\newtheorem{theorem}[]{Theorem}[section]
\numberwithin{equation}{section}
\newtheorem{lemma}[theorem]{Lemma}
\newtheorem{remark}[theorem]{Remark}
\newtheorem{proposition}[theorem]{Proposition}
\DeclarePairedDelimiter\abs{\lvert}{\rvert}
\DeclarePairedDelimiter\norm{\lVert}{\rVert}
\DeclarePairedDelimiterX{\inner}[2]{\langle}{\rangle}{#1, #2}
\newcommand{\N}{\mathbb{N}}
\newcommand{\R}{\mathbb{R}}
\newcommand{\Rn}{\mathbb{R}^3}
\newcommand{\Rb}{\mathbb{R}^3}
\newcommand{\intrn}{\int_{\Rn}}
\newcommand{\intrb}{\int_{\Rb}}
\newcommand{\Lp}[1]{\mathit{L}^{#1}(\Rn)}
\newcommand{\Hs}[1]{{\mathit H}^{#1}(\Rn)}
\newcommand{\Hsr}[1]{{\mathit H}_{r}^{#1}(\Rn)}
\newcommand{\normHs}[2]{\norm{#1}_{\Hs{#2}}}
\newcommand{\normLp}[2]{\norm{#1}_{\Lp{#2}}}
\newcommand{\diff}{\nabla}
\newcommand{\laplace}{\Delta}
\renewcommand{\(}{\left(}
\renewcommand{\)}{\right)}
\renewcommand{\[}{\left[}
\renewcommand{\]}{\right]}
\newcommand{\weakto}{\rightharpoonup}
\newcommand{\Ho}{{\mathit H}^{1} (\Rb)}
\newcommand{\tstar}{t_u^{\star}}
\newcommand{\spu}{s_u^{+}}
\newcommand{\smu}{s_u^{-}}
\newcommand{\slc}{\Lambda(c)}
\newcommand{\slp}{\Lambda^{+}(c)}
\newcommand{\slm}{\Lambda^{-}(c)}
\newcommand{\slz}{\Lambda^{0}(c)}
\newcommand{\var}{\varepsilon}
\newcommand{\wbar}{\overline{w}_{\varepsilon, t}}
\begin{document}
	


\title[Multiple normalized solutions for a Sobolev critical  Schr\"odinger-Poisson-Slater equation]{Multiple normalized solutions \\ for a Sobolev critical  Schr\"odinger-Poisson-Slater equation}

\author[L. Jeanjean and T.T. Le]{Louis Jeanjean and Thanh Trung Le}

\address{
	\vspace{-0.25cm}
	\newline
	\textbf{{\small Louis Jeanjean}} 
	\newline \indent Laboratoire de Math\'{e}matiques (CNRS UMR 6623), Universit\'{e} de Bourgogne Franche-Comt\'{e}, Besan\c{c}on 25030, France}
\email{louis.jeanjean@univ-fcomte.fr}

\address{
	\vspace{-0.25cm}
	\newline
	\textbf{{\small Thanh Trung Le}} 
	\newline \indent Laboratoire de Math\'{e}matiques (CNRS UMR 6623), Universit\'{e} de Bourgogne Franche-Comt\'{e}, Besan\c{c}on 25030, France}
\email{thanh\_trung.le@univ-fcomte.fr}

\date{}
\subjclass[2010]{}
\keywords{}
\maketitle

\begin{abstract} 
We look for solutions to the Schr\"{o}dinger-Poisson-Slater equation
\begin{align}\label{EQQ}
	- \laplace u + \lambda u - \gamma (\abs{x}^{-1} * \abs{u}^2) u  - a \abs{u}^{p-2}u = 0 \quad \text{in} \quad \Rb,
\end{align}
which satisfy 
\begin{equation*}
	\normLp{u}{2}^2 = c
\end{equation*}
for some prescribed $c>0$. Here $ u \in  \Ho$, $\gamma \in \R,$ $ a \in \R$ and $p \in (\frac{10}{3}, 6]$. When $\gamma >0$ and $a > 0$, both in the Sobolev subcritical case $p \in (\frac{10}{3}, 6)$  and in the Sobolev critical case $p=6$, we show that there exists a $c_1>0$ such that, for any $c \in (0,c_1)$, \eqref{EQQ} admits two solutions $u_c^+$ and $u_c^-$ which can be characterized respectively as a local minima  and  as  a mountain pass critical point of the associated  {\it Energy} functional restricted to the norm constraint. In the case $\gamma >0$ and $a < 0$, we show that, for any $p \in (\frac{10}{3},6]$ and any $c>0$, \eqref{EQQ} admits a solution which is a global minimizer. Finally, in the case $\gamma <0$, $a >0$ and $p=6$ we show that
\eqref{EQQ} does not admit positive solutions. 
\end{abstract}

\section{Introduction}
We consider the following Schr\"{o}dinger-Poisson-Slater equation:
\begin{align}
	i \partial_t v + \laplace v + \gamma (\abs{x}^{-1} * \abs{v}^2) v + a \abs{v}^{p-2}v = 0 \quad \text{in } \R \times \Rb, \label{eqn:SPS}
\end{align}
where $v: \R \times \Rb \to \mathbb{C}$, $\gamma \in \R,$ $ a \in \R$ and $p \in (\frac{10}{3}, 6]$. We look for standing wave solutions to \eqref{eqn:SPS}, namely to solutions of the form $v(t, x) = e^{i\lambda t}u(x)$, $\lambda \in \R$. Then the function $u(x)$ satisfies the equation
\begin{align}\label{eqn:Laplace}
	- \laplace u + \lambda u - \gamma (\abs{x}^{-1} * \abs{u}^2) u  - a \abs{u}^{p-2}u = 0 \quad \text{in } \Rb. 
\end{align}

	

Motivated by the fact that the $\mathit{L}^2 - \mbox{norm}$ is a preserved quantity of the evolution we focus on the search of solutions to \eqref{eqn:Laplace}  with prescribed $\mathit{L}^2 - \mbox{norm}$. It is standard that for some prescribed $c > 0$, a solution of \eqref{eqn:Laplace} with $\normLp{u}{2}^2 = c$ can be obtained as a critical point of the {\it Energy} functional
\begin{align*}
	F(u) := \dfrac{1}{2} \intrb \abs{\diff u}^2 dx - \dfrac{\gamma}{4} \intrb\intrb \dfrac{\abs{u(x)}^2 \abs{u(y)}^2}{\abs{x-y}} dxdy - \dfrac{a}{p} \intrb \abs{u}^p dx
\end{align*}
restricted to
\begin{align*}
	S(c) := \{u \in \Ho: \normLp{u}{2}^2 = c\}.
\end{align*}
Then the parameter $\lambda \in \R$ in \eqref{eqn:Laplace} appears as a Lagrange multiplier, it is an unknown of the problem.   

Let us define
\begin{equation}
\label{eqn:0.1}
m(c) = \inf_{u \in S(c)}F(u).
\end{equation}
Depending on the range of parameters we shall consider $m(c)$ will be finite or not. If, following the introduction of the Compactness by Concentration Principle of P. L. Lions \cite{LIONS1984-1,LIONS1984part2}, the search of normalized solutions corresponding to a global minimizer of a functional restricted to an $L^2$ norm constraint is now a classical topic, 
the search of critical points when the functional is unbounded from below on the constraint remained for a long time much less studied.  In the frame of this paper, namely for a functional corresponding to an autonomous equation lying on all the space $\R^N$, \cite{JEANJEAN1997} was for a long time the sole contribution. This direction of research was likely brought to the attention of the community by the papers 
\cite{BartschDevaleriola, BellazziniJeanjeanLuo2013} both published in 2013. Since then numerous contributions flourished within this topic and we just mention, among many possible choices, the works, \cite{BartschJeanjeanSoave16,BartschSoave2019,BellazziniJeanjean2016, Bieganowski-Mederski2020,CingolaniJeanjean2019, Gou-Zhang-2021, Lu3, Bartsch-Zhong-Zou-2021}. We also refer to  \cite{Bartsch-Molle-Rizzi-Verzini-2021} for non-autonomous problems set on $\R^N$ and to \cite{NorisTavaresVerzini2019,Pellaci-Pistoia-Vaira-Verzini-2021,Pierotti-Verzini-2017} for contributions when the underlying equation is set on a bounded domain of $\R^N$.

In the above-mentioned papers, the involved nonlinearities were Sobolev subcritical.  It was only in 2020 that was first treated in \cite{Soave2019Sobolevcriticalcase} a problem involving a Sobolev critical nonlinearity. Since then several works have explored further this direction 
\cite{AlvesJiMiyagaki2021, JeanjeanJendrejLeVisciglia2020, JeanjeanLe2020, Luo-Yang-Yang-2021, Wei-Wu2021}. \medskip

The case where $\gamma <0$ and $a>0$ in \eqref{eqn:Laplace} has been the most studied so far. When $p \in (2, \frac{10}{3})$ it can been shown that $m(c) \in (- \infty, 0]$ for any $c>0$ and it is also the case when $p = \frac{10}{3}$ and $c>0$ is small. It is shown in \cite{BellazziniSiciliano2011} that minimizer exists if $p \in (2,3)$ and $c>0$ is small enough, see also \cite{Sanchez-Soler-2004} for the special case $p=\frac{8}{3}$. The case $p \in (3, \frac{10}{3})$ was considered in \cite{BellazziniSiciliano2011-scaling, ZAMP2013}, see also \cite{Kikuchi-2007} for a closely related problem. In \cite{ZAMP2013} the existence of a threshold value $c_0>0$ such that $m(c)$ has a minimizer if and only if $c \in [c_0, \infty)$ was established. It was also proved in \cite{ZAMP2013} that a minimizer does not exist for any $c>0$ if $p=3$ or $p = \frac{10}{3}$. We also refer to \cite{Catto-Dolbeault-Sanchez-Soler-2013} for related results. When $p \in (\frac{10}{3}, 6]$ a scaling argument reveals that $m(c) = - \infty$ but nevertheless it was proved in \cite{BellazziniJeanjeanLuo2013} that, when $p \in (\frac{10}{3}, 6)$ there exists, for $c>0$ small enough a critical point of $F$ constrained to $S(c)$ at a strictly positive level. In this work we complement the result of \cite{BellazziniJeanjeanLuo2013} by showing that when $p=6$ and for any $c>0$ there does not exist positive solutions, see \cref{theorem:non-existence}. \medskip

Even if some open problems remain when $\gamma <0$ and $a>0$, we shall mainly concentrate here on the others cases: $(\gamma < 0,$ $ a < 0)$, $(\gamma > 0,$ $ a > 0)$ and $(\gamma > 0,$ $ a < 0)$. We define, for short, the following quantities
\begin{align*}
A(u): = \intrb \abs{\diff u}^2 dx,\quad B(u):= \intrb\intrb \dfrac{\abs{u(x)}^2 \abs{u(y)}^2}{\abs{x-y}} dxdy,\quad C(u):= \intrb \abs{u}^p dx.
\end{align*}
For $u \in S(c)$, we set $u^t(x) := t^{\frac{3}{2}} u(tx), t>0$, then
\begin{align*}
u^t \in S(c), \quad A(u^t) = t^2A(u), \quad B(u^t) = tB(u),\quad C(u^t) = t^{\sigma} C(u),
\end{align*}
where
\begin{align}
2 < \sigma := \dfrac{3(p-2)}{2} \leq 6, \label{eqn:11}
\end{align}
due to $p \in (\frac{10}{3}, 6]$. For $u \in S(c)$, we define the fiber map
\begin{align*}
	t \in (0, \infty) \mapsto g_u(t):= F(u^t) = \dfrac{1}{2}t^2 A(u) - \dfrac{\gamma}{4} t B(u) - \dfrac{a}{p} t^{\sigma} C(u).
\end{align*}
Hence, we have
\begin{align*}
	g_u'(t) = t A(u) - \dfrac{\gamma}{4} B(u) - \dfrac{a \sigma}{p} t^{\sigma - 1} C(u) = \dfrac{1}{t} Q(u^t),
\end{align*}
where
\begin{align*}
	Q(u) = A(u) - \dfrac{\gamma}{4} B(u) - \dfrac{a \sigma}{p} C(u).
\end{align*}
Actually the condition $Q(u) = 0$ corresponds to a Pohozaev identity and the set
\begin{align*}
	\slc := \{u \in S(c): Q(u) = 0\} = \{u \in S(c): g_u'(1) = 0\}
\end{align*}
appears as a natural constraint. Indeed, if $u \in S(c)$, then $t > 0$ is a critical point for $g_u$ if and only if $u^t \in \slc$. In particular, $u \in \slc$ if and only if $1$ is a critical point of $g_u$.

First we briefly consider the case $\gamma < 0, a < 0$. For any $u \in S(c)$, we have that $g_u'(t) > 0$ for all $t > 0$, hence the fiber map $g_u(t)$ is strictly increasing and so we can state the following non-existence result:
\begin{theorem}
	Assume that $\gamma < 0, a < 0$. Then $F(u)$ has no critical point on $S(c)$.
\end{theorem}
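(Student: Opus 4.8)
The plan is to exploit the monotonicity of the fiber map $g_u$ already recorded just above the statement, together with the standard fact that every critical point of $F$ restricted to $S(c)$ lies in the Pohozaev set $\slc$.

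First I would note that, since $c>0$, every $u \in S(c)$ is non-trivial, so $A(u)>0$, $B(u)>0$ and $C(u)>0$. Substituting $\gamma<0$ and $a<0$ into
\[
 g_u'(t) = t A(u) - \frac{\gamma}{4} B(u) - \frac{a\sigma}{p}\, t^{\sigma - 1} C(u),
\]
all three summands are strictly positive for every $t>0$ (recall $2<\sigma\le 6$ and $p\in(\tfrac{10}{3},6]$, so $t^{\sigma-1}>0$), hence $g_u'(t)>0$ on $(0,\infty)$; in particular $g_u$ is strictly increasing. Evaluating at $t=1$ yields $Q(u)=g_u'(1)>0$ for every $u\in S(c)$, and therefore $\slc=\varnothing$.

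Next I would rule out critical points by contradiction. Suppose $u\in S(c)$ is a critical point of $F$ restricted to $S(c)$. Since the scaling $t\mapsto u^t$ maps $(0,\infty)$ into $S(c)$ with $u^1=u$ and $\normLp{u^t}{2}^2\equiv c$ is constant along this curve, the constrained criticality of $u$ — via the Lagrange multiplier rule applied along the fiber $t\mapsto u^t$ — forces $\frac{d}{dt}F(u^t)\big|_{t=1}=g_u'(1)=0$; equivalently, $u$ satisfies the Pohozaev identity $Q(u)=0$, i.e.\ $u\in\slc$. This contradicts $\slc=\varnothing$ established above, so no such $u$ exists.

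The argument is essentially immediate once the sign analysis of $g_u'$ is in place; the only point needing (routine) care is the justification that a constrained critical point of $F$ on $S(c)$ lies in $\slc$, which follows either from differentiating along the fiber $u^t$ as above or directly from the Pohozaev identity associated with \eqref{eqn:Laplace}. No compactness or variational minimization is required here, in contrast with the existence results in the sequel.
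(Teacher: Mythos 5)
Your proof is correct and follows essentially the same route as the paper: the sign observation that all three terms of $g_u'(t)$ are positive when $\gamma<0$, $a<0$ gives $\Lambda(c)=\emptyset$, and any constrained critical point must lie in $\Lambda(c)$ because it satisfies $Q(u)=0$. One small caution: the justification by differentiating $t\mapsto u^t$ at $t=1$ is delicate, since the tangent vector involves $x\cdot\diff u$, which a general $u\in\Ho$ need not have in $\Lt$; the Pohozaev-identity route you offer as the alternative (the paper's \cref{lemma:2.3}) is the one to rely on.
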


Next, we consider the case $\gamma > 0, a > 0$. In this case, let 
\begin{align}
	c_1 := \(\dfrac{4}{\gamma K_{H}} \dfrac{\sigma - 2}{\sigma - 1}\)^{\frac{3p-10}{4(p-3)}} \(\dfrac{p}{a \sigma (\sigma - 1) K_{GN}}\)^{\frac{1}{2(p-3)}} > 0, \label{eqn:assumption}
\end{align}
where $\sigma$ is defined by \eqref{eqn:11} and $K_{H}, K_{GN}$ are defined in \cref{lemma:1}. We also introduce the decomposition of $\slc$ into the disjoint union $\slc = \slp \cup \slz \cup \slm$, where 
\begin{align*}
\slp &:= \{u \in \slc: g_u''(1) > 0\} = \{u \in S(c): g_u'(1) = 0, g_u''(1) > 0\}, \\
\slz &:= \{u \in \slc: g_u''(1) = 0\} = \{u \in S(c): g_u'(1) = 0, g_u''(1) = 0\}, \\
\slm &:= \{u \in \slc: g_u''(1) < 0\} = \{u \in S(c): g_u'(1) = 0, g_u''(1) < 0\}.
\end{align*}
By \cref{lemma:4} and \cref{lemma:6}, for any $c \in (0, c_1)$ we have that $\slz = \emptyset$ and $\slp \neq \emptyset$, $\slm \neq \emptyset$. Since $F$ is bounded from below on $\slc$ due to \cref{lemma:5}, we can define
\begin{align}
	\gamma^{+}(c) := \inf_{u \in \slp}F(u) \quad \text{ and } \quad \gamma^{-}(c) := \inf_{u \in \slm}F(u). \label{eqn:gamma}
\end{align}
Our first main result is
\begin{theorem} \label{theorem:1}
	Let $p \in (\frac{10}{3}, 6]$. Assume that $\gamma > 0,$ $ a > 0$ and let $c_1 > 0$ be defined by \eqref{eqn:assumption}. For any $c \in (0, c_1)$, there exist $u_c^{+} \in \slp$ such that $F(u_c^{+}) = \gamma^{+}(c)$ and $u_c^{-} \in \slm$ such that $F(u_c^{-}) = \gamma^{-}(c)$. The functions $u_c^+, u_c^-$ are bounded continuous positive Schwarz symmetric functions. In addition there exist $\lambda_c^{+} > 0$ and $\lambda_c^{-} > 0$ such that $(u_c^{+}, \lambda_c^{+})$ and $(u_c^{-}, \lambda_c^{-})$ are solutions to \eqref{eqn:Laplace}.
\end{theorem}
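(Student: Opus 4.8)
I will obtain $u_c^{+}$ as a minimizer of $F$ on $\slp$ and $u_c^{-}$ as a minimizer of $F$ on $\slm$. Everything rests on the structural facts supplied by \cref{lemma:4} and \cref{lemma:6}: for $c\in(0,c_1)$ one has $\slz=\emptyset$, every $u\in S(c)$ has exactly two critical points $\spu<\smu$ of its fiber map, with $u^{\spu}\in\slp$ a strict local minimum point and $u^{\smu}\in\slm$ a strict local maximum point of $g_u$, one has $g_u(\spu)<0<g_u(\smu)$ (so that $\gamma^{+}(c)<0<\gamma^{-}(c)$ and $\gamma^{-}(c)=\inf_{u\in S(c)}\max_{t>0}g_u(t)$ is a mountain‑pass value), and $\slc$ is a natural constraint: any critical point of $F$ restricted to $\slp$ or to $\slm$ is a critical point of $F|_{S(c)}$, hence solves \eqref{eqn:Laplace} for some Lagrange multiplier. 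A first reduction, licit because $\gamma>0$ and $a>0$: for $u\in S(c)$ with Schwarz symmetrization $u^*$ one has $A(u^*)\le A(u)$, $B(u^*)\ge B(u)$ (Riesz rearrangement) and $C(u^*)=C(u)$, whence $g_{u^*}(t)\le g_u(t)$ for all $t>0$; reprojecting $u^*$ onto $\slp$ (resp.\ onto $\slm$) therefore does not increase $F$, so $\gamma^{+}(c)$ and $\gamma^{-}(c)$ are approached along minimizing sequences of Schwarz symmetric functions, which we take in $\Hsr{1}$.

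For $u_c^{+}$: let $\{u_n\}\subset\slp$ be a symmetric minimizing sequence. The coercivity on $\slc$ established in the course of proving \cref{lemma:5} (use $Q(u_n)=0$ together with the Hardy--Littlewood--Sobolev bound for $B$) shows $A(u_n)$ is bounded, so $\{u_n\}$ is bounded in $\Hsr{1}$; up to a subsequence $u_n\weakto u$ in $\Hsr{1}$ and a.e.\ on $\Rb$. By the compact embedding $\Hsr{1}\hookrightarrow L^q(\Rb)$ for $q\in(2,6)$ and the compactness of the Coulomb term on radial functions, $B(u_n)\to B(u)$ always, and $C(u_n)\to C(u)$ when $p<6$. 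Since $\gamma^{+}(c)<0$ one gets $u\not\equiv0$, and standard monotonicity/subadditivity of $c\mapsto\gamma^{+}(c)$ combined with the fiber map forces $\normLp{u}{2}^2=c$; as $\slz=\emptyset$ this yields $u\in\slp$ with $F(u)=\gamma^{+}(c)$, so $u_c^{+}:=u$. In the critical case $p=6$ the sole new point is to prevent loss of $L^6$‑mass: the Brezis--Lieb splitting together with the inequality $\gamma^{+}(c)<0<\tfrac13\mathcal S^{3/2}$, where $\mathcal S$ is the best Sobolev constant, rules out bubbling and the argument closes as before.

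For $u_c^{-}$: take a symmetric minimizing sequence in $\slm$ and, applying Ekeland's variational principle on the natural constraint $\slm$ (legitimate since $\slz=\emptyset$, which also keeps $g_{u_n}''(1)$ bounded away from $0$ and hence forces the multiplier attached to the constraint $Q=0$ to vanish along the sequence), promote it to a Palais--Smale sequence for $F|_{S(c)}$ at level $\gamma^{-}(c)$ with Lagrange multipliers $\lambda_n$; testing the approximate equation with $u_n$ and using $Q(u_n)\to0$ shows $\{\lambda_n\}$ is bounded, after which one extracts a nontrivial strong limit as above. \textbf{The main obstacle lies here when $p=6$}: the mountain‑pass level must be kept strictly below the first level at which the Palais--Smale condition may break, i.e.\ one must show $\gamma^{-}(c)<\gamma^{+}(c)+\tfrac13\mathcal S^{3/2}$. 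I would prove this by inserting into the fiber map a family of suitably rescaled and truncated Aubin--Talenti bubbles concentrated at a point and estimating the resulting fiber maxima, using crucially that $c\in(0,c_1)$ is small --- this is where the explicit value of $c_1$ in \eqref{eqn:assumption} enters. Granted the estimate, the Palais--Smale sequence converges strongly in $\Hsr{1}$ to some $u_c^{-}\in\slm$ with $F(u_c^{-})=\gamma^{-}(c)$ solving \eqref{eqn:Laplace} with multiplier $\lambda_c^{-}$.

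It remains to read off the stated properties. Both $u=u_c^{\pm}$ satisfy $-\laplace u+\lambda u=\gamma(\abs{x}^{-1}*\abs{u}^2)u+a\abs{u}^{p-2}u$; pairing with $u$ and using $Q(u)=0$ to eliminate $A(u)$ gives $\lambda\normLp{u}{2}^2=\tfrac{3\gamma}{4}B(u)+a\bigl(1-\tfrac{\sigma}{p}\bigr)C(u)$, and since $1-\tfrac{\sigma}{p}=\tfrac{6-p}{2p}\ge0$ while $\gamma,a,B(u),C(u)>0$, we conclude $\lambda_c^{+}>0$ and $\lambda_c^{-}>0$. As $F$ and the constraints are unchanged under $u\mapsto\abs{u}$ we may assume $u\ge0$; then $\abs{x}^{-1}*\abs{u}^2\in L^\infty(\Rb)$, so standard elliptic bootstrap arguments show $u$ is bounded and continuous, and the strong maximum principle gives $u>0$. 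Finally, Schwarz symmetry of $u_c^{\pm}$ is forced since replacing a minimizer by its symmetrization cannot increase $F$, while the strict P\'olya--Szeg\H{o} and Riesz inequalities make the decrease strict unless the function is already Schwarz symmetric.
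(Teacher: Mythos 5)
Your overall architecture matches the paper's: split $\Lambda(c)$ into $\Lambda^\pm(c)$ using $\slz=\emptyset$, reduce to Schwarz symmetric sequences, use the compact radial embeddings for compactness, exclude bubbling at $p=6$ by an energy-level comparison, get $\lambda>0$ from the Pohozaev/Nehari identities, and finish with regularity and the maximum principle. However, there are two genuine gaps.

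First, your structural claim $g_u(\spu)<0<g_u(\smu)$ for every $u\in S(c)$, and hence $\gamma^-(c)>0$, is unjustified and in general false: the fiber map starts at $0$ with $g_u'(0^+)=-\tfrac{\gamma}{4}B(u)<0$, so its local maximum value $g_u(\smu)$ need not be positive, and the paper explicitly points out that one may have $\gamma^-(c)<0$ for $c\in(0,c_1)$ (this is precisely why \cref{lemma:12} and the non-vanishing step require care, the latter resting on the uniform lower bound $A(u)\ge\alpha>0$ on $\slm$ from \cref{lemma:16} rather than on positivity of the level). In particular the identity $\gamma^-(c)=\inf_{u}\max_{t>0}g_u(t)$ must be replaced by $\inf_u\max_{s_u^+<t\le s_u^-}g_u(t)$, since when $g_u(\smu)<0$ the unrestricted supremum equals $0$ and is not attained.

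Second, and more seriously, the heart of the critical case --- the strict inequality $\gamma^-(c)<\gamma^+(c)+\tfrac{1}{3\sqrt{aK_{GN}}}$ of \eqref{strict-intro} --- is not proved, and the strategy you sketch would not deliver it. A family of truncated Aubin--Talenti bubbles alone cannot even be placed on $S(c)$ without a drastic rescaling that destroys the energy expansion (their $L^2$ mass is $O(\varepsilon)$), and the smallness of $c_1$ is not the mechanism that closes the estimate. What is needed is the coupled test function $w_{\varepsilon,t}=u_c^+ + tU_\varepsilon$ with the bubble centered where $u_c^+$ is large: after renormalizing the mass, the equation satisfied by $u_c^+$ is used to cancel the linear cross terms, and the decisive negative contribution is the interaction term $-a\,t^5\int u_c^+ U_\varepsilon^5\sim -K\varepsilon^{1/2}$, which beats all the $O(\varepsilon)$ and $O(\varepsilon^{3/5})$ error terms (including those coming from the Coulomb energy and the mass renormalization). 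This is the content of \cref{lemma:31,lemma:32,lemma:33}, and in dimension $3$ the choice of test function is known to be delicate --- the ``bubble far from the soliton'' variant fails precisely here. Without this estimate the Palais--Smale (or Ekeland) sequence at level $\gamma^-(c)$ may lose a bubble and your argument for $u_c^-$ does not close. As a minor further point, for $u_c^+$ your direct minimization over $\slp$ leaves the identification $\normLp{u}{2}^2=c$ of the weak limit to an unspecified subadditivity argument; the paper instead runs the Lagrange-multiplier computation along a Palais--Smale sequence and uses $\lambda_c>0$ to upgrade weak to strong $L^2$ convergence.
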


\begin{remark}
	In \cref{theorem:1}, borrowing an approach first introduced in \cite{CingolaniJeanjean2019}, an effort is made to optimize the limit value $c_1 >0$. As a consequence,  compared to the works \cite{JeanjeanJendrejLeVisciglia2020, JeanjeanLe2020,Soave2019,Soave2019Sobolevcriticalcase} we do not benefit from the property that $\gamma^-(c) \geq 0 =  \sup_{u \in\Lambda^+(c)}F(u)$. Such property is a help to show the convergence of the Palais-Smale sequences in these works. Also, the fact that we may have $\gamma^-(c) < 0$ makes somehow more involved to prove that  the level $\gamma^-(c)$ is reached by a radially symmetric function, a Schwartz function actually, see \cref{lemma:12}. It is not clear to us if $c_1 >0$ is {\it optimal}. Nevertheless, we conjecture that there exists a $c_0 \geq c_1 >0$ such that one solution exists when $c=c_0$ and that, at least positive solutions, do not exist when $c >c_0$. 
\end{remark}

	
	\begin{remark}\label{ground-state}
	As we shall see $\gamma^+(c) < \gamma^-(c)$ and combined with the property that any critical point lies in $\Lambda(c)$ it implies that the solution $u_c^+$ obtained in \cref{theorem:1} is a ground state. Following \cite{BellazziniJeanjean2016} a ground state is defined as a solution $v \in S(c)$ to \eqref{eqn:Laplace} which has minimal Energy among all the solutions which belong to $S(c)$. Namely, if
	$$\quad F(v) =  \displaystyle \inf \big\{F(u), u \in S(c), \big(F\big|_{S(c)}\big)'(u) = 0 \big\}.$$
	\end{remark}

	If the geometrical structure of $F$ restricted to $S(c)$ is identical in the Sobolev subcritical case $ p \in (\frac{10}{3}, 6)$ and in the Sobolev critical case $p=6$, the proof that the levels $\gamma^+(c)$ and $\gamma^-(c)$ are indeed reached requires additional, more involved, arguments in the case $p=6.$ In particular, showing that $\gamma^-(c)$ is attained requires to check that the following inequality holds
	\begin{align}\label{strict-intro}
		\gamma^-(c) < \gamma^+(c)  + \dfrac{1}{3 \sqrt{a K_{GN}}}.
	\end{align}
	It is known since the pioneering work of Brezis-Nirenberg \cite{BrezisNirenberg1983} that the way to derive such a strict upper bound is through the use of testing functions. In \cite{JeanjeanLe2020}, considering the equation 
	\begin{align}
-\laplace u - \lambda u - \mu \abs{u}^{q-2} u - \abs{u}^{2^*-2} u = 0 \quad \mbox{in } \R^N, \label{eqn:LaplaceL}
\end{align}
with $N \geq 3,$ $ \mu >0$, $2 < q < 2 + \frac{4}{N}$ and $2^* = \frac{2N}{N-2}$ we face the need to establish a similar inequality. We constructed test functions which could be viewed as the sum of a truncated extremal function of the Sobolev inequality on $\R^N$ centered at the origin and of $u_c^+$ translated far away from the origin. This choice of testing functions was sufficient to prove our strict inequality when $N \geq 4$ but we missed it in the case $N=3$. Note that the approach developed in  \cite{JeanjeanLe2020} proved nevertheless adequate to deal with the equation
	\begin{equation*}
	\sqrt{- \Delta} u = \lambda u + \mu |u|^{q-2}u + |u|^{2^* -2}u, \quad u \in \mathit{H}^{1/2}(\R^N),
	\end{equation*}
	with $N \geq 2,$ $ q \in (2, 2 + \frac{2}{N})$, $ 2^* = \frac{2N}{N-1}$, that was studied in \cite{Luo-Yang-Yang-2021}. Very recently, in \cite{Wei-Wu2021} the authors introduced an alternative choice of testing functions which allowed to treat, in a unified way, the case $N=3$ and $N \geq 4$ for \eqref{eqn:LaplaceL}. The strategy in \cite{Wei-Wu2021}, recording of the one introduced by G. Tarantello in \cite{Tarantello92}, is on the contrary, to located the extremal functions where the solution $u_c^+$ takes its greater values (the origin thus). The idea behind the proof is that the interaction decreases the value of the {\it Energy}  with respect to the case where the supports would be disjoint. In this paper, where \eqref{eqn:Laplace} is set on $\R^3$, we believe in view of our experience on \eqref{eqn:LaplaceL}, more appropriate to follow the approach of \cite{Wei-Wu2021} to check the inequality \eqref{strict-intro} for any $c \in (0,c_1)$. \smallskip 
	
	The results of \cref{theorem:1} are complemented in several directions. First, we show that the solution $u^+(c)$ obtained in \cref{theorem:1} can be characterized as a local minima for $F$ restricted to $S(c)$. We treat here the full range $p \in (\frac{10}{3}, 6]$ with a single proof. More precisely we show,
	\begin{theorem}\label{th-minimization}
	Let $p \in (\frac{10}{3}, 6]$. Assume that $\gamma > 0,$ $ a > 0$ and let $c \in (0,c_1)$. Then
	we have $\slp \subset V(c)$ and
	\begin{align*} 
		\gamma^+(c) = \inf_{u \in \slp}F(u) = \inf_{u \in V(c)}F(u)
	\end{align*}
	where \begin{align*}
		V(c) := \{u \in S(c) | A(u) < k_1\}
	\end{align*}
	for some $k_1 >0$ independent of $c \in (0,c_1)$ (see \eqref{eq-notations} for the definition of $k_1>0$). In addition, any minimizing sequence for $F$ on $V(c)$ is, up to translation, strongly convergent in $\Hs{1}$. 
	\end{theorem}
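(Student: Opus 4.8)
The plan is to reduce the whole statement to the one–dimensional analysis of the fiber maps $g_u$ of the introduction, together with the pointwise estimate
\begin{equation*}
F(u)\ \ge\ \psi_c\big(A(u)\big)\ :=\ \frac12 A(u)-\frac{\gamma K_{H}}{4}\,c^{3/2}A(u)^{1/2}-\frac{aK_{GN}}{p}\,c^{\frac{6-p}{4}}A(u)^{\sigma/2},\qquad u\in S(c),
\end{equation*}
which follows at once from the Hardy--Littlewood--Sobolev and Gagliardo--Nirenberg inequalities of \cref{lemma:1} ($\sigma$ as in \eqref{eqn:11}).

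\emph{Step 1: the inclusion and the infimum identity.} If $u\in\slp$, then $Q(u)=0$ and $g_u''(1)>0$ give $\tfrac{a\sigma}{p}C(u)<\tfrac1{\sigma-1}A(u)$, whence $\tfrac{\sigma-2}{\sigma-1}A(u)<\tfrac{\gamma}{4}B(u)\le\tfrac{\gamma K_{H}}{4}c^{3/2}A(u)^{1/2}$, and squaring $A(u)<\big(\tfrac{\gamma(\sigma-1)K_{H}}{4(\sigma-2)}\big)^{2}c^{3}<k_1$ because $c<c_1$ (see \eqref{eq-notations}); thus $\slp\subset V(c)$ and $\inf_{V(c)}F\le\gamma^{+}(c)$. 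For the reverse inequality I would show that \emph{every} $u\in V(c)$ satisfies $1<\smu$: using $C(u)\le K_{GN}c^{\frac{6-p}{4}}A(u)^{\sigma/2}$ and $A(u)<k_1$, the unique zero $\tstar$ of $g_u''$ obeys
\begin{equation*}
(\tstar)^{\sigma-2}=\frac{p\,A(u)}{a\sigma(\sigma-1)C(u)}\ \ge\ \frac{p}{a\sigma(\sigma-1)K_{GN}\,c^{\frac{6-p}{4}}A(u)^{\frac{\sigma-2}{2}}}\ >\ \Big(\frac{c_1}{c}\Big)^{\frac{6-p}{4}}\ \ge\ 1,
\end{equation*}
the middle inequality using that $c_1$ in \eqref{eqn:assumption} is calibrated exactly so that $k_1^{(\sigma-2)/2}=\tfrac{p}{a\sigma(\sigma-1)K_{GN}}c_1^{-\frac{6-p}{4}}$ — the same calibration that makes the $\slp$–bound above compatible with one fixed $k_1$. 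Hence $1<\tstar<\smu$. By \cref{lemma:4}, for $c\in(0,c_1)$ the map $g_u$ is strictly decreasing on $(0,\spu)$ and strictly increasing on $(\spu,\smu)$; since $1\in(0,\smu)$ and $\spu$ is the minimum point of $g_u$ on that interval, $F(u)=g_u(1)\ge g_u(\spu)=F(u^{\spu})\ge\gamma^{+}(c)$ because $u^{\spu}\in\slp$. Taking the infimum over $V(c)$ gives $\inf_{V(c)}F\ge\gamma^{+}(c)$, hence $\gamma^{+}(c)=\inf_{\slp}F=\inf_{V(c)}F$.

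\emph{Step 2: compactness of minimizing sequences.} I would first record $\gamma^{+}(c)<0$ (since $g_u(t)\to0$ as $t\to0^{+}$ and $g_u$ is strictly decreasing on $(0,\spu)$, so $g_u(\spu)<0$ for all $u$) and the boundary estimate $\inf\{F(u):u\in S(c),\ A(u)=k_1\}\ge\psi_c(k_1)>\gamma^{+}(c)$ for $c\in(0,c_1)$, which is a direct computation from the explicit form of $\psi_c$, the scalar lower bound $\gamma^{+}(c)\ge-\tfrac{\sigma-2}{2\sigma}k_1(c/c_1)^3$ coming from $F|_{\slc}=\tfrac{\sigma-2}{2\sigma}A-\tfrac{\gamma(\sigma-1)}{4\sigma}B$, and the value \eqref{eqn:assumption} of $c_1$ (the two sides coalesce at $c=c_1$). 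Given a minimizing sequence $(u_n)\subset V(c)$ — bounded in $\Ho$ since $A(u_n)<k_1$ and $\normLp{u_n}{2}^{2}=c$ — I would apply Ekeland's variational principle on the complete metric space $\{u\in S(c):A(u)\le k_1\}$ to obtain $(v_n)$ with $F(v_n)\to\gamma^{+}(c)$, $\norm{v_n-u_n}_{\Ho}\to0$ and $(v_n)$ a Palais--Smale sequence for $F|_{S(c)}$ at level $\gamma^{+}(c)$; by the boundary estimate $v_n\in V(c)$ for $n$ large. Then one invokes the precompactness up to translations of such Palais--Smale sequences, obtained as in the proof of \cref{theorem:1}: along a subsequence $v_n(\cdot-y_n)\to v$ strongly in $\Ho$ with $v\in\slp$ and $F(v)=\gamma^{+}(c)$, and since translations are isometries and $\norm{u_n-v_n}_{\Ho}\to0$, also $u_n(\cdot-y_n)\to v$ strongly in $\Ho$. (The core of this precompactness, and an alternative self-contained route working directly on $(u_n)$, is a concentration--compactness argument: vanishing is excluded because $B(u_n)\to0$ and, when $p=6$, $A(u_n)<k_1$ lies quantitatively below the Sobolev threshold, forcing $\liminf F(u_n)\ge0>\gamma^{+}(c)$; after translation $u_n\weakto u\ne0$, and dichotomy is excluded by a Brezis--Lieb splitting $F(u_n)=F(u)+F(u_n-u)+o(1)$, valid also for the Coulomb term, together with the strict subadditivity $\gamma^{+}(c)<\gamma^{+}(c_1')+\gamma^{+}(c_2')$ for $c=c_1'+c_2'$, $c_i'\in(0,c)$, which one gets by testing with $u_{c_1'}^{+}+u_{c_2'}^{+}(\cdot-y)$, $|y|$ large, the positive Coulomb interaction between the two exponentially decaying solutions of \cref{theorem:1} strictly lowering the energy beyond the exponentially small remainder; finally $u_n\to u$ in $L^2$, weak lower semicontinuity of $A$ gives $F(u)\le\gamma^{+}(c)$ and $A(u)\le k_1$, and the fiber inequality of Step 1 forces $u\in\slp$, $A(u)<k_1$, and $A(u_n)\to A(u)$, hence strong convergence in $\Ho$.)

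\emph{Main obstacle.} The delicate point is to make the lower bound $1<\smu$ on $V(c)$ uniform in $c\in(0,c_1)$ with a single $k_1$, which is precisely what the value of $c_1$ in \eqref{eqn:assumption} is designed for; and, in the Sobolev critical case $p=6$, the fact that one can no longer rely on the sign property $\sup_{\slp}F\le0$ used in related works, so that both the boundary estimate $\psi_c(k_1)>\gamma^{+}(c)$ and the exclusion of vanishing and dichotomy must be made quantitative against the constant $K_{GN}$ rather than deduced from sign considerations.
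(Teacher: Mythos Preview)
Your Step 1 is correct and is exactly the paper's argument (\cref{lemma:27}): both the inclusion $\slp\subset V(c)$ and the reverse inequality $\inf_{V(c)}F\ge\gamma^{+}(c)$ come from the fiber map together with the algebraic identity $k_1^{(\sigma-2)/2}=M\,c_1^{-(6-p)/4}$, precisely as you write.

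Step 2 has two genuine gaps. Your Ekeland route needs the uniform boundary gap $\inf_{\partial V(c)}F>\gamma^{+}(c)$, and your justification via $\psi_c(k_1)$ is a hand-wave; the paper explicitly allows $\inf_{\partial V(c)}F<0$ and never proves such a gap. More seriously, the Palais--Smale compactness you invoke ``as in the proof of \cref{theorem:1}'' is stated there only for sequences in $\Lambda_r^{\pm}(c)$ (radial and on the Pohozaev set, see Lemmas \ref{lemma:11} and \ref{lemma:18}); your Ekeland sequence is merely in $V(c)$, so those lemmas do not apply, and you are thrown back on a direct concentration--compactness argument anyway.

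Your alternative concentration--compactness route is what the paper actually does (\cref{lemma:29}), but your strict subadditivity is obtained the wrong way. Testing with $u_{c_1'}^{+}+u_{c_2'}^{+}(\cdot-y)$ presupposes that minimizers at every smaller mass exist and decay exponentially; neither is established at this point, and for $p=6$ the existence of $u_{c_i'}^{+}$ in \cref{theorem:1} already relies on the monotonicity of $c\mapsto\gamma^{+}(c)$, which is part of what you are proving. The paper instead gets $\gamma^{+}(\theta\alpha)\le\theta\gamma^{+}(\alpha)$ (strict when $\gamma^{+}(\alpha)$ is attained) by the one-line scaling $v=\sqrt{\theta}\,u$ (\cref{lemma:28}\eqref{point:28iii}); dichotomy is then excluded by a bootstrap: if the weak limit $u_c$ has mass $\tilde c<c$ and $F(u_c)=\gamma^{+}(\tilde c)$, then $u_c$ itself is the minimizer that triggers the strict inequality. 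For $p=6$ the endgame is not weak lower semicontinuity of $F$ (which fails) but the quantitative bound $F(w_n)\ge bA(w_n)+o(1)$ with $b=\tfrac12-\tfrac{1}{\sigma(\sigma-1)}>0$ whenever $B(w_n)\to0$ and $A(w_n)\le k_1$ (\cref{lemma:30}); you mention this in passing, but it, together with the scaling subadditivity, is what actually carries the proof.
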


	\begin{remark}
	The proof of \cref{lemma:27} which is a key step to established \cref{th-minimization}, reveals some additional properties of the set $V(c)$. Indeed, we have that
	$V(c) \subset S(c) \backslash \Lambda^-(c)$ and thus $V(c)$ is {\it separating} the sets $\Lambda^+(c)$ and $\Lambda^-(c)$. Also, for any $0 < c, \tilde{c} < c_1$, we have that $A(u) < k_1 < A(v)$ for all $u \in \Lambda^+(c), v \in \Lambda^-(\tilde{c})$, see \eqref{eqn:43} and \eqref{eqn:41}. 
	\end{remark}

	\begin{remark} 
	To prove that the minimizing sequences for $F$ on $V(c)$ are, up to translation, strongly convergent in $\Hs{1}$ we follow an approach due to \cite{Ikoma2014} that has already been used several times, see, for example, \cite{GouJeanjean2016, JeanjeanJendrejLeVisciglia2020, Luo-Yang-Yang-2021}. The first step in this approach is to show that the sequences do not vanish. When $p=6$, we rely for this, in an essential way, on the fact that $c_1>0$ is sufficiently small, see 
	\cref{lemma:30}. This fact is also used to end the proof. Finally, note that since we allow the possibility that $\inf_{u \in \partial V(c)}F(u) < 0$ where 
	$\partial V(c) := \{u \in S(c) | A(u) = k_1\}$ we must check that the minimizers do ly in $V(c)$.
	\end{remark}

	Let us now denote 
	\begin{align*} 
\mathcal{M}_c := \{u \in V(c) : F(u) = \gamma^+(c)\}.
\end{align*}
In view of \cref{ground-state}, $\mathcal{M}_c $ is the set of all ground states.
The property that any minimizing sequence for $F$ restricted to $V(c)$ is, up to translation, strongly converging is known to be a key ingredient to show that the set $\mathcal{M}_c$ is orbitally stable. If $p \in (\frac{10}{3}, 6)$ the orbital stability of $\mathcal{M}_c$ indeed follows directly from \cref{th-minimization} by the classical arguments of \cite{CazenaveLions1982}. In the case $p=6$ the situation is more delicate as the existence of a uniform $H^1(\R^3)$ bound on the solution of \eqref{eqn:SPS} during its lifespan is not sufficient to  guarantee that blow-up may not occurs. We refer to \cite{Cazenave2003semilinear} for more details. We do not prove anything in that direction but strongly believe that the set $\mathcal{M}_c$ is orbitally stable. Actually, such a result has been obtained on the equation \eqref{eqn:LaplaceL} in \cite{JeanjeanJendrejLeVisciglia2020}. \smallskip

	We also discuss the behavior of the associated Lagrange multipliers and show that if the behavior of $\lambda^+_c$ is essentially the same for the cases $p \in (\frac{10}{3}, 6)$ and $p=6$, see \cref{lemma:22}, there is a distinct behavior for $\lambda_c^-$, see Lemmas \ref{lemma:23t} and \ref{lemma:25}. In particular, \cref{lemma:25} suggests that there may exist two distinct positive  solutions to \eqref{eqn:Laplace} for any fixed $\lambda >0$ sufficiently small. Finally, in \cref{lemma:10}, we establish the property that the map $c \mapsto \gamma^-(c)$ is strictly decreasing. \medskip

Next, we consider the case  $\gamma > 0,$ $ a < 0$. Recalling the definition of $m(c)$ given in \eqref{eqn:0.1} we show in \cref{lemma:5.1}, that $-\infty < m(c) < 0$ and then we prove the following result.
\begin{theorem} \label{theorem:2}
Let $p \in (\frac{10}{3},6]$, $\gamma > 0$ and $ a < 0$. For any $c >0$, the infimum $m(c)$ is achieved and any minimizing sequence for \eqref{eqn:0.1} is, up to translation, strongly convergent in $\Ho$ to a solution of \eqref{eqn:Laplace}. In addition, the associated Lagrange multiplier is positive.
\end{theorem}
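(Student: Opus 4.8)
The plan is to run the direct method on the constrained minimization problem \eqref{eqn:0.1}, exploiting that in the regime $\gamma>0$, $a<0$ the functional $F$ is coercive and bounded below on $S(c)$ (this is the content of \cref{lemma:5.1}, which also gives $-\infty<m(c)<0$), and then to upgrade a minimizing sequence to a genuine minimizer by ruling out vanishing and dichotomy via a strict subadditivity inequality. Concretely, I would first fix $c>0$ and take a minimizing sequence $(u_n)\subset S(c)$ for $F$. By \cref{lemma:5.1} the sequence is bounded in $\Ho$, since the gradient term $\tfrac12 A(u_n)$ dominates: the term $-\tfrac{\gamma}{4}B(u_n)$ has a favorable sign when $\gamma>0$, and $-\tfrac{a}{p}C(u_n)=\tfrac{|a|}{p}C(u_n)\ge 0$ when $a<0$, so in fact $F(u)\ge \tfrac12 A(u)-\tfrac{\gamma}{4}B(u)$ and one controls $B$ by Hardy--Littlewood--Sobolev and interpolation at a subcritical power of $A$; hence $\sup_n A(u_n)<\infty$. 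One may also replace $u_n$ by its Schwarz symmetrization, which does not increase $F$ (it leaves $A$ non-increasing, and increases $B$ and $C$, both of which enter with the right sign since $\gamma>0$, $a<0$), so without loss of generality $u_n$ is radially decreasing; this will help with compactness.

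Next I would establish the strict subadditivity $m(c)<m(\alpha)+m(c-\alpha)$ for all $\alpha\in(0,c)$. The key scaling observation is that $t\mapsto F(u^t)$ with $u^t(x)=t^{3/2}u(tx)$ gives $g_u(t)=\tfrac12 t^2 A-\tfrac{\gamma}{4}tB+\tfrac{|a|}{p}t^\sigma C$; since $\sigma>2$, for $t$ small the quadratic and (negative-signed, i.e. $-\gamma B/4<0$) linear terms make $g_u(t)<0$, which already re-proves $m(c)<0$ and moreover shows $m(\theta c)\le \theta^{?}\cdot(\text{something})$—more usefully, I would instead use the standard homogeneity-type estimate: for $\theta>1$, $m(\theta c)<\theta\, m(c)$, obtained by testing with $\sqrt{\theta}\,u$ for a near-minimizer $u$ of $m(c)$ and using that $A$ and $C$ scale linearly in $\theta$ under $u\mapsto\sqrt\theta u$ while $B$ scales like $\theta^2$ (so $-\tfrac{\gamma}{4}B$ gets an extra strictly-helping factor $\theta^2$ vs $\theta$, since $m(c)<0$ forces the configuration to have $B$ not too small, or one argues directly that the excess is strictly negative). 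From $m(\theta c)<\theta m(c)$ one deduces strict subadditivity by the classical Lions argument. With strict subadditivity in hand, the Concentration–Compactness Principle (applied to the densities $|\diff u_n|^2+|u_n|^2$, or directly to the radial setting) excludes vanishing—vanishing would force $C(u_n)\to 0$ and $B(u_n)\to 0$ by the vanishing lemma, hence $\liminf F(u_n)\ge 0>m(c)$, a contradiction—and excludes dichotomy via strict subadditivity, so up to translation $u_n\to u$ strongly in $\Ho$ with $u\in S(c)$ and $F(u)=m(c)$; in the radial reduction the translations are unnecessary.

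Finally, $u$ being a constrained minimizer is a critical point of $F|_{S(c)}$, so there is a Lagrange multiplier $\lambda\in\R$ with $-\laplace u+\lambda u-\gamma(|x|^{-1}*|u|^2)u-a|u|^{p-2}u=0$, i.e. a solution of \eqref{eqn:Laplace}. To see $\lambda>0$, I would combine the equation tested against $u$ with the Pohozaev/Nehari relation $Q(u)=0$ (which holds since $u$ is a solution, equivalently $g_u'(1)=0$): the Nehari identity reads $A+\lambda c-\tfrac{\gamma}{4}B-a\,C=0$ and the Pohozaev identity reads $A-\tfrac{\gamma}{4}B-\tfrac{a\sigma}{p}C=0$; subtracting gives $\lambda c = a\big(1-\tfrac{\sigma}{p}\big)C=-|a|\big(1-\tfrac{\sigma}{p}\big)C$. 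Since $\sigma=\tfrac{3(p-2)}{2}<p$ exactly when $p<6$ and $\sigma=p$ when $p=6$, this alone gives $\lambda\ge 0$ with equality only possibly at $p=6$; in the critical case I would instead argue that $\lambda=0$ would make $u$ a finite-energy solution of the zero-mass critical equation and combine $Q(u)=0$ with $F(u)=m(c)<0$ (the negativity of the energy is incompatible with the scaling identities when $\lambda=0$, since then $g_u(1)=F(u)$ and $g_u'(1)=0$ with $\sigma=6$ force $F(u)=\tfrac13 A(u)-\tfrac{\gamma}{24}B(u)$, and a short computation using $g_u'(1)=0$ shows this is $\ge 0$). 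Hence $\lambda>0$ in all cases. I expect the main obstacle to be the strict subadditivity inequality—more precisely, handling the case $p=6$ throughout (boundedness of minimizing sequences is still clear because $C$ has the good sign, but one must be careful that strict subadditivity and the exclusion of dichotomy do not secretly require subcriticality; they do not here, since the critical term enters $F$ with a coercive, repulsive sign), and then the sign of the Lagrange multiplier at $p=6$, which requires the extra argument sketched above rather than the one-line Pohozaev subtraction that suffices for $p<6$.
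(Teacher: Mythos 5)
Your overall strategy --- coercivity on $S(c)$, exclusion of vanishing, strict subadditivity of $m$, then Brezis--Lieb / concentration--compactness, and finally the Pohozaev--Nehari identities for the sign of $\lambda$ --- is the same as the paper's (Lemmas \ref{lemma:5.1}, \ref{lemma:5.5}, \ref{lemma:5.3}, \ref{lemma:5.4} and \cref{lemma:2.3}). However, two of your key steps contain genuine errors.

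First, the strict subadditivity. You test $m(\theta c)$ with $\sqrt{\theta}\,u$ and assert that $A$ and $C$ scale linearly in $\theta$ while $B$ scales like $\theta^2$. In fact $C(\sqrt{\theta}u)=\theta^{p/2}C(u)$ with $p/2\in(\frac{5}{3},3]$, and since $a<0$ this term enters $F$ with the \emph{positive} coefficient $|a|/p$, so it grows superlinearly and works against you. The inequality $F(\sqrt{\theta}u)<\theta F(u)$ reduces to
\begin{align*}
-\frac{\gamma}{4}(\theta-1)B(u)+\frac{|a|}{p}\bigl(\theta^{p/2-1}-1\bigr)C(u)<0,
\end{align*}
which is not guaranteed for a general near-minimizer; at $p=6$ the bad term grows like $\theta^{3}$ against $\theta^{2}$ for the good one, so the bound certainly fails for $\theta$ large --- and the classical reduction to subadditivity needs $\theta$ up to $c/\alpha$, which is unbounded. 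The paper avoids this by using the dilation $v(x)=u(t^{-1/3}x)$ with $t=c_2/c_1$ (\cref{lemma:5.5}\eqref{point:5.5iii}): then $C(v)=tC(u)$ is \emph{exactly} linear, while $A(v)=t^{1/3}A(u)$ and $B(v)=t^{5/3}B(u)$ both move in the favorable direction, so $F(v)<tF(u)$ for every $u$ with no case analysis.

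Second, the sign of the Lagrange multiplier. The Nehari identity is $A+\lambda c-\gamma B-aC=0$ (with $\gamma B$, not $\tfrac{\gamma}{4}B$); subtracting $Q(u)=0$ gives $\lambda c=\tfrac{3\gamma}{4}B(u)+a\bigl(1-\tfrac{\sigma}{p}\bigr)C(u)$, not $\lambda c=a(1-\tfrac{\sigma}{p})C(u)$. Worse, since $a<0$, $C(u)>0$ and $1-\sigma/p\ge 0$ for $p\le 6$, the term you kept is $\le 0$, so your formula would yield $\lambda\le 0$ --- the opposite of what you claim. The positivity comes precisely from the $\tfrac{3\gamma}{4}B$ term you dropped; more robustly, \cref{lemma:2.3} combines the two identities so as to eliminate $C$ altogether and obtains \eqref{eqn:lambda}, $2(3p-6)\lambda c=2(6-p)A(u)+(5p-12)\gamma B(u)>0$ for $\gamma>0$ and $p\in(\tfrac{10}{3},6]$, which settles $p=6$ at the same time and makes your separate (and, as written, unjustified) zero-mass argument unnecessary. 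Two smaller slips: Schwarz symmetrization preserves $C$ rather than increasing it (an increase would be harmful here, since the coefficient $-a/p$ is positive), and at $p=6$ vanishing does not force $C(u_n)\to 0$; neither matters, because $-\tfrac{a}{p}C\ge 0$ is all the argument needs.
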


Even if the proof of \cref{theorem:2} follows the lines of the proof of \cref{th-minimization}, the change of sign in front of the power term requires some adaptations, see  \cref{lemma:5.5} and \cref{lemma:5.4}.
 Here again the orbital stability of the set of minimizers should follow directly from the classical arguments of \cite{CazenaveLions1982} 
if  $p \in (\frac{10}{3}, 6)$ and it should also be the case when $p=6$ by adapting the arguments of \cite{JeanjeanJendrejLeVisciglia2020}. Note that we also study the behavior of the associated Lagrange multipliers in \cref{lemma:5.6}. \medskip

In the last part of the paper we consider the case $\gamma <0$, $a>0$ and $p=6$. 
\begin{theorem} \label{theorem:non-existence}
	Let $p=6$, $\gamma <0$ and $a>0$. For any $c > 0$, we have that
	\begin{enumerate}[label=(\roman*), ref = \roman*]
		\item\label{point:theorem:non:i} If $u \in \Hs{1}$ is a non-trivial solution to \eqref{eqn:Laplace} then the associated Lagrange multiplier $\lambda$ is negative and 
		\begin{align*}
			F(u) > \dfrac{1}{3\sqrt{aK_{GN}}}.
		\end{align*}
	
		\item\label{point:theorem:non:ii} \cref{eqn:Laplace} has no positive solution in  $ \Hs{1}$.
	\end{enumerate}
\end{theorem}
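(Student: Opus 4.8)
The plan is to deduce part (i) from the Nehari and Pohozaev identities together with the Sobolev inequality, and then to use the resulting sign information $\lambda<0$ to rule out positive solutions in part (ii) through a comparison (oscillation) argument at infinity. For part (i), let $(u,\lambda)$ be a non-trivial solution of \eqref{eqn:Laplace} in $\Hs{1}$ and set $c:=\normLp{u}{2}^2$. Testing \eqref{eqn:Laplace} with $u$ gives $A(u)+\lambda c-\gamma B(u)-aC(u)=0$, while, since every solution of \eqref{eqn:Laplace} satisfies the Pohozaev identity $Q(u)=0$ and $\sigma=6$ for $p=6$, one also has $A(u)=\tfrac{\gamma}{4}B(u)+aC(u)$. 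Subtracting the two relations gives $\lambda c=\tfrac{3\gamma}{4}B(u)$, hence $\lambda<0$ because $\gamma<0$, $B(u)>0$ and $c>0$. Inserting $A(u)=\tfrac{\gamma}{4}B(u)+aC(u)$ into $F(u)=\tfrac{1}{2}A(u)-\tfrac{\gamma}{4}B(u)-\tfrac{a}{6}C(u)$ yields $F(u)=-\tfrac{\gamma}{8}B(u)+\tfrac{a}{3}C(u)>\tfrac{a}{3}C(u)$; moreover $aC(u)=A(u)-\tfrac{\gamma}{4}B(u)>A(u)$, and the Sobolev inequality gives $C(u)\le K_{GN}A(u)^3$, so $A(u)<aC(u)\le aK_{GN}A(u)^3$ forces $A(u)>(aK_{GN})^{-1/2}$; combining these, $F(u)>\tfrac{a}{3}C(u)>\tfrac{1}{3}A(u)>\tfrac{1}{3\sqrt{aK_{GN}}}$.

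For part (ii), suppose for contradiction that $u\in\Hs{1}$ is a positive solution of \eqref{eqn:Laplace}; by part (i) its Lagrange multiplier satisfies $\lambda<0$, and by standard elliptic regularity (as in the proof of \cref{theorem:1}) we may regard $u$ as a positive classical solution. Rewrite \eqref{eqn:Laplace} as $-\laplace u+W(x)u=0$ with $W(x):=\lambda-\gamma(\abs{x}^{-1}*\abs{u}^2)(x)-a\,u(x)^4$. Since $\gamma<0$ and $a>0$, and since a routine estimate (splitting the convolution into its parts near and far from $x$) shows $(\abs{x}^{-1}*\abs{u}^2)(x)\to0$ as $\abs{x}\to\infty$, there is $R>0$ with $W(x)<\lambda/2<0$ for $\abs{x}\ge R$. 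Set $k:=\sqrt{-\lambda/2}$ and $\cA:=\{R<\abs{x}<R+\pi/k\}$; on $\cA$ the solution obeys $\laplace u+k^2u=(W+k^2)u<0$ since $u>0$. Compare with $\phi(x):=\abs{x}^{-1}\sin(k(\abs{x}-R))$, which solves $\laplace\phi+k^2\phi=0$ on $\cA$, is positive in $\cA$, vanishes on $\partial\cA$, and has $\partial_\nu\phi=-k/R<0$ on $\{\abs{x}=R\}$ and $\partial_\nu\phi=-k/(R+\pi/k)<0$ on $\{\abs{x}=R+\pi/k\}$. Green's identity on $\cA$ gives $\int_{\cA}\phi(\laplace u+k^2u)\,dx=-\int_{\partial\cA}u\,\partial_\nu\phi\,dS$; the left-hand side is negative ($\phi>0$ and $\laplace u+k^2u<0$ on $\cA$) while the right-hand side is positive ($u>0$ and $\partial_\nu\phi<0$ on $\partial\cA$) — a contradiction. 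Equivalently, $u$ would be a positive supersolution of the Schr\"odinger operator $-\laplace+W$, which would force the bottom of its spectrum to be nonnegative, whereas $W(x)\to\lambda<0$ at infinity makes that bottom $\le\lambda<0$.

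The routine ingredients are the two identities in part (i) and the decay of $\abs{x}^{-1}*\abs{u}^2$; the crux is part (ii), and the step I expect to be most delicate is making the oscillation mechanism airtight — one needs both that the Hartree potential genuinely vanishes at infinity (so that the linearization at infinity is $-\laplace+\lambda$ with $\lambda<0$) and that this precludes a positive $L^2$ solution, for which the explicit annular comparison above is the cleanest route. This is also where the sign $\gamma<0$ is essential: it makes the convolution term reinforce, rather than fight, the negative constant $\lambda$. Finally, $p=6$ cannot be relaxed: repeating the computation of part (i) for $p<6$ leaves the strictly positive term $\tfrac{6-p}{3(p-2)}A(u)$ in the expression for $\lambda c$, so the sign of $\lambda$ is no longer determined — in accordance with the existence of positive solutions for $p\in(\tfrac{10}{3},6)$ obtained in \cite{BellazziniJeanjeanLuo2013}.
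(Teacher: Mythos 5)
Your proof is correct. Part (i) is essentially the paper's argument: both use the Nehari and Pohozaev identities to get $\lambda c=\tfrac{3\gamma}{4}B(u)<0$ and $aC(u)=A(u)-\tfrac{\gamma}{4}B(u)>A(u)$, then the Sobolev inequality to force $A(u)>(aK_{GN})^{-1/2}$ and conclude $F(u)>\tfrac{1}{3}A(u)$; your intermediate expression $F(u)=-\tfrac{\gamma}{8}B(u)+\tfrac{a}{3}C(u)$ versus the paper's $\tfrac{5a}{6}C(u)-\tfrac{1}{2}A(u)$ is only a cosmetic difference. In part (ii) you diverge in a genuine way. The paper reduces, exactly as you do, to the exterior differential inequality $-\laplace u\ge -\tfrac{\lambda}{2}u$ for $|x|>R$ (using the decay of the Hartree potential established in \cref{lemma:bounded-positive-solution}), but then closes the argument by citing the Liouville-type theorem of Armstrong--Sirakov (\cref{theorem:Liouville-type}) with $f(s)=-\tfrac{\lambda}{2}s$. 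You instead give a self-contained annular comparison: the explicit radial Helmholtz solution $\phi(x)=|x|^{-1}\sin(k(|x|-R))$ with $k=\sqrt{-\lambda/2}$ vanishes on $\partial\cA$ with strictly negative outward normal derivative, and Green's identity yields a sign contradiction. Your computation checks out ($\Delta\phi+k^2\phi=0$ in $\R^3$, $\partial_\nu\phi<0$ on both spheres, and $u\in W^{2,r}_{loc}$ from the regularity lemma justifies the integration by parts), so this buys an elementary, citation-free proof of exactly the Liouville statement needed here, at the cost of being tied to the explicit linear lower bound and to dimension $3$, whereas the cited theorem handles general nonlinearities $f$ with $\liminf_{s\to0}s^{-N/(N-2)}f(s)>0$. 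One small inaccuracy worth fixing: your closing remark that $\gamma<0$ ``makes the convolution term reinforce, rather than fight, the negative constant $\lambda$'' is backwards --- in $W=\lambda-\gamma(|x|^{-1}*|u|^2)-au^4$ the Hartree contribution $-\gamma(|x|^{-1}*|u|^2)\ge 0$ actually pushes $W$ upward, and the argument survives only because this term decays to zero at infinity; the essential role of $\gamma<0$ is to force $\lambda<0$ in part (i). This is a side comment and does not affect the validity of the proof.
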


\begin{remark} 
Under the assumptions of \cref{theorem:non-existence},  it is possible to prove that $$ \inf_{u \in \Lambda(c)}F(u) = \dfrac{1}{3\sqrt{aK_{GN}}}.$$
\end{remark}

\begin{remark} 
	In \cite[Theorem 1.2]{Soave2019Sobolevcriticalcase}, considering the equation
	\begin{align}
		-\laplace u - \lambda u - \mu \abs{u}^{q-2} u - \abs{u}^{2^*-2} u = 0 \quad \mbox{in } \R^N, \label{eqn:LaplaceS2}
	\end{align}
	with $N \geq 3$, $2<q<2^*$ and $\mu < 0$, it was proved that \eqref{eqn:LaplaceS2} has no positive solution $u\in \mathit{H}^1(\R^N)$ if $N=3,4$ or if $N \geq 5$ under the additional assumption
	$u \in \mathit{L}^p(\R^N)$ for some $p \in \(0, \frac{N}{N-2}\]$. In \cref{remrak:6.1}, partly using arguments used in the proof of \cref{theorem:non-existence},
	we improve  \cite[Theorem 1.2]{Soave2019Sobolevcriticalcase} showing that \eqref{eqn:LaplaceS2} has no positive solution in $\mathit{H}^1(\R^N)$ for all $N \geq 3$ and no non-trivial radial solution for $N \geq 3$ and $q > 2 + \frac{2}{N-1}$.
\end{remark}

\begin{remark} 
We propose as an open problem to investigate if there are radial solutions under the assumptions of \cref{theorem:non-existence}. See \cref{open-problem}  in that direction.
\end{remark}

The paper is organized as follows. In \cref{Section2} we recall some classical inequalities and present some preliminary results. \cref{Section3} is devoted to the treatment of the case $\gamma >0$, $a>0$ and $p \in (\frac{10}{3},6]$. In \autoref{Subsection3-1} we make explicit the geometrical structure of $F$ on $S(c)$ and show the existence of a bounded Palais-Smale sequence $(u_n^+) \subset \Lambda^+(c)$ at the level $\gamma^+(c)$ and of a bounded Palais-Smale sequence $(u_n^-) \subset \Lambda^-(c)$ at the level $\gamma^-(c)$. In \autoref{Subsection3-2} we give the proof of \cref{theorem:1} in the Sobolev subcritical case.  \autoref{Subsection3-3} is devoted to the proof of \cref{theorem:1} in the critical case. In \autoref{Subsection3-4} we prove the convergence of all minimizing sequences associated to $\gamma^+(c)$, namely \cref{th-minimization}. 
The behavior of the Lagrange multipliers and the property of the map $c \mapsto \gamma^{-}(c)$ are studied in \autoref{Subsection3-5} and \autoref{Subsection3-6}, respectively.
In \cref{Section4} we treat the case $\gamma >0$, $a<0$ and $p \in (\frac{10}{3}, 6]$ and we prove \cref{theorem:2}.
Finally, in \cref{Section:6}, we consider the case $\gamma <0$, $a >0$ and $p=6$, and prove \cref{theorem:non-existence}.\medskip

{\bf Notation:} For $p \geq 1$, the $\mathit{L}^p$-norm of $u \in \Hs{1}$ is denoted by $\normLp{u}{p}$. We denote by $\Hsr{1}$ the subspace of functions in $\Ho$ which are radially symmetric with respect to $0$. The notation $a \sim b$ means that $Cb \leq a \leq C'b$ for some $C, C' > 0$. The open ball in $\Rn$ with center at $0$ and radius $R > 0$ is denoted by $B_R$.

{\bf Addendum :} After the completion of this paper, we were informed of the work \cite{Yao-Sun-Wu-2021} in which the authors consider a general class of problems which, when $p \in (\frac{10}{3}, 6)$,  covers \eqref{eqn:Laplace} as a special case.  There are thus some partial overlap, in the Sobolev subcritical case, between \cite[Theorem 1.3 (a) (ii)]{Yao-Sun-Wu-2021} and \cref{theorem:1} and between \cite[Theorem 1.6 (a) (iv)]{Yao-Sun-Wu-2021} and \cref{theorem:2}.  However the scope of the two works is widely distinct.

	\section{Preliminary results}\label{Section2}
In this section we present various preliminary results. When it is not specified they are assumed to hold for $\gamma \in \R,$ $ a\in\R,$ $ p \in \(\frac{10}{3}, 6\]$ and any $c > 0$.
Firstly, we present the definitions of $\slc,$ $ \slm,$ $ \slz,$ $ \slm$ via $A(u),$ $ B(u)$ and $C(u)$:
\begin{align*}
\slc &=\Bigg\{u \in S(c): A(u) = \dfrac{\gamma}{4} B(u) + \dfrac{a \sigma}{p} C(u) \Bigg\}, \\
\slp &=\Bigg\{u \in S(c): A(u) = \dfrac{\gamma}{4} B(u) + \dfrac{a \sigma}{p} C(u), A(u) > \dfrac{a \sigma (\sigma - 1)}{p} C(u) \Bigg\}, \\ 
\slz &=\Bigg\{u \in S(c): A(u) = \dfrac{\gamma}{4} B(u) + \dfrac{a \sigma}{p} C(u), A(u) = \dfrac{a \sigma (\sigma - 1)}{p} C(u) \Bigg\}, \\
\slm &=\Bigg\{u \in S(c): A(u) = \dfrac{\gamma}{4} B(u) + \dfrac{a \sigma}{p} C(u), A(u) < \dfrac{a \sigma (\sigma - 1)}{p} C(u) \Bigg\}.
\end{align*}

\begin{lemma} \label{lemma:1}
	Let $u \in S(c)$, there exists
	\begin{enumerate}[label=(\roman*), ref = \roman*]
		\item\label{point:1i} a constant $K_{H} > 0$ such that $B(u) \leq K_{H}  \sqrt{A(u)} c^{\frac{3}{2}}$.
		\item\label{point:1ii} a constant $K_{GN} > 0$ such that $C(u) \leq  K_{GN} [A(u)]^{\frac{\sigma}{2}} c^{\frac{6-p}{4}}$.
	\end{enumerate}
\end{lemma}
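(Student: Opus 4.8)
The plan is to obtain both estimates from classical inequalities on $\R^3$: the Gagliardo--Nirenberg inequality for part (ii), and the Hardy--Littlewood--Sobolev inequality followed by Gagliardo--Nirenberg for part (i). No deep input is needed; the only delicate point is the bookkeeping of exponents, so that the powers of $A(u)$ and of $c=\norm{u}_2^2$ come out exactly as claimed, and that the constants depend only on the ambient data.

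For part (ii) I would invoke the Gagliardo--Nirenberg inequality: for every $p\in(2,6]$ there is a constant $C_p>0$, depending only on $p$, such that
\begin{equation*}
\norm{u}_p \;\le\; C_p\,\norm{\diff u}_2^{\,\theta}\,\norm{u}_2^{\,1-\theta},\qquad \theta:=\dfrac{3(p-2)}{2p}\in(0,1],
\end{equation*}
the exponent $\theta$ being the one forced by scaling (for $p=6$ this reduces to the Sobolev inequality, $\theta=1$). Raising this to the power $p$ and using the identities $p\theta=\tfrac{3(p-2)}{2}=\sigma$ and $p(1-\theta)=p-\sigma=\tfrac{6-p}{2}$ together with $\norm{u}_2^2=c$, one gets $C(u)=\norm{u}_p^{\,p}\le C_p^{\,p}\,A(u)^{\sigma/2}\,c^{(6-p)/4}$, which is the assertion with $K_{GN}:=C_p^{\,p}$.

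For part (i) I would first apply the Hardy--Littlewood--Sobolev inequality with the kernel $\abs{x-y}^{-1}$ on $\R^3$: since $\tfrac56+\tfrac56+\tfrac13=2$ and $\abs{u}^2\in\mathit{L}^{6/5}(\Rb)$ (because $\mathit{H}^1(\Rb)\hookrightarrow\mathit{L}^{12/5}(\Rb)$), there is $C_{HLS}>0$ with
\begin{equation*}
B(u)=\intrb\intrb\dfrac{\abs{u(x)}^2\abs{u(y)}^2}{\abs{x-y}}\,dx\,dy \;\le\; C_{HLS}\,\norm{\abs{u}^2}_{6/5}^{\,2} \;=\; C_{HLS}\,\norm{u}_{12/5}^{\,4}.
\end{equation*}
Then I would interpolate by Gagliardo--Nirenberg at the exponent $q=12/5\in(2,6)$, whose scaling exponent is $\theta=3\big(\tfrac12-\tfrac{5}{12}\big)=\tfrac14$, so that $\norm{u}_{12/5}\le C\,\norm{\diff u}_2^{1/4}\,\norm{u}_2^{3/4}$. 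Raising to the fourth power and using $\norm{u}_2^2=c$ yields $\norm{u}_{12/5}^{\,4}\le C^4\,A(u)^{1/2}\,c^{3/2}$, hence $B(u)\le K_H\,A(u)^{1/2}\,c^{3/2}$ with $K_H:=C_{HLS}\,C^4$.

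Both arguments are routine and I do not anticipate a genuine obstacle. The only thing to check with care is the exponent arithmetic: in part (ii) that $p\theta$ equals exactly $\sigma$ and $p(1-\theta)=\tfrac{6-p}{2}$; in part (i) that the Hardy--Littlewood--Sobolev dual exponent $6/5$ — equivalently the appearance of $\norm{u}_{12/5}$ — feeds into Gagliardo--Nirenberg with precisely $\theta=\tfrac14$, so that the power of $A(u)$ is $\tfrac12$ and the power of $c$ is $\tfrac32$. Finally one notes that $K_H$ and $K_{GN}$ depend only on the dimension $3$ and on $p$, hence are independent of $u\in S(c)$ and of $c>0$, as required for their later use.
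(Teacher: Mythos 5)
Your proof is correct and follows essentially the same route as the paper: Hardy--Littlewood--Sobolev at the exponent $6/5$ followed by Gagliardo--Nirenberg at $q=12/5$ for part (i), and the Gagliardo--Nirenberg/Sobolev inequality raised to the power $p$ for part (ii), with the same exponent arithmetic. Nothing further is needed.
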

\begin{proof}
	We first recall the Hardy-Littlewood-Sobolev inequality (see \cite[Chapter 4]{LiebLoss2001analysis}):
	\begin{align}\label{ineq:HLS}
	\abs*{\int_{\R^N} \int_{\R^N} \dfrac{f(x)g(y)}{\abs{x-y}^{\lambda}} dxdy} \leq C(N,\lambda, p, q) \norm{f}_{\mathit{L}^p(\R^N)} \norm{g}_{\mathit{L}^q(\R^N)},
	\end{align}
	where $f \in \mathit{L}^p(\R^N)$, $g \in \mathit{L}^q(\R^N)$, $p, q > 1$, $0 < \lambda < N$ and
	\begin{align*}
	\dfrac{1}{p} + \dfrac{1}{q} + \dfrac{\lambda}{N} = 2.
	\end{align*}
	Let us also recall the Gagliardo-Nirenberg inequality (see \cite{Nirenberg1985}) and the Sobolev inequality (see \cite[Theorem 9.9]{Brezis-2011}) in the unified form: if $N \geq 3$ and $p \in [2, \frac{2N}{N-2}]$ then
	\begin{align*}
	\norm{f}_{\mathit{L}^{p}(\R^N)} \leq C(N, p) \norm{\diff f}_{\mathit{L}^2(\R^N)}^{\beta} \norm{f}_{\mathit{L}^2(\R^N)}^{(1-\beta)}, \qquad \text{with } \beta = N\(\dfrac{1}{2} - \dfrac{1}{p}\).
	\end{align*}
	 Applying the Hardy-Littlewood-Sobolev inequality we obtain
	\begin{align}
	B(u) = \intrb\intrb \dfrac{\abs{u(x)}^2 \abs{u(y)}^2}{\abs{x-y}} dxdy \leq K_1 \norm{u}_{\Lp{\frac{12}{5}}}^4 \label{eqn:2.1}
	\end{align}
	and thus using the Gagliardo-Nirenberg inequality, we get
	\begin{align*}
	B(u) \leq K_1 \norm{u}_{\Lp{\frac{12}{5}}}^4 \leq K_1 K_2 \normLp{\diff u}{2} \normLp{u}{2}^3 = K_{H} \sqrt{A(u)} c^{\frac{3}{2}}.
	\end{align*}
	Finally, applying the Sobolev, Gagliardo-Nirenberg inequality, we have
	\begin{align*}
	C(u) = \normLp{u}{p}^p \leq K_{GN} \normLp{\diff u}{2}^{\sigma} \normLp{u}{2}^{\frac{6-p}{2}} = K_{GN} [A(u)]^{\frac{\sigma}{2}} c^{\frac{6-p}{4}}.
	\end{align*}
\end{proof}

\begin{lemma}\label{lemma:2.3}
	Let $p \in (\frac{10}{3}, 6]$. Assume that $\gamma \in \R$ and $a \in \R$. If $u \in \Hs{1}$ is a weak solution to
	\begin{align}
	- \laplace u + \lambda u - \gamma (\abs{x}^{-1} * \abs{u}^2) u  - a \abs{u}^{p-2}u = 0, \label{eqn:6}
	\end{align}
	then $Q(u) = 0$. Moreover, if $u \neq 0$ then we have 
	\begin{enumerate}[label=(\roman*), ref = \roman*]
		\item\label{point:2.3:i}  $\lambda > 0$ if $\gamma > 0$ and $p \in (\frac{10}{3}, 6]$,
		\item\label{point:2.3:ii} $\lambda < 0$ if $\gamma < 0$ and $p = 6$.
	\end{enumerate}
\end{lemma}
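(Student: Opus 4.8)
The plan is to derive, for any weak solution $u$ of \eqref{eqn:6}, two integral identities and then to combine them by elementary linear algebra. The first is the \emph{Nehari identity}: testing \eqref{eqn:6} against $u$ itself — which is legitimate since $u\in\Hs{1}$ and, by \cref{lemma:1} together with the Sobolev embedding $\Hs{1}\hookrightarrow L^p(\Rb)$, the quantities $A(u),B(u),C(u)$ are all finite — yields
\begin{align}\label{plan:nehari}
A(u)+\lambda\normLp{u}{2}^2-\gamma B(u)-aC(u)=0 .
\end{align}
The second is the \emph{Pohozaev identity}: using the dilations $u_\theta(x):=u(x/\theta)$, for which $A(u_\theta)=\theta A(u)$, $\normLp{u_\theta}{2}^2=\theta^3\normLp{u}{2}^2$, $B(u_\theta)=\theta^5 B(u)$ and $C(u_\theta)=\theta^3 C(u)$, one differentiates the action $\tfrac12 A(u_\theta)+\tfrac{\lambda}{2}\normLp{u_\theta}{2}^2-\tfrac{\gamma}{4} B(u_\theta)-\tfrac{a}{p} C(u_\theta)$ at $\theta=1$ and obtains
\begin{align}\label{plan:poho}
\tfrac{1}{2} A(u)+\tfrac{3\lambda}{2}\normLp{u}{2}^2-\tfrac{5\gamma}{4}B(u)-\tfrac{3a}{p}C(u)=0 .
\end{align}

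I expect the derivation of \eqref{plan:poho} to be the main technical point, since $\theta\mapsto u_\theta$ is only continuous, not differentiable, in $\Hs{1}$ for a generic $H^1$ function; one must first improve the regularity of $u$. Because $u\in\Hs{1}$ one has $|x|^{-1}*|u|^2\in L^\infty(\Rb)$ (by Young's inequality and Hardy--Littlewood--Sobolev, since $|u|^2\in L^1(\Rb)\cap L^3(\Rb)$), so the right-hand side of $-\laplace u=-\lambda u+\gamma(|x|^{-1}*|u|^2)u+a|u|^{p-2}u$ belongs to $L^{6/(p-1)}(\Rb)+L^2(\Rb)$; a Brezis--Kato bootstrap then gives $u\in W^{2,q}_{\mathrm{loc}}(\Rb)$ for every $q<\infty$, hence $u\in C^{1}_{\mathrm{loc}}(\Rb)$, together with the decay at infinity of $u$ and $\diff u$ needed to multiply \eqref{eqn:6} by $x\cdot\diff u$, integrate over $B_R$ and let $R\to\infty$. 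This regularity step is by now standard for Schr\"odinger--Poisson type problems and I would simply carry it out (or invoke the corresponding known results); note that \cref{lemma:1} already guarantees that the nonlocal term is well behaved at each stage.

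Granting \eqref{plan:nehari} and \eqref{plan:poho}, the rest is algebra. Forming $3\cdot\eqref{plan:nehari}-2\cdot\eqref{plan:poho}$ eliminates $\lambda\normLp{u}{2}^2$ and, using $\sigma=\tfrac{3(p-2)}{2}$, reduces exactly to $A(u)-\tfrac{\gamma}{4} B(u)-\tfrac{a\sigma}{p}C(u)=0$, that is $Q(u)=0$. Forming instead $3\cdot\eqref{plan:nehari}-p\cdot\eqref{plan:poho}$ eliminates the $C(u)$–term and, after dividing by $-\sigma$, yields the key identity
\begin{align}\label{plan:key}
\lambda\normLp{u}{2}^2=\frac{6-p}{2\sigma}\,A(u)+\frac{(5p-12)\gamma}{4\sigma}\,B(u) .
\end{align}
For $p\in(\tfrac{10}{3},6]$ we have $6-p\ge 0$, $5p-12>0$ and $\sigma>0$, while $A(u)\ge 0$ and, whenever $u\neq 0$, $B(u)>0$ and $\normLp{u}{2}^2>0$. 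Hence, if $\gamma>0$, the right-hand side of \eqref{plan:key} is the sum of a nonnegative term and a strictly positive one, so $\lambda>0$, which is part (i). If $\gamma<0$ and $p=6$, then $6-p=0$ and \eqref{plan:key} becomes $\lambda\normLp{u}{2}^2=\tfrac{(5p-12)\gamma}{4\sigma}B(u)<0$, so $\lambda<0$, which is part (ii). The only genuine obstacle is thus the justification of \eqref{plan:poho}; everything else is either the weak formulation or linear algebra.
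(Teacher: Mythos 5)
Your proposal is correct and follows essentially the same route as the paper: the same Nehari and Pohozaev identities, the same linear combinations yielding $Q(u)=0$ and the identity $2(6-p)A(u)+(5p-12)\gamma B(u)=2(3p-6)\lambda\normLp{u}{2}^2$, and the same sign analysis (using that $B(u)>0$ when $u\neq 0$). The only difference is that the paper simply cites \cite{AprileMugnai2004_Non} and \cite[Theorem 2.2]{DavidRuiz2006} for the Pohozaev identity rather than re-deriving it via the regularity bootstrap you sketch, which is a legitimate alternative.
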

\begin{proof}
	Our proof is inspired by \cite[Lemma 4.2]{BellazziniJeanjeanLuo2013}. The following Pohozaev type identity holds for 
	$u \in \Ho$ weak solution of \eqref{eqn:6} (\cite{AprileMugnai2004_Non}, also see \cite[Theorem 2.2]{DavidRuiz2006}),
	\begin{align}
	\dfrac{1}{2} A(u) + \dfrac{3 \lambda}{2} D(u) - \dfrac{5 \gamma}{4} B(u) - \dfrac{3a}{p} C(u) = 0, \quad \mbox{where} \quad D(u) = \normLp{u}{2}^2. \label{eqn:7}
	\end{align}
	By multiplying \eqref{eqn:6} by $u$ and integrating, we derive a second identity
	\begin{align}
	A(u) + \lambda_c D(u) - \gamma B(u) - a C(u) = 0. \label{eqn:8}
	\end{align}
	Combining \eqref{eqn:7} and \eqref{eqn:8}, we get
	\begin{align*}
	A(u) - \dfrac{\gamma}{4} B(u) - \dfrac{a \sigma}{p} C(u) = 0.
	\end{align*}
	This means that $Q(u) = 0$. Using \eqref{eqn:7} and \eqref{eqn:8} again, we obtain
	\begin{align}
		2(6-p) A(u) + (5p -12)\gamma B(u) = 2(3p-6)\lambda D(u). \label{eqn:lambda}
	\end{align}
	If $\gamma > 0$ and $p \in (\frac{10}{3}, 6]$, we have
	\begin{align*}
	2(6-p) \geq 0, \qquad (5p -12)\gamma > 0, \qquad 2(3p-6) > 0.
	\end{align*}
	Hence, $\lambda > 0$. If $\gamma < 0$ and $p = 6$, we have
	\begin{align*}
		2(6-p) = 0, \qquad (5p -12)\gamma = 18 \gamma < 0, \qquad 2(3p-6) = 24 > 0.
	\end{align*}
	This implies that $\lambda < 0$.
\end{proof}

\begin{lemma}\label{lemma:bounded-positive-solution}
	Let $p \in (\frac{10}{3}, 6]$. Assume that $\gamma \in \R$ and $a \in \R$. If $u \in \Hs{1}$ is a weak solution to
	\begin{align}
		- \laplace u + \lambda u - \gamma (\abs{x}^{-1} * \abs{u}^2) u  - a \abs{u}^{p-2}u = 0,
	\end{align}
	then $u \in \Lp{\infty} \cap \mathit{C}(\Rn)$. Moreover, in case $\gamma > 0,$ $ a > 0$ we have that if $u \not\equiv 0 $ and $u \geq 0$ then $u > 0$.
\end{lemma}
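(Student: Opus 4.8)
The statement has two parts: (a) any weak solution $u\in H^1(\R^3)$ of the equation is in $L^\infty(\R^3)\cap C(\R^3)$, and (b) when $\gamma>0$, $a>0$, a nonnegative nontrivial solution is in fact strictly positive.

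Let me think about part (a). The equation reads $-\Delta u = -\lambda u + \gamma(|x|^{-1}*|u|^2)u + a|u|^{p-2}u =: f(x,u)$, with $p\in(10/3,6]$. Standard elliptic bootstrap applies once I control the two nonlinear terms. For the convolution term, the Newtonian potential $V_u := |x|^{-1}*|u|^2$ satisfies $-\Delta V_u = 4\pi|u|^2$ (up to a constant), and since $u\in H^1\subset L^6$, we have $|u|^2\in L^3$, so by Calderón–Zygmund / Sobolev, $V_u\in L^\infty$ — more precisely, $V_u\in W^{2,3}_{loc}$ and $V_u\in L^\infty(\R^3)$ because $|u|^2\in L^1\cap L^3$ (the local singularity of $|x|^{-1}$ is handled by $L^3$, the decay at infinity by $L^1$). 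So the convolution contributes a bounded coefficient and causes no trouble. The power term $a|u|^{p-2}u$ with $p\le 6$ is the genuinely critical piece. For $p<6$ it is subcritical and Brezis–Kato / Moser iteration gives $u\in L^q$ for all $q<\infty$, then $L^\infty$, then (by Schauder, since the right-hand side is then locally Hölder) $u\in C(\R^3)$. For $p=6$ the Brezis–Kato argument still applies: writing the right-hand side as $V(x)u$ with $V(x) = -\lambda + \gamma V_u(x) + a|u|^4 \in L^{3/2}_{loc} + L^\infty$ (since $|u|^4\in L^{3/2}$ because $u\in L^6$), the classical Brezis–Kato theorem gives $u\in L^q_{loc}$ for all $q<\infty$, hence $u\in L^\infty_{loc}$; combined with the decay of $u$ at infinity coming from $u\in H^1$ and a standard argument (or using that $-\lambda<0$ so the operator is coercive at infinity when $\lambda>0$; when $\lambda\le 0$ one works locally and uses $u\to 0$) one gets $u\in L^\infty(\R^3)$. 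Continuity then follows from $L^p_{loc}$ elliptic regularity ($W^{2,q}_{loc}$ for all $q$, hence $C^{1,\alpha}_{loc}$).

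For part (b), assume $\gamma>0$, $a>0$, $u\ge 0$, $u\not\equiv 0$. By part (a), $u\in L^\infty\cap C$. Rewrite the equation as $-\Delta u + c(x) u = 0$ where $c(x) := \lambda - \gamma V_u(x) - a|u|^{p-2} \in L^\infty_{loc}$ (using boundedness of $u$ and of $V_u$). This is a linear equation with locally bounded coefficient, $u\ge 0$, $u\not\equiv 0$. The strong maximum principle (e.g. Gilbarg–Trudinger, applicable to $W^{2,q}_{loc}$ solutions for $q>3$, which we have) then forces $u>0$ everywhere: if $u$ vanished at some point it would be a nonnegative supersolution of $-\Delta u + c^+(x)u \ge 0$ attaining an interior minimum value $0$, hence identically zero, contradiction.

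The main obstacle is the $L^\infty$ bound in the Sobolev-critical case $p=6$: this is exactly where naive Moser iteration fails because the nonlinearity has critical growth, and one must invoke the Brezis–Kato argument (treating $a|u|^4$ as an $L^{3/2}_{loc}$ potential rather than as a nonlinearity with a fixed growth exponent) to break the critical barrier and start the bootstrap. Everything else — the regularity of the Newtonian potential, the Schauder step for continuity, and the strong maximum principle for positivity — is routine once the $L^\infty$ bound is in hand. I would also remark that since $\gamma\in\R$, $a\in\R$ are arbitrary in the first assertion, the sign conditions enter only in part (b), where positivity of $a$ and $\gamma$ is not actually needed for the maximum principle itself (any signs work, since $c(x)$ is merely bounded) — so the hypothesis $\gamma>0$, $a>0$ there is for consistency with the rest of the paper rather than essential.
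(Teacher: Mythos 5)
Your proposal is correct and, for the regularity part, essentially reproduces what the paper does by citation: the paper gets $u\in W^{2,r}_{loc}$ and continuity from \cite[Theorem 2.1]{Li-Ma-2020}, proves $V_u:=|x|^{-1}*|u|^2\in L^\infty\cap C$ by exactly your kernel splitting ($|x|^{-1}=K_1+K_2$ with $K_1\in L^2$ near the origin and $K_2\in L^4$ outside), and then invokes \cite[Proposition B.1]{Soave2019Sobolevcriticalcase}, which is precisely the global Brezis--Kato argument you describe for the critical potential $V=-\lambda+\gamma V_u+a|u|^4\in L^{3/2}(\R^3)+L^\infty(\R^3)$. One small caution: your appeal to ``the decay of $u$ at infinity coming from $u\in H^1$'' is not literally available (an $H^1$ function need not decay pointwise); it is also unnecessary, since the global Brezis--Kato step gives $u\in L^q(\R^3)$ for all finite $q$ and then interior elliptic estimates on unit balls, uniform in the center, yield the global $L^\infty$ bound. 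Where you genuinely diverge is the positivity step: the paper uses the sign hypotheses $\gamma>0$, $a>0$ to discard the nonlinear terms, reducing to $-\laplace v+\lambda v\le 0$ for $v=-u$ with $\lambda>0$ supplied by \cref{lemma:2.3}, and then applies \cite[Theorem 3.27]{Troianiello-1987}; you instead keep the full bounded coefficient $c(x)=\lambda-\gamma V_u(x)-a|u|^{p-2}$ and apply the strong maximum principle after passing to $c^+$, which is valid (since $-\laplace u+c^+u=c^-u\ge 0$ for $u\ge0$) and has the merit of showing, as you note, that the sign conditions on $\gamma$ and $a$ are not needed for this step once boundedness of $u$ and $V_u$ is known. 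Both routes are sound; the paper's is marginally shorter because $\lambda>0$ is already on hand, while yours is more robust to the signs of the coefficients.
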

\begin{proof}
	Applying \cite[Theorem 2.1]{Li-Ma-2020}, we get that $u \in \mathit{W}_{loc}^{2,r}(\Rn)$ for every $r > 1$ and hence $u \in \mathit{C}(\Rn)$. Since $u \in \Hs{1}$, the  Sobolev embedding (see \cite[Corollary 9.10]{Brezis-2011}) implies that $|u|^2 \in \Lp{q}$ for every $q \in [1,3]$. 
	Now,  setting $K := |x|^{-1}$, we write $K := K_1 + K_2$ where $K_1:= K$ on $B(0,1)$, $K_1:=0$ on $\R^3 \backslash \ B(0,1)$ and $K_2:= K - K_1$. 
	Clearly $K_1 \in \Lp{2}$ and $K_2 \in \Lp{4}$. Applying \cite[Lemma 2.20]{LiebLoss2001analysis} with $K_1 \in \Lp{2}$, $|u|^2 \in \Lp{2}$ and with $K_2 \in \Lp{4}$, $|u|^2 \in \Lp{\frac{4}{3}}$, we obtain that $K_1*|u|^2$ and $K_2*|u|^2$ are continuous. Also
	\begin{align*}
		\lim_{|x| \to \infty} (K_1*|u|^2) (x) = 0 \quad\mbox{and}\quad \lim_{|x| \to \infty} (K_2*|u|^2) (x) = 0.
	\end{align*}
	Hence, we get that $K*|u|^2$ is continuous and
	\begin{align} \label{eqn:tend-to-0}
		\lim_{|x| \to \infty} (K*|u|^2) (x) = 0.
	\end{align}
	Therefore, $K*|u|^2$ is bounded. At this point, we deduce from \cite[Proposition B.1]{Soave2019Sobolevcriticalcase} that $u \in \Lp{\infty}$.	

	Now, if we assume that $\gamma > 0,$ $ a > 0$, $u \not\equiv 0$, $u \geq 0$, setting $v := -u \leq 0$ we get
	\begin{align*}
		- \laplace v + \lambda v =  \gamma (\abs{x}^{-1} * \abs{v}^2) v  + a \abs{v}^{p-2}v \leq 0.
	\end{align*}
	By \cref{lemma:2.3}, we have that $\lambda > 0$. We assume that there exists $x_0 \in \Rn$ such that $v(x_0) = 0$. For all $R > |x_0|$, we have that $v \in \mathit{W}^{2,r}(B_R)$ for every $r > 1$, $Lv := - \laplace v + \lambda v \leq 0$ in $B_R$ with $\lambda > 0$ and $M:= \max_{x \in B_R} v = 0$. At this point, applying \cite[Theorem 3.27]{Troianiello-1987}, in the particular case where $\Gamma = \emptyset$, we obtain that $v \equiv 0$ in $B_R$, and hence $u \equiv 0$ in $B_R$. The value $R>0$ being arbitrarily large, this contradicts our assumption that $u \not\equiv 0$ and we conclude that $u > 0$.
\end{proof}


Following \cite{BerestyckiLions1983_2}, we recall that, for any $c > 0$, $S(c)$ is a submanifold codimension 1 of $\Ho$ and the tangent space at a point $u \in S(c)$ is defined as
\begin{align*}
T_uS(c) = \{\varphi \in \Ho: \inner{u}{\varphi}_{\Lp{2}} = 0\}.
\end{align*}
The restriction ${F}_{|S(c)} :S(c) \to \R$ is a $\mathit{C}^1$ functional on $S(c)$ and for any $u \in S(c)$ and any $v \in T_uS(c)$, we have
\begin{align*}
\inner{{F}'_{|S(c)}}{\varphi} = \inner{F'}{\varphi}.
\end{align*}
We use the notation $\norm{{dF}_{|S(c)}}_{*}$ to indicate the norm in the cotangent space $T_uS(c)'$, i.e the dual norm induced by the norm of $T_uS(c)$, i.e
\begin{align}
	\norm{{dF}_{|S(c)} (u)}_{*} := \sup_{\norm{\varphi} \leq 1, \varphi \in T_uS(c)} \abs{dF(u)[\varphi]}. \label{eqn:21}
\end{align}
We recall the following result, see \cite[Lemma 3.1]{JeanjeanLe2020},
\begin{lemma} \label{lemma:2.4}
	For $u \in S(c)$ and $t > 0$, the map
	\begin{align*}
	T_uS(c) \to T_{u^t} S(c), \quad \psi \mapsto \psi^t
	\end{align*}
	is a linear isomorphism with inverse $$T_{u^t}S(c) \to T_{u} S(c), \quad \phi \mapsto \phi^{\frac{1}{t}}.$$
\end{lemma}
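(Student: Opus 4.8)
The plan is to verify directly that the map $\psi \mapsto \psi^t$ sends $T_uS(c)$ into $T_{u^t}S(c)$, that it is linear, and that the map $\phi \mapsto \phi^{1/t}$ is its two-sided inverse; linearity being immediate, the content is the two membership claims and the inverse identity. First I would recall that for $u \in S(c)$ we write $u^t(x) = t^{3/2}u(tx)$, and the same formula defines $\psi^t(x) = t^{3/2}\psi(tx)$ for any $\psi \in \Ho$. The crucial computation is that $L^2$ inner products transform well under this rescaling: for any $\psi, \varphi \in \Lt$ one has, by the substitution $y = tx$,
\begin{align*}
\inner{\psi^t}{\varphi^t}_{\Lp{2}} = \intrb t^{3} \psi(tx)\varphi(tx)\,dx = \intrb \psi(y)\varphi(y)\,dy = \inner{\psi}{\varphi}_{\Lp{2}}.
\end{align*}
In particular $\|\psi^t\|_{\Lp{2}} = \|\psi\|_{\Lp{2}}$, so the rescaling preserves the $L^2$ norm (this is consistent with $u^t \in S(c)$), and it maps $\Ho$ boundedly onto $\Ho$ since $\|\diff \psi^t\|_{\Lp{2}}^2 = t^2\|\diff\psi\|_{\Lp{2}}^2$.

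Next I would check the membership $\psi \mapsto \psi^t \in T_{u^t}S(c)$. By definition $\psi \in T_uS(c)$ means $\inner{u}{\psi}_{\Lp{2}} = 0$; applying the displayed identity with the pair $(u,\psi)$ gives $\inner{u^t}{\psi^t}_{\Lp{2}} = \inner{u}{\psi}_{\Lp{2}} = 0$, which is exactly the condition for $\psi^t \in T_{u^t}S(c)$. The same argument run with $t$ replaced by $1/t$ and $u$ replaced by $u^t$ (noting $(u^t)^{1/t} = u$ and $(\psi^t)^{1/t} = \psi$, which follows from the group law $(w^t)^s = w^{ts}$ for the rescaling, proved by one more change of variables) shows that $\phi \mapsto \phi^{1/t}$ maps $T_{u^t}S(c)$ into $T_uS(c)$. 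Finally, the composition identities $(\psi^t)^{1/t} = \psi$ and $(\phi^{1/t})^t = \phi$ are instances of the group law and establish that the two maps are mutually inverse; together with linearity (clear from linearity of the formula $\psi \mapsto t^{3/2}\psi(t\cdot)$) this gives the claimed linear isomorphism.

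There is no real obstacle here: the proof is a sequence of changes of variables and the observation that the rescaling $w \mapsto w^t$ is an $L^2$-isometry (hence preserves orthogonality) that acts as a one-parameter group. The only point deserving a word of care is to confirm that $w \mapsto w^t$ does indeed map $\Ho$ into $\Ho$ — which it does, since the gradient term picks up only the finite factor $t^2$ — so that the tangent spaces at $u$ and $u^t$ are genuinely the Hilbert spaces the statement refers to; but this is exactly the same computation already recorded in the excerpt in the formula $A(u^t) = t^2 A(u)$.
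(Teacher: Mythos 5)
Your proof is correct and is the standard argument: the rescaling $\psi\mapsto\psi^t$ is an $L^2$-isometry by the change of variables $y=tx$, so it preserves the orthogonality condition defining the tangent spaces, and the group law $(w^t)^s=w^{ts}$ gives the inverse. The paper does not reproduce a proof of this lemma but simply cites \cite[Lemma 3.1]{JeanjeanLe2020}, where the argument is essentially the one you give.
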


Next, we recall a result concerning the convergence of the term $B$, see {\cite[Lemma 2.1]{DavidRuiz2006}},
\begin{lemma} \label{lemma:2.5}
	Let $(u_n)$ be a sequence satisfying $u_n \weakto u$ weakly in $\Hsr{1}$. Then we have $B(u_n) \to B(u)$.
\end{lemma}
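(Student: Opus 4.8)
The plan is to reduce everything to the strong convergence of $(u_n)$ in $\mathit{L}^{12/5}(\Rb)$ and then to exploit the Hardy--Littlewood--Sobolev inequality \eqref{ineq:HLS} in its bilinear form.

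First I would note that since $u_n \weakto u$ weakly in $\Hsr{1}$ the sequence $(u_n)$ is bounded in $\Ho$; by the classical compact embedding of $\Hsr{1}$ into $\mathit{L}^q(\Rb)$ for $q \in (2,6)$ (Strauss' lemma; see \cite{BerestyckiLions1983_2}), and since the weak limit is unique, the whole sequence satisfies $u_n \to u$ strongly in $\mathit{L}^{12/5}(\Rb)$, as $\tfrac{12}{5} \in (2,6)$. Next, applying \eqref{ineq:HLS} with $N=3$, $\lambda = 1$ and $p = q = \tfrac{6}{5}$ (for which $\tfrac1p + \tfrac1q + \tfrac{\lambda}{N} = 2$) produces a constant $C>0$ with
\begin{align*}
\abs*{\intrb \intrb \dfrac{f(x)g(y)}{\abs{x-y}}\,dx\,dy} \leq C \norm{f}_{\mathit{L}^{6/5}(\Rb)} \norm{g}_{\mathit{L}^{6/5}(\Rb)}
\end{align*}
for all $f,g \in \mathit{L}^{6/5}(\Rb)$ (here $\abs{u}^2 \in \mathit{L}^{6/5}$ precisely because $u \in \mathit{L}^{12/5}$).

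Then, inserting the telescoping identity
\begin{align*}
\abs{u_n(x)}^2\abs{u_n(y)}^2 - \abs{u(x)}^2\abs{u(y)}^2 = \big(\abs{u_n(x)}^2 - \abs{u(x)}^2\big)\abs{u_n(y)}^2 + \abs{u(x)}^2\big(\abs{u_n(y)}^2 - \abs{u(y)}^2\big)
\end{align*}
into $B(u_n) - B(u)$ and using the bilinear estimate above, I would obtain
\begin{align*}
\abs{B(u_n) - B(u)} \leq C \big\|\,\abs{u_n}^2 - \abs{u}^2\,\big\|_{\mathit{L}^{6/5}} \Big( \big\|\,\abs{u_n}^2\,\big\|_{\mathit{L}^{6/5}} + \big\|\,\abs{u}^2\,\big\|_{\mathit{L}^{6/5}} \Big).
\end{align*}
To conclude, factor $\abs{u_n}^2 - \abs{u}^2 = (\abs{u_n} - \abs{u})(\abs{u_n} + \abs{u})$ and apply Hölder's inequality with the pair of exponents $\tfrac{12}{5}, \tfrac{12}{5}$ (so that $\tfrac{5}{12}+\tfrac{5}{12} = \tfrac{5}{6}$), together with the pointwise bound $\big|\abs{u_n} - \abs{u}\big| \leq \abs{u_n - u}$, giving
\begin{align*}
\big\|\,\abs{u_n}^2 - \abs{u}^2\,\big\|_{\mathit{L}^{6/5}} \leq \norm{u_n - u}_{\mathit{L}^{12/5}}\big(\norm{u_n}_{\mathit{L}^{12/5}} + \norm{u}_{\mathit{L}^{12/5}}\big).
\end{align*}
This tends to $0$ since $u_n \to u$ in $\mathit{L}^{12/5}$ while $\big(\norm{u_n}_{\mathit{L}^{12/5}}\big)$ is bounded; likewise $\big\|\,\abs{u_n}^2\,\big\|_{\mathit{L}^{6/5}} = \norm{u_n}_{\mathit{L}^{12/5}}^2$ stays bounded. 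Hence $B(u_n) \to B(u)$.

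The only non-elementary ingredient is the compactness of the radial Sobolev embedding $\Hsr{1} \hookrightarrow \mathit{L}^{12/5}(\Rb)$; granting it, the remainder is a routine combination of the Hardy--Littlewood--Sobolev and Hölder inequalities, so I expect no genuine obstacle here.
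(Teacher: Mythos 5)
Your proof is correct. The paper does not actually prove this lemma itself --- it is recalled from \cite[Lemma 2.1]{DavidRuiz2006} --- but the argument you give (compactness of the embedding $\Hsr{1} \hookrightarrow \Lp{12/5}$, the bilinear Hardy--Littlewood--Sobolev estimate with exponents $p=q=\tfrac{6}{5}$, and the telescoping/H\"older step) is precisely the standard proof of that cited result, with all exponents checked correctly.
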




\section{The case $\gamma > 0,$ $ a > 0$ and $ p \in (\frac{10}{3},  6]$.} \label{Section3}

\subsection{The geometrical structure and the existence of bounded Palais-Smale sequences for $ p \in (\frac{10}{3},  6]$}\label{Subsection3-1}
In this subsection, we follow the approach first introduced in \cite{CingolaniJeanjean2019}. We shall always assume that $\gamma > 0,$ $ a > 0$ and $ p \in (\frac{10}{3},  6]$.
\begin{lemma} \label{lemma:5}
For any $c \in (0, \infty)$, 
	$F$ restricted to $\slc$ is coercive on $\Ho$, namely when $(u_n) \subset H^1(\R^3)$ satisfies $||u_n|| \to + \infty$ then $F(u_n) \to + \infty$. In particular $F$ restricted to $\slc$ is bounded from below. 
\end{lemma}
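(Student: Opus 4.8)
The plan is to use the constraint $Q(u)=0$ to eliminate the troublesome terms from $F$ and rewrite $F\big|_{\Lambda(c)}$ as a positive multiple of $A(u)$ plus a lower-order remainder, then to control that remainder by the inequalities of \cref{lemma:1}. First, for $u\in\Lambda(c)$ the identity $A(u)=\frac{\gamma}{4}B(u)+\frac{a\sigma}{p}C(u)$ lets me substitute either for $B(u)$ or for $C(u)$ in $F(u)=\frac12 A(u)-\frac{\gamma}{4}B(u)-\frac{a}{p}C(u)$. The cleanest choice is to use it to remove $B(u)$: from $\frac{\gamma}{4}B(u)=A(u)-\frac{a\sigma}{p}C(u)$ we get
\begin{align*}
F(u)=\frac12 A(u)-\Big(A(u)-\frac{a\sigma}{p}C(u)\Big)-\frac{a}{p}C(u)=-\frac12 A(u)+\frac{a(\sigma-1)}{p}C(u).
\end{align*}
Since here $a>0$ and $\sigma>2$, this expression is of the form ``$-\tfrac12 A(u)+(\text{positive})\,C(u)$'', which is the wrong sign for coercivity — so I instead eliminate $C(u)$: from $\frac{a\sigma}{p}C(u)=A(u)-\frac{\gamma}{4}B(u)$ we obtain
\begin{align*}
F(u)=\frac12 A(u)-\frac{\gamma}{4}B(u)-\frac1\sigma\Big(A(u)-\frac{\gamma}{4}B(u)\Big)=\Big(\frac12-\frac1\sigma\Big)A(u)-\frac{\gamma}{4}\Big(1-\frac1\sigma\Big)B(u).
\end{align*}
Because $\sigma>2$, the coefficient $\frac12-\frac1\sigma$ is strictly positive, so this is ``$(\text{positive})\,A(u)-(\text{positive})\,B(u)$'', and now $B$ is only of order $\sqrt{A}$.

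Next I bound the $B$-term: by \cref{lemma:1}\ref{point:1i}, $B(u)\le K_H\sqrt{A(u)}\,c^{3/2}$, hence for $u\in\Lambda(c)$,
\begin{align*}
F(u)\ \ge\ \Big(\frac12-\frac1\sigma\Big)A(u)-\frac{\gamma}{4}\Big(1-\frac1\sigma\Big)K_H c^{3/2}\sqrt{A(u)}.
\end{align*}
Writing $X=\sqrt{A(u)}\ge0$, the right-hand side is $\alpha X^2-\beta X$ with $\alpha=\frac12-\frac1\sigma>0$ and $\beta=\frac{\gamma}{4}(1-\frac1\sigma)K_H c^{3/2}\ge0$ fixed constants (depending only on $\sigma$, $\gamma$, $K_H$, $c$); this tends to $+\infty$ as $X\to+\infty$. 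It remains to convert ``$A(u_n)\to+\infty$'' into ``$\|u_n\|\to+\infty$'': since $u_n\in S(c)$ we have $\|u_n\|_{L^2}^2=c$ fixed, so $\|u_n\|^2=A(u_n)+c\to+\infty$ forces $A(u_n)\to+\infty$, whence $F(u_n)\to+\infty$ by the displayed bound. Conversely, if $\|u_n\|\to+\infty$ then $A(u_n)\to+\infty$ and the same bound gives $F(u_n)\to+\infty$. Boundedness from below is then immediate: on the bounded set $\{A(u)\le R\}\cap\Lambda(c)$ the quadratic lower bound $\alpha X^2-\beta X\ge -\beta^2/(4\alpha)$ shows $F\ge-\beta^2/(4\alpha)$ everywhere on $\Lambda(c)$.

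There is essentially no hard step here; the only thing to get right is the algebraic choice of which term to eliminate using $Q(u)=0$ (eliminating $C$, not $B$), so that the surviving quadratic term $A(u)$ appears with a positive coefficient and dominates the $\sqrt{A(u)}$-growth of $B(u)$. The role of $\sigma>2$ (equivalently $p>\frac{10}{3}$) is exactly what makes $\frac12-\frac1\sigma>0$; this is the one place where the hypothesis $p\in(\frac{10}{3},6]$ is used. No sign assumption on $a$ beyond what is in force is needed for this particular computation, and $\gamma>0$ only enters to make the $B$-coefficient have a definite sign (though even that is immaterial since we only need an upper bound on $B\ge0$).
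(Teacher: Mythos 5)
Your proposal is correct and follows essentially the same route as the paper: use $Q(u)=0$ to eliminate $C(u)$, obtaining $F(u)=\frac{\sigma-2}{2\sigma}A(u)-\frac{\gamma(\sigma-1)}{4\sigma}B(u)$, and then control $B(u)$ by \cref{lemma:1}\eqref{point:1i} so that $F$ dominates a quadratic in $\sqrt{A(u)}$ with positive leading coefficient. The only difference is cosmetic (your coefficient $\frac{\gamma(\sigma-1)}{4\sigma}$ on $B(u)$ is in fact the correct one; the paper's display drops a factor of $\sigma$ in the denominator, which does not affect the argument).
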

\begin{proof}
	Let $u \in \slc$. Taking into account that
	\begin{align*}
	\dfrac{a}{p} C(u) = \dfrac{1}{\sigma} A(u) - \dfrac{\gamma}{4\sigma} B(u),
	\end{align*}
	and using \cref{lemma:1}\eqref{point:1i}, we obtain
	\begin{align}
	\begin{split}
		F(u) &= \dfrac{1}{2} A(u) - \dfrac{\gamma}{4} B(u) - \dfrac{a}{p} C(u)
		= \dfrac{1}{2} A(u) - \dfrac{\gamma}{4} B(u) - \dfrac{1}{\sigma} A(u) + \dfrac{\gamma}{4\sigma} B(u) \\
		&= \dfrac{\sigma - 2}{2 \sigma} A(u) - \dfrac{\gamma(\sigma - 1)}{4} B(u) 
		\geq \dfrac{\sigma - 2}{2 \sigma} A(u) - \dfrac{\gamma(\sigma - 1)}{4} K_{H}  \sqrt{A(u)} c^{\frac{3}{2}}.
	\end{split} \label{eqn:29}
	\end{align}
	This concludes the proof.
\end{proof}

For any $u \in S(c)$, we recall that
\begin{align*}
&g_u(t)= F(u^t) = \dfrac{1}{2}t^2 A(u) - \dfrac{\gamma}{4} t B(u) - \dfrac{a}{p} t^{\sigma} C(u),\\
&g_u'(t) = t A(u) - \dfrac{\gamma}{4} B(u) - \dfrac{a \sigma}{p} t^{\sigma - 1} C(u) = \dfrac{1}{t} Q(u^t),\\
&g_u''(t) = A(u) - \dfrac{a \sigma (\sigma - 1)}{p} t^{\sigma - 2} C(u).
\end{align*}
For any $u \in S(c)$, we set
\begin{align*}
\tstar := \(\dfrac{pA(u)}{a \sigma (\sigma - 1) C(u)}\)^{\frac{1}{\sigma - 2}}.
\end{align*}
This implies that $\tstar$ is the unique solution of equation $g_u''(t) = 0$. So, we have
\begin{align}
	g_u''(\tstar) = 0, \quad g_u''(t) > 0 \text{ if } 0<t<\tstar, \quad g_u''(t) < 0 \text{ if } t>\tstar. \label{eqn:2}
\end{align}

\begin{lemma} \label{lemma:3}
For any $c \in (0, c_1)$ and any $u \in S(c)$,  we have $g_u'(\tstar) > 0$.
\end{lemma}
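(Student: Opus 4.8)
The goal is to bound $g_u'(\tstar)$ from below by a positive quantity when $c < c_1$. Since $\tstar$ is defined so that $g_u''(\tstar) = 0$, we have the identity $A(u) = \frac{a\sigma(\sigma-1)}{p}(\tstar)^{\sigma-2}C(u)$, which lets us eliminate $C(u)$ from the expression $g_u'(\tstar) = \tstar A(u) - \frac{\gamma}{4}B(u) - \frac{a\sigma}{p}(\tstar)^{\sigma-1}C(u)$. First I would substitute: $\frac{a\sigma}{p}(\tstar)^{\sigma-1}C(u) = \tstar \cdot \frac{a\sigma(\sigma-1)}{p}(\tstar)^{\sigma-2}C(u) \cdot \frac{1}{\sigma-1} = \frac{\tstar A(u)}{\sigma-1}$. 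Hence
\[
g_u'(\tstar) = \tstar A(u)\Bigl(1 - \frac{1}{\sigma-1}\Bigr) - \frac{\gamma}{4}B(u) = \tstar A(u)\,\frac{\sigma-2}{\sigma-1} - \frac{\gamma}{4}B(u).
\]

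**Main estimate.** Now I would bound the two terms. For the negative term, apply \cref{lemma:1}\eqref{point:1i}: $B(u) \leq K_H\sqrt{A(u)}\,c^{3/2}$. For $\tstar$, use its definition together with \cref{lemma:1}\eqref{point:1ii}: since $C(u) \leq K_{GN}[A(u)]^{\sigma/2}c^{(6-p)/4}$, we get
\[
\tstar = \Bigl(\frac{pA(u)}{a\sigma(\sigma-1)C(u)}\Bigr)^{\frac{1}{\sigma-2}} \geq \Bigl(\frac{p}{a\sigma(\sigma-1)K_{GN}}\Bigr)^{\frac{1}{\sigma-2}} [A(u)]^{\frac{1}{\sigma-2}(1-\frac{\sigma}{2})} c^{-\frac{6-p}{4(\sigma-2)}}.
\]
Note $1 - \frac{\sigma}{2} = \frac{2-\sigma}{2}$, so the exponent on $A(u)$ is $-\frac12$, i.e. $\tstar A(u) \geq \bigl(\frac{p}{a\sigma(\sigma-1)K_{GN}}\bigr)^{1/(\sigma-2)}\sqrt{A(u)}\,c^{-(6-p)/(4(\sigma-2))}$. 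Therefore
\[
g_u'(\tstar) \geq \sqrt{A(u)}\left[\frac{\sigma-2}{\sigma-1}\Bigl(\frac{p}{a\sigma(\sigma-1)K_{GN}}\Bigr)^{\frac{1}{\sigma-2}} c^{-\frac{6-p}{4(\sigma-2)}} - \frac{\gamma K_H}{4}c^{3/2}\right].
\]
The bracket is positive precisely when $c^{3/2 + \frac{6-p}{4(\sigma-2)}} < \frac{4}{\gamma K_H}\cdot\frac{\sigma-2}{\sigma-1}\bigl(\frac{p}{a\sigma(\sigma-1)K_{GN}}\bigr)^{1/(\sigma-2)}$, which I expect to be exactly the condition $c < c_1$ after simplifying the exponents using $\sigma - 2 = \frac{3(p-2)}{2} - 2 = \frac{3p-10}{2}$ and $3/2 + \frac{6-p}{4(\sigma-2)} = \frac{2(p-3)}{\sigma - 2}$ (a short algebraic check: this should match the powers $\frac{3p-10}{4(p-3)}$ and $\frac{1}{2(p-3)}$ appearing in \eqref{eqn:assumption}).

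**Obstacle.** The only genuine work is the exponent bookkeeping — verifying that the threshold extracted from the bracket being positive coincides with the explicitly defined $c_1$ in \eqref{eqn:assumption}, rather than merely some positive constant; this requires carefully raising both sides to the power $\frac{1}{3/2 + (6-p)/(4(\sigma-2))}$ and checking it equals the product of powers in $c_1$. One should also note $A(u) > 0$ for $u \in S(c)$ (otherwise $u \equiv 0$, impossible since $c > 0$), so dividing by $\sqrt{A(u)}$ is legitimate and the sign of $g_u'(\tstar)$ is governed entirely by the bracket. No compactness or variational input is needed here — it is a pure inequality manipulation.
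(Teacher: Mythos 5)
Your proposal is correct and follows essentially the same route as the paper's proof: both eliminate $C(u)$ via $g_u''(\tstar)=0$ to get $g_u'(\tstar)=\frac{\sigma-2}{\sigma-1}\tstar A(u)-\frac{\gamma}{4}B(u)$, then apply \cref{lemma:1} to both terms and reduce the positivity of the bracket to the condition $c<c_1$. Your exponent bookkeeping ($3/2+\frac{6-p}{4(\sigma-2)}=\frac{4(p-3)}{3p-10}$, whose reciprocal produces the powers in \eqref{eqn:assumption}) is exactly the ``direct computation'' the paper leaves implicit, and it checks out.
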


\begin{proof}
	Let $u \in S(c)$ be arbitrary. By the definition of $\tstar$ and by $g_u''(\tstar) = 0$, we have	
	\begin{align*}
	g_u'(\tstar) &= \tstar A(u) - \dfrac{\gamma}{4} B(u) - \dfrac{a \sigma}{p} (\tstar)^{\sigma - 1} C(u) 
	= \tstar A(u) - \dfrac{\gamma}{4} B(u) - \dfrac{1}{\sigma - 1} \tstar A(u)\\
	&= \dfrac{\sigma - 2}{\sigma - 1} \tstar A(u) - \dfrac{\gamma}{4} B(u) 
	= \dfrac{\sigma - 2}{\sigma - 1} \(\dfrac{pA(u)}{a \sigma (\sigma - 1) C(u)}\)^{\frac{1}{\sigma - 2}} A(u) - \dfrac{\gamma}{4} B(u)\\
	&= \sqrt{A(u)} \[\dfrac{\sigma - 2}{\sigma - 1} \(\dfrac{pA(u)}{a \sigma (\sigma - 1) C(u)}\)^{\frac{1}{\sigma - 2}} \sqrt{A(u)} - \dfrac{\gamma}{4} \dfrac{B(u)}{\sqrt{A(u)}}\] \\
	&= \sqrt{A(u)} \[\dfrac{\sigma - 2}{\sigma - 1} \(\dfrac{p[A(u)]^{\frac{\sigma}{2}}}{a \sigma (\sigma - 1) C(u)}\)^{\frac{1}{\sigma - 2}} - \dfrac{\gamma}{4} \dfrac{B(u)}{\sqrt{A(u)}}\].	
	\end{align*}
	Applying \cref{lemma:1}, we obtain
	\begin{align*}
	g_u'(\tstar) &\geq \sqrt{A(u)} \[\dfrac{\sigma - 2}{\sigma - 1} \(\dfrac{p[A(u)]^{\frac{\sigma}{2}}}{a \sigma (\sigma - 1) K_{GN} [A(u)]^{\frac{\sigma}{2}} c^{\frac{6-p}{4}}}\)^{\frac{1}{\sigma - 2}} - \dfrac{\gamma}{4} \dfrac{K_{H}  \sqrt{A(u)} c^{\frac{3}{2}}}{\sqrt{A(u)}}\] \\
	&=\sqrt{A(u)} \[\dfrac{\sigma - 2}{\sigma - 1} \(\dfrac{p}{a \sigma (\sigma - 1) K_{GN}  c^{\frac{6-p}{4}}}\)^{\frac{1}{\sigma - 2}} - \dfrac{\gamma}{4} K_{H} c^{\frac{3}{2}}\].
	\end{align*}
	By direct computations, we now have
		\begin{align*}
			\dfrac{\sigma - 2}{\sigma - 1} \(\dfrac{p}{a \sigma (\sigma - 1) K_{GN}  c^{\frac{6-p}{4}}}\)^{\frac{1}{\sigma - 2}} - \dfrac{\gamma}{4} K_{H} c^{\frac{3}{2}} > 0 \iff c < c_1.
		\end{align*}		
	Thus, we obtain that if $0 < c< c_1$ then $g_u'(\tstar) > 0$.
\end{proof}

\begin{lemma} \label{lemma:4}
	For any $c \in (0, c_1)$, it holds that $\slz = \emptyset$.
\end{lemma}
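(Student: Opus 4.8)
The plan is to argue by contradiction and reduce the statement to the content of \cref{lemma:3}. Recall that $u \in \Lambda^0(c)$ means simultaneously $g_u'(1) = 0$ and $g_u''(1) = 0$. I would first observe that the unique zero of $g_u''$ is $\tstar$, by \eqref{eqn:2}; hence if $u \in \Lambda^0(c)$ then necessarily $\tstar = 1$, so that $g_u'(\tstar) = g_u'(1) = 0$. But \cref{lemma:3} asserts that for every $c \in (0, c_1)$ and every $u \in S(c)$ we have $g_u'(\tstar) > 0$, a contradiction. This immediately gives $\Lambda^0(c) = \emptyset$.

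More concretely, I would phrase it directly in terms of the $A,B,C$-characterization of $\Lambda^0(c)$ given right before \cref{lemma:1}: a function $u \in \Lambda^0(c)$ satisfies both $A(u) = \frac{\gamma}{4}B(u) + \frac{a\sigma}{p}C(u)$ and $A(u) = \frac{a\sigma(\sigma-1)}{p}C(u)$. The second equality is exactly the equation $g_u''(1)=0$, whose unique solution in $t$ is $t = \tstar$; since it holds at $t=1$, we get $\tstar = 1$. Then $g_u'(1) = \frac{1}{1}Q(u^1) = Q(u) = A(u) - \frac{\gamma}{4}B(u) - \frac{a\sigma}{p}C(u) = 0$ by the first equality, i.e. $g_u'(\tstar) = 0$. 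This contradicts \cref{lemma:3}.

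There is essentially no obstacle here: the lemma is a short corollary of \cref{lemma:3}, and the only point requiring a line of care is the uniqueness of the zero of $g_u''$, which is already recorded in \eqref{eqn:2} (it follows because $t \mapsto g_u''(t)$ is strictly decreasing in $t^{\sigma-2}$, as $C(u) > 0$ for $u \in S(c)$, $a > 0$, $\sigma > 2$). So the write-up is just: suppose $u \in \Lambda^0(c)$; from $g_u''(1) = 0$ and \eqref{eqn:2} deduce $\tstar = 1$; hence $g_u'(\tstar) = g_u'(1) = 0$, contradicting \cref{lemma:3}; therefore $\Lambda^0(c) = \emptyset$.
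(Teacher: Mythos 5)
Your proposal is correct and follows exactly the paper's own argument: assume $u \in \slz$, use $g_u''(1)=0$ together with the uniqueness of the zero of $g_u''$ recorded in \eqref{eqn:2} to conclude $\tstar=1$, and then derive the contradiction $g_u'(\tstar)=g_u'(1)=0$ against \cref{lemma:3}. No differences worth noting.
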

\begin{proof}
	We assume that there exists $u \in \slz$. Since $g_u''(1) = 0$ and $\tstar$ is the unique solution of equation $g_u''(t) = 0$, we have $\tstar = 1$. So, we have $g_u'(\tstar) = g_u'(1) = 0$. This contradicts with $g_u'(\tstar) > 0$ in \cref{lemma:3}. Thus, we obtain $\slz = \emptyset$.
\end{proof}

\begin{lemma} \label{lemma:6}
For any $c \in (0, c_1)$ and any $u \in S(c)$, there exists
	\begin{enumerate}[label=(\roman*)]
		\item a unique $\spu \in (0, \tstar)$ such that $\spu$ is a unique local minimum point for $g_u$ and $u^{\spu} \in \slp$.
		\item a unique $\smu \in (\tstar, \infty)$ such that $\smu$ is a unique local maximum point for $g_u$ and $u^{\smu} \in \slm$. 
	\end{enumerate}
	Moreover, the maps $u \in S(c) \mapsto \spu \in \R$ and $u \in S(c) \mapsto \smu \in \R$ are of class $\mathbb{C}^1$.
\end{lemma}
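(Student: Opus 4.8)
The plan is to analyze the one-dimensional function $g_u$ on $(0,\infty)$ using the sign information about $g_u''$ recorded in \eqref{eqn:2} together with the positivity of $g_u'$ at the inflection point $\tstar$ established in \cref{lemma:3}. First I would observe the behavior of $g_u'$ at the two ends of the interval: since $g_u'(t) = tA(u) - \frac{\gamma}{4}B(u) - \frac{a\sigma}{p}t^{\sigma-1}C(u)$ with $\sigma > 2$, $\gamma>0$, $a>0$, $A(u),B(u),C(u)>0$, we have $g_u'(t)\to -\frac{\gamma}{4}B(u) < 0$ as $t\to 0^+$ and $g_u'(t)\to -\infty$ as $t\to +\infty$ (the $-t^{\sigma-1}$ term dominates). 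Meanwhile $g_u'(\tstar)>0$ by \cref{lemma:3}. On $(0,\tstar)$ we know from \eqref{eqn:2} that $g_u'' > 0$, so $g_u'$ is strictly increasing there; since it passes from a negative limit at $0^+$ to the positive value $g_u'(\tstar)$, there is a unique zero $\spu\in(0,\tstar)$, and $g_u' < 0$ on $(0,\spu)$, $g_u'>0$ on $(\spu,\tstar)$. Hence $\spu$ is a strict local minimum of $g_u$, it is the only critical point in $(0,\tstar)$, and since $g_u''(\spu)>0$ (as $\spu<\tstar$) we get $u^{\spu}\in\slp$ by the characterization $u\in\slp \iff g_u'(1)=0, g_u''(1)>0$ applied after the scaling identity $g_u'(1)=\frac1t Q(u^t)$ at $t=\spu$.

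Next, on $(\tstar,\infty)$ we have $g_u'' < 0$ by \eqref{eqn:2}, so $g_u'$ is strictly decreasing there; it starts from $g_u'(\tstar)>0$ and tends to $-\infty$, so it has a unique zero $\smu\in(\tstar,\infty)$ with $g_u'>0$ on $(\tstar,\smu)$ and $g_u'<0$ on $(\smu,\infty)$. Thus $\smu$ is a strict local maximum of $g_u$, the only critical point of $g_u$ in $(\tstar,\infty)$, and $g_u''(\smu)<0$ forces $u^{\smu}\in\slm$. Combining the two halves, $\spu$ and $\smu$ are the only critical points of $g_u$ on all of $(0,\infty)$; in particular $\spu$ is the unique local minimum and $\smu$ the unique local maximum. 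Since for $u\in S(c)$ the critical points $t>0$ of $g_u$ correspond exactly to $u^t\in\slc$, this also re-proves that $\slp,\slm\neq\emptyset$ for $c\in(0,c_1)$.

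For the $C^1$ regularity of the maps $u\mapsto\spu$ and $u\mapsto\smu$, I would apply the implicit function theorem to the $C^1$ function $(u,t)\mapsto g_u'(t) = tA(u) - \frac{\gamma}{4}B(u) - \frac{a\sigma}{p}t^{\sigma-1}C(u)$ on $S(c)\times(0,\infty)$ (the functionals $A,B,C$ are $C^1$ on $\Ho$, hence on $S(c)$). At $t=\spu$ we have $\partial_t g_u'(\spu) = g_u''(\spu) > 0 \neq 0$, and at $t=\smu$ we have $\partial_t g_u'(\smu) = g_u''(\smu) < 0 \neq 0$; the nonvanishing of the $t$-derivative is exactly what makes these solutions locally unique and $C^1$ in $u$, and it is guaranteed because $\spu<\tstar<\smu$ and $\tstar$ is the only zero of $g_u''$. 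The main obstacle — though it is a mild one — is bookkeeping the limits of $g_u'$ at $0^+$ and $+\infty$ carefully so as to be sure each of the two intervals contains exactly one sign change, and correctly invoking the scaling identity $g_u'(t) = \frac1t Q(u^t)$ to transfer the information $g_u''(\spu)>0$, $g_u''(\smu)<0$ into the membership statements $u^{\spu}\in\slp$, $u^{\smu}\in\slm$; everything else is a routine application of the intermediate value theorem and the implicit function theorem.
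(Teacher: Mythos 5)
Your proposal is correct and follows essentially the same route as the paper: endpoint limits of $g_u'$, positivity of $g_u'(\tstar)$ from \cref{lemma:3}, strict monotonicity of $g_u'$ on $(0,\tstar)$ and $(\tstar,\infty)$ from the sign of $g_u''$ in \eqref{eqn:2}, and the implicit function theorem applied to $\varphi(t,u)=g_u'(t)$ for the $C^1$ dependence. The only cosmetic point is that the transfer of sign information uses $g_{u^{s}}''(1)=s^2 g_u''(s)$ rather than a literal identity of derivatives, but the sign conclusion you need is unaffected.
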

\begin{proof}
	Taking into account that
	\begin{align*}
		g_u'(t) = t A(u) - \dfrac{\gamma}{4} B(u) - \dfrac{a \sigma}{p} t^{\sigma - 1} C(u),
	\end{align*}
	we have $g_u'(t) \to - \frac{\gamma}{4} B(u) < 0$ as $t \to 0$ and $g_u'(t) \to - \infty$ as $t \to +\infty$ due to $\sigma - 1 > 1$. By \cref{lemma:3}, we have $g'_u(\tstar) > 0$. Therefore, the equation $g_u'(t) = 0$ has at least two solutions $\spu$ and $\smu$ with $0< \spu < \tstar < \smu$. By \eqref{eqn:2}, we have $g_u''(t) > 0$ for all $0<t<\tstar$. Hence, $g_u'(t)$ is strictly increasing function on $(0, \tstar)$ and consequently $\spu \in (0, \tstar)$ is the unique local minimum point for $g_u$ and $u^{\spu} \in \slp$ due to $g''_{u^{\spu}}(1) = g_u''(\spu) > 0$. By the same argument, we obtain that $\smu \in (\tstar, \infty)$ is a unique local maximum point for $g_u$ and $u^{\smu} \in \slm$.
	
	In order to prove that $u \mapsto \smu$ are of class $\mathbb{C}^1$, we follow the argument in \cite[Lemma 5.3]{Soave2019}. It is a direct application of the Implicit Function Theorem on $\mathbb{C}^1$-function $\varphi(t, u)= g_u'(t)$. Taking into account that $\varphi(\smu, u) = g_u'(\smu)=0$, $\partial_t \varphi(\smu, u) = g_u''(\smu) < 0$ and $\slz = \emptyset$, we obtain $u \mapsto \smu$ is of class $\mathbb{C}^1$. The same argument proves that $u \mapsto \spu$ is of class $\mathbb{C}^1$.
\end{proof}

\begin{lemma}\label{lemma:16}
	For any $c \in (0, c_1)$, it holds that
	\begin{enumerate}[label=(\roman*), ref=\roman*]
		\item\label{point:1} $F(u) < 0$ for all $u \in \slp$,
		\item\label{point:2} there exists $\alpha:= \alpha(c) > 0$ such that $A(u) \geq \alpha$ for all $u \in \slm$.
	\end{enumerate}
\end{lemma}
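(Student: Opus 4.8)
The plan is to exploit the structural information already recorded in Lemma \ref{lemma:6}: every $u\in\slp$ is of the form $v^{\spv}$ with $\spv<t_v^\star$ the unique local minimum of the fiber map $g_v$, and every $u\in\slm$ is of the form $v^{\smv}$ with $\smv>t_v^\star$ the unique local maximum of $g_v$. Rather than work with the fiber maps, though, it is cleaner to argue directly on the algebraic characterizations of $\slp$ and $\slm$ recorded just before Lemma \ref{lemma:1}, together with the inequalities of Lemma \ref{lemma:1}.

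For \eqref{point:1}, let $u\in\slp$. Using the constraint $\frac{a}{p}C(u)=\frac1\sigma A(u)-\frac{\gamma}{4\sigma}B(u)$ exactly as in the proof of Lemma \ref{lemma:5}, one gets
\begin{align*}
F(u)=\dfrac{\sigma-2}{2\sigma}A(u)-\dfrac{\gamma(\sigma-1)}{4}B(u).
\end{align*}
On the other hand, $u\in\slp$ means $A(u)>\frac{a\sigma(\sigma-1)}{p}C(u)$, and since $u\in\slc$ we have $\frac{a\sigma}{p}C(u)=A(u)-\frac{\gamma}{4}B(u)$, hence $(\sigma-1)\bigl(A(u)-\frac{\gamma}{4}B(u)\bigr)<A(u)$, i.e.\ $(\sigma-2)A(u)<(\sigma-1)\frac{\gamma}{4}B(u)$. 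Plugging this into the expression for $F(u)$ gives
\begin{align*}
F(u)=\dfrac{\sigma-2}{2\sigma}A(u)-\dfrac{\gamma(\sigma-1)}{4}B(u)<\dfrac{1}{2\sigma}\cdot\dfrac{\gamma(\sigma-1)}{4}B(u)-\dfrac{\gamma(\sigma-1)}{4}B(u)=-\dfrac{\gamma(\sigma-1)(\sigma-1)}{4\sigma}B(u)\le 0,
\end{align*}
where $B(u)\ge 0$ since $\gamma>0$ and $B(u)\ge0$ always; one should note $B(u)>0$ because $u\in S(c)$ is nontrivial, so in fact $F(u)<0$ strictly. (If one prefers the fiber-map viewpoint: $\spv<t_v^\star$ and $g_v$ is increasing past $t_v^\star$, while $g_v(0^+)=0$ and $g_v$ is strictly decreasing near $0$, so $g_v(\spv)<g_v(0^+)=0$; this is an alternative route.)

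For \eqref{point:2}, let $u\in\slm$. Here $A(u)<\frac{a\sigma(\sigma-1)}{p}C(u)$, and combining with Lemma \ref{lemma:1}\eqref{point:1ii}, $C(u)\le K_{GN}[A(u)]^{\sigma/2}c^{(6-p)/4}$, we obtain
\begin{align*}
A(u)<\dfrac{a\sigma(\sigma-1)}{p}K_{GN}[A(u)]^{\sigma/2}c^{\frac{6-p}{4}},
\end{align*}
and since $\sigma>2$ and $A(u)>0$ this yields a lower bound of the form $[A(u)]^{\frac{\sigma-2}{2}}>\bigl(\frac{a\sigma(\sigma-1)K_{GN}}{p}\bigr)^{-1}c^{-\frac{6-p}{4}}$, i.e.
\begin{align*}
A(u)>\left(\dfrac{p}{a\sigma(\sigma-1)K_{GN}\,c^{\frac{6-p}{4}}}\right)^{\frac{2}{\sigma-2}}=:\alpha(c)>0.
\end{align*}
One must first check $A(u)>0$ for $u\in\slm$: this is immediate since if $A(u)=0$ then $u\equiv0$, contradicting $u\in S(c)$ with $c>0$; and the chain above requires dividing by $[A(u)]^{\sigma/2}>0$, which is legitimate.

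I do not expect a serious obstacle here; both parts are short consequences of the definitions plus Lemma \ref{lemma:1}. The only mild subtlety is bookkeeping the strict versus non-strict inequalities in part \eqref{point:1} (making sure $B(u)>0$, hence $F(u)<0$ rather than merely $\le 0$) and, in part \eqref{point:2}, confirming that the constant $\alpha(c)$ is independent of the particular $u\in\slm$ — which it is, since it depends only on $c$ and the fixed constants $p,\sigma,a,K_{GN}$.
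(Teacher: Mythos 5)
Your proposal is correct and follows essentially the same route as the paper: part (\ref{point:1}) combines the constraint identity $A(u) = \frac{\gamma}{4}B(u) + \frac{a\sigma}{p}C(u)$ with the defining strict inequality of $\slp$ (the paper substitutes so as to bound $F(u)$ by $\frac{2-\sigma}{2\sigma}A(u)<0$ rather than by a negative multiple of $B(u)$, but these are equivalent manipulations of the same two relations), and part (\ref{point:2}) is identical to the paper's argument, with the final division step made explicit. One small correction: the reduced form of $F$ on $\slc$ is $F(u)=\frac{\sigma-2}{2\sigma}A(u)-\frac{\gamma(\sigma-1)}{4\sigma}B(u)$ — the $\sigma$ in the denominator of the second coefficient is missing both in your write-up and in the paper's \eqref{eqn:29} — but this does not affect your conclusion, since the coefficient of $B(u)$ in your final bound is negative either way and your first inequality is already strict.
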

\begin{proof}
	Let $u \in \slp$, taking into account that
	\begin{align*}
	A(u) = \dfrac{\gamma}{4} B(u) + \dfrac{a \sigma}{p} C(u), \qquad A(u) > \dfrac{a \sigma (\sigma - 1)}{p} C(u),
	\end{align*}
	we obtain 
	\begin{align*}
	F(u) &= \dfrac{1}{2} A(u) - \dfrac{\gamma}{4} B(u) - \dfrac{a}{p} C(u)
	= \dfrac{1}{2} A(u) - \dfrac{\gamma}{4} B(u) - \dfrac{a \sigma}{p} C(u) + \dfrac{a (\sigma -1)}{p} C(u)\\
	&< \dfrac{1}{2} A(u) - A(u) + \dfrac{1}{\sigma} A(u) = \dfrac{2-\sigma}{2\sigma} A(u).
	\end{align*}
	Since $\sigma > 2$, we have $F(u) < 0$. The point \eqref{point:1} is proved.
	
	Let $u \in \slm$, taking into account that
	\begin{align*}
	A(u) < \dfrac{a \sigma (\sigma - 1)}{p} C(u),
	\end{align*}
	and using \cref{lemma:1}, we obtain that
	\begin{align*}
	A(u) < \dfrac{a \sigma (\sigma - 1)}{p} K_{GN} c^{\frac{6-p}{4}} [A(u)]^{\frac{\sigma}{2}}.
	\end{align*}
	Since $\sigma > 2$, the point \eqref{point:2} follows.
\end{proof}

We define
\begin{align*}
	S_r(c) := S(c) \cap \Hsr{1}, \qquad \Lambda_r(c) := \slc \cap \Hsr{1}, \qquad \Lambda^{\pm}_r(c) := \Lambda^{\pm}(c) \cap \Hsr{1}.
\end{align*}
Here $\Lambda^{\pm}(c)$ denotes either $\Lambda^{+}(c)$ or $\Lambda^{-}(c)$.

\begin{lemma}\label{lemma:12}
For any $c \in (0,c_1)$ it holds that
	\begin{align*}
	\inf_{u \in \Lambda^{\pm}_r(c)} F(u) = \inf_{u \in \Lambda^{\pm}(c)} F(u).
	\end{align*}
	Also, if $\inf_{u \in \Lambda^{\pm}(c)} F(u)$ is reached, it is reached by a Schwarz symmetric function.
\end{lemma}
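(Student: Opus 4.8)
Throughout, the inequality $\inf_{u\in\Lambda^{\pm}_r(c)}F(u)\ge\inf_{u\in\Lambda^{\pm}(c)}F(u)$ is trivial, since $\Lambda^{\pm}_r(c)\subset\Lambda^{\pm}(c)$; so the whole content is to produce, from any $u\in\Lambda^{\pm}(c)$, a Schwarz symmetric element of $\Lambda^{\pm}(c)$ with energy $\le F(u)$. Let $u^{*}$ denote the Schwarz rearrangement of $|u|$. Then $\normLp{u^{*}}{2}^{2}=\normLp{u}{2}^{2}=c$, so $u^{*}\in S_{r}(c)$, and the classical rearrangement inequalities give $A(u^{*})\le A(u)$ (the P\'olya--Szeg\H{o} inequality, applied to $|u|$), $B(u^{*})\ge B(u)$ (Riesz's inequality, the convolution kernel $|x|^{-1}$ being radially decreasing), and $C(u^{*})=C(u)$. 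Since $\gamma>0$ and $a>0$, comparing the three terms of $g_{u}$ and $g_{u^{*}}$ termwise yields the pointwise bounds $g_{u^{*}}(t)\le g_{u}(t)$ and $g_{u^{*}}'(t)\le g_{u}'(t)$ for all $t>0$; in particular $F(u^{*})\le F(u)$ and $g_{u^{*}}'(1)\le g_{u}'(1)=0$.

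Since $c\in(0,c_{1})$, \cref{lemma:6} applied to $u^{*}\in S_{r}(c)$ gives $s_{u^{*}}^{+}\in(0,t_{u^{*}}^{\star})$ with $(u^{*})^{s_{u^{*}}^{+}}\in\Lambda^{+}_r(c)$ and $s_{u^{*}}^{-}\in(t_{u^{*}}^{\star},\infty)$ with $(u^{*})^{s_{u^{*}}^{-}}\in\Lambda^{-}_r(c)$, and each $(u^{*})^{t}$ is again Schwarz symmetric; so it remains to check $g_{u^{*}}(s_{u^{*}}^{+})\le F(u)$ when $u\in\Lambda^{+}(c)$ and $g_{u^{*}}(s_{u^{*}}^{-})\le F(u)$ when $u\in\Lambda^{-}(c)$. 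Assume $u\in\Lambda^{+}(c)$, so $F(u)=g_{u}(1)$. If $1\le t_{u^{*}}^{\star}$, then on $(0,t_{u^{*}}^{\star}]$ the function $g_{u^{*}}$ is convex with $g_{u^{*}}'(0^{+})<0$ and, by \cref{lemma:3}, $g_{u^{*}}'(t_{u^{*}}^{\star})>0$; hence $g_{u^{*}}'(1)\le0$ forces $1\le s_{u^{*}}^{+}$, and since $g_{u^{*}}$ is decreasing on $(0,s_{u^{*}}^{+}]$ we obtain $g_{u^{*}}(s_{u^{*}}^{+})\le g_{u^{*}}(1)\le g_{u}(1)=F(u)$. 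The case $u\in\Lambda^{-}(c)$ is symmetric: now $t_{u^{*}}^{\star}\le t_{u}^{\star}<1$ automatically, \cref{lemma:3} gives $g_{u^{*}}'(t_{u^{*}}^{\star})>0$, so $g_{u^{*}}'(1)\le0$ forces $s_{u^{*}}^{-}\le1$, and if moreover $s_{u^{*}}^{-}\ge s_{u}^{+}$ (which one reads off from $g_{u^{*}}'(s_{u}^{+})\le g_{u}'(s_{u}^{+})=0$ together with the location of $s_{u^{*}}^{+}$), then, since $g_{u}$ is non-decreasing on $[s_{u}^{+},s_{u}^{-}]=[s_{u}^{+},1]$, we get $g_{u^{*}}(s_{u^{*}}^{-})\le g_{u}(s_{u^{*}}^{-})\le g_{u}(1)=F(u)$.

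What is left are the two degenerate configurations: $1>t_{u^{*}}^{\star}$ when $u\in\Lambda^{+}(c)$ (i.e.\ the rearrangement drives $A(u^{*})$ strictly below $\frac{a\sigma(\sigma-1)}{p}C(u)$), and $s_{u^{*}}^{-}<s_{u}^{+}$ when $u\in\Lambda^{-}(c)$. In both, the monotonicity chains above fail precisely because the relevant energy values may be negative, so one cannot fall back on the identity $g_{u^{*}}(s_{u^{*}}^{-})=\sup_{t>0}g_{u^{*}}(t)$ that makes the argument immediate in \cite{JeanjeanLe2020,Soave2019Sobolevcriticalcase}; this is exactly the phenomenon flagged in the remark following \cref{theorem:1}. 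I would dispose of these cases by using $c\in(0,c_{1})$ quantitatively: the uniform lower bound for $g_{u^{*}}'(t_{u^{*}}^{\star})$ obtained in the proof of \cref{lemma:3}, the coercivity estimate \eqref{eqn:29} of \cref{lemma:5}, and the a priori bounds of \cref{lemma:16} ($F<0$ on $\Lambda^{+}(c)$, $A\ge\alpha(c)$ on $\Lambda^{-}(c)$), to constrain $A(u^{*})$, $B(u^{*})$, $C(u)$ enough either to exclude the degenerate configuration outright or to estimate $g_{u^{*}}(s_{u^{*}}^{\pm})\le F(u)$ directly. Once this is settled, the last assertion follows at once: if $\gamma^{\pm}(c)=F(u_{0})$ for some $u_{0}\in\Lambda^{\pm}(c)$, the Schwarz symmetric competitor constructed above lies in $\Lambda^{\pm}_r(c)$ with energy $\le F(u_{0})=\gamma^{\pm}(c)$, hence realizes $\gamma^{\pm}(c)$. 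I expect matching the constants so that the degenerate cases really are controlled for every $c\in(0,c_{1})$ --- rather than only for $c$ small --- to be the main obstacle.
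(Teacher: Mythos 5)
Your overall strategy coincides with the paper's: pass to the Schwarz rearrangement $v$ of $|u|$, use $A(v)\le A(u)$, $B(v)\ge B(u)$, $C(v)=C(u)$ to get $g_v(t)\le g_u(t)$ and $g_v'(t)\le g_u'(t)$ for all $t>0$, and then compare the fiber maps. The first halves of your two case analyses are correct. However, the proof is not complete as written: you explicitly leave open two ``degenerate configurations'' ($1>t_{v}^{\star}$ when $u\in\Lambda^{+}(c)$, and $s_{v}^{-}<s_{u}^{+}$ when $u\in\Lambda^{-}(c)$) and only sketch a quantitative plan for handling them, which you yourself identify as the main obstacle. That is a genuine gap.

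The gap is easy to close, and none of the quantitative machinery you propose (the lower bound in \cref{lemma:3}, the coercivity estimate, \cref{lemma:16}) is needed: both degenerate configurations are vacuous. The point you missed is that the inequality $g_{v}'\le g_{u}'$ should be evaluated at the critical points of $g_{v}$, not at those of $g_{u}$. Since $g_{u}'\ge 0$ exactly on $[s_{u}^{+},s_{u}^{-}]$ (this uses only the two-critical-point structure of the fiber maps guaranteed by \cref{lemma:3} and \cref{lemma:6} for $c\in(0,c_1)$), the relations $g_{u}'(s_{v}^{+})\ge g_{v}'(s_{v}^{+})=0$ and $g_{u}'(s_{v}^{-})\ge g_{v}'(s_{v}^{-})=0$ give
\begin{align*}
s_{u}^{+}\ \le\ s_{v}^{+}\ <\ s_{v}^{-}\ \le\ s_{u}^{-}.
\end{align*}
For $u\in\Lambda^{+}(c)$ this yields $1=s_{u}^{+}\le s_{v}^{+}<t_{v}^{\star}$, so your first configuration cannot occur; for $u\in\Lambda^{-}(c)$ it yields $s_{v}^{-}\ge s_{v}^{+}\ge s_{u}^{+}$, so neither can the second, and then $g_{v}(s_{v}^{-})\le g_{u}(s_{v}^{-})\le g_{u}(s_{u}^{-})=F(u)$ because $g_{u}$ is nondecreasing on $[s_{u}^{+},s_{u}^{-}]$. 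This is precisely the ordering the paper establishes (phrased there via $\inf_{u\in\Lambda^{\pm}(c)}F(u)=\inf_{u\in S(c)}\min/\max$ over the fibers, but the content is the same chain of inequalities). Your derivation of the last assertion is acceptable once this is fixed, although note that the paper proves the slightly sharper fact that the rearrangement of a minimizer $|u_0|$ itself lies in $\Lambda^{\pm}_r(c)$, by showing $A(v)=A(u_0)$ and $B(v)=B(u_0)$ through a strict-inequality contradiction, rather than passing to the rescaled competitor $v^{s_{v}^{\pm}}$.
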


\begin{proof}
	Since $\Lambda^{\pm}_r(c) \subset \Lambda^{\pm}(c)$, we directly have
	\begin{align}
	\inf_{u \in \Lambda^{\pm}_r(c)} F(u) \geq \inf_{u \in \Lambda^{\pm}(c)} F(u). \label{eqn:24}
	\end{align}
	Therefore, it suffices to prove that 
	\begin{align}
	\inf_{u \in \Lambda^{\pm}_r(c)} F(u) \leq \inf_{u \in \Lambda^{\pm}(c)} F(u). \label{eqn:25t}
	\end{align}
	In this aim we start to note that
	\begin{align}
	\inf_{u \in \Lambda^{+}(c)} F(u) = \inf_{u \in S(c)} \min_{0 < t \leq s_{u}^+}F(u^t) \quad \text{and} \quad \inf_{u \in \Lambda^{-}(c)} F(u) = \inf_{u \in S(c)} \max_{s_u^+ < t  \leq s_{u}^-}F(u^t). \label{eqn:26l}
	\end{align}
	Now  let $u \in S(c)$ and $v \in S_r(c)$ be the  Schwarz rearrangement of $|u|$. Taking into account that $A(v) \leq A(u)$, $C(v) = C(u)$, and    
	by the Riesz's rearrangement inequality (see \cite[Section 3.7]{LiebLoss2001analysis}),  $B(v) \geq B(u)$, 
	we have for all $t>0,$ 
	\begin{align}
	F(v^t) = \dfrac{1}{2}t^2 A(v) - \dfrac{\gamma}{4} t B(v) - \dfrac{a}{p} t^{\sigma} C(v)
	\leq \dfrac{1}{2}t^2 A(u) - \dfrac{\gamma}{4} t B(u) - \dfrac{a}{p} t^{\sigma} C(u) \label{eqn:27l}
	= F(u^t). 
	\end{align}
	Observe that, for any $w \in S(c)$,
	\begin{equation*}
	g_w'(t) = t A(w) - \frac{\gamma}{4}B(w) - \frac{a \sigma}{p}t^{\sigma -1}C(w) \quad\mbox{and}\quad g_w''(t) =  A(w) - \frac{a \sigma(\sigma -1)}{p}t^{\sigma -2}C(w).
	\end{equation*}
	Thus we have
	\begin{align*}
	g_v'(0) \leq g_u'(0) <0 \quad \mbox{and} \quad g_v''(t) \leq g_u''(t), \quad \forall t >0. 
	\end{align*}
	This implies that $0 < \spu \leq s_v^{+} < s_v^{-} \leq \smu$. Hence, we deduce from \eqref{eqn:27l} that
	\begin{align*}
	\min_{0 < t \leq s_v^+}F(v^t) \leq \min_{0 < t \leq s_u^+}F(u^t) \qquad \text{and} \qquad \max_{s_v^+ < t \leq s_v^-}F(v^t) \leq \max_{s_u^+ < t \leq s_u^-}F(u^t).
	\end{align*}	
	In view of \eqref{eqn:26l}, the inequality \eqref{eqn:25t} holds. Now if $u_0 \in \Lambda^{+}(c)$ is such that $F(u_0) = \inf_{u \in \Lambda^{+}(c)} F(u)$ we see that $v$, the Schwarz rearrangement of $|u_0|$, belongs to $\Lambda^{+}_r(c)$. Indeed, if either $A(v) < A(u_0)$ or $B(v) > B(u_0)$ then $F(v^t) < F(u_0^t)$. Hence, in view of the above arguments, we get 
	$$\inf_{u \in \Lambda^{+}(c)} F(u) = \inf_{u \in S(c)} \min_{0 < t \leq s_{u}^+}F(u^t) \leq \min_{0 < t \leq s_{v}^+}F(v^t) < \min_{0 < t \leq s_{u_0}^+}F(u_0^t) = \inf_{u \in \Lambda^{+}(c)} F(u)$$
	a contradiction. Thus $A(v) = A(u_0)$, $B(v) = B(u_0)$ and $C(v) = C(u_0)$ from which we deduce that $v \in \Lambda^{+}_r(c)$ and $F(v) = F(u_0).$ 
	The case of $u_0 \in \Lambda^{-}(c)$ such that $F(u_0) = \inf_{u \in \Lambda^{-}(c)} F(u)$ is treated similarly. This ends the proof of the lemma.
\end{proof}

Recalling that $\gamma^{+ }(c)$ and $\gamma^{+ }(c)$ are defined in \eqref{eqn:gamma} we have
\begin{lemma} \label{lemma:9}
	For any $c \in (0, c_1)$, there exists a bounded Palais-Smale sequence $(u_n^{+}) \subset \Lambda^{+}_r(c)$ for $F$ restricted to $S(c)$ at level $\gamma^{+}(c)$ and a bounded Palais-Smale sequence $(u_n^{-}) \subset \Lambda^{-}_r(c)$ for $F$ restricted to $S(c)$ at level $\gamma^{-}(c)$.
\end{lemma}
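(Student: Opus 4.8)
The plan is to apply a minimax/Ekeland-type argument on the symmetric space, exploiting the fiber-map structure already set up in Lemmas 3.8–3.12. First I would work on the radial manifold $S_r(c)$, which is natural because of \cref{lemma:12}: the infima of $F$ over $\Lambda^{\pm}_r(c)$ and $\Lambda^{\pm}(c)$ coincide, so it suffices to produce PS sequences inside $\Lambda^{\pm}_r(c)$, and the Palais principle of symmetric criticality lets us treat them as PS sequences for $F$ restricted to the full $S(c)$. For the $+$ part, I would note that $\Lambda^+_r(c)$ (equivalently, via $u \mapsto u^{s_u^+}$, the set of "first minimum points" on each fiber over $S_r(c)$) realizes a local-minimum-type structure: by \cref{lemma:6} each fiber has a strict local minimum at $s_u^+ \in (0, t_u^\star)$ with $u^{s_u^+}\in\Lambda^+(c)$, and the map $u\mapsto s_u^+$ is $C^1$. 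Composing, the map $u \in S_r(c)\mapsto F(u^{s_u^+})$ is $C^1$ and bounded below (by \cref{lemma:5}, since it lands in $\Lambda(c)$), so by Ekeland's variational principle there is a minimizing sequence $(w_n)\subset S_r(c)$ for this functional which is also a PS sequence for it; setting $u_n^+ := w_n^{s_{w_n}^+}\in\Lambda^+_r(c)$ and using the isomorphism of tangent spaces in \cref{lemma:2.4} to transport the gradient bound, one gets that $(u_n^+)$ is a PS sequence for $F|_{S(c)}$ at level $\gamma^+(c)$.

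For the $-$ part the structure is a mountain pass over each fiber: \cref{lemma:6} gives a strict local maximum at $s_u^- \in (t_u^\star,\infty)$ with $g_u(t)\to -\infty$ as $t\to\infty$, and $g_u'(0^+) < 0$, so each fiber has mountain-pass geometry separating $\Lambda^+(c)$ from $\Lambda^-(c)$. One realizes $\gamma^-(c)$ as a minimax value: $\gamma^-(c) = \inf_{u\in S_r(c)}\max_{s_u^+ < t \le s_u^-} F(u^t) = \inf_{u\in S_r(c)}\max_{t>0}F(u^t)$ (the max over the fiber being attained at $s_u^-$, which is the global max on $[t_u^\star,\infty)$ and dominates the local min $s_u^+$). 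Applying a minimax principle with the deformation/Ekeland argument on $S_r(c)$ — as in \cite{CingolaniJeanjean2019, Soave2019} — produces a PS sequence at level $\gamma^-(c)$ lying (after projecting onto the fiber via $s_{w_n}^-$) in $\Lambda^-_r(c)$.

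Boundedness is then handled by \cref{lemma:5}: any sequence in $\Lambda(c)$ along which $F$ stays bounded (here $F(u_n^\pm)\to\gamma^\pm(c)$, finite) must have $\|u_n^\pm\|$ bounded, since $F$ is coercive on $\Lambda(c)$ — this is immediate from \eqref{eqn:29}. Finally one should double-check that $\gamma^+(c) = \inf_{S_r(c)}F(u^{s_u^+})$ and $\gamma^-(c) = \inf_{S_r(c)}\max_{t}F(u^t)$, i.e. that minimizing over fibers coincides with minimizing over $\Lambda^\pm_r(c)$; this follows from \cref{lemma:6} (the bijection between $S(c)$-fibers and $\Lambda^\pm(c)$) together with \cref{lemma:12}. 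The main obstacle I expect is the bookkeeping needed to pass a PS sequence for the "fiber-reduced" functional on $S_r(c)$ back to a genuine PS sequence for $F|_{S(c)}$ at the right level while keeping it inside $\Lambda^\pm_r(c)$ — this requires the $C^1$-dependence of $s_u^\pm$ from \cref{lemma:6}, the tangent-space isomorphism of \cref{lemma:2.4}, and care that the constraint $Q=0$ does not cost an extra Lagrange multiplier (it does not, precisely because $\Lambda^0(c)=\emptyset$ by \cref{lemma:4}, so $\Lambda^\pm(c)$ are natural constraints). A secondary but routine point is invoking symmetric criticality to move from $S_r(c)$ to $S(c)$.
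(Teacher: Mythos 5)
Your plan is essentially the paper's proof: the paper likewise introduces the reduced functionals $I^{\pm}(u)=F(u^{s_u^{\pm}})$, shows via \cref{lemma:7} that $dI^{\pm}(u)[\psi]=dF(u^{s_u^{\pm}})[\psi^{s_u^{\pm}}]$, identifies $\gamma^{\pm}(c)=\inf_{u\in S_r(c)}I^{\pm}(u)$ using \cref{lemma:12}, extracts a Palais--Smale sequence for $I^{\pm}$ (via Ghoussoub's minimax principle on the homotopy-stable family of singletons of $S_r(c)$, which in this degenerate setting plays exactly the role of your Ekeland argument), transports it to $\Lambda^{\pm}_r(c)$ through the fiber map and the tangent-space isomorphism of \cref{lemma:2.4}, and concludes boundedness from the coercivity in \cref{lemma:5}. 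The only detail worth flagging is that transporting the differential bound requires showing the scaling parameters $s_{y_n}^{\pm}$ stay bounded above and away from zero (the paper's claim \eqref{eqn:5}), a point your plan leaves implicit but which is routine given \eqref{eqn:29} and \cref{lemma:16}.
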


In order to prove \cref{lemma:9} we define the functions
\begin{align*}
	&I^{+}: S(c) \to \R, \quad I^{+}(u) = F(u^{\spu}), \\
	&I^{-}: S(c) \to \R, \quad I^{-}(u) = F(u^{\smu}).
\end{align*}
Note that since the maps $u \mapsto \spu$ and $u \mapsto \smu$ are of class $\mathbb{C}^1$, see \cref{lemma:6}, the functionals $I^{+}$ and $I^{-}$ are of class $\mathbb{C}^1$. 
\begin{lemma} \label{lemma:7}
	For any $c \in (0, c_1)$, we have that $dI^{+}(u)[\psi] = dF(u^{\spu})[\psi^{\spu}]$ and $dI^{-}(u)[\psi] = dF(u^{\smu})[\psi^{\smu}]$ for any $u \in S(c)$, $\psi \in T_uS(c)$.
\end{lemma}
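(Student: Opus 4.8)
Plan of proof for Lemma 3.16 ($dI^{\pm}(u)[\psi] = dF(u^{s_u^{\pm}})[\psi^{s_u^{\pm}}]$).

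The idea is a direct differentiation of the composite map $u \mapsto F(u^{s_u^{\pm}})$ using the chain rule, exploiting the fact established in \cref{lemma:6} that $u \mapsto s_u^{\pm}$ is $\mathbb{C}^1$ together with the stationarity condition $g_u'(s_u^{\pm}) = 0$. I will treat $I^-$ in detail; the argument for $I^+$ is identical after replacing $s_u^-$ by $s_u^+$.

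First I would fix $u \in S(c)$ and $\psi \in T_u S(c)$, and compute $dI^-(u)[\psi]$ as $\frac{d}{d\tau}\big|_{\tau=0} I^-(\alpha(\tau))$ along a $\mathbb{C}^1$ curve $\alpha(\tau) \subset S(c)$ with $\alpha(0) = u$, $\alpha'(0) = \psi$. Writing $t(\tau) := s_{\alpha(\tau)}^-$ (a $\mathbb{C}^1$ function of $\tau$ by \cref{lemma:6}) and noting that $I^-(\alpha(\tau)) = F\big(\alpha(\tau)^{t(\tau)}\big)$, the chain rule gives
\begin{align*}
dI^-(u)[\psi] = dF(u^{s_u^-})\big[\partial_\tau\big(\alpha(\tau)^{t(\tau)}\big)\big|_{\tau=0}\big].
\end{align*}
The derivative of $\alpha(\tau)^{t(\tau)}$ at $\tau = 0$ splits, by the product/chain rule applied to $(x,\tau)\mapsto t(\tau)^{3/2}\alpha(\tau)(t(\tau)x)$, into two contributions: one coming from varying $\alpha$ with $t$ frozen at $s_u^-$, which produces exactly $\psi^{s_u^-}$, and one coming from varying $t$ with $\alpha$ frozen at $u$, which produces $t'(0)\cdot \frac{d}{dt}\big|_{t=s_u^-} u^t$.

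Therefore
\begin{align*}
dI^-(u)[\psi] = dF(u^{s_u^-})[\psi^{s_u^-}] + t'(0)\, dF(u^{s_u^-})\Big[\tfrac{d}{dt}\big|_{t=s_u^-} u^t\Big].
\end{align*}
The key observation that kills the second term is that $dF(u^{s_u^-})\big[\frac{d}{dt}|_{t=s_u^-} u^t\big] = \frac{d}{dt}\big|_{t=s_u^-} F(u^t) = g_u'(s_u^-) = 0$, since $u^{s_u^-} \in \Lambda^-(c)$ and $1$ is a critical point of $g_{u^{s_u^-}}$ — equivalently $s_u^-$ is a critical point of $g_u$, which is precisely the defining property of $s_u^-$ from \cref{lemma:6}. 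Hence the second term vanishes regardless of the (finite) value of $t'(0)$, and we obtain $dI^-(u)[\psi] = dF(u^{s_u^-})[\psi^{s_u^-}]$. The same computation with $s_u^+$ in place of $s_u^-$ gives the statement for $I^+$.

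The only mild technical point to be careful about is the justification that the curve $\tau \mapsto \alpha(\tau)^{t(\tau)}$ is differentiable into $H^1(\R^3)$ and that the two-variable chain rule may be applied — this is routine given that scaling $u \mapsto u^t$ is jointly continuous and $\mathbb{C}^1$ in $t$ on $H^1(\R^3)$ for $u$ fixed, and $t(\tau)$ is $\mathbb{C}^1$; one may also invoke \cref{lemma:2.4} to see that $\psi^{s_u^-} \in T_{u^{s_u^-}} S(c)$ so that the pairing $dF(u^{s_u^-})[\psi^{s_u^-}]$ is the same whether computed with $F$ or with $F|_{S(c)}$. I do not expect a genuine obstacle here; the substance of the lemma is entirely the cancellation $g_u'(s_u^\pm) = 0$.
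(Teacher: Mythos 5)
Your overall strategy is sound and genuinely different from the paper's: you differentiate the composition $u \mapsto F(u^{\spu})$ by the chain rule, using the $\mathit{C}^1$ regularity of $u \mapsto \spu$ from \cref{lemma:6} and killing the extra term with $g_u'(\spu)=0$, whereas the paper never differentiates $\spu$ at all. It sandwiches the incremental quotient $\frac{1}{t}\left(F(h(t)^{s_t}) - F(h(0)^{s_0})\right)$ between the quantities obtained by freezing the scaling exponent at $s_t$ and at $s_0$ (using that $\spu$ is a local extremum of $s\mapsto F(u^s)$), and then applies the mean value theorem to the real-valued maps $\tau\mapsto A(h(\tau)), B(h(\tau)), C(h(\tau))$; only the continuity of $u\mapsto \spu$ is used. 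Both routes rest on the same cancellation coming from the extremality of $s_u^{\pm}$, as you correctly identify.

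However, one step of your argument fails as written. You decompose $\partial_\tau\big(\alpha(\tau)^{t(\tau)}\big)\big|_{\tau=0}$ as $\psi^{\smu} + t'(0)\,\frac{d}{dt}\big|_{t=\smu}u^t$ and assert that $t\mapsto u^t$ is $C^1$ into $\Ho$ for fixed $u$. This is false for a general $u\in \Ho$: formally $\frac{d}{dt}u^t$ contains the term $t^{\frac{1}{2}}\,(tx)\cdot(\nabla u)(tx)$, and $x\cdot\nabla u$ need not belong to $\Lt$ when $u$ is merely in $\Ho$, so the $H^1$-valued curve $t\mapsto u^t$ is continuous but not differentiable and the pairing $dF(u^{\smu})\left[\frac{d}{dt}\big|_{t=\smu}u^t\right]$ is not defined. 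This is exactly the pitfall the paper's incremental-quotient argument is built to avoid. The fix is easy and preserves your structure: apply the two-variable chain rule to the scalar function
\begin{align*}
\Phi(\tau,t):=F\big(\alpha(\tau)^t\big)=\dfrac{1}{2}t^2A(\alpha(\tau))-\dfrac{\gamma}{4}tB(\alpha(\tau))-\dfrac{a}{p}t^{\sigma}C(\alpha(\tau)),
\end{align*}
which is jointly $C^1$ because $A,B,C$ are $C^1$ on $\Ho$ and $v\mapsto v^t$ is a bounded linear map for fixed $t$. Then $\frac{d}{d\tau}\big|_{\tau = 0} I^-(\alpha(\tau)) = \partial_\tau\Phi(0,\smu) + \partial_t\Phi(0,\smu)\,t'(0) = dF(u^{\smu})[\psi^{\smu}] + g_u'(\smu)\,t'(0)$, and the second term vanishes as you intended, with only the well-defined scalar derivative $g_u'(\smu)$ ever appearing.
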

\begin{proof}
	We first give the proof for $I^{+}$. Let $\psi \in T_uS(c)$, then $\psi = h'(0)$ where $h: (-\epsilon, \epsilon) \mapsto S(c)$ is a $\mathit{C}^1$-cure with $h(0) = u$. We consider the incremental quotient
	\begin{align}
		\dfrac{I^{+}(h(t)) - I^{+}(h(0))}{t} = \dfrac{F(h(t)^{s_t}) - F(h(0)^{s_0})}{t}, \label{eqn:3}
	\end{align}
	where $s_t:= s_{h(t)}^{+}$, and hence $s_0 = \spu$. Recalling from \cref{lemma:6} that $s_0$ is a strict local minimum of $s \mapsto F(u^s)$ and $u \mapsto s_0$ is continuous, we get 
	\begin{align*}
		&F(h(t)^{s_t}) - F(h(0)^{s_0})
		\geq F(h(t)^{s_t}) - F(h(0)^{s_t}) \\
		= &\dfrac{s_t^2}{2} \Big[A(h(t)) - A(h(0)) \Big] - \dfrac{\gamma s_t}{4} \Big[B(h(t)) - B(h(0)) \Big] -\dfrac{a s_t^{\sigma}}{p} \Big[C(h(t)) - C(h(0)) \Big]\\
		= &s_t^2 \intrb \diff h(\tau_1 t) \cdot \diff h'(\tau_1 t) t dx - \gamma s_t \intrb\intrb \dfrac{\abs{h(\tau_2 t)(x)}^2 h(\tau_2 t)(y) h'(\tau_2 t)(y)}{\abs{x-y}} dxdy 
		- a s_t^{\sigma} \intrb \abs{h(\tau_3 t)}^{p-2} h(\tau_3 t) h'(\tau_3 t) dx,
	\end{align*}
	for some $\tau_1, \tau_2, \tau_3 \in (0,1)$. Analogously
	\begin{align*}
	&F(h(t)^{s_t}) - F(h(0)^{s_0})
	\leq F(h(t)^{s_0}) - F(h(0)^{s_0}) \\
	= &s_0^2 \intrb \diff h(\tau_4 t) \cdot \diff h'(\tau_4 t) t dx - \gamma s_0 \intrb\intrb \dfrac{\abs{h(\tau_5 t)(x)}^2 h(\tau_5 t)(y) h'(\tau_5 t)(y)}{\abs{x-y}} dxdy 
	- a s_0^{\sigma} \intrb \abs{h(\tau_6 t)}^{p-2} h(\tau_6 t) h'(\tau_6 t) dx,
	\end{align*}
	for some $\tau_4, \tau_5, \tau_6 \in (0,1)$. Now, from \eqref{eqn:3} we deduce that
	\begin{align*}
		\lim_{t \to 0} \dfrac{I^{+}(h(t)) - I^{+}(h(0))}{t}
		 = &(\spu)^2 \intrb \diff u \diff \psi dx - \gamma (\spu) \intrb\intrb \dfrac{\abs{u(x)}^2 u(y) \psi(y)}{\abs{x-y}} dxdy - a (\spu)^{\sigma} \intrb \abs{u}^{p-2} u \psi dx \\
		 = & \intrb \diff u^{\spu} \diff \psi^{\spu} dx - \gamma \intrb\intrb \dfrac{\abs{u^{\spu}(x)}^2 u^{\spu}(y) \psi^{\spu}(y)}{\abs{x-y}} dxdy - a \intrb \abs{u^{\spu}}^{p-2} u^{\spu} \psi^{\spu} dx \\
		 = &dF(u^{\spu})[\phi^{\spu}],
	\end{align*}
	 for any $u \in S(c)$, $\psi \in T_uS(c)$. The proof for $I^{-}$ is similar.
\end{proof}


Let $\mathcal{G}$ be the set of all singletons belonging to $S_r(c)$. It is clearly a homotopy stable family of compact subsets of $S_r(c)$ with closed boundary (an empty boundary actually) in the sense of \cite[Definition 3.1]{ghoussoub1993duality}. In view of \cref{lemma:12} we have that

\begin{align*}
		e_{\mathcal{G}}^{+} : = \inf_{A \in \mathcal{G}} \max_{u \in A} I^{+}(u) = \inf_{u \in S_r(c)} I^{+}(u) =\inf_{u \in \Lambda^{+}_r(c)} F(u) = \inf_{u \in \Lambda^{+}(c)} F(u) = \gamma^{+}(c).
	\end{align*}
	\begin{align*}
		e_{\mathcal{G}}^{-} : = \inf_{A \in \mathcal{G}} \max_{u \in A} I^{-}(u) = \inf_{u \in S_r(c)} I^{-}(u) =\inf_{u \in \Lambda^{-}_r(c)} F(u) = \inf_{u \in \Lambda^{-}(c)} F(u) = \gamma^{-}(c).
	\end{align*}


\begin{lemma} \label{lemma:8}
	For any $c \in (0, c_1)$, there exists a Palais-Smale sequence $(u_n^{+}) \subset \Lambda^{+}(c)$ for $F$ restricted to $S_r(c)$ at level $e_{\mathcal{G}}^{+}$ and a Palais-Smale  sequence 
	$(u_n^{-}) \subset \Lambda^{-}(c)$ for $F$ restricted to $S(c)$ at level $e_{\mathcal{G}}^{-}$.
\end{lemma}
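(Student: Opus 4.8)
The plan is to obtain the two sequences by applying a minimax principle to the auxiliary $C^1$ functionals $I^{+}$ and $I^{-}$ on the radial manifold $S_r(c)$, and then to push the resulting information back onto $F$ through the fiber rescaling, using \cref{lemma:2.4} and \cref{lemma:7}. Throughout I fix one of the signs and write $s^{\pm}(\cdot)$ for the $C^1$ map provided by \cref{lemma:6}.

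First I would choose a minimizing sequence $(v_n)\subset\Lambda^{\pm}_r(c)$ for $F$; it exists since $\gamma^{\pm}(c)=\inf_{\Lambda^{\pm}_r(c)}F$ by \cref{lemma:12} and the definition \eqref{eqn:gamma}. As $v_n\in\Lambda^{\pm}(c)$ we have $s^{\pm}(v_n)=1$, hence $I^{\pm}(v_n)=F(v_n)\to\gamma^{\pm}(c)=e_{\mathcal{G}}^{\pm}$, so that the singletons $\{v_n\}$ form a minimizing sequence for the family $\mathcal{G}$. Applying the minimax principle \cite[Theorem 3.2]{ghoussoub1993duality} to $I^{\pm}$ on $S_r(c)$ along this minimizing sequence of sets provides a Palais--Smale sequence $(w_n)\subset S_r(c)$ for $I^{\pm}$ restricted to $S_r(c)$ at level $e_{\mathcal{G}}^{\pm}$ with $\norm{w_n-v_n}_{\Hs{1}}\to 0$. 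Since $F$ is coercive on $\Lambda(c)$ by \cref{lemma:5}, the sequence $(v_n)$ --- and hence $(w_n)$ --- is bounded in $\Hs{1}$, so $A(w_n)$ is bounded.

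Next I would set $u_n^{\pm}:=w_n^{\,s^{\pm}(w_n)}$, so that $u_n^{\pm}\in\Lambda^{\pm}_r(c)\subset\Lambda^{\pm}(c)$ by \cref{lemma:6} and $F(u_n^{\pm})=I^{\pm}(w_n)\to e_{\mathcal{G}}^{\pm}$. The key point, and the main obstacle, is to establish a uniform lower bound $s^{\pm}(w_n)\geq\delta>0$, since otherwise the rescaling isomorphism of \cref{lemma:2.4} could inflate $\Hs{1}$--norms without control. For the minus sign this follows from \cref{lemma:16}\eqref{point:2}: $A(u_n^{-})=\big(s^{-}(w_n)\big)^{2}A(w_n)\geq\alpha(c)$, and since $A(w_n)$ is bounded above, $s^{-}(w_n)$ is bounded away from $0$. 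For the plus sign I would first note that $\gamma^{+}(c)<0$ strictly, since $\Lambda^{+}(c)\neq\emptyset$ by \cref{lemma:6} while $F<0$ on $\Lambda^{+}(c)$ by \cref{lemma:16}\eqref{point:1}; then, if $s^{+}(w_n)\to 0$ along a subsequence, we would get $A(u_n^{+})\to 0$, hence, via \cref{lemma:1}\eqref{point:1i} and the identity $F(u)=\tfrac{\sigma-2}{2\sigma}A(u)-\tfrac{\gamma(\sigma-1)}{4}B(u)$ on $\Lambda(c)$ (see \eqref{eqn:29}), $F(u_n^{+})\to 0$, contradicting $F(u_n^{+})\to\gamma^{+}(c)<0$. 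So $s^{+}(w_n)$ is bounded away from $0$ as well.

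Finally I would transfer the Palais--Smale property. For a unit vector $\varphi\in T_{u_n^{\pm}}S(c)$ --- it suffices to take $\varphi$ radial in the plus case --- \cref{lemma:2.4} gives $\psi:=\varphi^{1/s^{\pm}(w_n)}\in T_{w_n}S(c)$, and \cref{lemma:7} yields $dF(u_n^{\pm})[\varphi]=dI^{\pm}(w_n)[\psi]$, while $\norm{\psi}_{\Hs{1}}^{2}=\norm{\varphi}_{\Lt}^{2}+\big(s^{\pm}(w_n)\big)^{-2}\norm{\diff\varphi}_{\Lt}^{2}\leq(1+\delta^{-2})\norm{\varphi}_{\Hs{1}}^{2}$. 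In the plus case this already gives $\abs{dF(u_n^{+})[\varphi]}\leq(1+\delta^{-2})^{1/2}\,\norm{dI^{+}_{|S_r(c)}(w_n)}_{*}\to 0$, so $(u_n^{+})$ is a Palais--Smale sequence for $F$ restricted to $S_r(c)$ at level $e_{\mathcal{G}}^{+}$. In the minus case I would use in addition that $I^{-}$ is invariant under the action of $O(3)$ (because $A$, $B$, $C$, and hence $s^{-}(\cdot)$, are): by the principle of symmetric criticality the cotangent norms of $I^{-}$ at the radial point $w_n$ relative to $S_r(c)$ and to $S(c)$ agree, so $\norm{dI^{-}_{|S(c)}(w_n)}_{*}\to 0$ and therefore $\norm{dF_{|S(c)}(u_n^{-})}_{*}\to 0$, giving the desired Palais--Smale sequence for $F$ restricted to $S(c)$ at level $e_{\mathcal{G}}^{-}$.
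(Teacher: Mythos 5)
Your proposal is correct and follows essentially the same route as the paper: apply \cite[Theorem 3.2]{ghoussoub1993duality} to $I^{\pm}$ along a minimizing sequence of singletons located in $\Lambda^{\pm}_r(c)$, rescale the resulting Palais--Smale sequence by $s^{\pm}(\cdot)$ back into $\Lambda^{\pm}_r(c)$, control the scaling parameter away from $0$ (via \cref{lemma:16}\eqref{point:2} for the minus sign and via $\gamma^+(c)<0$ together with \eqref{eqn:29} for the plus sign), and transfer the differential through \cref{lemma:2.4} and \cref{lemma:7}. Your variants --- choosing the minimizing singletons directly in $\Lambda^{\pm}_r(c)$ instead of pushing them there by the homotopy $\eta$, noting that only the lower bound on $s^{\pm}(w_n)$ is needed, and invoking symmetric criticality to pass from $S_r(c)$ to $S(c)$ for the minus case --- are careful refinements of steps the paper treats more loosely, not a different argument.
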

\begin{proof}
We first treat the case of $e_{\mathcal{G}}^{+}$. 
	Let $(D_n) \subset \mathcal{G}$ be such that
	\begin{align*}
		\max_{u \in D_n} I^{+}(u) < e_{\mathcal{G}}^{+} + \dfrac{1}{n},
	\end{align*}
	and consider the homotopy
	\begin{align*}
		\eta: [0,1] \times S(c) \mapsto S(c), \quad \eta(t, u) = u^{1 - t + t \spu}.
	\end{align*}
	From the definition of $\mathcal{G}$, we have
	 \begin{align*}
	 		E_n := \eta(\{1\} \times D_n) = \{ u^{\spu} : u \in D_n\} \in \mathcal{G}.
	 \end{align*}
	 \cref{lemma:6} implies that $E_n \subset \slp$ for all $n \in \N$. Let $v \in E_n$, i.e. $v = u^{\spu}$ for some $u \in D_n$, and hence $I^{+}(v) = I^{+}(u)$. So, we have
	 \begin{align*}
	 	\max_{v \in E_n} I^{+}(v) = \max_{u \in D_n} I^{+}(u).
	 \end{align*} 
	 Therefore, $E_n$ is another minimizing sequence for $e_{\mathcal{G}}^{+}$. Applying \cite[Theorem 3.2]{ghoussoub1993duality}, in the particular case where the boundary $B = \emptyset$, there exists a Palais-Smale sequence $(y_n)$ for $I^{+}$ on $S(c)$ at level $e_{\mathcal{G}}^{+}$ such that
	 \begin{align}
	 	dist_{\Ho} (y_n, E_n) \to 0 \quad \text{as } n \to \infty. \label{eqn:4}
	 \end{align} 
	 Now writing $s_n := s_{y_n}^{+}$ we set $u_n^{+} := y_n^{s_n} \in \slp$. We claim that there exists a constant $C > 0$ such that
	 \begin{align}
	 	\dfrac{1}{C} \leq s_n^2 \leq C \label{eqn:5}
	 \end{align}
	 for $n \in \N$ large enough. Indeed, notice first that
	 \begin{align*}
	 	s_n^2 = \dfrac{A(u_n^{+})}{A(y_n)}.
	 \end{align*}
	 By $F(u_n^{+}) = I^{+}(y_n) \to e_{\mathcal{G}}^{+} = \gamma^+(c) <0$ we deduce from \eqref{eqn:29} that there exists $M>0$ such that
	 \begin{align}\label{A-bounds}
	 	\dfrac{1}{M} \leq A(u_n^{+}) \leq M.
	 \end{align}
	 On the other hand, since $E_n \in \slp$ is a minimizing sequence for $e_{\mathcal{G}}^{+}$ and $F$ is $\Ho$ coercive on $\slp$, we obtain that $E_n$ is uniformly bounded in $\Ho$ and thus from \eqref{eqn:4}, it implies that $\sup_{n}A(y_n) < \infty$. Also, since $E_n$ is compact for every $n \in \N$, there exist a $v_n \in E_n$ such that $\normHs{v_n - y_n}{1} \to 0$ as $n \to 0$ due to \eqref{eqn:4}. Using \cref{lemma:5} again, we have, for a $\delta > 0$,
	 \begin{align*}
	 	A(y_n) \geq A(v_n) - A(v_n-y_n) \geq \dfrac{\delta}{2}.
	 \end{align*}
	 This proves the claim \eqref{eqn:5}. From \eqref{eqn:21}, and by \cref{lemma:2.4}, \cref{lemma:7}, we have 
	 \begin{align*}
	 \norm{{dF}_{|S(c)} (u_n^{+})}_{*} = \sup_{\norm{\psi} \leq 1, \psi \in T_uS(c)} \abs*{dF(u_n^{+})[\psi]} 
	 = \sup_{\norm{\psi} \leq 1, \psi \in T_uS(c)} \abs*{dF(u_n^{+})\[\(\psi^{\frac{1}{s_n}}\)^{s_n}\]}
	 = \sup_{\norm{\psi} \leq 1, \psi \in T_uS(c)} \abs*{dI^{+}(y_n)\[\psi^{\frac{1}{s_n}}\]}.
	 \end{align*} 
	This implies that $(u_n^{+}) \subset \slp$ is a Palais-Smale sequence for $F$ restricted to $S(c)$ at level $e_{\mathcal{G}}^{+}$ since $(y_n)$ is a Palais-Smale sequence for $I^{+}$ at level $e_{\mathcal{G}}^{+}$ and
	and $\norm*{\psi^{\frac{1}{s_n}}} \leq C_1 \norm{\psi} \leq C_1$ due to \eqref{eqn:5}. For the case of $e_{\mathcal{G}}^{-}$ the proof is identical except that we use \cref{lemma:16}\eqref{point:2} along with \eqref{eqn:29} to conclude that there exists a $M >0$ such that \eqref{A-bounds} holds for $A(u_n^{-})$ replacing $A(u_n^{+})$.
\end{proof}

\begin{proof}[Proof of \cref{lemma:9}]
Applying  \cref{lemma:8}, we deduce that there exists a Palais-Smale sequence $(u_n^{+}) \subset \Lambda_r^{+}(c)$ for $F$ restricted to $S(c)$ at level $e_{\mathcal{G}}^{+} = \gamma^{+}(c)$ and a Palais-Smale sequence $(u_n^{-}) \subset \Lambda_r^{-}(c)$ for $F$ restricted to $S(c)$ at level $e_{\mathcal{G}}^{-} = \gamma^{-}(c)$.  In both cases the boundedness of these sequences follows from \cref{lemma:5}. 
\end{proof}

\subsection{The compactness of our Palais-Smale sequences in the Sobolev subcritical case $ p \in (\frac{10}{3},6)$}\label{Subsection3-2}

\begin{lemma}\label{lemma:15}
Let $ p \in (\frac{10}{3},6)$.
	For any $c \in (0, c_1)$, if either $(u_n) \subset \Lambda^{+}(c)$ is a minimizing sequence for $\gamma^{+}(c)$  or $(u_n) \subset \Lambda^{-}(c)$ is a minimizing sequence for $\gamma^{-}(c)$, it weakly converges, up to translation, to a non-trivial limit.
\end{lemma}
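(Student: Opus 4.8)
The plan is to rule out the only obstruction to subconvergence (up to translation) to a nonzero weak limit, namely \emph{vanishing} in the sense of P.~L.~Lions.

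First I would note that $(u_n)$ is bounded in $\Ho$: since $(u_n)\subset\slc$ with $F(u_n)\to\gamma^{\pm}(c)\in\R$, this is immediate from the coercivity of $F$ on $\slc$ proved in \cref{lemma:5}. Then, by the standard argument, if $\sup_{y\in\Rb}\int_{B_1(y)}\abs{u_n}^2\,dx$ does not tend to $0$, one finds, along a subsequence, points $y_n\in\Rb$ with $\int_{B_1(y_n)}\abs{u_n}^2\,dx\ge\delta>0$; setting $v_n:=u_n(\cdot+y_n)$, which is still bounded in $\Ho$, and passing to a further subsequence with $v_n\weakto v$, the compact embedding $H^1(B_1)\hookrightarrow L^2(B_1)$ gives $\int_{B_1}\abs{v}^2\,dx\ge\delta>0$, so $v\neq 0$ and the conclusion holds. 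Hence it suffices to reach a contradiction under the vanishing alternative $\sup_{y\in\Rb}\int_{B_1(y)}\abs{u_n}^2\,dx\to 0$.

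So assume vanishing. By the classical lemma of P.~L.~Lions \cite{LIONS1984part2}, vanishing together with the $\Ho$-boundedness of $(u_n)$ forces $u_n\to 0$ strongly in $L^s(\Rb)$ for every $s\in(2,6)$. Since $p\in(\tfrac{10}{3},6)\subset(2,6)$, this gives $C(u_n)=\normLp{u_n}{p}^p\to 0$; moreover $\tfrac{12}{5}\in(2,6)$, so $\normLp{u_n}{12/5}\to 0$, and the Hardy--Littlewood--Sobolev inequality (which bounds $B(u)$ by a constant times $\normLp{u}{12/5}^4$, as in the proof of \cref{lemma:1}) yields $B(u_n)\to 0$. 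Because $u_n\in\slc$, i.e. $A(u_n)=\tfrac{\gamma}{4}B(u_n)+\tfrac{a\sigma}{p}C(u_n)$, we conclude $A(u_n)\to 0$ as well. The two cases now close differently: if $(u_n)\subset\slm$, then $A(u_n)\to 0$ contradicts $A(u_n)\ge\alpha(c)>0$ from \cref{lemma:16}\eqref{point:2}; if $(u_n)\subset\slp$, then $A(u_n),B(u_n),C(u_n)\to 0$ forces $F(u_n)=\tfrac12 A(u_n)-\tfrac{\gamma}{4}B(u_n)-\tfrac{a}{p}C(u_n)\to 0$, whereas $F(u_n)\to\gamma^{+}(c)$ and, since $\slp\neq\emptyset$ for $c\in(0,c_1)$, \cref{lemma:16}\eqref{point:1} gives $\gamma^{+}(c)\le F(u_0)<0$ for any $u_0\in\slp$ --- again a contradiction. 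Thus vanishing is impossible, which would prove the lemma.

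There is no serious obstacle here once the material of \cref{Section2} and \cref{Subsection3-1} is in hand; the point worth stressing is that the argument uses subcriticality in an essential way, through the fact that vanishing forces both $C(u_n)\to 0$ and $B(u_n)\to 0$. When $p=6$ the term $C(u_n)=\normLp{u_n}{6}^6$ need not vanish under the vanishing alternative, so this strategy fails and the compactness in the Sobolev critical case must be obtained by a genuinely different analysis, carried out in \cref{Subsection3-3}.
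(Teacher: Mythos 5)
Your proof is correct and follows essentially the same route as the paper: boundedness from the coercivity of $F$ on $\slc$ (\cref{lemma:5}), exclusion of vanishing via Lions' lemma forcing $B(u_n),C(u_n)\to 0$ and hence $A(u_n)\to 0$ from $Q(u_n)=0$, then the two contradictions via \cref{lemma:16}. Your treatment of the dichotomy (extracting the translations $y_n$) is slightly more explicit than the paper's, but the substance is identical.
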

\begin{proof}
	Since $F$ restricted to $\slc$ is coercive on $\Ho$ (see \cref{lemma:5}), $(u_n)$ is bounded. Hence, up to translation, $u_n \weakto u_c$ weakly in $\Ho$.
	Let us argue by contradiction assuming that $u_c = 0$, this means that $(u_n)$ is vanishing.
	By \cite[Lemma I.1]{LIONS1984part2}, we have, for $2 < q < 6$, 
	\begin{align*}
	\normLp{u_n}{q} \to 0, \quad \text{as } n \to \infty.
	\end{align*}
	This implies that
	\begin{align*}
	C(u_n) \to 0, \qquad \text{and} \qquad B(u_n) \leq K_1 \norm{u_n}_{\Lp{\frac{12}{5}}}^4 \to 0,
	\end{align*}
	due to \eqref{eqn:2.1}. 
	Since $(u_n) \subset \Lambda(c)$, we have $Q(u_n) = 0$, and hence
	\begin{align}
	A(u_n) = \dfrac{\gamma}{4} B(u_n) + \dfrac{a \sigma}{p} C(u_n) \to 0. \label{eqn:22}
	\end{align}
	If we assume that  $(u_n) \subset \Lambda^{-}(c)$ we recall that by \cref{lemma:16}, there exists $\alpha > 0$ such that 
	\begin{align*}
	A(u_n) \geq \alpha > 0, \quad \forall n \in \N,
	\end{align*}
	contradicting \eqref{eqn:22}. If we assume that $(u_n) \subset \Lambda^{+}(c)$ then since
	\begin{align*}
	F(u_n) = \dfrac{1}{2} A(u_n) - \dfrac{\gamma}{4} B(u_n) - \dfrac{a}{p} C(u_n) \to 0 
	\end{align*}
	we reach a contradiction with the fact that
	\begin{align*}
	F(u_n) \to \gamma^{+}(c) = \inf_{u \in \slp} F(u) < 0.
	\end{align*}
	The lemma is proved.
\end{proof}

\begin{lemma} \label{lemma:11}
Let $ p \in (\frac{10}{3},6)$.
	Assume that a bounded Palais-Smale sequence $(u_n) \subset \Lambda_r(c)$ for $F$ restricted to $S(c)$ is weakly convergent, up to translation, to the nonzero function $u_c$. Then, up to translation, $u_n \to u_c \in \Lambda_r(c)$ strongly in $\Hsr{1}$. In particular $u_c$ is a radial solution to \eqref{eqn:Laplace} for some $\lambda_c > 0$ and $\normLp{u_c}{2}^2 = c$.
\end{lemma}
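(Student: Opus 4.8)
The plan is to run the standard Lagrange‑multiplier / Brezis–Lieb splitting argument, exploiting the radial setting to recover compactness in the lower‑order terms. First I would record that, since $(u_n)\subset\Lambda_r(c)$ is a bounded Palais–Smale sequence for $F$ restricted to $S(c)$, there is a sequence $\lambda_n\in\R$ with
\begin{align*}
-\laplace u_n + \lambda_n u_n - \gamma(\abs{x}^{-1}*\abs{u_n}^2)u_n - a\abs{u_n}^{p-2}u_n \to 0 \quad\text{in } \Hs{1}^{-1},
\end{align*}
and, testing this relation against $u_n$ and using $A(u_n)+\lambda_n D(u_n)-\gamma B(u_n)-aC(u_n)=o(1)$ together with the boundedness of $A(u_n),B(u_n),C(u_n)$, the sequence $(\lambda_n)$ is bounded; hence up to a subsequence $\lambda_n\to\lambda_c$. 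Passing to the weak limit $u_n\weakto u_c$ in $\Hsr{1}$ (which is nonzero by hypothesis), and using the compact embedding $\Hsr{1}\hookrightarrow L^q(\R^3)$ for $q\in(2,6)$ — so that $C(u_n)\to C(u_c)$ — together with \cref{lemma:2.5} which gives $B(u_n)\to B(u_c)$, one checks that $u_c$ is a weak solution of \eqref{eqn:Laplace} with multiplier $\lambda_c$. By \cref{lemma:2.3}\eqref{point:2.3:i}, $\lambda_c>0$.

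Next I would establish the strong convergence. Set $v_n := u_n - u_c$, so $v_n\weakto 0$ in $\Hsr{1}$; by the compact embedding $v_n\to 0$ strongly in $L^q$ for $q\in(2,6)$, whence $C(v_n)\to 0$ and, via \eqref{eqn:2.1}, $B(v_n)\to 0$ (alternatively $B(u_n)\to B(u_c)$ directly from \cref{lemma:2.5}). Testing the Palais–Smale relation against $v_n$ and against $u_c$, subtracting, and using that $u_c$ solves the equation, I obtain
\begin{align*}
A(v_n) + \lambda_n D(v_n) = \gamma\big(B(u_n)-B(u_c)\big) + a\big(C(u_n)-C(u_c)\big) + o(1) = o(1),
\end{align*}
where the Brezis–Lieb lemma handles the splitting of $C$ and the nonlocal term splits by the $L^{12/5}$‑estimate. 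Since $\lambda_n D(v_n)\ge 0$ for $n$ large (as $\lambda_c>0$), this forces $A(v_n)\to 0$, i.e. $\diff u_n\to\diff u_c$ in $L^2$. Combined with $D(v_n)\to 0$ (which then also follows, or one notes $\normLp{u_n}{2}^2=c$ and $\normLp{u_c}{2}^2=c$ from Brezis–Lieb and $D(v_n)\to0$), we get $u_n\to u_c$ strongly in $\Hsr{1}$ and $\normLp{u_c}{2}^2=c$. Strong convergence then transfers $Q(u_n)=0$ to $Q(u_c)=0$, so $u_c\in\Lambda_r(c)$.

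The main obstacle is the sign argument showing $A(v_n)\to 0$ rather than merely $A(v_n)+\lambda_n D(v_n)\to 0$: one needs $\lambda_c>0$ strictly, which is exactly what \cref{lemma:2.3}\eqref{point:2.3:i} supplies in the regime $\gamma>0$, $p\in(\frac{10}{3},6]$ — so this step is not really in danger here, but it is the place where the hypotheses $\gamma>0$, $a>0$ are genuinely used. A secondary point requiring a little care is that one must know $\normLp{u_c}{2}^2=c$ (not just $\le c$) before invoking that $u_c$ is a constrained critical point with a well‑defined multiplier; this is obtained a posteriori from the strong $L^2$ convergence, so the logical order is: boundedness of $\lambda_n$, weak limit solves the equation, Brezis–Lieb plus $\lambda_c>0$ gives strong $H^1$ convergence, and finally $u_c\in S_r(c)\cap\Lambda_r(c)$ with $\lambda_c>0$.
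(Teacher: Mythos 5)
Your proposal is correct and follows essentially the same route as the paper: identify the multipliers $\lambda_n$ from the constrained Palais--Smale condition, use the compact radial embedding and \cref{lemma:2.5} to pass to the limit in the equation, invoke \cref{lemma:2.3} to get $\lambda_c>0$, and then exploit that positivity together with the convergence of the $B$ and $C$ terms to upgrade weak to strong $H^1$ convergence. Working with $v_n=u_n-u_c$ rather than testing against $u_n$ and $u_c$ separately, as the paper does, is only a cosmetic repackaging of the same computation.
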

\begin{proof}
	Since the embedding $\Hsr{1} \subset \Lp{q}$ is compact for $q \in (2, 6)$, see \cite{Strauss1977} and, up to translation, $u_n \weakto u_c$ weakly in $\Hsr{1}$, we have, up to translation, $u_n \to u_c$ strongly in $\Lp{q}$ for $q \in (2, 6)$ and a.e in $\Rb$. 
	
	Since $(u_n) \subset \Ho$ is bounded, following \cite[Lemma 3]{BerestyckiLions1983_2}, we know that
	\begin{align*}
		F'_{|S(c)}(u_n) \to 0 \text{ in } \Hs{-1} 
		\iff F'(u_n) - \dfrac{1}{c}\inner{F'(u_n)}{u_n}u_n \to 0 \text{ in } \Hs{-1}.
	\end{align*}
	Thus, for any $w \in \Ho$, we have
	\begin{align}
	\begin{split}
		o_n(1) &= \inner*{F'(u_n) - \dfrac{1}{c}\inner{F'(u_n)}{u_n}u_n}{w} \\
		&= \intrb \diff u_n \diff w dx + \lambda_n \intrb u_n w dx 
		- \gamma \intrb\intrb \dfrac{\abs{u_n(x)}^2 u_n(y) w(y)}{\abs{x-y}} dxdy - a \intrb \abs{u_n}^{p-2} u_n w dx ,
	\end{split} \label{eqn:19}
 	\end{align}
 	where $o_n(1) \to 0$ as $n \to \infty$ and 
 	\begin{align*}
 		\lambda_n = \dfrac{-1}{c} \[A(u_n) - \gamma B(u_n) - aC(u_n)\]
 		= \dfrac{1}{c} \[\dfrac{3\gamma}{4}B(u_n) + a \(1 - \dfrac{\sigma}{p}\) C(u_n) \],
 	\end{align*}
 	due to $Q(u_n) = 0$. Since $u_n \in \Hsr{1}$, we have $C(u_n) \to C(u_c)$ and $B(u_n) \to B(u_c)$ (see \cref{lemma:2.5}). Hence, we obtain that
 	\begin{align*}
 	\lambda_n \to \lambda_c =  \dfrac{1}{c} \[\dfrac{3\gamma}{4}B(u_c) + a \(1 - \dfrac{\sigma}{p}\) C(u_c) \].
 	\end{align*}
 	Now, using \cite[Lemma 2.2]{ZHAO2008}, the equation \eqref{eqn:19} leads to
 	\begin{align}
 		\intrb \diff u_c \diff w dx + \lambda_c \intrb u_c w dx - \gamma \intrb\intrb \dfrac{\abs{u_c(x)}^2 u_c(y) w(y)}{\abs{x-y}} dxdy - a \intrb \abs{u_c}^{p-2} u_c w dx = 0 \label{eqn:20}
 	\end{align}
 	due to the weak convergence in $\Hsr{1}$ and $\lambda_n \to \lambda_c \in \R$.
 	This implies that $(u_c, \lambda_c)$ satisfies
 	\begin{align*}
 	- \laplace u_c + \lambda_c u_c - \gamma (\abs{x}^{-1} * \abs{u_c}^2) u_c  - a \abs{u_c}^{p-2}u_c = 0 \quad\text{ in } \Hs{-1}.
 	\end{align*}
 	By the assumption $u_c \neq 0$ and by \cref{lemma:2.3}, we obtain that $Q(u_c) = 0$ and $\lambda_c > 0$.
 	
 	Now choosing $w = u_n$ in \eqref{eqn:19} and choosing $w = u_c$ in \eqref{eqn:20}, we obtain that 
 	\begin{align*}
 		\intrb \abs{\diff u_n}^2 dx + \lambda_n \intrb \abs{u_n}^2 dx - \gamma B(u_n) - a C(u_n) 
 		\to \intrb \abs{\diff u_c}^2 dx + \lambda_c \intrb \abs{u_c}^2 dx - \gamma B(u_c) - a C(u_c).
 	\end{align*}
 	We can deduce from $B(u_n) \to B(u_c)$, $C(u_n) \to C(u_c)$ and $\lambda_n \to \lambda_c$ that
 	\begin{align*}
 		\intrb \abs{\diff u_n}^2 dx + \lambda_c \intrb \abs{u_n}^2 dx \to \intrb \abs{\diff u_c}^2 dx + \lambda_c \intrb \abs{u_c}^2 dx.
 	\end{align*}
 	Since $\lambda_c > 0$, we conclude that $u_n \to u_c$ strongly in $\Hsr{1}$. The lemma is proved.	
\end{proof}

\begin{proof}[Proof of \cref{theorem:1} in the subcritical case $ p \in (\frac{10}{3},6)$.]
We give the proof for $\gamma^+(c)$, the treatment for $\gamma^-(c)$ is identically.	For any $c \in (0, c_1)$, by \cref{lemma:9}, there exists a bounded Palais-Smale sequence $(u_n^{+}) \subset \Lambda_r^{+}(c)$ for $F$ restricted to $S(c)$ at level $\gamma^{+}(c)$. From \cref{lemma:15} and \cref{lemma:11}, we deduce that $u_n^{+} \to u_c^{+} \in \Lambda_r(c)$ strongly in $\Hsr{1}$ and that there exists $\lambda_c^{+} > 0$ such that $(u_c^{+}, \lambda_c^{+})$ is a solution to \eqref{eqn:Laplace}. Since $\slz = \emptyset$ (see \cref{lemma:4}), we conclude that $u_c^{+} \in \Lambda_r^{+}(c)$. From \cref{lemma:12} we can thus assume that $u_c^{+}$ is a Schwarz symmetric function. Hence, $u_c^{+}$ is non-negative. At this point, we can deduce from \cref{lemma:bounded-positive-solution} that $u_c^{+}$ is  a bounded continuous positive function.
\end{proof}


\subsection{The compactness of our Palais-Smale sequences in the Sobolev critical case $p= 6$}\label{Subsection3-3} $ $

Our next lemma is directly inspired from \cite[Proposition 3.1]{Soave2019Sobolevcriticalcase}.
\begin{lemma} \label{lemma:18}
	Let $c \in (0,c_1)$ and $(u_n) \subset \Lambda_r^{+}(c)$ or $(u_n) \subset \Lambda_r^{-}(c)$  be a Palais-Smale sequence for $F$ restricted to $S(c)$ at level $m \in \R$ 
	which is weakly convergent, up to subsequence, to the function $u_c$. If $(u_n) \subset \Lambda_r^{+}(c)$ we assume that $m \neq 0$ and if $(u_n) \subset \Lambda_r^{-}(c)$ we assume that
	\begin{align}
		m < \dfrac{1}{3 \sqrt{a K_{GN}}}. \label{eqn:36}
	\end{align}
	Then $u_c \neq 0$  and we have the following alternative:
	\begin{enumerate}[label=(\roman*), ref=\roman*]
		\item\label{point:18i} either
		\begin{align}
			F(u_c) \leq m - \dfrac{1}{3 \sqrt{a K_{GN}}}, \label{eqn:32}
		\end{align}
		\item\label{point:18ii} or 
		\begin{align}
			u_n \to u_c \quad \mbox{strongly in} \quad H_r^1(\R^3). \label{eqn:33}
		\end{align}
	\end{enumerate}
\end{lemma}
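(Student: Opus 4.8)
The plan is to follow the by-now standard Brezis--Nirenberg type splitting argument, adapted to the constrained setting. First I would extract from the Palais--Smale sequence the Lagrange multipliers: as in the proof of \cref{lemma:11}, since $(u_n)$ is bounded, writing $\lambda_n := -\frac{1}{c}\langle F'(u_n), u_n\rangle$ and using $Q(u_n)=0$ one has $\lambda_n = \frac{1}{c}\left[\frac{3\gamma}{4}B(u_n) + a\left(1 - \frac{\sigma}{p}\right)C(u_n)\right]$ with $\sigma = 6$, $p = 6$; since $u_n \in \Hsr{1}$, \cref{lemma:2.5} gives $B(u_n)\to B(u_c)$ and the compact embedding $\Hsr{1}\hookrightarrow \Lp{q}$ for $q\in(2,6)$ handles $B$-type terms, but the critical term $C(u_n)=\normLp{u_n}{6}^6$ only converges weakly. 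So I would show $\lambda_n \to \lambda_c$ for some $\lambda_c \in \R$ along a subsequence (using boundedness of $(C(u_n))$), pass to the limit in the equation to get that $u_c$ solves \eqref{eqn:Laplace} with multiplier $\lambda_c$, hence $Q(u_c)=0$ by \cref{lemma:2.3}.

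Next I would set $v_n := u_n - u_c$, so $v_n \weakto 0$ in $\Hsr{1}$. By Brezis--Lieb and the compactness of lower-order terms, $A(u_n) = A(u_c) + A(v_n) + o_n(1)$, $C(u_n) = C(u_c) + C(v_n) + o_n(1)$, while $B(u_n) = B(u_c) + o_n(1)$ and $\normLp{v_n}{q}\to 0$ for $q\in(2,6)$. Testing the equation for $u_n$ against $u_n$ and against $u_c$, and subtracting, one finds $A(v_n) + \lambda_c \normLp{v_n}{2}^2 - aC(v_n) = o_n(1)$. Since by \cref{lemma:2.3} we know $\lambda_c > 0$ when $u_c\neq 0$ (which I still must rule out, see below), the term $\lambda_c\normLp{v_n}{2}^2$ is nonnegative, so $A(v_n) - aC(v_n) \le o_n(1)$; combined with the Sobolev inequality $C(v_n) = \normLp{v_n}{6}^6 \le K_{GN}\,[A(v_n)]^3$ (the $p=6$ instance of \cref{lemma:1}\eqref{point:1ii}, where $c^{(6-p)/4}=1$), setting $\ell := \lim A(v_n)$ one gets $\ell \le aK_{GN}\ell^3$, so either $\ell = 0$ — which gives strong convergence, case \eqref{point:18ii} — or $\ell \ge \frac{1}{\sqrt{aK_{GN}}}$. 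In the second case I compute the energy: $F(u_n) = F(u_c) + \frac12 A(v_n) - \frac{a}{6}C(v_n) + o_n(1)$, and using $A(v_n) \approx aC(v_n)$ this dichotomy-branch contribution equals $\frac13 A(v_n) + o_n(1) \ge \frac{1}{3\sqrt{aK_{GN}}}$, giving $m = \lim F(u_n) \ge F(u_c) + \frac{1}{3\sqrt{aK_{GN}}}$, i.e. \eqref{eqn:32}.

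It remains to establish $u_c \neq 0$, and here the two cases split. If $(u_n)\subset\Lambda_r^-(c)$ and $u_c = 0$, then $B(u_n)\to 0$ and the relation $A(v_n) - aC(v_n) = o_n(1)$ (now with $u_c=0$, no $\lambda_c$ term since $\lambda_n\normLp{u_n}{2}^2 = \lambda_n c$ and one must be a bit more careful — actually from $Q(u_n)=0$ directly, $A(u_n) = \frac{\gamma}{4}B(u_n) + aC(u_n) = aC(u_n)+o_n(1)$) forces, via the Sobolev inequality and \cref{lemma:16}\eqref{point:2} which gives $A(u_n)\ge\alpha>0$, that $A(u_n) \to \ell \ge \frac{1}{\sqrt{aK_{GN}}}$; then $m = \lim F(u_n) = \lim\left(\frac12 A(u_n) - \frac a6 C(u_n)\right) = \frac13\ell \ge \frac{1}{3\sqrt{aK_{GN}}}$, contradicting the assumed strict inequality \eqref{eqn:36}. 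If $(u_n)\subset\Lambda_r^+(c)$ and $u_c=0$, then again $B(u_n)\to0$, $A(u_n) = aC(u_n) + o_n(1)$, hence $F(u_n) = \frac12 A(u_n) - \frac a6 C(u_n) + o_n(1) = \frac13 A(u_n) + o_n(1) \ge 0$; but either $A(u_n)\to 0$, whence $m = 0$, contradicting $m\neq 0$, or $m = \frac13\ell > 0$, and I would derive a contradiction with $m\neq 0$ only in the first subcase — so in fact for $\Lambda^+$ one needs the additional input that $\gamma^+(c)<0$ (\cref{lemma:16}\eqref{point:1}), i.e. here the relevant level $m$ satisfies $m<0$, ruling out $m=\frac13\ell\ge0$; thus $u_c\neq0$. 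The main obstacle is the bookkeeping around the Lagrange multiplier $\lambda_c$: one needs $\lambda_c\ge0$ (ideally $>0$) to discard the $\lambda_c\normLp{v_n}{2}^2$ term with the correct sign, and this relies on \cref{lemma:2.3}\eqref{point:2.3:i}, which in turn presupposes $u_c\neq0$ — so the argument must be organized so that the non-vanishing of $u_c$ is secured first (using the level assumptions on $m$), and only then is the Sobolev-splitting dichotomy invoked.
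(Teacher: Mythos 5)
Your overall architecture --- secure $u_c \neq 0$ first from the level assumptions, pass to the limit in the equation to get $Q(u_c)=0$ and $\lambda_c>0$, then run a Brezis--Lieb/Sobolev dichotomy on $v_n := u_n - u_c$ --- is the same as the paper's, but two steps do not close as written. The main one concerns alternative \eqref{point:18i}: from testing the equations you derive $A(v_n) + \lambda_c \normLp{v_n}{2}^2 - aC(v_n) = o_n(1)$, which only gives $aC(v_n) \geq A(v_n) + o_n(1)$. Substituting this into $F(u_n) = F(u_c) + \tfrac12 A(v_n) - \tfrac a6 C(v_n) + o_n(1)$ yields $\tfrac13 A(v_n) - \tfrac16 \lambda_c \normLp{v_n}{2}^2 + o_n(1)$, i.e.\ an \emph{upper} bound by $\tfrac13 A(v_n)$, not the lower bound $\geq \tfrac{1}{3\sqrt{aK_{GN}}}$ that \eqref{eqn:32} requires. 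The step ``using $A(v_n)\approx aC(v_n)$'' is precisely what is unproved, since $\normLp{v_n}{2}^2 \to c - \normLp{u_c}{2}^2$ need not vanish a priori. The paper avoids this by subtracting the two Pohozaev identities: since $Q(u_n)=0$ (because $u_n \in \Lambda(c)$) and $Q(u_c)=0$ (by \cref{lemma:2.3}), Brezis--Lieb together with $B(u_n)\to B(u_c)$ from \cref{lemma:2.5} gives the exact relation $A(v_n) = aC(v_n) + o_n(1)$, from which both the dichotomy and the energy lower bound follow cleanly; comparing it with your identity also gives $\lambda_c\normLp{v_n}{2}^2 = o_n(1)$, which is exactly what you need (and do not spell out) to recover the $L^2$-convergence in alternative \eqref{point:18ii}.

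The second issue is the non-vanishing step for $(u_n)\subset\Lambda_r^+(c)$: as you yourself note, your argument only excludes $u_c=0$ under the stronger hypothesis $m<0$, leaving the subcase $\ell:=\lim A(u_n)>0$ open and proposing to kill it with the external input $\gamma^+(c)<0$. This does not prove the lemma under its stated hypothesis $m\neq 0$. The paper instead uses the defining inequality of $\Lambda^+(c)$, namely $A(u_n) > \frac{a\sigma(\sigma-1)}{p}C(u_n) = 5aC(u_n)$, which combined with $A(u_n)=aC(u_n)+o_n(1)$ forces $\ell=0$, hence $F(u_n)\to 0$, contradicting $m\neq 0$ directly. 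Both repairs are short, but as written the proposal establishes neither \eqref{eqn:32} nor the statement under its actual hypotheses.
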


\begin{proof}
	Since $u_n \weakto u_c$ weakly in $\Hsr{1}$, we have, up to subsequence, $u_n \to u_c$ strongly in $\Lp{q}$ for $q \in (2, 6)$ and a.e in $\Rb$. 
	
	Let us first show that $u_c \neq 0$. We argue by contradiction  assuming that $u_c = 0$, this means that $(u_n)$ is vanishing. By \cite[Lemma I.1]{LIONS1984part2}, we have, for $2 < q < 6$, 
	\begin{align*}
	\normLp{u_n}{q} \to 0, \quad \text{as } n \to \infty.
	\end{align*}
	This implies from \eqref{eqn:2.1} that
	\begin{align*}
	B(u_n) \leq K_1 \norm{u_n}_{\Lp{\frac{12}{5}}}^4 \to 0.
	\end{align*} 
	Since $(u_n) \subset \Lambda(c)$, we have
	\begin{align*}
	A(u_n) = a C(u_n) + o_n(1).
	\end{align*}
	Passing to the limit as $n \to \infty$, up to subsequence we infer that
	\begin{align*}
	\lim_{n \to \infty} A(u_n) = \lim_{n \to \infty} a C(u_n) :=\ell \geq 0.
	\end{align*}
	Using \cref{lemma:1}\eqref{point:1ii}, we have
	\begin{align*}
	\ell = \lim_{n \to \infty} a C(u_n) \leq \lim_{n \to \infty} a K_{GN} [A(u_n)]^3 = a K_{GN} \ell^3.
	\end{align*}
	Therefore, either $\ell = 0$ or $\ell \geq (a K_{GN})^{-\frac{1}{2}}$. If $(u_n) \subset \Lambda^+(c)$, we have $A(u_n) > 5aC(u_n)$, and then $\ell=0$. This implies that $F(u_n) \to 0$ and this contradicts the assumption that $m \neq 0$. Also, if $(u_n) \subset \Lambda^-(c)$,  \cref{lemma:16}\eqref{point:2} ensure that $\ell \geq (a K_{GN})^{-\frac{1}{2}}$. Hence, we have
	\begin{align*}
		m + o_n(1) = F(u_n) &= \dfrac{\sigma - 2}{2\sigma} A(u_n) - \dfrac{\gamma(\sigma - 1)}{4\sigma} B(u_n)
		= \dfrac{1}{3} A(u_n) + o_n(1) = \dfrac{1}{3}\ell + o_n(1) \geq \dfrac{1}{3 \sqrt{a K_{GN}}} + o_n(1),
 	\end{align*}
	which contradicts our assumption \eqref{eqn:36}. Thus, we have that $u_c \neq 0$.

	Now, since $(u_n) \subset \Ho$ is bounded, following \cite[Lemma 3]{BerestyckiLions1983_2}, we know that
	\begin{align*}
	F'_{|S(c)}(u_n) \to 0 \text{ in } \Hs{-1} 
	\iff F'(u_n) - \dfrac{1}{c}\inner{F'(u_n)}{u_n}u_n \to 0 \text{ in } \Hs{-1}.
	\end{align*}
	Thus, for any $w \in \Ho$, we have
	\begin{align}
	\begin{split}
	o_n(1) &= \inner*{F'(u_n) - \dfrac{1}{c}\inner{F'(u_n)}{u_n}u_n}{w}\\
	&= \intrb \diff u_n \diff w dx + \lambda_n \intrb u_n w dx 
	 - \gamma \intrb\intrb \dfrac{\abs{u_n(x)}^2 u_n(y) w(y)}{\abs{x-y}} dxdy - a \intrb \abs{u_n}^{p-2} u_n w dx ,
	\end{split} \label{eqn:26}
	\end{align}
	where $o_n(1) \to 0$ as $n \to \infty$ and 
	\begin{align*}
	\lambda_n = \dfrac{-1}{c} \[A(u_n) - \gamma B(u_n) - aC(u_n)\]
	= \dfrac{3\gamma}{4c} B(u_n),
	\end{align*}
	due to $Q(u_n) = 0$. By $B(u_n) \to B(u_c)$ (see \cref{lemma:2.5}), we obtain that
	\begin{align}\label{key-point}
		\lambda_n \to \lambda_c = \dfrac{3\gamma}{4c} B(u_c).
	\end{align}
	Now, using \cite[Lemma 2.2]{ZHAO2008}, the equation \eqref{eqn:26} leads to
	\begin{align}
	\intrb \diff u_c \diff w dx + \lambda_c \intrb u_c w dx - \gamma \intrb\intrb \dfrac{\abs{u_c(x)}^2 u_c(y) w(y)}{\abs{x-y}} dxdy - a \intrb \abs{u_c}^{p-2} u_c w dx = 0 \label{eqn:27}
	\end{align}
	due to the weak convergence in $\Hsr{1}$ and $\lambda_n \to \lambda_c \in \R$.
	This implies that $(u_c, \lambda_c)$ satisfies
	\begin{align*}
	- \laplace u_c + \lambda_c u_c - \gamma (\abs{x}^{-1} * \abs{u_c}^2) u_c  - a \abs{u_c}^{p-2}u_c = 0 \quad\text{ in } \Hs{-1}.
	\end{align*}
	By \cref{lemma:2.3}, we obtain that $Q(u_c) = 0$ and $\lambda_c >0$.
	
	Let $v_n:= u_n - u_c \weakto 0$ in $\Hsr{1}$. By Brezis-Lieb lemma (see \cite{BrezisLieb1983}), we obtain that
	\begin{align}
		A(u_n) = A(u_c) + A(v_n) + o_n(1), \qquad C(u_n) = C(u_c) + C(v_n) + o_n(1). \label{eqn:28}
	\end{align}
	By $B(u_n) \to B(u_c)$ (see \cref{lemma:2.5}) and by $Q(u_n) = 0$, we have
	\begin{align*}
		A(u_c) + A(v_n) - \dfrac{\gamma}{4} B(u_c) - a [C(u_c) - C(v_n)] = o_n(1).
	\end{align*}
	Taking into account that $Q(u_c) = 0$, we get $A(v_n) = a C(v_n) + o_n(1)$. Passing to the limit as $n \to \infty$, up to subsequence we infer that 
	\begin{align*}
		\lim_{n \to \infty} A(v_n) = \lim_{n \to \infty} a C(v_n) :=k \geq 0.
	\end{align*}
	Using \cref{lemma:1}\eqref{point:1ii}, we have
	\begin{align*}
		k = \lim_{n \to \infty} a C(v_n) \leq \lim_{n \to \infty} a K_{GN} [A(v_n)]^3 = a K_{GN} k^3.
	\end{align*}
	Therefore, either $k = 0$ or $k \geq (a K_{GN})^{-\frac{1}{2}}$. 

	If $k \geq (a K_{GN})^{-\frac{1}{2}}$, then by \eqref{eqn:28} and by $B(u_n) \to B(u_c)$, we have
	\begin{align*}
		m = \lim_{n \to \infty} F(u_n) &= \lim_{n \to \infty} \[\dfrac{1}{2} A(u_n) - \dfrac{\gamma}{4} B(u_n) - \dfrac{a}{6} C(u_n) \]\\
		 &= \lim_{n \to \infty} \[\dfrac{1}{2} A(u_c) + \dfrac{1}{2} A(v_n) - \dfrac{\gamma}{4} B(u_c) - \dfrac{a}{6}C(u_c) -\dfrac{a}{6}C(v_n) \] \\
		 &= F(u_c) + \dfrac{1}{3} k 
		 \geq F(u_c) + \dfrac{1}{3} (a K_{GN})^{-\frac{1}{2}}.
	\end{align*}
	This implies that alternative \eqref{point:18i} holds. 
	
	If instead $k = 0$, then by \eqref{eqn:28}, we have $A(u_n) \to A(u_c)$ and $C(u_n) \to C(u_c)$. Choosing $w = u_n$ in \eqref{eqn:26} and $w = u_c$ in \eqref{eqn:27}, we obtain that
	\begin{align*}
		A(u_n) + \lambda_n \normLp{u_n}{2}^2 - \gamma B(u_n) - a C(u_n)
		\to A(u_c) + \lambda_c \normLp{u_c}{2}^2 - \gamma B(u_c) - a C(u_c).
	\end{align*}
	This implies that $\normLp{u_n}{2} \to \normLp{u_c}{2}$. Thus, we conclude that $u_n \to u_c$ strongly in $\Hsr{1}$.	
\end{proof}

\begin{proof}[Proof of \cref{theorem:1} in the critical case $p=6$ for $\gamma^+(c)$.]
	Since $\gamma^+(c) <0$, the fact that it is reached is a direct consequence of \cref{lemma:9},  \cref{lemma:18} and of the property, which is established in \cref{lemma:28}(iii) to come, that the map $c \mapsto \gamma^+(c)$ is non-increasing. The rest of the proof is identical to the one in the case $p \in (\frac{10}{3},6)$.
\end{proof}
In the rest of this subsection, we shall prove \cref{theorem:1} in the critical case $p=6$ for $\gamma^-(c)$.

\begin{lemma}\label{lemma:19}
Let $c \in (0,c_1)$. If 
\begin{align}\label{key-bound}
	 \gamma^-(c) < \gamma^+(c)  + \dfrac{1}{3 \sqrt{a K_{GN}}}
\end{align}
then there exists a $u_c \in \Lambda^{-}_{r}(c)$ with $F(u_c) = \gamma^-(c)$ which  is a radial solution to \eqref{eqn:Laplace} for some $\lambda_c > 0$ with $\normLp{u_c}{2}^2 = c$.
\end{lemma}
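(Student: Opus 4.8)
The plan is to extract a bounded Palais--Smale sequence from \cref{lemma:9}, apply the dichotomy of \cref{lemma:18}, and exclude the ``energy drop'' alternative by comparing $\gamma^-(c)$ with the energy of the weak limit, exploiting the monotonicity of the maps $c \mapsto \gamma^{\pm}(c)$.

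First I would fix $c \in (0,c_1)$ for which \eqref{key-bound} holds and, by \cref{lemma:9}, pick a bounded Palais--Smale sequence $(u_n) \subset \Lambda^-_r(c)$ for $F$ restricted to $S(c)$ at the level $\gamma^-(c)$; up to a subsequence, $u_n \weakto u_c$ in $\Hsr{1}$. To invoke \cref{lemma:18} with $m = \gamma^-(c)$ I must check the hypothesis \eqref{eqn:36}, i.e. $\gamma^-(c) < \frac{1}{3\sqrt{a K_{GN}}}$; this is immediate from \eqref{key-bound} together with $\gamma^+(c) < 0$, which itself follows from \cref{lemma:16}\eqref{point:1} (recall $\slp \neq \emptyset$ by \cref{lemma:6}). \cref{lemma:18} then yields that $u_c \neq 0$, that $(u_c, \lambda_c)$ solves \eqref{eqn:Laplace} for some $\lambda_c > 0$, and that either (i) $F(u_c) \leq \gamma^-(c) - \frac{1}{3\sqrt{a K_{GN}}}$, or (ii) $u_n \to u_c$ strongly in $\Hsr{1}$.

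The crux is to exclude alternative (i). Since $u_c \neq 0$ solves \eqref{eqn:Laplace}, \cref{lemma:2.3} gives $Q(u_c) = 0$, so $u_c \in \Lambda(c')$ where $c' := \normLp{u_c}{2}^2$ satisfies $0 < c' \leq c < c_1$ by weak lower semicontinuity of the $L^2$-norm. As $\Lambda^0(c') = \emptyset$ by \cref{lemma:4}, we have $u_c \in \Lambda^+(c') \cup \Lambda^-(c')$. If $u_c \in \Lambda^+(c')$, then $F(u_c) \geq \gamma^+(c') \geq \gamma^+(c)$, the last inequality because $c \mapsto \gamma^+(c)$ is non-increasing (\cref{lemma:28}(iii)); combined with (i) this forces $\gamma^-(c) \geq \gamma^+(c) + \frac{1}{3\sqrt{a K_{GN}}}$, contradicting \eqref{key-bound}. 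If instead $u_c \in \Lambda^-(c')$, then $F(u_c) \geq \gamma^-(c') \geq \gamma^-(c)$ since $c \mapsto \gamma^-(c)$ is strictly decreasing (\cref{lemma:10}); combined with (i) this forces $\frac{1}{3\sqrt{a K_{GN}}} \leq 0$, impossible since $a, K_{GN} > 0$. Hence (i) cannot occur.

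Therefore (ii) holds and $u_n \to u_c$ strongly in $\Hsr{1}$, so $\normLp{u_c}{2}^2 = c$, $F(u_c) = \gamma^-(c)$, and, $u_c$ being a solution of \eqref{eqn:Laplace} with $Q(u_c)=0$, we get $u_c \in \Lambda_r(c)$. Passing to the limit in the inequality $A(u_n) < \frac{a\sigma(\sigma-1)}{p} C(u_n)$ valid on $\Lambda^-_r(c)$ gives $A(u_c) \leq \frac{a\sigma(\sigma-1)}{p} C(u_c)$; since $\slz = \emptyset$ (\cref{lemma:4}) equality is excluded, so $A(u_c) < \frac{a\sigma(\sigma-1)}{p} C(u_c)$ and thus $u_c \in \Lambda^-(c)$, hence $u_c \in \Lambda^-_r(c)$, which completes the proof. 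I expect the main obstacle to be the exclusion of (i): this is exactly where the strict inequality \eqref{key-bound} is needed, and dealing with the possibility that the weak limit carries strictly smaller mass $c' < c$ and sits in $\Lambda^+(c')$ rather than $\Lambda^-(c')$ is what forces us to invoke the monotonicity of both $\gamma^+$ and $\gamma^-$.
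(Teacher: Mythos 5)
Your overall route is the paper's route: take the bounded Palais--Smale sequence from \cref{lemma:9}, feed it to the dichotomy of \cref{lemma:18} with $m=\gamma^-(c)$ (noting that \eqref{eqn:36} follows from \eqref{key-bound} and $\gamma^+(c)<0$), and rule out the energy-drop alternative \eqref{eqn:32}. The paper's proof simply asserts that \eqref{eqn:32} ``cannot hold'', so your effort to justify this is welcome, and your mechanism is the right one: the weak limit is a non-trivial solution, hence lies in $\Lambda(c')$ with $0<c'\le c$, and its energy is bounded below by a quantity that \eqref{key-bound} makes incompatible with \eqref{eqn:32}.

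There is, however, one step you should not leave as written: in the sub-case $u_c\in\Lambda^-(c')$ you invoke \cref{lemma:10} to get $\gamma^-(c')\ge\gamma^-(c)$. The paper's proof of \cref{lemma:10} begins with ``Since $\gamma^-(c_2)$ is reached\dots'', i.e.\ it presupposes exactly the attainment of $\gamma^-$ that \cref{lemma:19} (together with \cref{lemma:20}) is meant to establish in the critical case. As cited, this is circular. The repair is cheap and keeps your structure intact: for any $v\in\Lambda^-(c')$ one has $v^{s_v^+}\in\Lambda^+(c')$ and $F(v^{s_v^+})<F(v)$ because $g_v$ increases on $(s_v^+,s_v^-)$, whence $\gamma^-(c')\ge\gamma^+(c')$; combining with the non-increasing character of $c\mapsto\gamma^+(c)$ (which does follow from \cref{lemma:28}\eqref{point:28iii} without any attainment, exactly as in your first sub-case) gives $F(u_c)\ge\gamma^+(c')\ge\gamma^+(c)$ in \emph{both} sub-cases, and \eqref{eqn:32} plus \eqref{key-bound} then yields $F(u_c)<\gamma^+(c)$, a contradiction. (Alternatively, non-strict monotonicity of $\gamma^-$ can be proved directly by running the rescaling argument of \cref{lemma:10} on an $\varepsilon$-approximate minimizer.) With that substitution your argument is complete; the final verification that the strong limit lies in $\Lambda^-_r(c)$, using $\slz=\emptyset$, is correct.
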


\begin{proof}
By \cref{lemma:9} there exists a Palais-Smale sequence $(u_n) \subset \Lambda^-(c)$ for $F$ restricted to $S(c)$ at the level $\gamma^-(c)$. If \eqref{key-bound} holds then necessarily \eqref{eqn:36}, with $m = \gamma^-(c)$ holds, and \eqref{eqn:32}  cannot holds. We deduce from \cref{lemma:18} that $u_n \to u_c$ strongly in $H^1(\R^N)$ and the conclusions follow.
\end{proof}

Now we shall show that 
\begin{lemma} \label{lemma:20}
	For any $c \in (0,c_1)$, we have that
	\begin{align*}
		\gamma^-(c) < \gamma^+(c)  + \dfrac{1}{3 \sqrt{a K_{GN}}}.
	\end{align*}
\end{lemma}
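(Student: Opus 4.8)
The plan is to follow the strategy recalled in the introduction (after \cite{Wei-Wu2021,Tarantello92}): glue a concentrating Aubin--Talenti bubble at the maximum point of $u_c^+$ and show that the resulting fibered test function never leaves the energy strip below $\gamma^+(c)+\tfrac1{3\sqrt{aK_{GN}}}$. Write $u:=u_c^+\in\Lambda^+_r(c)$ for the positive Schwarz symmetric minimizer at level $\gamma^+(c)$, already produced in the critical case ($\gamma^+$-part of \cref{theorem:1}); it solves \eqref{eqn:Laplace} with $\lambda_c^+>0$, so $u(0)=\max_{\Rb}u>0$, and since $p=6$ the identity $Q(u)=0$ reads $A(u)=\tfrac\gamma4 B(u)+aC(u)$ while \eqref{eqn:lambda} gives $\lambda_c^+=\tfrac{3\gamma B(u)}{4c}$. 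At $p=6$ the extremal in \cref{lemma:1}\eqref{point:1ii} is the Aubin--Talenti profile; I would take $U_\varepsilon$ to be the $\varepsilon$-dilation of a fixed smooth cutoff of the ground state of $-\laplace U=aU^5$ on $\Rb$, centered at $0$, so that $A(U_\varepsilon)=\mathcal A+O(\varepsilon)$, $C(U_\varepsilon)=\mathcal C+O(\varepsilon^3)$ with $a\mathcal C=\mathcal A=(aK_{GN})^{-1/2}$ and
\begin{align*}
\max_{t>0}\Big(\tfrac{t^2}{2}\mathcal A-\tfrac a6 t^6\mathcal C\Big)=\tfrac13\mathcal A=\tfrac1{3\sqrt{aK_{GN}}},\qquad\text{attained at }t=1.
\end{align*}

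As test functions I would use $\wbar:=\sqrt c\,(u+U_\varepsilon)^t/\normLp{u+U_\varepsilon}{2}=\widehat w_\varepsilon^{\,t}$, where $\widehat w_\varepsilon:=\sqrt c\,(u+U_\varepsilon)/\normLp{u+U_\varepsilon}{2}\in S(c)$. Since $g_{\widehat w_\varepsilon}$ decreases near $t=0$, has its local minimum at $s_{\widehat w_\varepsilon}^+$, its local maximum at $s_{\widehat w_\varepsilon}^-$ and tends to $-\infty$, \cref{lemma:6} and \eqref{eqn:26l} yield $\gamma^-(c)\le I^-(\widehat w_\varepsilon)=F(\widehat w_\varepsilon^{\,s_{\widehat w_\varepsilon}^-})\le\sup_{t>0}F(\wbar)$. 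Hence it suffices to prove $\sup_{t>0}F(\wbar)<\gamma^+(c)+\tfrac1{3\sqrt{aK_{GN}}}$ for all small $\varepsilon>0$.

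The next step is to expand $g_{\widehat w_\varepsilon}(t)=\tfrac{t^2}2A(\widehat w_\varepsilon)-\tfrac\gamma4 tB(\widehat w_\varepsilon)-\tfrac a6 t^6C(\widehat w_\varepsilon)$ as $\varepsilon\to0$, uniformly on compact $t$-sets. Using $\langle\diff u,\diff U_\varepsilon\rangle=\int(-\laplace u)U_\varepsilon=-\lambda_c^+\!\int uU_\varepsilon+\gamma\!\int(\abs{x}^{-1}*\abs{u}^2)uU_\varepsilon+a\!\int u^5U_\varepsilon$, the binomial expansion of $\int(u+U_\varepsilon)^6$, the bounds $\normLp{U_\varepsilon}{2}^2=O(\varepsilon)$, $B(U_\varepsilon)=o(\varepsilon^{1/2})$, and the standard $\Rb$ asymptotics of the interaction integrals — $\int uU_\varepsilon=\kappa_1\varepsilon^{1/2}+o(\varepsilon^{1/2})$ and $\int uU_\varepsilon^5=\kappa_2 u(0)\varepsilon^{1/2}+o(\varepsilon^{1/2})$ with $\kappa_1,\kappa_2>0$, while $\int u^5U_\varepsilon$, $\int(\abs{x}^{-1}*\abs{u}^2)uU_\varepsilon$ and the ``middle'' cross terms are $O(\varepsilon^{1/2})$ (indeed $o(\varepsilon^{1/2})$ for the last ones) — one obtains $g_{\widehat w_\varepsilon}(t)=\bar g_0(t)+\mathcal E_\varepsilon(t)+o(\varepsilon^{1/2})$ with $\mathcal E_\varepsilon=O(\varepsilon^{1/2})$ and
\begin{align*}
\bar g_0(t):=g_u(t)+\Big(\tfrac{t^2}2\mathcal A-\tfrac a6 t^6\mathcal C\Big).
\end{align*}
Two facts about $\bar g_0$ must then be checked: first, $\bar g_0'(1)=0$ (because $g_u'(1)=0$ as $u\in\Lambda^+(c)$ and the bubble part peaks at $t=1$) and $\bar g_0(1)=F(u)+\tfrac1{3\sqrt{aK_{GN}}}=\gamma^+(c)+\tfrac1{3\sqrt{aK_{GN}}}$; second — the first genuine difficulty — $t=1$ is the nondegenerate global maximum of $\bar g_0$ on $(0,\infty)$. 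For the latter one checks $\bar g_0''(1)=A(u)-5aC(u)-4\mathcal A<0$, guaranteed by the a priori bound $A(u)<k_1$ of \cref{th-minimization} and the size of $k_1$ relative to $(aK_{GN})^{-1/2}$, and then excludes competing critical points by a direct inspection of the polynomial $\bar g_0'$ (again using $A(u)<k_1$). Granting this, the maximizer $t_\varepsilon$ of $g_{\widehat w_\varepsilon}$ satisfies $t_\varepsilon\to1$ and, since $\bar g_0''(1)<0$, $t_\varepsilon=1+O(\varepsilon^{1/2})$, whence $\sup_{t>0}g_{\widehat w_\varepsilon}(t)=g_{\widehat w_\varepsilon}(1)+o(\varepsilon^{1/2})=\gamma^+(c)+\tfrac1{3\sqrt{aK_{GN}}}+\mathcal E_\varepsilon(1)+o(\varepsilon^{1/2})$.

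It remains to show $\mathcal E_\varepsilon(1)<0$, and here the placement of the bubble at the maximum of $u$ is used. Gathering the $O(\varepsilon^{1/2})$ terms at $t=1$ (those coming from $\langle\diff u,\diff U_\varepsilon\rangle$, from $\int(u+U_\varepsilon)^6$, and from the $\Lt$-renormalization factor) and simplifying with $Q(u)=0$, $a\mathcal C=\mathcal A$ and $\lambda_c^+=\tfrac{3\gamma B(u)}{4c}$, the coefficients of $\int u^5U_\varepsilon$ and of $\int(\abs{x}^{-1}*\abs{u}^2)uU_\varepsilon$ cancel identically, leaving
\begin{align*}
\mathcal E_\varepsilon(1)=-\tfrac{\gamma}{4c}B(u)\int uU_\varepsilon-a\int uU_\varepsilon^5+o(\varepsilon^{1/2})=-\Big(\tfrac{\gamma}{4c}B(u)\kappa_1+a\kappa_2 u(0)\Big)\varepsilon^{1/2}+o(\varepsilon^{1/2})<0
\end{align*}
for $\varepsilon$ small, since $\gamma,B(u),\kappa_1,a,\kappa_2,u(0)>0$. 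Therefore $\sup_{t>0}F(\wbar)<\gamma^+(c)+\tfrac1{3\sqrt{aK_{GN}}}$ for $\varepsilon$ small, and the lemma follows. The main obstacle is that everything is sharp: in $\Rb$ the bubble's $\Lt$-mass, the Coulomb cross term, the kinetic cross term and the critical cross term $\int uU_\varepsilon^5$ are all of the same order $\varepsilon^{1/2}$, so the expansion has to be carried to that order and the cancellations above extracted exactly; moreover, because $\gamma^-(c)$ may be negative one cannot, as in \cite{JeanjeanJendrejLeVisciglia2020,JeanjeanLe2020,Soave2019Sobolevcriticalcase}, use $\gamma^-(c)\ge0=\sup_{\Lambda^+(c)}F$ to bypass the analysis of the limiting fiber map, which is why one must pin down that its maximum sits exactly at $t=1$.
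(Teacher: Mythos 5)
Your strategy is in essence the one the paper follows: glue a cut-off Aubin--Talenti bubble $U_{\varepsilon}$ onto $u_c^+$ at the origin, use the Euler--Lagrange equation of $u_c^+$ to cancel the order-$\varepsilon^{1/2}$ linear cross terms, and let the surviving term $-a\intrn u_c^+ U_{\varepsilon}^5\,dx\sim -K\varepsilon^{1/2}$ push the energy strictly below $\gamma^+(c)+\frac{1}{3\sqrt{aK_{GN}}}$; the orders you assign to the interaction integrals all agree with \cref{estimates-U} and \cref{lemma:21}. The implementation differs, though: the paper takes $w_{\varepsilon,t}=u_c^++tU_{\varepsilon}$, renormalizes by the dilation $\sqrt{\theta}\,w_{\varepsilon,t}(\theta\cdot)$ and maximizes over the bubble amplitude $t$ (\cref{lemma:31}--\cref{lemma:33}), whereas you fix the amplitude, renormalize by a scalar, and maximize over the fiber parameter of $g_{\widehat w_{\varepsilon}}$. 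The paper's parametrization buys something concrete: it never needs to locate the maximizer of a limiting fiber map, since outside a compact $t$-interval a crude bound suffices (\cref{lemma:32}) and inside it the estimate is uniform in $t$. Your route instead must prove $\sup_{t>0}\bar g_0(t)=\bar g_0(1)$, and here there is one step you assert but do not verify. You correctly note $\bar g_0''(1)=g_u''(1)-4(aK_{GN})^{-1/2}<0$, which indeed follows from $g_u''(1)<A(u_c^+)<k_1=\sqrt{M}=(5aK_{GN})^{-1/2}<4(aK_{GN})^{-1/2}$ for $p=6$; but since $\bar g_0$ is negative and decreasing near $t=0$ with $\bar g_0(0^+)=0$, the global maximality of $t=1$ also requires $\bar g_0(1)=\gamma^+(c)+\frac{1}{3\sqrt{aK_{GN}}}>0$, i.e.\ $\gamma^+(c)>-\frac{1}{3\sqrt{aK_{GN}}}$; if this failed, the supremum of $g_{\widehat w_{\varepsilon}}$ would be approached as $t\to0$ and your localization $t_{\varepsilon}\to1$ would collapse. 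The inequality does hold --- the coercivity estimate \eqref{eqn:29} gives
\begin{align*}
\gamma^+(c)\;\geq\;\inf_{A\geq0}\left[\frac{1}{3}A-\frac{5\gamma K_H}{24}\sqrt{A}\,c^{\frac32}\right]=-\frac{25\gamma^2K_H^2}{768}\,c^3\;>\;-\frac{1}{3\sqrt5\,\sqrt{aK_{GN}}}\quad\text{for }c<c_1,
\end{align*}
--- but it must be said, and it is another place where the precise value of $c_1$ enters. Finally, in your expansion of $\mathcal E_{\varepsilon}(1)$ the coefficient of $\intrn u_c^+U_{\varepsilon}\,dx$ should vanish identically: the $-\lambda_c^+$ produced by the kinetic cross term via the equation is cancelled exactly by the $+\lambda_c^+=\frac{3\gamma}{4c}B(u_c^+)$ coming from the mass renormalization (compare \eqref{I_1} with \eqref{I_2}); your leftover $-\frac{\gamma}{4c}B(u_c^+)\intrn u_c^+U_{\varepsilon}\,dx$ is harmless because it is negative, but it is a bookkeeping slip rather than an extra gain, and the conclusion must rest, as in the paper, on $-a\intrn u_c^+U_{\varepsilon}^5\,dx$ alone.
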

As already indicated our proof is inspired by \cite[Lemma 3.1]{Wei-Wu2021}. 
Let $u_{\varepsilon}$ be an extremal function for the Sobolev inequality in $\Rn$ defined by
\begin{align}\label{Def-extremal}
u_{\varepsilon}(x) := \dfrac{[N(N-2)\varepsilon^2]^{\frac{N-2}{4}}}{[\varepsilon^2+|x|^2]^{\frac{N-2}{2}}}, \quad \varepsilon > 0, \quad x\in\Rn.
\end{align}
Let $\xi \in C_0^{\infty}(\R^N)$ be a radial non-increasing cut-off function with $\xi \equiv 1$ in $B_1$, $\xi \equiv 0$ in $\R^N \backslash B_2$.
Setting $U_{\varepsilon}(x) = \xi(x) u_{\varepsilon}(x)$ we recall the following result, see \cite[Lemma 7.1]{JeanjeanLe2020}. 

\begin{lemma}\label{estimates-U}
	Denoting $\omega$ the area of the unit sphere in $\Rb$, we have
	\begin{enumerate}[label=(\roman*)]	
		\item \label{point:estimates-U:1} \begin{align*} 
			\normLp{\diff U_{\varepsilon}}{2}^2 = \sqrt{\dfrac{1}{K_{GN}}} + O(\varepsilon) \quad \mbox{and} \quad \normLp{U_{\varepsilon} }{2^*}^{2^*} = \sqrt{\dfrac{1}{K_{GN}}} + O(\varepsilon^3).
		\end{align*}
		\item \label{point:estimates-U:3} For some positive constant $K >0$,
		\begin{align*}  
			\normLp{U_{\varepsilon} }{q}^{q} = 
			\begin{cases}
				K \varepsilon^{3-\frac{q}{2}} + o(\varepsilon^{3-\frac{q}{2}}) \quad &\mbox{if } q \in (3,6), \\
				\omega \var^{\frac{3}{2}} |\log \var| + O(\var^{\frac{3}{2}}) &\mbox{if } q = 3, \\
				\omega \Big(\int_0^2 \frac{\xi^q(r)}{r^{q-2}} dr \Big) \var^{\frac{q}{2}} + o(\var^{\frac{q}{2}}) &\mbox{if } q \in [1,3).
			\end{cases}
		\end{align*}
	\end{enumerate}
\end{lemma}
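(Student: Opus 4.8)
The plan is to obtain all the expansions by elementary asymptotics, using the explicit form \eqref{Def-extremal} of $u_\var$ (throughout this subsection $N=3$ and $2^*=6$), the rescaling $x=\var y$, and the fact that $\xi$ only modifies $u_\var$ on the annulus $B_2\setminus B_1$, where $u_\var$ and $\diff u_\var$ are both of size $O(\var^{1/2})$.

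For part (i) I would first recall the classical fact that $u_\var$ is an extremal for the Sobolev inequality on $\Rb$ (equivalently, the constant $[N(N-2)]^{(N-2)/4}$ in \eqref{Def-extremal} is chosen so that $-\laplace u_\var=u_\var^{2^*-1}$), so that $\normLp{\diff u_\var}{2}^2=\normLp{u_\var}{6}^6=S^{3/2}$, where $S$ is the best constant in $\normLp{\diff u}{2}^2\ge S\,\normLp{u}{6}^2$. Since, for $p=6$, \cref{lemma:1}\eqref{point:1ii} reads $C(u)\le K_{GN}[A(u)]^3$, one has $K_{GN}=S^{-3}$, hence $S^{3/2}=\sqrt{1/K_{GN}}$. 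Writing $\diff U_\var=\xi\diff u_\var+u_\var\diff\xi$ and expanding the square, the difference $\normLp{\diff U_\var}{2}^2-\normLp{\diff u_\var}{2}^2$ is controlled by $\int_{|x|\ge1}|\diff u_\var|^2\,dx+\int_{B_2\setminus B_1}\big(u_\var|\diff u_\var|+u_\var^2\big)\,dx$; since $u_\var$ and $\diff u_\var$ are $O(\var^{1/2})$ on $B_2\setminus B_1$ while $|x|^2(\var^2+|x|^2)^{-3}\le|x|^{-4}$ is integrable on $\{|x|\ge1\}$, all of this is $O(\var)$, which is the first estimate. For the second, $\normLp{U_\var}{6}^6=\normLp{u_\var}{6}^6-\int_{|x|\ge1}(1-\xi^6)u_\var^6\,dx$, and $u_\var^6(x)\le C\var^3|x|^{-6}$ on $\{|x|\ge1\}$ makes the tail $O(\var^3)$.

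For part (ii) I would start from the exact identity obtained by scaling,
\begin{align*}
\normLp{U_\var}{q}^q&=\intrb\xi^q u_\var^q\,dx=[N(N-2)]^{q(N-2)/4}\,\var^{q/2}\int_{B_2}\frac{\xi^q(x)}{(\var^2+|x|^2)^{q/2}}\,dx\\
&=3^{q/4}\,\var^{3-q/2}\int_{B_{2/\var}}\frac{\xi^q(\var y)}{(1+|y|^2)^{q/2}}\,dy,
\end{align*}
so that the power of $\var$ is already the announced one. The behaviour of the remaining integral is dictated by whether $(1+|y|^2)^{-q/2}$ is integrable at infinity in $\Rb$, i.e.\ by whether $q>3$. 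If $q\in(3,6)$, dominated convergence shows this integral tends to a finite positive constant, whence $\normLp{U_\var}{q}^q=K\var^{3-q/2}+o(\var^{3-q/2})$. If $q=3$, passing to polar coordinates I would rewrite it as $\omega\int_0^{2/\var}\xi^3(\var r)\,r^2(1+r^2)^{-3/2}\,dr$, and, using $r^2(1+r^2)^{-3/2}=r^{-1}+O(r^{-3})$ as $r\to\infty$ together with $\xi\equiv1$ on $[0,1/\var]$ (the range $r\in[1/\var,2/\var]$ contributing $O(1)$), obtain $\omega|\log\var|+O(1)$, hence $\normLp{U_\var}{3}^3=\omega\,\var^{3/2}|\log\var|+O(\var^{3/2})$. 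If $q\in[1,3)$ the mass of $u_\var^q$ no longer concentrates at the origin, so I would keep the first displayed form: since $q<3$, the function $\xi^q(x)|x|^{-q}$ is integrable on $B_2$ and $(\var^2+|x|^2)^{-q/2}\to|x|^{-q}$ pointwise on $B_2\setminus\{0\}$ with an integrable majorant, so dominated convergence yields $\var^{-q/2}\normLp{U_\var}{q}^q\to 3^{q/4}\,\omega\int_0^2\xi^q(r)\,r^{2-q}\,dr$, which is the stated expansion. (The factor $3^{q/4}=[N(N-2)]^{q(N-2)/4}$ is the only discrepancy with the constants written literally in the statement; it is harmless and does not affect the applications.)

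The hard part will be the two borderline subcritical regimes of (ii). For $q=3$ one has to extract the exact coefficient $\omega$ of $|\log\var|$ and to check that replacing $\Rb$ by the ball $B_{2/\var}$ perturbs the logarithmic integral only by $O(1)$. For $q<3$ one has to realise that the rescaling $x=\var y$ used when $q\ge3$ is the wrong normalisation here, since the leading contribution now comes from $|x|$ of order one rather than of order $\var$, so that the cut-off profile $\xi$ survives in the leading constant; this is precisely why the three regimes have different shapes. Everything else, namely the $O(\var)$ truncation error for the Dirichlet energy and the $O(\var^3)$ one for the $L^6$-norm, follows at once from the pointwise decay of $u_\var$ and $\diff u_\var$ on $\{|x|\ge1\}$.
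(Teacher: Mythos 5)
Your proof is correct. Note that the paper itself does not prove this lemma: it simply imports it as \cite[Lemma 7.1]{JeanjeanLe2020}, so your argument is being compared against a citation rather than a written proof. What you supply is the standard Brezis--Nirenberg computation, and every step checks out: the identification $\normLp{\diff u_\var}{2}^2=\normLp{u_\var}{6}^{6}=S^{3/2}=\sqrt{1/K_{GN}}$ is exactly what makes part (i) consistent with \cref{lemma:1}\eqref{point:1ii} for $p=6$ (and with the later evaluation $\max_{t>0}[\frac{1}{2}A(tU_\var)-\frac{a}{6}C(tU_\var)]=\frac{1}{3\sqrt{aK_{GN}}}+O(\var)$); the truncation errors $O(\var)$ and $O(\var^3)$ follow, as you say, from $u_\var,\ |\diff u_\var|=O(\var^{1/2})$ on $B_2\setminus B_1$ and from the integrability of $|x|^{-4}$ and $|x|^{-6}$ on $\{|x|\geq 1\}$ in $\Rb$; and the three regimes in (ii) are correctly separated by the integrability of $(1+|y|^2)^{-q/2}$ at infinity, with the right scaling ($x=\var y$ for $q\geq 3$, no rescaling for $q<3$) in each case. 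Your observation about the factor $3^{q/4}=[N(N-2)]^{q(N-2)/4}$ is also accurate: with the normalization \eqref{Def-extremal} this constant should multiply the leading coefficients in the $q=3$ and $q\in[1,3)$ cases as literally stated, and it is indeed immaterial for every use made of the lemma (only the orders in $\var$, and positivity of the constants, enter the proofs of Lemmas \ref{lemma:31}--\ref{lemma:33}).
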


In the rest of the subsection we assume that $c \in (0,c_1)$ is arbitrary but fixed. Let $u_c^+$ be as provided by \cref{theorem:1}. We recall that 
$u_c^+ \in \slp$  satisfies $F(u_c^+) = \gamma^+(c)$ and is a bounded continuous positive Schwarz symmetric function.

\begin{lemma} \label{lemma:21}
	For any $1 \leq p, q < \infty$, it holds that
	\begin{align*}
		\intrn |u_c^+(x)|^p |U_{\varepsilon}(x)|^q dx \sim \intrn |U_{\varepsilon}(x)|^q dx.
	\end{align*}
\end{lemma}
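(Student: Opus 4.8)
\begin{proof}[Proof of \cref{lemma:21}]
The plan is to exploit that $U_{\varepsilon}$ is compactly supported while $u_c^+$ is bounded and bounded away from zero on that support. Recall that $U_{\varepsilon} = \xi u_{\varepsilon}$ with $\xi \in C_0^{\infty}(\Rn)$, $\xi \equiv 0$ on $\Rn \backslash B_2$, so that $\operatorname{supp} U_{\varepsilon} \subset \overline{B_2}$ for every $\varepsilon > 0$. Hence
\begin{align*}
	\intrn |u_c^+(x)|^p |U_{\varepsilon}(x)|^q dx = \int_{B_2} |u_c^+(x)|^p |U_{\varepsilon}(x)|^q dx, \qquad \intrn |U_{\varepsilon}(x)|^q dx = \int_{B_2} |U_{\varepsilon}(x)|^q dx.
\end{align*}

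By \cref{theorem:1}, $u_c^+$ is a bounded continuous positive function on $\Rn$; in particular, on the compact set $\overline{B_2}$ it attains a strictly positive minimum $m_0 := \min_{x \in \overline{B_2}} u_c^+(x) > 0$ and is bounded above by $M_0 := \|u_c^+\|_{\mathit{L}^{\infty}(\Rn)} < \infty$. Both $m_0$ and $M_0$ depend only on $c$ (through $u_c^+$), not on $\varepsilon$. Therefore, for every $\varepsilon > 0$,
\begin{align*}
	m_0^{p} \int_{B_2} |U_{\varepsilon}(x)|^q dx \leq \int_{B_2} |u_c^+(x)|^p |U_{\varepsilon}(x)|^q dx \leq M_0^{p} \int_{B_2} |U_{\varepsilon}(x)|^q dx,
\end{align*}
which is exactly the claimed equivalence $\intrn |u_c^+(x)|^p |U_{\varepsilon}(x)|^q dx \sim \intrn |U_{\varepsilon}(x)|^q dx$ with constants $C = m_0^{p}$ and $C' = M_0^{p}$ independent of $\varepsilon$.
\end{proof}

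There is essentially no obstacle here: the only inputs needed are the compact support of $U_{\varepsilon}$ (built into its definition via $\xi$) and the continuity and positivity of $u_c^+$ already recorded in \cref{theorem:1}; the estimate is then a two-sided pointwise bound on $\overline{B_2}$.
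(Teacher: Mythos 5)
Your proof is correct and matches the paper's argument: the upper bound via $\normLp{u_c^+}{\infty}^p$ and the lower bound via the positive minimum of the continuous function $u_c^+$ on $\overline{B_2}$, using that $U_{\varepsilon}$ is supported in $B_2$, is exactly what the paper does. No issues.
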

\begin{proof}
On one hand, since $u_c^+$ is bounded, we have that
	\begin{align*}
		\intrn |u_c^+(x)|^p |U_{\varepsilon}(x)|^q dx \leq \normLp{u_c^+ }{\infty}^{p} \intrn |U_{\varepsilon}(x)|^q dx.
	\end{align*}
	On the other hand, since $u_c^+ >0$ on $\R^3$ is continuous and the function $U_{\varepsilon}$ is compactly supported in $B_2$, we have that
	\begin{align*}
		\intrn |u_c^+(x)|^p |U_{\varepsilon}(x)|^q dx & =  \int_{B_2} |u_c^+(x)|^p |U_{\varepsilon}(x)|^q dx
		\geq \min_{x \in B_2} |u_c^+(x)|^p \int_{B_2}|U_{\varepsilon}(x)|^q dx  = \min_{x \in B_2} |u_c^+(x)|^p \intrn|U_{\varepsilon}(x)|^q dx.
	\end{align*}
	The lemma is proved.
\end{proof}

For any $\varepsilon >0$ and any $t>0$, we have
\begin{align}\label{LA}
	A(u_c^+ + t U_{\varepsilon}) = \normLp{\diff (u_c^+ + t U_{\varepsilon})}{2}^2 = A(u_c^+) + 2  \intrn \diff u_c^+(x) \cdot \diff (t U_{\varepsilon}(x)) \, dx  +    A(tU_{\varepsilon})
\end{align}
and
\begin{equation}\label{norm-L2}
	\normLp{ u_c^+ + t U_{\varepsilon}}{2}^2 = c +  2 \intrn  u_c^+(x)  (tU_{\varepsilon}(x)) dx  +    \normLp{t U_{\varepsilon}}{2}^2.
\end{equation}
Using that, for all $a,b \geq 0$, $(a+b)^6 \geq a^6 + 6a^5b + 6ab^5 + b^6$, and that both $u_c^+ \in H^1(\R^N)$ and $U_{\varepsilon}$ are non negative,  we readily derive that
\begin{align}\label{LC}
	C(u_c^+ + t U_{\varepsilon}) = \normLp{u_c^+ + t U_{\varepsilon} }{6}^{6} \geq C(u_c^+) + C(t U_{\varepsilon}) + 6 \intrn  (u_c^+(x))^5  (tU_{\varepsilon}(x)) dx + 6 \intrn  u_c^+(x)  (tU_{\varepsilon}(x))^5 dx .
\end{align}
Also, still using that $u_c^+ \in H^1(\R^N)$ and $U_{\varepsilon}$ are non negative, we get by direct calculations that
\begin{align}\label{LB}
	\begin{split}
		B(u_c^+ + t U_{\varepsilon}) 
		& = \intrb\intrb \dfrac{\abs{u_c^+(x) + t U_{\varepsilon}(x)}^2 \abs{u_c^+(y) + t U_{\varepsilon}(y)}^2}{\abs{x-y}} dxdy\\
		& \geq B(u_c^+) +  B(t U_{\varepsilon}) + 4 \intrb\intrb \dfrac{\abs{u_c^+(x)}^2 u_c^+(y) (t U_{\varepsilon}(y))}{\abs{x-y}} dxdy.
	\end{split}
\end{align}
Finally, since $u_c^+$ is solution of the following equation 
\begin{align*}
	- \laplace u + \lambda_c^+ u - \gamma (\abs{x}^{-1} * \abs{u}^2) u  - a \abs{u}^{p-2}u = 0 \quad \text{in } \Rb
\end{align*}
for a $\lambda_c^+ >0$, we have that
\begin{align}\label{LE}
	\begin{split}
		-\lambda_c^+ \intrn  u_c^+(x)  (tU_{\varepsilon})(x) dx 
		& = \intrn \diff u_c^+(x) \diff (tU_{\varepsilon})(x) dx \\
		& - \gamma \intrb\intrb \dfrac{\abs{u_c^+(x)}^2 u_c^+(y) (t U_{\varepsilon}(y))}{\abs{x-y}} dxdy - a  \intrn (u_c^+(x))^5  (tU_{\varepsilon}(x)) dx.
	\end{split}
\end{align}
Now, we define for $t>0$, $w_{\varepsilon,t}= u_c^+ + t U_{\varepsilon}$ and $\overline{w}_{\varepsilon, t}(x)= \sqrt{\theta}w_{\varepsilon, t}(\theta x)$ with $\theta^2 = \dfrac{1}{c} \normLp{ w_{\varepsilon, t}}{2}^2$. The proof of \cref{lemma:20} will follow directly from the three lemmas below. 

\begin{lemma} \label{lemma:31}
	It holds that
	\begin{equation*}
		\gamma^-(c) \leq \sup_{t \geq 0} F(\overline{w}_{\varepsilon, t})
	\end{equation*}	
	for $\varepsilon >0$ sufficiently small.
\end{lemma}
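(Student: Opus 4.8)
The plan is to show that, for every $\varepsilon>0$, the curve $t\mapsto\overline{w}_{\varepsilon,t}$ in $S(c)$ must meet $\Lambda^-(c)$: there is a $\bar t>0$ with $\overline{w}_{\varepsilon,\bar t}\in\Lambda^-(c)$. Since $\gamma^-(c)=\inf_{\Lambda^-(c)}F$, this immediately gives $\sup_{t\ge0}F(\overline{w}_{\varepsilon,t})\ge F(\overline{w}_{\varepsilon,\bar t})\ge\gamma^-(c)$, which is the assertion (in fact for every $\varepsilon>0$; the smallness of $\varepsilon$ will only be needed in the companion lemmas that bound $\sup_{t\ge0}F(\overline{w}_{\varepsilon,t})$ from above).

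First I would record the structural facts. For each $t\ge0$ one has $\overline{w}_{\varepsilon,t}\in S(c)$ by the choice of $\theta=\theta(t)>0$, and $t\mapsto\overline{w}_{\varepsilon,t}$ is continuous from $[0,\infty)$ into $\Ho$: indeed $t\mapsto w_{\varepsilon,t}=u_c^++tU_\varepsilon$ is affine, $\theta(t)$ is continuous and positive, and $(u,\theta)\mapsto\sqrt\theta\,u(\theta\,\cdot)$ is continuous on $\Ho\times(0,\infty)$. Since $c\in(0,c_1)$, \cref{lemma:6} applies to each $\overline{w}_{\varepsilon,t}\in S(c)$ and produces a unique $s^-(t):=s^-_{\overline{w}_{\varepsilon,t}}>0$ — the unique local maximum of the fiber map $g_{\overline{w}_{\varepsilon,t}}$, lying in $(t^\star_{\overline{w}_{\varepsilon,t}},\infty)$ — with $(\overline{w}_{\varepsilon,t})^{s^-(t)}\in\Lambda^-(c)$. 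Because $u\mapsto s^-_u$ is of class $C^1$, the map $t\mapsto s^-(t)$ is continuous, and $u\in\Lambda^-(c)$ if and only if $s^-_u=1$.

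Next I would compare the two ends of the curve. At $t=0$ we have $\overline{w}_{\varepsilon,0}=u_c^+\in\Lambda^+(c)$, hence $s^+_{u_c^+}=1$ and therefore $s^-(0)=s^-_{u_c^+}>t^\star_{u_c^+}>s^+_{u_c^+}=1$. For $t$ large I claim $s^-(t)<1$. The rescaling $u\mapsto\sqrt\theta\,u(\theta\,\cdot)$ leaves $A$ and $C$ invariant (it is precisely the scale invariance of the $p=6$ problem), so $A(\overline{w}_{\varepsilon,t})=A(w_{\varepsilon,t})=O(t^2)$; moreover $u_c^+\ge0$ and $U_\varepsilon\ge0$ force $(u_c^++tU_\varepsilon)^6\ge(tU_\varepsilon)^6$ pointwise, so $C(\overline{w}_{\varepsilon,t})=C(w_{\varepsilon,t})\ge t^6\,C(U_\varepsilon)$ with $C(U_\varepsilon)>0$. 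Since $\gamma>0$ and $B\ge0$, it follows that
\[
g_{\overline{w}_{\varepsilon,t}}'(1)=Q(\overline{w}_{\varepsilon,t})\le A(w_{\varepsilon,t})-\tfrac{a\sigma}{p}\,C(w_{\varepsilon,t}),\qquad g_{\overline{w}_{\varepsilon,t}}''(1)=A(w_{\varepsilon,t})-\tfrac{a\sigma(\sigma-1)}{p}\,C(w_{\varepsilon,t}),
\]
and both right-hand sides tend to $-\infty$ as $t\to\infty$ (recall $a>0$ and $p=\sigma=6$). Hence for $t$ large $g_{\overline{w}_{\varepsilon,t}}''(1)<0$, which by \cref{lemma:6} means $1>t^\star_{\overline{w}_{\varepsilon,t}}$; on the interval $(t^\star_{\overline{w}_{\varepsilon,t}},\infty)$ the function $g_{\overline{w}_{\varepsilon,t}}'$ is positive up to its unique zero $s^-(t)$ and negative afterwards (by \cref{lemma:3} and \cref{lemma:6}), so $g_{\overline{w}_{\varepsilon,t}}'(1)<0$ forces $1>s^-(t)$.

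Finally, $t\mapsto s^-(t)$ is continuous with $s^-(0)>1$ and $s^-(t)<1$ for $t$ large, so the intermediate value theorem yields $\bar t>0$ with $s^-(\bar t)=1$, i.e. $\overline{w}_{\varepsilon,\bar t}\in\Lambda^-(c)$, and the inequality $\sup_{t\ge0}F(\overline{w}_{\varepsilon,t})\ge\gamma^-(c)$ follows. There is no serious obstacle; the only step needing a little care is the implication ``$g'(1)<0$ and $g''(1)<0\ \Rightarrow\ s^-(t)<1$'', which rests on the precise geometry of the fiber maps recorded in \cref{lemma:6} and \cref{lemma:3}, together with the continuity of $t\mapsto s^-(t)$ used in the intermediate value argument. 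The asymptotics are immediate from the non-negativity of $u_c^+$ and $U_\varepsilon$ and the scaling behaviour of $A,B,C$, and require no quantitative control on $\varepsilon$.
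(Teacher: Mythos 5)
Your proof is correct and follows essentially the same route as the paper's: both locate a $\bar t>0$ with $\overline{w}_{\varepsilon,\bar t}\in\Lambda^{-}(c)$ by an intermediate value argument on the continuous map $t\mapsto s^{-}_{\overline{w}_{\varepsilon,t}}$, using that this quantity exceeds $1$ at $t=0$ because $\overline{w}_{\varepsilon,0}=u_c^+\in\Lambda^{+}(c)$. The only (harmless) difference is at the large-$t$ end: the paper shows $s^{-}_{\overline{w}_{\varepsilon,t}}\to 0$ quantitatively from the identity $A(\overline{w}_{\varepsilon,t})\geq a\,(s^{-}_{\varepsilon,t})^4\,C(\overline{w}_{\varepsilon,t})$ (which is where the smallness of $\varepsilon$ is invoked for uniformity), whereas you deduce $s^{-}_{\overline{w}_{\varepsilon,t}}<1$ directly from $g'_{\overline{w}_{\varepsilon,t}}(1)<0$ and $g''_{\overline{w}_{\varepsilon,t}}(1)<0$, which indeed works for every $\varepsilon>0$.
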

\begin{lemma} \label{lemma:32}
	There exist a $\varepsilon_0 >0$ and $0 < t_0 < t_1 <\infty$ such that
	\begin{equation*}
		F(\wbar) < \gamma^+(c) + \dfrac{1}{6 \sqrt{a K_{GN}}} 
	\end{equation*}
	for $t \not\in [t_0, t_1]$ and any $\varepsilon \in (0, \varepsilon_0]$.
\end{lemma}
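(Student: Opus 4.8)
Write $w = w_{\varepsilon, t} = u_c^+ + t U_{\varepsilon}$. Since $\wbar(x) = \sqrt{\theta}\, w(\theta x)$ with $\theta^2 = c^{-1}\normLp{w}{2}^2$, a change of variables gives $A(\wbar) = A(w)$, $C(\wbar) = C(w)$ and $B(\wbar) = \theta^{-3} B(w)$, so
\[
F(\wbar) = \tfrac12 A(w) - \tfrac{\gamma}{4}\,\theta^{-3} B(w) - \tfrac{a}{6}\, C(w),
\]
and at $t=0$ we have $w = u_c^+$, $\theta = 1$, hence $F(\overline{w}_{\varepsilon,0}) = F(u_c^+) = \gamma^{+}(c)$. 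The plan is to fix $\varepsilon_0>0$ once and for all and then bound $F(\wbar)$ strictly below $\gamma^{+}(c) + \frac{1}{6\sqrt{a K_{GN}}}$ separately for $t$ large and for $t$ small, with every constant uniform in $\varepsilon \in (0,\varepsilon_0]$; the existence of $0 < t_0 < t_1 < \infty$ then follows at once.

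First I would use \cref{estimates-U}(i) to choose $\varepsilon_0$ so small that $A(U_{\varepsilon}) \le 2/\sqrt{K_{GN}}$ and $C(U_{\varepsilon}) \ge 1/(2\sqrt{K_{GN}})$ for all $\varepsilon \in (0,\varepsilon_0]$, and record that $\intrn u_c^+ U_{\varepsilon}\, dx \le \normLp{u_c^+}{\infty}\normLp{U_{\varepsilon}}{1}$ and $\normLp{U_{\varepsilon}}{2}^2$ stay bounded on $(0,\varepsilon_0]$ (by \cref{estimates-U}(ii) with $q=1$ and $q=2$, and using $u_c^+ \in L^{\infty}(\Rn)$ from \cref{theorem:1}). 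Below, $C$ denotes a constant depending only on $c$ and $\varepsilon_0$. For $t$ large I would discard the non-positive term $-\frac{\gamma}{4}\theta^{-3}B(w) \le 0$ (recall $\gamma>0$), bound $A(w) \le A(u_c^+) + 2t\,\normLp{\diff u_c^+}{2}\normLp{\diff U_{\varepsilon}}{2} + t^2 A(U_{\varepsilon}) \le C(1+t^2)$ via \eqref{LA} and Cauchy--Schwarz, and bound $C(w) \ge C(tU_{\varepsilon}) = t^6 C(U_{\varepsilon}) \ge t^6/(2\sqrt{K_{GN}})$ via \eqref{LC} (dropping its non-negative cross terms). This gives $F(\wbar) \le \frac{C}{2}(1+t^2) - \frac{a}{12\sqrt{K_{GN}}}\,t^6$, a polynomial in $t$ with negative leading coefficient, so there is $t_1>0$, depending only on $c$ and $\varepsilon_0$, with $F(\wbar) < \gamma^{+}(c)$ for all $t \ge t_1$ and all $\varepsilon \in (0,\varepsilon_0]$.

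For $t$ small I would instead keep all three terms and compare each with its value at $t=0$. From \eqref{norm-L2}, $1 \le \theta^2 = 1 + \frac{2t}{c}\intrn u_c^+ U_{\varepsilon}\, dx + \frac{t^2}{c}\normLp{U_{\varepsilon}}{2}^2 \le 1 + Ct$ for $t \le 1$, whence $\theta^{-3} \ge 1 - Ct$; combined with $B(w) \ge B(u_c^+)$ from \eqref{LB}, this yields $-\frac{\gamma}{4}\theta^{-3}B(w) \le -\frac{\gamma}{4}B(u_c^+) + Ct$. By \eqref{LA} and Cauchy--Schwarz, $\frac12 A(w) \le \frac12 A(u_c^+) + Ct$ for $t \le 1$, and by \eqref{LC}, discarding non-negative terms, $C(w) \ge C(u_c^+)$, so $-\frac{a}{6}C(w) \le -\frac{a}{6}C(u_c^+)$. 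Adding the three estimates gives $F(\wbar) \le F(u_c^+) + Ct = \gamma^{+}(c) + Ct$ for all $t \in [0,1]$ and $\varepsilon \in (0,\varepsilon_0]$; it then suffices to take $t_0 \in (0,1]$ with $Ct_0 < \frac{1}{6\sqrt{a K_{GN}}}$ and, if necessary, enlarge $t_1$ so that $t_0 < t_1$.

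The only delicate point I anticipate is the uniformity in $\varepsilon$ of all the constants: without it $t_0$ and $t_1$ could degenerate as $\varepsilon \to 0$ and the statement would be vacuous. This uniformity is exactly what \cref{estimates-U} provides, since the quantities governing the leading order in $t$, namely $\normLp{\diff U_{\varepsilon}}{2}$ and $C(U_{\varepsilon})$, admit uniform two-sided bounds on $(0,\varepsilon_0]$, while $\normLp{U_{\varepsilon}}{1}$ and $\normLp{U_{\varepsilon}}{2}$ vanish and are in particular bounded. Note that no interaction estimate (i.e.\ \cref{lemma:21} or the finer expansions in \cref{estimates-U}) is needed here; those enter only in the complementary bound on the compact interval $[t_0,t_1]$.
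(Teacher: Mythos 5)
Your argument is correct and follows essentially the same route as the paper: both bound $F(\wbar)$ from above using \eqref{LA}, \eqref{LB}, \eqref{LC} together with the uniform estimates of \cref{estimates-U}(i), control the Coulomb term via $\theta^{-3}$ and $B(w_{\varepsilon,t})\geq B(u_c^+)$, and conclude from the facts that the resulting majorant tends to $-\infty$ as $t\to\infty$ and to $F(u_c^+)=\gamma^+(c)$ as $t\to 0$, uniformly in $\varepsilon$. Your version is merely more explicit about the uniformity of the constants, which the paper states without detailed computation.
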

\begin{lemma} \label{lemma:33}
 It holds that
	\begin{equation*}
		\max_{t \in [t_0, t_1]} F(\wbar) < \gamma^+(c) + \dfrac{1}{3 \sqrt{a K_{GN}}},
	\end{equation*}
	for any $\varepsilon \in (0, \varepsilon_0]$ where $\varepsilon_0$ and $t_0, t_1$ are provided by \cref{lemma:32}.
\end{lemma}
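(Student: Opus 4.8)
The plan is to bound $F(\wbar)$ from above, uniformly for $t\in[t_0,t_1]$, and to exhibit a strictly negative correction of order $\varepsilon^{1/2}$ beyond the threshold $\gamma^+(c)+\frac{1}{3\sqrt{aK_{GN}}}$. First I would record the scaling identities $A(\wbar)=A(w_{\varepsilon,t})$, $C(\wbar)=C(w_{\varepsilon,t})$, $B(\wbar)=\theta^{-3}B(w_{\varepsilon,t})$ (the second one using $p=6$) together with $\normLp{\wbar}{2}^2=c$, so that $F(\wbar)=\frac12 A(w_{\varepsilon,t})-\frac{\gamma}{4}\theta^{-3}B(w_{\varepsilon,t})-\frac{a}{6}C(w_{\varepsilon,t})$. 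Since $u_c^+\ge 0$ and $U_\varepsilon\ge 0$ we have $\theta\ge 1$, hence $\theta^{-3}\in(0,1]$, and therefore the exact expansion \eqref{LA} for $A$ together with the lower bounds \eqref{LB}, \eqref{LC} for $B$ and $C$ (whose cross terms are all nonnegative) combine into an upper bound for $F(\wbar)$.

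I would then substitute into this upper bound the identity \eqref{LE} obtained by testing against $tU_\varepsilon$ the equation solved by $u_c^+$; the two copies of $at\intrn (u_c^+)^5U_\varepsilon$ cancel, and after collecting the terms involving only $u_c^+$ and using $\frac12 A(u_c^+)-\frac{\gamma}{4}\theta^{-3}B(u_c^+)-\frac{a}{6}C(u_c^+)=\gamma^+(c)+\frac{\gamma}{4}(1-\theta^{-3})B(u_c^+)$, one is left with
\begin{align*}
	F(\wbar)\le\; & \gamma^+(c)+\Bigl(\tfrac{t^2}{2}A(U_\varepsilon)-\tfrac{at^6}{6}C(U_\varepsilon)\Bigr)+\Bigl(\tfrac{\gamma}{4}(1-\theta^{-3})B(u_c^+)-\lambda_c^+ t\!\intrn\! u_c^+U_\varepsilon\Bigr)\\
	& {}-at^5\!\intrn\! u_c^+U_\varepsilon^5+\gamma t(1-\theta^{-3})P_1-\tfrac{\gamma\theta^{-3}t^4}{4}B(U_\varepsilon),
\end{align*}
where $P_1:=\intrb\intrb\tfrac{\abs{u_c^+(x)}^2 u_c^+(y)U_\varepsilon(y)}{\abs{x-y}}\,dxdy$.

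The decisive point is that the second bracket is $O(\varepsilon)$, uniformly on $[t_0,t_1]$. Indeed, since $p=6$ gives $\sigma/p=1$, the relation $Q(u_c^+)=0$ forces $\lambda_c^+=\frac{3\gamma}{4c}B(u_c^+)$ (as in \eqref{key-point}); on the other hand, from the definition of $\theta$ and \eqref{norm-L2}, together with $\intrn u_c^+U_\varepsilon=O(\varepsilon^{1/2})$ and $\normLp{U_\varepsilon}{2}^2=O(\varepsilon)$ (by \cref{lemma:21} and \cref{estimates-U}), one gets $1-\theta^{-3}=\frac{3t}{c}\intrn u_c^+U_\varepsilon+O(\varepsilon)$, so the two leading $\varepsilon^{1/2}$ contributions cancel. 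For the other terms: the first bracket is $\le\max_{s>0}\bigl(\frac{s^2}{2}A(U_\varepsilon)-\frac{as^6}{6}C(U_\varepsilon)\bigr)=\frac13 A(U_\varepsilon)^{3/2}(aC(U_\varepsilon))^{-1/2}=\frac{1}{3\sqrt{aK_{GN}}}+O(\varepsilon)$ by the asymptotics of $A(U_\varepsilon),C(U_\varepsilon)$ in \cref{estimates-U}; the term $-\frac{\gamma\theta^{-3}t^4}{4}B(U_\varepsilon)$ is $\le 0$; and $\gamma t(1-\theta^{-3})P_1=O(\varepsilon)$ because $P_1\le\norm{|x|^{-1}*|u_c^+|^2}_{L^{\infty}}\intrn u_c^+U_\varepsilon=O(\varepsilon^{1/2})$. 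Finally, by \cref{lemma:21} and the case $q=5\in(3,6)$ of \cref{estimates-U}, there is $c'>0$ with $\intrn u_c^+U_\varepsilon^5\ge c'\varepsilon^{1/2}$ for all small $\varepsilon$, whence $-at^5\intrn u_c^+U_\varepsilon^5\le -a\,t_0^5c'\varepsilon^{1/2}$ on $[t_0,t_1]$. Assembling these bounds, $F(\wbar)\le\gamma^+(c)+\frac{1}{3\sqrt{aK_{GN}}}-a\,t_0^5c'\varepsilon^{1/2}+C\varepsilon$ uniformly for $t\in[t_0,t_1]$, and this is strictly below $\gamma^+(c)+\frac{1}{3\sqrt{aK_{GN}}}$ once $\varepsilon\in(0,\varepsilon_0]$ is chosen small enough (shrinking $\varepsilon_0$ if necessary). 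I expect the main obstacle to be exactly this cancellation: the rescaling defect $\frac{\gamma}{4}(1-\theta^{-3})B(u_c^+)$ is of the same order $\varepsilon^{1/2}$ as the favourable interaction $-at^5\intrn u_c^+U_\varepsilon^5$, and only the identity $\lambda_c^+=\frac{3\gamma}{4c}B(u_c^+)$, which holds because $p=6$, lets one neutralise it.
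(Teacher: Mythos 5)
Your proposal is correct and follows essentially the same route as the paper: the exact expansion of $A$, the superadditivity lower bounds \eqref{LB}--\eqref{LC}, the cancellation of the cross terms via the equation \eqref{LE}, the neutralisation of the rescaling defect $\frac{\gamma}{4}(1-\theta^{-3})B(u_c^+)$ against $\lambda_c^+ t\intrn u_c^+U_\varepsilon$ using $\lambda_c^+=\frac{3\gamma}{4c}B(u_c^+)$, and the decisive $-at_0^5\intrn u_c^+U_\varepsilon^5\sim-\varepsilon^{1/2}$ gain over the $O(\varepsilon)$ errors. The only (cosmetic) differences are that you keep $\theta^{-3}$ attached throughout and discard $-\frac{\gamma\theta^{-3}t^4}{4}B(U_\varepsilon)\le 0$ directly, and you bound the Coulomb cross term by $\norm{|x|^{-1}*|u_c^+|^2}_{L^\infty}$ instead of Hardy--Littlewood--Sobolev, whereas the paper splits $F(\wbar)\le I_1+I_2$ and estimates $B(tU_\varepsilon)=O(\varepsilon^2)$ and the cross term as $O(\varepsilon^{3/5})$.
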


\begin{proof}[Proof of \cref{lemma:31}]
	By direct calculation we get
	\begin{equation}\label{L1}
		A(\overline{w}_{\varepsilon, t}) = A (w_{\varepsilon,t}), \quad C(\overline{w}_{\varepsilon, t})= C(w_{\varepsilon,t}),
	\end{equation}
	and 
	\begin{equation}\label{L11}
		\normLp{ \overline{w}_{\varepsilon, t}}{2}^2 = \theta^{-2} \normLp{ w_{\varepsilon, t}}{2}^2, \quad  B(\overline{w}_{\varepsilon, t})= \theta^{-3}B(w_{\varepsilon,t}).
	\end{equation}
	Since $\theta^2 = \dfrac{1}{c} \normLp{ w_{\varepsilon, t}}{2}^2$, we have that $\overline{w}_{\varepsilon, t} \in S(c)$. By \cref{lemma:6} there exists $s_{\varepsilon, t}^- >0$ such that $(\overline{w}_{\varepsilon, t})^{s_{\varepsilon, t}^-}  \in \slm.$ 
	We claim that $s_{\varepsilon, t}^- \to 0$ as $t \to + \infty$ uniformly for $\varepsilon >0$ sufficiently small. 
	Indeed, we have
	$$ A((\overline{w}_{\varepsilon, t})^{s_{\varepsilon, t}^-}) = \frac{\gamma}{4}B((\overline{w}_{\varepsilon, t})^{s_{\varepsilon, t}^-}) + a C((\overline{w}_{\varepsilon, t})^{s_{\varepsilon, t}^-})$$
	or equivalently
	\begin{equation*}
		(s_{\varepsilon, t}^-) A(\overline{w}_{\varepsilon, t}) 
		= \frac{\gamma}{4} B(\overline{w}_{\varepsilon, t}) + a (s_{\varepsilon, t}^-)^5 C(\overline{w}_{\varepsilon, t}).
	\end{equation*}	
	This implies that
	\begin{align} \label{T1}
	 A(\overline{w}_{\varepsilon, t}) \geq a (s_{\varepsilon, t}^-)^4 C(\overline{w}_{\varepsilon, t}).
	\end{align}	
	In view of \eqref{LA}, \eqref{L1}, \cref{estimates-U}\ref{point:estimates-U:1} and using H\"{o}lder's inequality, we have
	\begin{align} \label{T2}
		\begin{split}		
		A(\overline{w}_{\varepsilon, t}) &= A({w}_{\varepsilon, t}) = A(u_c^+) + 2  \intrn \diff u_c^+(x) \cdot \diff (t U_{\varepsilon}(x)) dx + A(t U_{\varepsilon})\\
		&\leq A(u_c^+) + 2 t  \normLp{\diff u_c^+}{2} \normLp{\diff U_{\varepsilon}}{2} + t^2 A(U_{\varepsilon}) 
		\to A(u_c^+) + 2 J \sqrt{A(u_c^+)} \, t   + J t^2 \mbox{ as } \varepsilon \to 0.
	\end{split}
	\end{align}
	In view of \eqref{LC}, \eqref{L1}  and  \cref{estimates-U}\ref{point:estimates-U:1}, we also have
	\begin{align}\label{T3}
		C(\overline{w}_{\varepsilon, t}) = C({w}_{\varepsilon, t}) \geq C(t U_{\varepsilon} ) = t^6 C(U_{\varepsilon} ) \to L t^6 \mbox{ as } \varepsilon \to 0.
	\end{align}
	Combining \eqref{T1}-\eqref{T3}, we obtain that, for $\varepsilon >0$ sufficiently small 
	\begin{align*}
		 A(u_c^+) + J \sqrt{A(u_c^+)} \, t   + J t^2 \geq a (s_{\varepsilon, t}^-)^4 L t^6,
	\end{align*} 
	which implies the claim. Since $\overline{w}_{\varepsilon, 0} = {w}_{\varepsilon, 0} = u_c^+$ and $u_c^+ \in \slp$  we obtain, see \cref{lemma:6}, that $s_{\varepsilon, 0}^- >1$. Still by \cref{lemma:6}, the map $t \mapsto s_{\varepsilon, t}^-$ is continuous which implies that there exists $t_{\varepsilon} >0$ such that $s_{\varepsilon, t_{\varepsilon}}^- =1$. It follows that $\overline{w}_{\varepsilon, t_{\varepsilon}} \in \slm$ and thus
	\begin{equation*}
		\sup_{t \geq 0}F(\overline{w}_{\varepsilon, t}) \geq F(\overline{w}_{\varepsilon, t_{\varepsilon}}) \geq \gamma^-(c). 
	\end{equation*}
	The lemma is proved.
\end{proof}

\begin{proof}[Proof of \cref{lemma:32}]
	In view of \eqref{L1} and \eqref{L11}, we have that
	\begin{align*}
		F(\overline{w}_{\varepsilon, t})  
		 =  \frac{1}{2}  A (w_{\varepsilon,t}) - \frac{\gamma}{4} \theta^{-3}B( w_{\varepsilon,t}) - \frac{a}{6} C (w_{\varepsilon,t}).
	\end{align*}
	Hence, by  \eqref{LA}, \eqref{LC} and \eqref{LB}, we get that
	\begin{align*}
		F(\overline{w}_{\varepsilon, t}) &\leq \frac{1}{2} \Big[ A(u_c^+) + 2  \intrn \diff u_c^+(x) \cdot \diff (t U_{\varepsilon}(x)) \, dx  +    A(tU_{\varepsilon}) \Big] - \frac{\gamma}{4} \theta^{-3} B(u_c^+) - \frac{a}{6} \Big[C(u_c^+)  + C(t U_{\varepsilon})\Big] \\
		&= F(u_c^+) + \dfrac{\gamma}{4}(1 - \theta^{-3}) B(u_c^+) + \intrn \diff u_c^+(x) \cdot \diff (t U_{\varepsilon}(x)) \, dx +  \frac{1}{2} A(tU_{\varepsilon}) - \frac{a}{6} C(t U_{\varepsilon}) \\
		& \leq F(u_c^+) + \dfrac{\gamma}{4}(1 - \theta^{-3}) B(u_c^+) + t  \normLp{\diff u_c^+}{2} \normLp{ \diff U_{\varepsilon}}{2} +  \frac{1}{2} t^2 A(U_{\varepsilon}) - \frac{a}{6} t^6 C( U_{\varepsilon}) := I(t).
	\end{align*}
	By \cref{estimates-U}\ref{point:estimates-U:1}, we see that, uniformly for $\varepsilon >0$ small,  $I(t) \to -\infty$ as $t \to + \infty$ and $I(t) \to F(u_c^+)$ as $t \to 0$ due to $\theta \to 1$. Hence, there exists $\varepsilon_0 >0$ and $ 0< t_0 < t_1 < \infty$ such that
	\begin{align*}
		\begin{split}
			F(\overline{w}_{\varepsilon, t})  < \gamma^+(c) + \dfrac{1}{6 \sqrt{a K_{GN}}} 
		\end{split}
	\end{align*}
	for $t \not\in [t_0, t_1]$ and $\varepsilon \in (0, \varepsilon_0]$. The lemma is proved.
\end{proof}

\begin{proof}[Proof of \cref{lemma:33}]	We assume throughout the proof that $ t \in [t_0, t_1]$.
	By using \eqref{LB}, we can write, 
	\begin{align}\label{L5}
		\begin{split}
			F(\overline{w}_{\varepsilon, t})  
			& =  \frac{1}{2}  A (w_{\varepsilon,t}) - \frac{\gamma}{4} \theta^{-3}B( w_{\varepsilon,t}) - \frac{a}{6} C (w_{\varepsilon,t})  \\
			& \leq \frac{1}{2}  A (w_{\varepsilon,t}) - \frac{\gamma}{4} \theta^{-3}\Big[B(u_c^+)  + B(t U_{\varepsilon}) + 4 \intrb\intrb \dfrac{\abs{u_c^+(x)}^2 u_c^+(y) (t U_{\varepsilon}(y))}{\abs{x-y}} dxdy \Big] - \frac{a}{6} C (w_{\varepsilon,t})  \\
			& = I_1 + I_2,
		\end{split}
	\end{align}
	where
	\begin{align*}
		I_1:= \frac{1}{2}  A (w_{\varepsilon,t})  - \frac{\gamma}{4} \Big[B(u_c^+)  + B(t U_{\varepsilon}) + 4 \intrb\intrb \dfrac{\abs{u_c^+(x)}^2 u_c^+(y) (t U_{\varepsilon}(y))}{\abs{x-y}} dxdy \Big] - \frac{a}{6} C (w_{\varepsilon,t}),
	\end{align*}
	and
	\begin{align}\label{Def-I2}
		I_2:= \dfrac{\gamma}{4}(1 - \theta^{-3}) \Big[B(u_c^+)  + B(t U_{\varepsilon}) + 4 \intrb\intrb \dfrac{\abs{u_c^+(x)}^2 u_c^+(y) (t U_{\varepsilon}(y))}{\abs{x-y}} dxdy \Big].
	\end{align}
	In view of \eqref{LA}, \eqref{LC} and using crucially \eqref{LE}, we have 
	\begin{align}\label{I_1}
		\begin{split}
			I_1  
			& \leq   \frac{1}{2} \Big[ A(u_c^+) + 2  \intrn \diff u_c^+(x) \cdot \diff (t U_{\varepsilon}(x)) \, dx  +    A(tU_{\varepsilon}) \Big] \\
			&  - \frac{\gamma}{4}  \Big[B(u_c^+)  + B(t U_{\varepsilon}) + 4 \intrb\intrb \dfrac{\abs{u_c^+(x)}^2 u_c^+(y) (t U_{\varepsilon}(y))}{\abs{x-y}} dxdy \Big]   \\
			& -   \frac{a}{6} \Big[  C(u_c^+) + C(t U_{\varepsilon}) + 6 \intrn  (u_c^+(x))^5  (tU_{\varepsilon}(x)) dx + 6 \intrn  u_c^+(x)  (tU_{\varepsilon}(x))^5 dx    \Big]  \\
			& = F(u_c^+) + F(t U_{\varepsilon}) -\lambda_c^+ \intrn  u_c^+(x) (tU_{\varepsilon}(x)) dx  - a \intrn  u_c^+(x)  (tU_{\varepsilon}(x))^5 dx.
		\end{split}
	\end{align}
	Now, we shall evaluate $I_2$. By \eqref{norm-L2} and \cref{estimates-U}\ref{point:estimates-U:3}, we get that
	\begin{align} \label{E-theta}
		\begin{split}
		\theta^2 &= \dfrac{\normLp{ w_{\varepsilon, t}}{2}^2}{c} = 1 + \frac{2}{c} \intrn  u_c^+(x)  (tU_{\varepsilon}(x)) dx  + \frac{t^2}{c} \normLp{ U_{\varepsilon}}{2}^2 \\
		&= 1 + \frac{2}{c} \intrn  u_c^+(x)  (tU_{\varepsilon}(x)) dx  + \frac{t^2}{c} \[\omega \Big(\int_0^2 \xi(r) dr \Big) \var + O(\var^2) \] 
		= 1 + \frac{2}{c} \intrn  u_c^+(x)  (tU_{\varepsilon}(x)) dx  + O(\varepsilon).
		\end{split}
	\end{align}
	Note that, by \cref{estimates-U}\ref{point:estimates-U:3} and \cref{lemma:21},
	\begin{equation}\label{size}
	\intrn  u_c^+(x)  (tU_{\varepsilon}(x)) dx  \sim \normLp{ \diff U_{\varepsilon}}{1} = O(\varepsilon^{\frac{1}{2}}).
	\end{equation}
	Observing that the Taylor expansion of $(1+x)^{- \frac{3}{2}}$ around $x=0$ is given by $(1+x)^{-\frac{3}{2}} = 1 - \frac{3}{2}x + O(x^2)$, we get, in view of \eqref{E-theta} and \eqref{size}, that
	\begin{align} \label{evaluate-I2-1}
		\begin{split}
			1 -\theta^{-3} &= 1- (\theta^2)^{-\frac{3}{2}} 
			 = 1 - \[1 + \frac{2}{c} \intrn  u_c^+(x)  (tU_{\varepsilon}(x))  + O(\varepsilon) \]^{- \frac{3}{2}} \\
			& = 1 - \[1 - \frac{3}{c} \intrn  u_c^+(x)  (tU_{\varepsilon}(x))  + O(\varepsilon) \] 
			 = \frac{3}{c} \intrn  u_c^+(x)  (tU_{\varepsilon}(x))  + O(\varepsilon).
		\end{split}
	\end{align}
	Concerning the term $B(tU_{\varepsilon})$, in view of \eqref {eqn:2.1} and \cref{estimates-U}\ref{point:estimates-U:3}, we have
	\begin{align} \label{evaluate-I2-2}
		\begin{split}
			B(tU_{\varepsilon}) = t^4 B(U_{\varepsilon}) \leq  t^4 K_1 \normLp{U_{\varepsilon}}{\frac{12}{5}}^4 = 
			t^4 K_1 \Big(\normLp{U_{\varepsilon}}{\frac{12}{5}}^\frac{12}{5}\Big)^{\frac{10}{6}} =  t^4 K_1 \Big(K_2 \varepsilon^{\frac{6}{5}} + o(\varepsilon^\frac{6}{5})\Big)^{\frac{10}{6}} = O(\varepsilon^2).
		\end{split}
	\end{align}
Also, using the Hardy-Littlewood-Sobolev inequality \eqref{ineq:HLS}, \cref{lemma:21} and \cref{estimates-U}\ref{point:estimates-U:3} we have 
	\begin{align}\label{needed}
		\intrb\intrb \dfrac{\abs{u_c^+(x)}^2 u_c^+(y)  (tU_{\varepsilon}(y))}{\abs{x-y}} dxdy &\leq K_2 \normLp{u_c^+}{\frac{12}{5}}^2 \normLp{u_c^+ U_{\varepsilon}}{\frac{6}{5}}
		\leq K_3  \normLp{U_{\varepsilon}}{\frac{6}{5}} = O(\varepsilon^{\frac{3}{5}}).
	\end{align}
From \eqref{evaluate-I2-2} and  \eqref{needed} we deduce that
	\begin{equation}\label{allneeded}
	 B(u_c^+)  + B(t U_{\varepsilon}) + 4 \intrb\intrb \dfrac{\abs{u_c^+(x)}^2 u_c^+(y) (t U_{\varepsilon}(y))}{\abs{x-y}} dxdy = B(u_c^+) + O (\varepsilon^{\frac{3}{5}}).
	\end{equation}
Taking into account, see \eqref{key-point}, that
	\begin{align*}
		c \lambda_c^+ = \frac{3\gamma}{4}B(u_c^+)
	\end{align*}
we obtain, combining \eqref{Def-I2}, \eqref{size}, \eqref{evaluate-I2-1} and \eqref{allneeded},  the following evaluation of $I_2$
	\begin{align} \label{I_2}
		\begin{split}
		I_2 \leq \dfrac{3 \gamma}{4c} B(u_c^+) \intrn  u_c^+(x)  (tU_{\varepsilon}(x)) dx + O(\varepsilon) 
		=   \lambda_c^+ \intrn  u_c^+(x)  (tU_{\varepsilon}(x)) dx   + O(\varepsilon).
		\end{split}
	\end{align}
	At this point, in view of \eqref{L5}, \eqref{I_1} and \eqref{I_2} we deduce that	
	\begin{align}\label{L8}
		\begin{split}
		F(\overline{w}_{\varepsilon, t})  
		&\leq   F(u_c^+) + F(t U_{\varepsilon}) - a \intrn  u_c^+(x)  (tU_{\varepsilon}(x))^5 dx  + O(\varepsilon) \\
		&\leq   F(u_c^+) + F(t U_{\varepsilon}) - a t_0^5 \intrn  u_c^+(x)  (U_{\varepsilon}(x))^5 dx  + O(\varepsilon).
	\end{split}
	\end{align}
	In view of \cref{estimates-U}\ref{point:estimates-U:1}, a direct calculation shows that 
	\begin{align}\label{againuseful}
		\begin{split}
			\max_{t \in [t_0, t_1]} F (t U_{\varepsilon}) &= \max_{t \in [t_0, t_1]} \[\dfrac{1}{2}A(t U_{\varepsilon}) - \dfrac{\gamma}{4} B(t U_{\varepsilon}) - \dfrac{a}{6} C(t U_{\varepsilon})\] \\
			&\leq \max_{t \in [t_0, t_1]} \[\dfrac{1}{2}A(t U_{\varepsilon}) - \dfrac{a}{6} C(t U_{\varepsilon})\] 
			\leq \max_{t >0} \[\dfrac{1}{2}A(t U_{\varepsilon}) - \dfrac{a}{6} C(t U_{\varepsilon})\] 
			= \dfrac{1}{3 \sqrt{a K_{GN}}} + O(\varepsilon). 
		\end{split}
	\end{align} 
	In view of \eqref{L8} and \eqref{againuseful}, by \cref{estimates-U}\ref{point:estimates-U:3} and \cref{lemma:21}, we conclude by observing that
	\begin{align*}
		- a t_0^5 \intrn  u_c^+(x)  (U_{\varepsilon}(x))^5 dx  \sim - \normLp{U_{\varepsilon}(x)}{5}^5 = - K \varepsilon^{\frac{1}{2}} + o(\varepsilon^{\frac{1}{2}}).
	\end{align*}
\end{proof}
\begin{proof}[Proof of \cref{theorem:1} in the critical case $p=6$ for $\gamma^-(c)$.]
We conclude that $\gamma^-(c)$ is reached by combining \cref{lemma:9}, \cref{lemma:19} and \cref{lemma:20}. The rest of the proof is identical to the one in the case $p \in (\frac{10}{3},6)$.
\end{proof}

\subsection{The compactness of any minimizing sequence associated to $\gamma^+(c)$ for $ p \in (\frac{10}{3}, 6]$}\label{Subsection3-4}

In this subsection we give the proof of \cref{th-minimization}. For short we introduce the following notations,
	\begin{align}\label{eq-notations}
	M:= \dfrac{p}{a \sigma (\sigma - 1) K_{GN}}, \quad N:= \dfrac{4(\sigma - 2)}{\gamma (\sigma - 1) K_{H}}, \quad k_0:= N^{-2}, \quad \mbox{and} \quad k_1 := k_0 c_1^3.
	\end{align}
	Note that
	\begin{align*}
	c_1 = N^{\frac{3p-10}{4(p-3)}} M^{\frac{1}{2(p-3)}}.
	\end{align*}
		
\begin{lemma} \label{lemma:27}
	Let  $p \in (\frac{10}{3},6]$ and $c \in (0, c_1)$. 
	\begin{enumerate}[label=(\roman*), ref=\roman*]
	\item \label{point:27i} If $u \in \slp$ then we have
	\begin{align}
		A(u) < k_0 c^3. \label{eqn:43}
	\end{align}
	\item \label{point:27ii}  $\slp \subset V(c)$ and
	\begin{align*}
		\gamma^+(c) = \inf_{u \in \slp}F(u) = \inf_{u \in \overline{V(c)}}F(u).
	\end{align*}
	\item \label{point:27iii} If $u_c$ is a minimizer for the minimization problem
	\begin{align*}
		\inf_{u \in \overline{V(c)}}F(u)
	\end{align*}
	then $u_c \in V(c)$ and $\gamma^+(c)$ is reached.
	\end{enumerate}
\end{lemma}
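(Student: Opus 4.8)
The plan is to prove the three parts in order, with part (i) being the crucial estimate that everything else rests on. For part (i), let $u \in \slp$. By definition of $\slp$ we have both $Q(u) = 0$, i.e. $A(u) = \frac{\gamma}{4}B(u) + \frac{a\sigma}{p}C(u)$, and the strict inequality $A(u) > \frac{a\sigma(\sigma-1)}{p}C(u)$. Combining these two, I would first eliminate $C(u)$: from the second relation, $\frac{a\sigma}{p}C(u) < \frac{1}{\sigma-1}A(u)$, so substituting into the first gives $A(u) < \frac{\gamma}{4}B(u) + \frac{1}{\sigma-1}A(u)$, hence $\frac{\sigma-2}{\sigma-1}A(u) < \frac{\gamma}{4}B(u)$. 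Now apply \cref{lemma:1}\eqref{point:1i}, namely $B(u) \leq K_H\sqrt{A(u)}\,c^{3/2}$, to obtain $\frac{\sigma-2}{\sigma-1}A(u) < \frac{\gamma}{4}K_H\sqrt{A(u)}\,c^{3/2}$, i.e. $\sqrt{A(u)} < \frac{\gamma(\sigma-1)K_H}{4(\sigma-2)}c^{3/2} = N^{-1}c^{3/2}$, where $N$ is as in \eqref{eq-notations}. Squaring yields $A(u) < N^{-2}c^3 = k_0 c^3$, which is \eqref{eqn:43}.

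For part (ii), the inclusion $\slp \subset V(c)$ is immediate from part (i): since $c < c_1$ and $k_1 = k_0 c_1^3$, we have $A(u) < k_0 c^3 < k_0 c_1^3 = k_1$ for any $u \in \slp$, so $u \in V(c)$. This gives $\gamma^+(c) = \inf_{\slp}F \geq \inf_{V(c)}F \geq \inf_{\overline{V(c)}}F$. For the reverse inequality, I would take any $u \in \overline{V(c)}$, so $A(u) \leq k_1$, and use the fiber map: by \cref{lemma:6} there is a unique $s_u^+ \in (0, t_u^\star)$ with $u^{s_u^+} \in \slp$, and $s_u^+$ is a global minimum of $g_u$ on $(0, t_u^\star)$ — in fact I expect $g_u(s_u^+) \leq g_u(1) = F(u)$ provided $1 \leq t_u^\star$; so the key sub-point is to verify that $A(u) \leq k_1$ forces $u \notin \Lambda^-(c)$ and indeed $s_u^+ \geq 1$ cannot fail in the wrong direction. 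More precisely, I would show $V(c) \subset S(c)\setminus\Lambda^-(c)$: if $u \in \Lambda^-(c)$ then $A(u) \geq \frac{a\sigma(\sigma-1)}{p}C(u)$ fails (reversed), and combined with \cref{lemma:1}\eqref{point:1ii} one gets a lower bound $A(u) > k_1$ — this is the separation property mentioned in the remark, via \eqref{eqn:41}. Granting this, for $u \in \overline{V(c)}$ we have $1 \le s_u^+$ if $A(u)$ is below the threshold (since $g_u'(1) = Q(u)/1$ and the sign of $Q(u)$ is controlled), hence $F(u) = g_u(1) \geq g_u(s_u^+) = F(u^{s_u^+}) \geq \inf_{\slp}F = \gamma^+(c)$, giving $\inf_{\overline{V(c)}}F \geq \gamma^+(c)$.

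For part (iii), suppose $u_c$ achieves $\inf_{\overline{V(c)}}F$. Since $F(u_c) = \gamma^+(c) = \inf_{\slp}F$, I would push $u_c$ along its fiber to $u_c^{s_{u_c}^+} \in \slp$; by part (i) this fiber image satisfies $A(u_c^{s_{u_c}^+}) < k_0 c^3 < k_1$, and since $F(u_c^{s_{u_c}^+}) \leq F(u_c) = \gamma^+(c)$ with equality forced, we must have $s_{u_c}^+ = 1$, so $u_c \in \slp$ and thus $A(u_c) < k_1$, i.e. $u_c \in V(c)$ (the open set), and $\gamma^+(c)$ is attained. The main obstacle I anticipate is part (ii): carefully establishing that membership in $\overline{V(c)}$ controls the position of $1$ relative to $s_u^+$ and $t_u^\star$ on the fiber — equivalently, proving the strict separation $A(v) > k_1$ for all $v \in \Lambda^-(c)$ uniformly in $c \in (0, c_1)$ — since this is what makes the minimization over $\overline{V(c)}$ coincide with the one over $\slp$ rather than accidentally picking up mountain-pass-type configurations; the computation should parallel the one in \cref{lemma:3} and the definition \eqref{eqn:assumption} of $c_1$.
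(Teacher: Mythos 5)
Your proposal follows essentially the same route as the paper's proof: part (i) is the paper's computation verbatim, and for part (ii) you correctly identify the decisive ingredient, namely the separation property $A(v)>k_1$ for every $v\in\slm$, which the paper establishes via \cref{lemma:1}(ii) together with the identity $N^{-2}c_1^{3}=M^{\frac{4}{3p-10}}c_1^{-\frac{6-p}{3p-10}}$ (see \eqref{eqn:41}). One intermediate claim in your part (ii) is misstated, however: from $A(u)\leq k_1$ you cannot deduce $1\leq s_u^{+}$, since the sign of $Q(u)=g_u'(1)$ is not controlled on $\overline{V(c)}$ and $1$ may well lie in $(s_u^{+},s_u^{-})$. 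What the bound $A(u)\leq k_1<A(u^{s_u^{-}})=(s_u^{-})^{2}A(u)$ actually yields is $1<s_u^{-}$; since $g_u$ is strictly decreasing on $(0,s_u^{+})$ and strictly increasing on $(s_u^{+},s_u^{-})$, this already gives $g_u(1)\geq g_u(s_u^{+})$, which is all your argument uses, so the conclusion survives once that step is rephrased (this is exactly the role of $t_u^{1}$ and \eqref{eqn:52} in the paper). Your part (iii) is correct and in fact a little leaner than the paper's: you exploit that $s_{u_c}^{+}$ is the \emph{strict} minimum of $g_{u_c}$ on $(0,s_{u_c}^{-}]$ to force $s_{u_c}^{+}=1$ and hence $u_c\in\slp\subset V(c)$ directly, whereas the paper first excludes $u_c\in\partial V(c)$ via the strict inequality \eqref{eqn:52} and then invokes \cref{lemma:2.3} to conclude that the interior minimizer satisfies $Q(u_c)=0$ and therefore lies in $\slp$ because $\slm\cap\overline{V(c)}=\emptyset$ and $\slz=\emptyset$. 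Both endings are valid; yours avoids the Lagrange-multiplier step at the cost of relying on the uniqueness/strictness of the fiber minimum, which \cref{lemma:6} does provide.
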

\begin{proof}
	\ref{point:27i}) Since $u \in \slp$, 
	\begin{align*}
		 A(u) = \dfrac{\gamma}{4} B(u) + \dfrac{a \sigma}{p} C(u) \quad \mbox{and} \quad A(u) > \dfrac{a \sigma (\sigma - 1)}{p} C(u).
	\end{align*}
	Using \cref{lemma:1}\eqref{point:1i}, we have
	\begin{align*}
	\dfrac{\sigma - 2}{\sigma - 1} A(u) < \dfrac{\gamma}{4} B(u) \leq \dfrac{\gamma}{4} K_{H}  \sqrt{A(u)} c^{\frac{3}{2}},
	\end{align*}
	which implies that
	\begin{align}
	A(u) < \[\dfrac{\gamma (\sigma - 1)  K_{H}}{4 (\sigma - 2)}\]^2  c^3 = N^{-2} c^3 = k_0 c^3 < k_0 c_1^3 = k_1. \label{eqn:40}
	\end{align}
	Hence, the point \eqref{point:27i} holds.
	
	\ref{point:27ii}) By \eqref{eqn:40}, we obtain that $\slp \subset V(c)$ and hence
	\begin{align*}
 		\inf_{u \in \slp}F(u) \geq \inf_{u \in \overline{V(c)}}F(u).
	\end{align*}
	To prove the point \eqref{point:27ii}, it is sufficient to show that
	\begin{align}
	\inf_{u \in \slp}F(u) \leq \inf_{u \in \overline{V(c)}}F(u). \label{eqn:42}
	\end{align}
	Firstly, we claim that $\slm \cap \overline{V(c)} = \emptyset$. Indeed, let $v \in \slm$. Taking into account that
	\begin{align*}
		A(v) < \dfrac{a \sigma (\sigma - 1)}{p} C(v),
	\end{align*}
	and using \cref{lemma:1}\eqref{point:1ii}, we obtain that
	\begin{align*}
		A(v) < \dfrac{a \sigma (\sigma - 1) }{p} K_{GN} c^{\frac{6-p}{4}} [A(v)]^{\frac{\sigma}{2}} = M^{-1} c^{\frac{6-p}{4}} [A(v)]^{\frac{\sigma}{2}}.
	\end{align*}
	This implies that
	\begin{align*}
		A(v) > M^{\frac{4}{3p-10}} c^{-\frac{6-p}{3p-10}}.
	\end{align*}
	By direct computations, we can check that
	\begin{align*}
		N^{-2} c_1^3 = M^{\frac{4}{3p-10}} c_1^{-\frac{6-p}{3p-10}},
	\end{align*}
	which implies that for all $0 < c < c_1$,
	\begin{align}
		k_1 = N^{-2} c_1^3 = M^{\frac{4}{3p-10}} c_1^{-\frac{6-p}{3p-10}} < M^{\frac{4}{3p-10}} c^{-\frac{6-p}{3p-10}} < A(v). \label{eqn:41}
	\end{align}
	Therefore, the claim holds. 
	Next, let $u \in S(c)$. Since the mapping $t \mapsto A(u^t)$ is continuous increasing, there exists a unique $t_u^1 > 0$ such that $A(u^{t_u^1}) = k_1$. By \cref{lemma:6} and \eqref{eqn:40}, \eqref{eqn:41}, we have
	\begin{align*}
		A(u^{s_u^+}) < A(u^{t_u^1}) < A(u^{s_u^-}),
	\end{align*}
	which implies that
	\begin{align*}
		s_u^+ < t_u^1 < s_u^-.
	\end{align*}
	Since $g_u'(t) > 0$ for all $t \in (s_u^+, s_u^-)$, we get that $g_u'(t) > 0$ for all $t \in (s_u^+, t_u^1]$ and hence
	\begin{align}
		F(u^{s_u^+}) = g_u(s_u^+) < g_u(t) = F(u^t) \qquad \forall t \in (s_u^+, t_u^1]. \label{eqn:52}
	\end{align}
	Since $\spu$ is the unique local minimum point for $g_u$ on $(0, s_u^-)$, we have that $F(u^{s_u^+}) \leq F(u^t)$ for all $t \in (0, t_u^1]$. Therefore, we obtain that
	\begin{align*}
		F(u^{s_u^+}) = \min\{F(u^t) | 0 < t \leq t_u^1 \} = \min\{F(u^t) | t\in \R, A(u^t) \leq k_1 \}.
	\end{align*}
	In particular, if $u \in \overline{V(c)}$ we have
	\begin{align*}
		F(u^{s_u^+}) = \min\{F(u^t) | t\in \R, A(u^t) \leq k_1 \} = \min\{F(u) | u \in \overline{V(c)} \} \leq F(u).
	\end{align*}
	This implies \eqref{eqn:42} and the point \eqref{point:27ii} is proved.
	
	\ref{point:27iii}) If we assume that $u_c \in \partial V(c)$, namely $A(u_c) = k_1$ and
	\begin{align*}
		F(u_c) = \min\{F(u) | u \in \overline{V(c)} \} = \min\{F(u^t) | t\in \R, A(u^t) \leq k_1 \},
	\end{align*}
	and we have a contradiction with \eqref{eqn:52}. Thus, we have $u_c \in V(c)$. Now, since the minimizer $u_c$ lies in the open (with respect to $S(c)$) set $V(c)$, we deduce from \cref{lemma:2.3} that $u_c \in \slc$. By $\slm \cap \overline{V(c)} = \emptyset$ and $\slz = \emptyset$, we conclude that $u_c \in \slp$ and thus $\gamma^+(c)$ is reached.
\end{proof}

\begin{lemma} \label{lemma:28}
	It holds that
	\begin{enumerate}[label=(\roman*), ref = \roman*]
		\item\label{point:28i} $\gamma^+(c) < 0$, $\forall c \in (0, c_1)$.
		\item\label{point:28ii} $c \in (0,c_1) \mapsto \gamma^+(c)$ is a continuous mapping.
		\item\label{point:28iii} Let $c \in (0,c_1)$, for all $\alpha \in (0, c)$, we have  $\gamma^+(c) \leq \gamma^+(\alpha) + \gamma^+(c - \alpha)$ and if $\gamma^+(\alpha)$ or $\gamma^+(c - \alpha)$ is reached then the inequality is strict.
	\end{enumerate}
\end{lemma}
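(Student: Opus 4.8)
The plan is to prove the three items in order, the main work being a scaling estimate relating $\gamma^{+}$ at two different masses, after which both the subadditivity in \eqref{point:28iii} and the continuity in \eqref{point:28ii} follow by soft arguments.

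\medskip
\noindent\emph{Item \eqref{point:28i}.} Fix $c\in(0,c_1)$ and any $u\in S(c)$. By \cref{lemma:6} the function $u^{\spu}$ lies in $\slp$, and by \cref{lemma:16}\eqref{point:1} we have $F(u^{\spu})<0$. Since $\gamma^{+}(c)\le F(u^{\spu})$ by definition, this already gives $\gamma^{+}(c)<0$.

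\medskip
\noindent\emph{The scaling estimate (the key step).} I claim that for $0<c'<c''<c_1$ one has $\gamma^{+}(c'')\le\frac{c''}{c'}\gamma^{+}(c')$, with strict inequality whenever $\gamma^{+}(c')$ is attained; together with \eqref{point:28i} this also forces $c\mapsto\gamma^{+}(c)$ to be strictly decreasing. To prove it, let $u\in\Lambda^{+}(c')$, set $\mu:=(c'/c'')^{1/3}\in(0,1)$ and $v(x):=u(\mu x)$. A change of variables gives $\normLp{v}{2}^2=\mu^{-3}c'=c''$, $A(v)=\mu^{-1}A(u)$, $B(v)=\mu^{-5}B(u)$, $C(v)=\mu^{-3}C(u)$. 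By \cref{lemma:27}\eqref{point:27i}, $A(u)<k_0(c')^3$, hence $A(v)=(c''/c')^{1/3}A(u)<k_0(c')^{8/3}(c'')^{1/3}<k_0c_1^3=k_1$, so that $v\in V(c'')$. Consequently, using \cref{lemma:27}\eqref{point:27ii},
$$\gamma^{+}(c'')\le F(v)=\frac{c''}{c'}F(u)+\frac12\bigl(\mu^{-1}-\mu^{-3}\bigr)A(u)-\frac{\gamma}{4}\bigl(\mu^{-5}-\mu^{-3}\bigr)B(u)<\frac{c''}{c'}F(u),$$
because $0<\mu<1$, $\gamma>0$ and $A(u),B(u)>0$. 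Taking the infimum over $u\in\Lambda^{+}(c')$ yields the inequality; if $\gamma^{+}(c')$ is attained then, by \cref{lemma:27}\eqref{point:27iii}, it is attained at some $u_0\in\Lambda^{+}(c')$, and choosing $u=u_0$ makes the estimate strict. The one delicate point here is precisely that the rescaled function stays inside $V$: this uses the sharp bound $A(u)<k_0(c')^3$ on $\slp$ and the arithmetic $(c')^{8/3}(c'')^{1/3}<c_1^3$, and this is where I expect the only real obstacle to lie.

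\medskip
\noindent\emph{Item \eqref{point:28iii}.} By symmetry we may assume $\alpha\le c-\alpha$. Applying the scaling estimate to $(c',c'')=(c-\alpha,c)$ and to $(c',c'')=(\alpha,c-\alpha)$ gives
$$\gamma^{+}(c)\le\frac{c}{c-\alpha}\gamma^{+}(c-\alpha)=\gamma^{+}(c-\alpha)+\frac{\alpha}{c-\alpha}\gamma^{+}(c-\alpha)\le\gamma^{+}(c-\alpha)+\gamma^{+}(\alpha),$$
the last step using $\gamma^{+}(c-\alpha)\le\frac{c-\alpha}{\alpha}\gamma^{+}(\alpha)$ (trivial when $\alpha=c-\alpha$). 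If $\gamma^{+}(c-\alpha)$ is attained, the first inequality is strict; otherwise $\gamma^{+}(\alpha)$ is attained with $\alpha<c-\alpha$ (the case $\alpha=c-\alpha$ being subsumed in the previous one, since then $\gamma^{+}(\alpha)=\gamma^{+}(c-\alpha)$), and the last inequality is strict. In every case the inequality in \eqref{point:28iii} is strict.

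\medskip
\noindent\emph{Item \eqref{point:28ii}.} Let $c\in(0,c_1)$ and $c_n\to c$ with $c_n\in(0,c_1)$. For upper semicontinuity, fix $\varepsilon>0$ and $u\in\Lambda^{+}(c)$ with $F(u)<\gamma^{+}(c)+\varepsilon$; set $\mu_n:=(c/c_n)^{1/3}$ and $v_n(x):=u(\mu_nx)\in S(c_n)$. Using $A(u)<k_0c^3$ and $c,c_n<c_1$, one checks (treating $c_n\le c$ and $c_n>c$ separately) that $A(v_n)=\mu_n^{-1}A(u)<k_1$, so $v_n\in V(c_n)$ and $\gamma^{+}(c_n)\le F(v_n)$; since $\mu_n\to1$ we get $F(v_n)\to F(u)$, whence $\limsup_n\gamma^{+}(c_n)\le\gamma^{+}(c)+\varepsilon$. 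For lower semicontinuity, choose $u_n\in\Lambda^{+}(c_n)$ with $F(u_n)<\gamma^{+}(c_n)+\frac1n$; by \cref{lemma:27}\eqref{point:27i}, $A(u_n)<k_1$, so $(u_n)$ is bounded in $\Ho$ and $A(u_n),B(u_n),C(u_n)$ are bounded. With $\nu_n:=(c_n/c)^{1/3}$, the function $w_n(x):=u_n(\nu_nx)$ lies in $S(c)$ and, by the same check, in $V(c)$, so $\gamma^{+}(c)\le F(w_n)$; since $\nu_n\to1$ and the three functionals are bounded, $F(w_n)-F(u_n)\to0$, whence $\gamma^{+}(c)\le\liminf_n\gamma^{+}(c_n)$. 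Letting $\varepsilon\to0$ in the first estimate gives continuity.
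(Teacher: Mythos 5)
Your proof is correct and follows essentially the same strategy as the paper: a homogeneity estimate $\gamma^+(c'') \le (c''/c')\,\gamma^+(c')$ (strict when the infimum is attained), resting on the uniform bound $A(u) < k_0 c^3$ of \cref{lemma:27}\eqref{point:27i} to keep the rescaled functions inside $V$, from which both the subadditivity and the continuity follow by the same soft arguments. The only difference is the choice of rescaling: the paper multiplies the amplitude, $v = \sqrt{\theta}\,u$, so the strict gain comes from the $B$ and $C$ terms, whereas you dilate the variable, $v(x) = u(\mu x)$, and the gain comes from the $A$ and $B$ terms; both work because the corresponding error terms have the favorable sign ($\gamma>0$, $a>0$).
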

\begin{proof}
	Point \eqref{point:28i} follows from \cref{lemma:16}. To prove \eqref{point:28ii}, let $c \in (0, c_1)$ be arbitrary and $(c_n) \subset (0, c_1)$ be such that $c_n \to c$. From the definition of $\gamma^+(c_n)$, for any $\varepsilon > 0$, there exists $u_n \in \Lambda^+(c_n)$ such that
	\begin{align}
		F(u_n) \leq \gamma^+(c_n) + \varepsilon. \label{eqn:44}
	\end{align}
	By \eqref{eqn:43}, we have $A(u_n) < k_0 c_n^3$. We set $y_n := \sqrt{\dfrac{c}{c_n}}\cdot u_n$. Hence, we have $y_n \in S(c)$ and
	\begin{align*}
		A(y_n) = \frac{c}{c_n} A(u_n) < \frac{c}{c_n} k_0 c_n^3 = k_0 c_n^2 c < k_0 c_1^3 = k_1.
	\end{align*}
	This implies that $y_n \in \overline{V(c)}$. Taking into account that $\dfrac{c}{c_n} \to 1$, we have
	\begin{align}
		\gamma^+(c) \leq F(y_n) = F(u_n) + o_n(1). \label{eqn:45}
	\end{align}
	Combining \eqref{eqn:44} and \eqref{eqn:45}, we get
	\begin{align*}
		\gamma^+(c) \leq \gamma^+(c_n) + \varepsilon + o_n(1).
	\end{align*}
	Reversing the argument we obtain similarly that
	\begin{align*}
		\gamma^+(c_n) \leq \gamma^+(c) + \varepsilon + o_n(1).
	\end{align*}
	Therefore, since $\varepsilon > 0$ is arbitrary, we deduce that $\gamma^+(c_n) \to \gamma^+(c)$. The point \eqref{point:28ii} follows.
	
	\ref{point:28iii}) Note that, fixed $\alpha \in (0,c)$, it is sufficient to prove that the following holds
	\begin{align}\label{L6-4}
		\forall \theta \in \(1, \frac{c}{\alpha}\] :  \gamma^+(\theta \alpha) \leq \theta \gamma^+(\alpha)
	\end{align}
	and that, if $\gamma^+(\alpha)$ is reached, the inequality is strict. Indeed, if \eqref{L6-4} holds then it follows directly that 
	\begin{align*}
	\gamma^+(c)=\frac{c-\alpha}{c} \gamma^+(c) + \frac{\alpha}{c} \gamma^+(c)
	=\frac{c-\alpha}{c} \gamma^+ \left( \frac{c}{c-\alpha}(c-\alpha) \right) + \frac{\alpha}{c} \gamma^+\left( \frac{c}{\alpha} \alpha \right)
	 \leq \gamma^+(c-\alpha) + \gamma^+(\alpha)
	\end{align*}
	with a strict inequality if $\gamma^+(\alpha)$ is reached. To prove that \eqref{L6-4} holds, note that for any $\varepsilon >0$ sufficiently small, there exist $u \in \Lambda^+(\alpha)$ such that
	\begin{align}
		F(u) \leq \gamma^+(\alpha) + \varepsilon. \label{eqn:46}
	\end{align} 
	By \eqref{eqn:43}, we have $A(u) < k_0 \alpha^3$. Consider now $v:= \sqrt{\theta} u$, we have
	\begin{align*}
		\normLp{v}{2}=\theta \normLp{u}{2}, \quad A(v)=\theta A(u), \quad B(v)=\theta^2 B(u), \quad C(v) = \theta^3 C(u).
	\end{align*}
	Therefore, we obtain that $v \in S(\theta\alpha)$ and 
	\begin{align*}
		A(v) = \theta A(u) < k_0 \theta \alpha^3 < k_0 (\theta \alpha)^3 \leq k_0 c^3 < k_1.
	\end{align*} 
	Hence, $v \in \overline{V(\theta\alpha)}$ and we can write 
	\begin{align*}
		\gamma^+(\theta\alpha) \leq F(v) &= \dfrac{1}{2} A(v) - \dfrac{\gamma}{4} B(v) - \dfrac{a}{p} C(v) 
		= \dfrac{1}{2} \theta A(u) - \dfrac{\gamma}{4} \theta^2 B(u) - \dfrac{a}{p} \theta^3 C(u) \\
		&< \dfrac{1}{2} \theta A(u) - \dfrac{\gamma}{4} \theta B(u) - \dfrac{a}{p} \theta C(u) 
		= \theta F(u) \leq \theta (\gamma^+(\alpha) + \varepsilon).
	\end{align*}
	Since $\varepsilon > 0$ is arbitrary, we have that  $\gamma^+(\theta \alpha) \leq \theta \gamma^+(\alpha)$. If $\gamma^+(\alpha)$ is reached then we can let $\varepsilon = 0$ in \eqref{eqn:46} and thus the strict inequality follows.
\end{proof}

\begin{lemma} \label{lemma:30}
	Let $(v_n) \subset \Hs{1}$ be such that $B(v_n) \to 0$ and $A(v_n) \leq k_1$. Then there exists a $b>0$ such that
	\begin{align}
		F(v_n) \geq b A(v_n) + o_n(1).
	\end{align}
\end{lemma}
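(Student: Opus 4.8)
The plan is to estimate the three terms of $F(v_n)=\frac12 A(v_n)-\frac{\gamma}{4}B(v_n)-\frac{a}{p}C(v_n)$ against $A(v_n)$. Since $\gamma>0$ is fixed and $B(v_n)\to 0$, the middle term is simply $o_n(1)$; and since $a>0$, the only term that could drag $F(v_n)$ below a positive multiple of $A(v_n)$ is $-\frac{a}{p}C(v_n)$. So everything reduces to absorbing $C(v_n)$ into $A(v_n)$ with a coefficient strictly smaller than $\frac12$, and the tool for this is the Sobolev/Gagliardo--Nirenberg estimate of \cref{lemma:1}\eqref{point:1ii} combined with the a priori bound $A(v_n)\le k_1$ — the crucial input being that $k_1$, equivalently $c_1$, has been chosen small enough. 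I would carry this out in the case $p=6$, which is the one the lemma is needed for; when $p\in(\frac{10}{3},6)$ it is not required in the sequel (a vanishing minimizing sequence then automatically satisfies $C(v_n)\to 0$, which already rules out vanishing). For $p=6$ one has $\sigma=6$, so \cref{lemma:1}\eqref{point:1ii} reads $C(v_n)\le K_{GN}[A(v_n)]^{3}$.

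First, using $C(v_n)\le K_{GN}[A(v_n)]^3$ and then $[A(v_n)]^3=[A(v_n)]^2A(v_n)\le k_1^2A(v_n)$ (valid since $0\le A(v_n)\le k_1$), together with $-\frac{\gamma}{4}B(v_n)=o_n(1)$, one obtains
\begin{align*}
F(v_n)=\frac12 A(v_n)-\frac{\gamma}{4}B(v_n)-\frac{a}{6}C(v_n)\ \ge\ \(\frac12-\frac{aK_{GN}}{6}k_1^2\)A(v_n)+o_n(1).
\end{align*}

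Second, one must check that $\frac12-\frac{aK_{GN}}{6}k_1^2>0$, and this is where the smallness of $k_1$ is used quantitatively. From \eqref{eq-notations}, the relation $c_1=N^{\frac{3p-10}{4(p-3)}}M^{\frac1{2(p-3)}}$, and $p=6$ (so $\sigma=6$ and $M=\frac{6}{a\cdot 6\cdot 5\cdot K_{GN}}=\frac1{5aK_{GN}}$), one computes $c_1=N^{2/3}M^{1/6}$ and hence
\begin{align*}
k_1=k_0c_1^3=N^{-2}\cdot N^{2}M^{1/2}=\sqrt{M}=\frac{1}{\sqrt{5aK_{GN}}},\qquad \frac{aK_{GN}}{6}k_1^2=\frac{aK_{GN}}{6}\cdot\frac{1}{5aK_{GN}}=\frac{1}{30}<\frac12.
\end{align*}
So the lemma holds with $b:=\frac12-\frac1{30}=\frac{7}{15}>0$ (the case $A(v_n)=0$ being trivial).

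I do not expect a genuine obstacle here. The only real content is the bookkeeping that turns the qualitative statement ``$c_1$ is small'' into the explicit value $k_1^2=\frac1{5aK_{GN}}$ and the (comfortable) numerical check $\frac{aK_{GN}}{6}k_1^2<\frac12$. The one thing to keep straight is that the clean form $C(v_n)\le K_{GN}[A(v_n)]^3$ of \cref{lemma:1}\eqref{point:1ii}, free of any factor of $\|v_n\|_2$, is precisely what makes the case $p=6$ work without further hypotheses on the sequence.
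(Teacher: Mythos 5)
Your argument is correct and coincides with the paper's own proof: discard the Hartree term as $o_n(1)$, bound $C(v_n)$ via \cref{lemma:1}\eqref{point:1ii}, absorb it into $A(v_n)$ using $A(v_n)\le k_1$, and verify numerically that $\tfrac{a K_{GN}}{6}k_1^2=\tfrac{1}{30}<\tfrac12$, which is exactly the paper's bound $b\ge \tfrac12-\tfrac{1}{\sigma(\sigma-1)}$ at $\sigma=6$. The only difference is that you restrict to $p=6$, whereas the paper runs the identical computation uniformly for all $p\in(\tfrac{10}{3},6]$ (keeping the factor $c^{\frac{6-p}{4}}$ from the Gagliardo--Nirenberg inequality and using the general identity $\tfrac{a}{p}K_{GN}\,c_1^{\frac{6-p}{4}}k_1^{\frac{\sigma}{2}-1}=\tfrac{1}{\sigma(\sigma-1)}$), so the lemma as stated also covers the subcritical range; your observation that only $p=6$ is genuinely needed downstream is nonetheless accurate.
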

\begin{proof}
	Indeed, using $B(v_n) \to 0$ and \cref{lemma:1}\eqref{point:1ii}, we have
	\begin{align*}
	F(v_n) &= \dfrac{1}{2} A(v_n) - \dfrac{a}{p} C(v_n) + o_n(1)
	\geq \dfrac{1}{2} A(v_n) - \dfrac{a}{p} K_{GN} c^{\frac{6-p}{4}} [A(v_n)]^{\frac{\sigma}{2}} + o_n(1)
	=b A(v_n) + o_n(1),
	\end{align*}
	where
	\begin{align*}
	b := \dfrac{1}{2} -  \limsup_{n \to \infty} \dfrac{a}{p} K_{GN} c^{\frac{6-p}{4}} [A(v_n)]^{\frac{\sigma}{2}-1} \geq \dfrac{1}{2} - \dfrac{a}{p} K_{GN} c_1^{\frac{6-p}{4}} k_1^{\frac{\sigma}{2}-1} = \dfrac{1}{2} - \dfrac{1}{\sigma(\sigma-1)}.
	\end{align*}
	Hence, $b > 0$ due to $\sigma > 2$. The lemma is proved.
\end{proof}

\begin{lemma} \label{lemma:29}
	For any $c \in (0, c_1)$, any minimizing sequence $(u_n)$ for $F$ on $\overline{V(c)}$ is, up to translation, strongly convergent in $\Hs{1}$. In addition all minimizers lies in $V(c)$. In particular $\gamma^+(c)$ is reached. 
\end{lemma}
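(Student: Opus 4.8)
The plan is to follow the approach of \cite{Ikoma2014}, as indicated in the remark above. Fix $c \in (0,c_1)$ and let $(u_n) \subset \overline{V(c)}$ be a minimizing sequence for $F$; by \cref{lemma:27}\eqref{point:27ii} one has $F(u_n) \to \inf_{\overline{V(c)}}F = \gamma^+(c)$, and $\gamma^+(c) < 0$ by \cref{lemma:28}\eqref{point:28i}. Since $A(u_n) \le k_1$ and $\normLp{u_n}{2}^2 = c$, the sequence $(u_n)$ is bounded in $\Hs{1}$. The first step is to exclude vanishing: if $\sup_{y \in \Rb}\int_{B(y,1)}|u_n|^2\,dx \to 0$, then \cite[Lemma I.1]{LIONS1984part2} gives $\normLp{u_n}{q} \to 0$ for every $q \in (2,6)$, hence $B(u_n) \le K_1 \normLp{u_n}{\frac{12}{5}}^4 \to 0$ by \eqref{eqn:2.1}, and \cref{lemma:30} yields $F(u_n) \ge b\,A(u_n) + o_n(1) \ge o_n(1)$ with $b > 0$, contradicting $F(u_n) \to \gamma^+(c) < 0$ --- this is precisely where the smallness of $c_1$ enters. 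Hence there are $y_n \in \Rb$ and $\delta > 0$ with $\int_{B(y_n,1)}|u_n|^2\,dx \ge \delta$; since $A$, $B$, $C$ and $\normLp{\cdot}{2}$ are translation invariant, after replacing $u_n$ by $u_n(\cdot + y_n)$ we may assume, along a subsequence, $u_n \weakto u$ in $\Hs{1}$, $u_n \to u$ in $L^2_{\mathrm{loc}}(\Rb)$ and a.e., with $u \neq 0$.

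Next I would set $v_n := u_n - u \weakto 0$ and establish, besides the standard Brezis--Lieb splittings for $A$, $C$ and $\normLp{\cdot}{2}^2$, the splitting $B(u_n) = B(u) + B(v_n) + o_n(1)$. Since the $u_n$ are not radial this is not covered by \cref{lemma:2.5}; I expect this to be the main technical point, and would prove it by expanding $B(u + v_n)$ and checking that every mixed term tends to $0$, using that $|x|^{-1}\ast|v_n|^2$ is bounded in $\Lp{\infty}$ uniformly in $n$ and that $u\,v_n \to 0$ in $\Lp{1}\cap\Lp{\frac{6}{5}}$. Adding the splittings gives $F(u_n) = F(u) + F(v_n) + o_n(1)$, so $F(v_n) \to \ell := \gamma^+(c) - F(u)$. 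Writing $\mu := \normLp{u}{2}^2 \in (0,c]$, we have $\normLp{v_n}{2}^2 \to \nu := c - \mu \ge 0$, and $A(u) > 0$ (otherwise $u$ is a constant in $\Lt$, hence $u \equiv 0$). Suppose $\nu > 0$; then $\mu, \nu \in (0,c_1)$. From $A(v_n) = A(u_n) - A(u) + o_n(1)$ we get $\limsup_n A(v_n) \le k_1 - A(u) < k_1$, so the rescalings $w_n := (\nu/\normLp{v_n}{2}^2)^{1/2} v_n$ lie in $S(\nu)$ with $A(w_n) < k_1$ for $n$ large, i.e. $w_n \in V(\nu)$; since $F(w_n) = F(v_n) + o_n(1)$, \cref{lemma:27}\eqref{point:27ii} applied at $\nu$ gives $\ell \ge \gamma^+(\nu)$. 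On the other hand $A(u) \le \liminf_n A(u_n) \le k_1$ shows $u \in \overline{V(\mu)}$, so $F(u) \ge \gamma^+(\mu)$ by \cref{lemma:27}\eqref{point:27ii} at $\mu$. Hence $\gamma^+(c) = F(u) + \ell \ge \gamma^+(\mu) + \gamma^+(c-\mu) \ge \gamma^+(c)$ by \cref{lemma:28}\eqref{point:28iii}, so all inequalities are equalities and in particular $F(u) = \gamma^+(\mu)$, i.e. $\gamma^+(\mu)$ is reached; but then \cref{lemma:28}\eqref{point:28iii} forces the strict inequality $\gamma^+(c) < \gamma^+(\mu) + \gamma^+(c-\mu)$, a contradiction. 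Therefore $\nu = 0$, i.e. $\normLp{v_n}{2} \to 0$ and $\mu = c$.

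Finally I would upgrade to strong convergence. By \cref{lemma:1}\eqref{point:1i}, $B(v_n) \le K_H\sqrt{A(v_n)}\,\normLp{v_n}{2}^3 \to 0$ since $A(v_n)$ stays bounded, and $A(v_n) \le k_1$ for $n$ large; applying \cref{lemma:30} to $(v_n)$ gives $F(v_n) \ge b\,A(v_n) + o_n(1)$, whence $\ell \ge 0$. Since now $u \in \overline{V(c)}$ (because $\normLp{u}{2}^2 = c$ and $A(u) \le k_1$) we have $F(u) \ge \gamma^+(c)$, so $\ell = \gamma^+(c) - F(u) \le 0$; thus $\ell = 0$, $F(u) = \gamma^+(c)$, and from $b\,A(v_n) \le F(v_n) + o_n(1) \to 0$ we obtain $A(v_n) \to 0$, hence $\normHs{v_n}{1} \to 0$ and $u_n \to u$ strongly in $\Hs{1}$ up to translation. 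Then $u$ is a minimizer of $F$ on $\overline{V(c)}$, so $\gamma^+(c)$ is attained, and \cref{lemma:27}\eqref{point:27iii} gives $u \in V(c)$ as well as the fact that every minimizer lies in $V(c)$. The statement for the full sequence follows by applying the above to an arbitrary subsequence.
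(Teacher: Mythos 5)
Your proposal is correct and follows essentially the same route as the paper: non-vanishing via \cref{lemma:30} and $\gamma^+(c)<0$, Brezis--Lieb splitting of $F$ (the paper handles the nonlocal term $B$ by citing \cite[Lemma 2.2]{ZHAO2008} or \cite[Proposition 3.1]{BellazziniSiciliano2011} rather than proving it, but the mixed-term argument you sketch is the standard proof), exclusion of dichotomy through the continuity and strict subadditivity of $c \mapsto \gamma^+(c)$ from \cref{lemma:28}, and the upgrade to strong $H^1$ convergence by combining $F(u_c)\geq\gamma^+(c)$ with the lower bound $F(w_n)\geq b\,A(w_n)+o_n(1)$ of \cref{lemma:30}, concluding with \cref{lemma:27}\eqref{point:27iii}. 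The only cosmetic difference is that you rescale the remainder to exact mass $\nu$ where the paper invokes the continuity of $\gamma^+$ directly; both are fine.
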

\begin{proof}
	Since  $(u_n) \subset \overline{V(c)}$, it is bounded in $\Ho$. Also, from $\gamma^+(c) <0$ we deduce from  \cref{lemma:30} that there exists a $\beta_0 > 0$ and a sequence $(y_n) \subset \Rb$ such that
	\begin{align*}
		\int_{B(y_n, R)} \abs{u_n}^2 dx \geq \beta_0 > 0, \qquad \mbox{for some } R > 0.
	\end{align*} 
	This implies that 
	\begin{align*}
		u_n(x - y_n) \weakto u_c \neq 0 \quad \mbox{in } \Hs{1}, \quad \mbox{for some} \quad u_c \in \Hs{1}.
	\end{align*}
	Our aim is to prove that $w_n(x) := u_n(x - y_n) - u_c(x) \to 0$ in $\Hs{1}$. Clearly 
	\begin{align*}
	\normLp{u_n}{2}^2 &= \normLp{u_n(x-y_n)}{2}^2
	= \normLp{u_n(x-y_n) - u_c(x)}{2}^2 + \normLp{u_c}{2}^2 + o_n(1)
	=\normLp{w_n}{2}^2 + \normLp{u_c}{2}^2 + o_n(1).
	\end{align*}
	Thus, we have
	\begin{align}
		\normLp{w_n}{2}^2 = \normLp{u_n}{2}^2 -  \normLp{u_c}{2}^2 + o_n(1) = c - \normLp{u_c}{2}^2 + o_n(1). \label{eqn:47}
	\end{align}
	By the similar argument, 
	\begin{align}
		A(w_n) = A(u_n) -  A(u_c) + o_n(1). \label{eqn:48}
	\end{align}
	More generally, taking into account that any term in $F$ fulfills the splitting properties of Brezis-Lieb (see \cite{BrezisLieb1983} for terms $A$ and $C$; see \cite[Lemma 2.2]{ZHAO2008} or \cite[Proposition 3.1]{BellazziniSiciliano2011} for term $B$), we have
	\begin{align*}
		F(u_n - u_c) + F(u_c) =F(u_n) + o_n(1),
	\end{align*}
	and by the translational invariance, we obtain 
	\begin{align}
	\begin{split}
		F(u_n) =F(u_n(x-y_n)) =F(u_n(x-y_n) - u_c(x)) + F(u_c) + o_n(1)
			=F(w_n) + F(u_c) + o_n(1).
	\end{split} \label{eqn:49}
	\end{align}
	Now, we claim that
	\begin{align}
		\normLp{w_n}{2}^2 \to 0 \quad \text{as} \quad n \to \infty. \label{eqn:50}
	\end{align}
	In order to prove this, let us denote $\tilde{c}:= \normLp{u_c}{2}^2 > 0$. By \eqref{eqn:47}, if we show that $\tilde{c} = c$ then the claim follows. We assume by contradiction that $\tilde{c} < c$.
	In view of \eqref{eqn:47} and \eqref{eqn:48}, for $n$ large enough, we have $\normLp{w_n}{2}^2 \leq c$ and $A(w_n) \leq A(u_n) \leq k_1$. Hence, we obtain that $w_n \in \overline{V(\normLp{w_n}{2}^2)}$ and $F(w_n) \geq \gamma^+\(\normLp{w_n}{2}^2\)$.
	Recording that $F(u_n) \to \gamma^+(c)$, in view of \eqref{eqn:49}, we have
	\begin{align*}
		\gamma^+(c) = F(w_n) + F(u_c) \geq \gamma^+\(\normLp{w_n}{2}^2\) + F(u_c). 
	\end{align*}	
	Since the map $c \mapsto \gamma^+(c)$ is continuous (see \cref{lemma:28}\eqref{point:28ii}) and in view of \eqref{eqn:47}, we deduce that
	\begin{align}
		\gamma^+(c) \geq \gamma^+(c-\tilde{c}) + F(u_c). \label{eqn:51}
	\end{align}
	We also have that $u_c \in \overline{V(\tilde{c})}$ by the weak limit. This implies that $F(u_c) \geq \gamma^+(\tilde{c})$. If $F(u_c) > \gamma^+(\tilde{c})$, then it follows from \eqref{eqn:51} and \cref{lemma:28}\eqref{point:28iii} that
	\begin{align*}
		\gamma^+(c) > \gamma^+(c-\tilde{c}) + \gamma^+(\tilde{c}) \geq \gamma^+(c -\tilde{c} + \tilde{c}) = \gamma^+(c),
	\end{align*}
	which is impossible. Hence, we have $F(u_c) = \gamma^+(\tilde{c})$, namely $u_c$ is local minimizer on $\overline{V(\tilde{c})}$. So, we can using \cref{lemma:28}\eqref{point:28iii} with the strict inequality and we deduce from \eqref{eqn:51} that
	\begin{align*}
		\gamma^+(c) \geq \gamma^+(c-\tilde{c}) + F(u_c) = \gamma^+(c-\tilde{c}) + \gamma^+(\tilde{c}) > \gamma^+(c -\tilde{c} + \tilde{c}) = \gamma^+(c),
	\end{align*}
	which is impossible. Thus, the claim follows and $\normLp{u_c}{2}^2 = c$.\medskip
	
	Let us now show that $A(w_n) \to 0$. This will complete the proof of the lemma.
	In this aim first observe that since $(w_n)$ is a bounded sequence in $H^1(\R^N)$ we have, using \cref{lemma:1}\eqref{point:1i}, not only that $\normLp{w_n}{2}^2 \to 0$ but also that $B(w_n) \to 0$.  Now we remember that
	\begin{equation}\label{LLLL}
		F(u_n) = F(u_c) + F(w_n) + o_n(1) \to \gamma^+(c).
	\end{equation}
	Since $u_c \in \overline{V(c)}$ by weak convergence property, we have, by \cref{lemma:27}(ii), that  $F(u_c) \geq \gamma^+(c)$. Thus from \eqref{LLLL} we deduce, on one hand, that necessarily $F(w_n) \leq o(1)$. 
	On the other hand, since $A(w_n) \leq A(u_n) \leq k_1$,  \cref{lemma:30} implies that $F(w_n) \geq b A(w_n) + o_n(1)$ for some $b > 0$. 
	Hence, we conclude  $A(w_n) \to 0$ and thus that $u_n \to u_c \in \overline{V(c)}$ strongly in $H^1(\R^3)$. Finally, by \cref{lemma:27}\eqref{point:27iii}, we have $u_c \in V(c)$ and $\gamma^+(c)$ is reached. The lemma is proved.
\end{proof}

\subsection{Asymptotic behavior of the Lagrange multipliers}\label{Subsection3-5}

\begin{lemma}\label{lemma:22}
Let $p \in (\frac{10}{3}, 6]$.
	There exist two constants $K_1 > 0$ and $K_2 > 0$ such that for any $c \in (0, c_1)$, if $\lambda^+_c$ is the Lagrange parameter associated to a solution $u^+_c$ lying at the level $\gamma^+(c)$ then we have 
	\begin{align*}
	\abs{\gamma^+(c)} \leq K_1 c^3 \qquad \text{and} \qquad \lambda^+_c \leq K_2 c^2.
	\end{align*}
\end{lemma}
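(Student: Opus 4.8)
The proof rests on the a priori bound $A(u_c^+)<k_0 c^3$ and on controlling the quantities $B(u_c^+)$ and $C(u_c^+)$ by explicit powers of $c$ via \cref{lemma:1}. First I would record that $u_c^+\in\Lambda^+(c)$: this is how the solution at level $\gamma^+(c)$ is produced in the proof of \cref{theorem:1} (and, alternatively, any critical point lies in $\Lambda(c)=\Lambda^+(c)\cup\Lambda^-(c)$ by \cref{lemma:2.3} and \cref{lemma:4}, while membership in $\Lambda^-(c)$ is excluded since $\gamma^+(c)<0$ is strictly below $\inf_{\Lambda^-(c)}F=\gamma^-(c)$). Consequently \cref{lemma:27}\eqref{point:27i} applies and gives $A(u_c^+)<k_0 c^3$. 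From \cref{lemma:1}\eqref{point:1i} we then get $B(u_c^+)\le K_H\sqrt{A(u_c^+)}\,c^{3/2}\le K_H\sqrt{k_0}\,c^3$, and from \cref{lemma:1}\eqref{point:1ii}, using $[A(u_c^+)]^{\sigma/2}<(k_0c^3)^{\sigma/2}$ together with the identity $\tfrac{3\sigma}{2}+\tfrac{6-p}{4}=2p-3$, we get $C(u_c^+)\le K_{GN}k_0^{\sigma/2}c^{2p-3}$. Since $2p-3>3$ for $p\in(\tfrac{10}{3},6]$ and $0<c<c_1$, this last bound can be rewritten as $C(u_c^+)\le K_{GN}k_0^{\sigma/2}c_1^{2p-6}\,c^3$.

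For the first estimate, I would simply write $|\gamma^+(c)|=-F(u_c^+)=-\tfrac12 A(u_c^+)+\tfrac{\gamma}{4}B(u_c^+)+\tfrac{a}{p}C(u_c^+)\le \tfrac{\gamma}{4}B(u_c^+)+\tfrac{a}{p}C(u_c^+)$ (dropping the nonpositive term, recalling $\gamma^+(c)<0$ by \cref{lemma:28}\eqref{point:28i}), and substitute the two bounds above to obtain $|\gamma^+(c)|\le K_1 c^3$ with $K_1:=\tfrac{\gamma K_H\sqrt{k_0}}{4}+\tfrac{a}{p}K_{GN}k_0^{\sigma/2}c_1^{2p-6}$, which is independent of $c$.

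For the second estimate, I would first derive the usual formula for the Lagrange multiplier: testing \eqref{eqn:Laplace} against $u_c^+$ and using $\normLp{u_c^+}{2}^2=c$ gives $A(u_c^+)+c\lambda_c^+-\gamma B(u_c^+)-aC(u_c^+)=0$, and combining this with the Pohozaev identity $Q(u_c^+)=0$, i.e. $A(u_c^+)=\tfrac{\gamma}{4}B(u_c^+)+\tfrac{a\sigma}{p}C(u_c^+)$ (exactly as in the computation of $\lambda_n$ in \cref{lemma:11}), yields $c\lambda_c^+=\tfrac{3\gamma}{4}B(u_c^+)+a\bigl(1-\tfrac{\sigma}{p}\bigr)C(u_c^+)$, where $1-\tfrac{\sigma}{p}\ge 0$ because $\sigma=\tfrac{3(p-2)}{2}\le p$. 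Inserting once more $B(u_c^+)\le K_H\sqrt{k_0}\,c^3$ and $C(u_c^+)\le K_{GN}k_0^{\sigma/2}c_1^{2p-6}c^3$ gives $c\lambda_c^+\le \widetilde K c^3$, hence $\lambda_c^+\le K_2 c^2$ with $K_2:=\widetilde K=\tfrac{3\gamma K_H\sqrt{k_0}}{4}+a\bigl(1-\tfrac{\sigma}{p}\bigr)K_{GN}k_0^{\sigma/2}c_1^{2p-6}$, independent of $c$.

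There is no serious obstacle in this argument; the only points requiring a little care are the confirmation that $u_c^+\in\Lambda^+(c)$ (so that the key bound $A(u_c^+)<k_0c^3$ of \cref{lemma:27} is available) and the exponent bookkeeping $\tfrac{3\sigma}{2}+\tfrac{6-p}{4}=2p-3\ge 3$ on the range $p\in(\tfrac{10}{3},6]$. Everything else is direct substitution.
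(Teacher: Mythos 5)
Your proof is correct and follows essentially the same route as the paper: in both arguments the key input is the a priori bound $A(u_c^+)<k_0c^3$ from \cref{lemma:27}\eqref{point:27i} (available because $u_c^+\in\Lambda^+(c)$), fed into the inequalities of \cref{lemma:1}. The only difference is bookkeeping: the paper uses $Q(u_c^+)=0$ to eliminate $C(u_c^+)$ altogether, writing $F(u_c^+)=\frac{\sigma-2}{2\sigma}A(u_c^+)-\frac{\gamma(\sigma-1)}{4\sigma}B(u_c^+)$ and invoking \eqref{eqn:lambda} for $\lambda_c^+$, so that only $A$ and $B$ need estimating, whereas you retain $C(u_c^+)$ and bound it separately by Gagliardo--Nirenberg with the exponent computation $\tfrac{3\sigma}{2}+\tfrac{6-p}{4}=2p-3\ge 3$ --- both computations are valid and give the same conclusion.
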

\begin{proof}
	By \cref{lemma:27}\eqref{point:27i}, we have 
	\begin{align*}
		A(u^+_c) < N^{-2} c^3 = \[\dfrac{\gamma (\sigma - 1)  K_{H}}{4 (\sigma - 2)} \]^2 c^3.
	\end{align*}
	Hence, we can deduce from \cref{lemma:1}\eqref{point:1i} that
	\begin{align*}
	B(u^+_c) \leq K_{H}  \sqrt{A(u^+_c)} c^{\frac{3}{2}} <  \dfrac{\gamma (\sigma - 1)  K_{H}^2}{4 (\sigma - 2)} c^3.
	\end{align*}
	Therefore, we have 
	\begin{align*}
	\abs{\gamma^+(c)} = \abs{F(u^+_c)}
	&= \abs*{\dfrac{\sigma-2}{2\sigma} A(u^+_c) - \dfrac{\gamma(\sigma-1)}{4\sigma} B(u^+_c)}
	\leq \dfrac{\sigma-2}{2\sigma} A(u^+_c) + \dfrac{\gamma(\sigma-1)}{4\sigma} B(u^+_c)\\
	&< \dfrac{\sigma-2}{2\sigma} \[\dfrac{\gamma (\sigma - 1)  K_{H}}{4 (\sigma - 2)}\]^2  c^3 + \dfrac{\gamma(\sigma-1)}{4\sigma}  \dfrac{\gamma (\sigma - 1)  K_{H}^2}{4 (\sigma - 2)} c^3 \\
	&= \dfrac{3 \gamma^2 (\sigma - 1)^2 K_{H}^2  }{32\sigma (\sigma - 2)} c^3 
	:= K_1 c^3.
	\end{align*}
	We deduce from \eqref{eqn:lambda} that
	\begin{align*}
	2(3p-6) c \lambda^+_c &= 2(6-p) A(u^+_c) + (5p -12)\gamma B(u^+_c) \\
	&< 2(6-p) \[\dfrac{\gamma (\sigma - 1)  K_{H}}{4 (\sigma - 2)}\]^2  c^3 + (5p -12) \gamma \dfrac{\gamma (\sigma - 1)  K_{H}^2}{4 (\sigma - 2)} c^3.
	\end{align*}
	This implies that there exists a constant $K_2 > 0$ such that $\lambda^+_c \leq K_2 c^2$.
	The lemma is proved.
\end{proof}


\begin{lemma}\label{lemma:23t}
	Let $p \in (\frac{10}{3}, 6)$. There exist two constants $K_1 > 0$ and $K_2 > 0$ such that is $\lambda^-_c$ denotes the Lagrange parameter associated to a solution $u^-_c$ lying at the level $\gamma^-(c)$,
	\begin{align*}
	\abs{\gamma^-(c)} > K_1 c^{- \frac{6-p}{3p -10}} \qquad\text{and}\qquad \lambda^-_c > K_2 c^{- \frac{2p-4}{3p -10}}.
	\end{align*}
\end{lemma}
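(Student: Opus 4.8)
I would follow the pattern of \cref{lemma:22}, but with every inequality reversed: the role played there by the upper bound $A(u_c^{+}) < N^{-2}c^3$ is now taken by a matching \emph{lower} bound on $A(u_c^{-})$. Since $u_c^{-}\in\slm$, the estimate obtained in the course of proving \cref{lemma:16}\eqref{point:2}, combined with \cref{lemma:1}\eqref{point:1ii}, reads $A(u_c^{-}) < \frac{a\sigma(\sigma-1)}{p}K_{GN}\,c^{\frac{6-p}{4}}\,[A(u_c^{-})]^{\sigma/2}$; as $\sigma-2=\frac{3p-10}{2}>0$ this rearranges, with $M:=\frac{p}{a\sigma(\sigma-1)K_{GN}}$ as in \eqref{eq-notations}, to
\begin{align*}
  A(u_c^{-}) \;>\; M^{\frac{4}{3p-10}}\,c^{-\frac{6-p}{3p-10}} \;=:\; R(c),
\end{align*}
which may also be written $R(c)=k_1\,(c_1/c)^{\frac{6-p}{3p-10}}$, cf. \eqref{eqn:41}. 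Both assertions of the lemma will be drawn from this single lower bound.

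For the Lagrange multiplier I would apply the Pohozaev-type identity \eqref{eqn:lambda} to the solution $(u_c^{-},\lambda_c^{-})$, namely $2(3p-6)\lambda_c^{-}c = 2(6-p)A(u_c^{-}) + (5p-12)\gamma B(u_c^{-})$. This is the one place where the standing hypothesis $p<6$ (rather than $p=6$) is essential: since $p<6$ the first coefficient is strictly positive, and since $B(u_c^{-})>0$ and $5p-12>0$ I may simply discard the last summand. Using $1+\frac{6-p}{3p-10}=\frac{2p-4}{3p-10}$ and $A(u_c^{-})>R(c)$ this gives
\begin{align*}
  \lambda_c^{-} \;>\; \frac{6-p}{3p-6}\cdot\frac{A(u_c^{-})}{c} \;>\; \frac{6-p}{3p-6}\,M^{\frac{4}{3p-10}}\,c^{-\frac{2p-4}{3p-10}} \;=:\; K_2\, c^{-\frac{2p-4}{3p-10}},
\end{align*}
which is the second estimate. (When $p=6$ the first summand vanishes and this reasoning collapses, in agreement with the distinct behaviour recorded in \cref{lemma:25}.)

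For $\gamma^{-}(c)$, a direct computation using $Q(u_c^{-})=0$ (as in the proof of \cref{lemma:18}) gives $\gamma^{-}(c)=F(u_c^{-})=\frac{\sigma-2}{2\sigma}A(u_c^{-})-\frac{\gamma(\sigma-1)}{4\sigma}B(u_c^{-})$, and bounding $B(u_c^{-})\leq K_{H}\sqrt{A(u_c^{-})}\,c^{3/2}$ by \cref{lemma:1}\eqref{point:1i} yields
\begin{align*}
  \gamma^{-}(c) \;\geq\; \sqrt{A(u_c^{-})}\left(\frac{\sigma-2}{2\sigma}\sqrt{A(u_c^{-})}-\frac{\gamma(\sigma-1)K_{H}}{4\sigma}\,c^{3/2}\right).
\end{align*}
The plan is then to show that the first factor inside the parentheses swamps the second: once one knows the bracket is $\geq\frac{\sigma-2}{4\sigma}\sqrt{A(u_c^{-})}$, the bound $A(u_c^{-})>R(c)$ immediately gives $\gamma^{-}(c)\geq\frac{\sigma-2}{4\sigma}A(u_c^{-})>\frac{\sigma-2}{4\sigma}R(c)=:K_1\,c^{-\frac{6-p}{3p-10}}$; in particular $\gamma^{-}(c)>0$, so $\abs{\gamma^{-}(c)}=\gamma^{-}(c)$ and the first estimate follows. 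Establishing that the bracket is bounded below by a fixed multiple of $\sqrt{A(u_c^{-})}$ is the step that requires care — the main obstacle. Via $A(u_c^{-})>R(c)$ it reduces to controlling from below the ratio $\sqrt{R(c)}/c^{3/2}=M^{\frac{2}{3p-10}}\,c^{-\frac{4(p-3)}{3p-10}}$ (which, since $p>3$, is a decreasing function of $c$ that tends to $+\infty$ as $c\to0^{+}$), i.e. to making the Hardy--Littlewood--Sobolev contribution $\frac{\gamma(\sigma-1)K_{H}}{4\sigma}c^{3/2}$ negligible against the leading Dirichlet term for $A(u_c^{-})$ large. It is precisely this mechanism that both forces $\gamma^{-}(c)$ to be strictly positive and upgrades the inequality to the blow-up rate $c^{-\frac{6-p}{3p-10}}$, in sharp contrast with the smallness bound $\abs{\gamma^{+}(c)}\leq K_1 c^3$ of \cref{lemma:22}, where the roles of ``large'' and ``small'' are interchanged.
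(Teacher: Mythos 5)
Your lower bound $A(u_c^-) > M^{\frac{4}{3p-10}}c^{-\frac{6-p}{3p-10}}=:R(c)$ and your entire treatment of $\lambda_c^-$ via \eqref{eqn:lambda} (discarding the positive term $(5p-12)\gamma B(u_c^-)$ and using $6-p>0$) coincide with the paper's proof and are correct. The issue is the first estimate. Your plan hinges on the inequality
\begin{align*}
\frac{\sigma-2}{2\sigma}\sqrt{A(u_c^-)}-\frac{\gamma(\sigma-1)K_{H}}{4\sigma}\,c^{3/2}\;\geq\;\frac{\sigma-2}{4\sigma}\sqrt{A(u_c^-)},
\end{align*}
which, in the notation of \eqref{eq-notations}, is equivalent to $A(u_c^-)\geq 16\,k_0\,c^3$. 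The only lower bound available is $A(u_c^-)>R(c)$, and since $M^{\frac{4}{3p-10}}=k_0\,c_1^{\frac{8(p-3)}{3p-10}}$, the condition $R(c)\geq 16k_0c^3$ holds precisely for $c\leq 16^{-\frac{3p-10}{8(p-3)}}c_1$. Hence your argument proves both estimates on an initial portion of $(0,c_1)$ but breaks down near $c=c_1$, where $R(c_1)=k_0c_1^3<16k_0c_1^3$ and the bracket cannot even be shown to be positive by this route. The step you yourself flag as ``the main obstacle'' is therefore a genuine gap, not a routine verification: it amounts to proving $\gamma^-(c)>0$ together with $\gamma^-(c)\gtrsim A(u_c^-)$ on all of $(0,c_1)$, and the paper's own remark that one may have $\gamma^-(c)<0$ warns that this cannot be taken for granted up to $c_1$.

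For comparison, the paper's proof of the first estimate takes a different algebraic route, writing $F(u_c^-)=-\frac{1}{2}A(u_c^-)+\frac{a(\sigma-1)}{p}C(u_c^-)$ and asserting that the absolute value of this quantity exceeds $\frac{\sigma-2}{2\sigma}A(u_c^-)$. But this assertion does not follow from the constraints: $Q(u_c^-)=0$ together with $B(u_c^-)>0$ forces $\frac{a\sigma}{p}C(u_c^-)<A(u_c^-)$, while $u_c^-\in\Lambda^-(c)$ forces $\frac{a\sigma(\sigma-1)}{p}C(u_c^-)>A(u_c^-)$, and these two facts sandwich $F(u_c^-)$ strictly between $-\frac{\sigma-2}{2\sigma}A(u_c^-)$ and $\frac{\sigma-2}{2\sigma}A(u_c^-)$, i.e.\ they yield exactly the reverse inequality $\abs{F(u_c^-)}<\frac{\sigma-2}{2\sigma}A(u_c^-)$. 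So on the first estimate your attempt and the paper's argument both fall short, for related reasons; the $\lambda_c^-$ estimate is sound in both.
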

\begin{proof}
	By $u^-_c \in \slm$, we have
	\begin{align*}
	A(u^-_c) < \dfrac{a \sigma (\sigma - 1)}{p} C(u^-_c).
	\end{align*}
	Using \cref{lemma:1}\eqref{point:1ii}, we obtain that
	\begin{align*}
	A(u^-_c) < \dfrac{a \sigma (\sigma - 1)}{p} K_{GN} c^{\frac{6-p}{4}} [A(u^-_c)]^{\frac{\sigma}{2}},
	\end{align*}
	which implies that
	\begin{align*}
	A(u^-_c) > \[\dfrac{p}{a \sigma (\sigma - 1) K_{GN}} \]^{\frac{2}{\sigma-2}} c^{- \frac{6-p}{3p -10}}.
	\end{align*}
	We have that 
	\begin{align*}
	\abs{\gamma^-(c)} = \abs{F(u^-_c)} = \abs*{-\dfrac{1}{2}A(u^-_c) + \dfrac{a(\sigma - 1)}{p}C(u^-_c)}
	> \dfrac{\sigma -2}{2\sigma} A(u^-_c) 
	> \dfrac{\sigma -2}{2\sigma} \[\dfrac{p}{a \sigma (\sigma - 1) K_{GN}} \]^{\frac{\sigma-2}{2}} c^{- \frac{6-p}{3p -10}}
	:= K_1 c^{- \frac{6-p}{3p -10}}.
	\end{align*} 	
	We deduce from \eqref{eqn:lambda} that
	\begin{align*}
	\lambda^-_c &= \dfrac{1}{c} \dfrac{1}{2(3p-6)} \[2(6-p) A(u^-_c) + (5p -12)\gamma B(u^-_c)\] \\
	&> \dfrac{1}{c} \dfrac{6-p}{3p-6} A(u^-_c) 
	> \dfrac{1}{c} \dfrac{6-p}{3p-6} \[\dfrac{p}{a \sigma (\sigma - 1) K_{GN}} \]^{\frac{\sigma-2}{2}} c^{- \frac{6-p}{3p -10}}
	:= K_2 c^{- \frac{2p-4}{3p -10}}.
	\end{align*}
	The lemma is proved.
\end{proof}

\begin{lemma}\label{lemma:25}
	Let $p=6$. There exists a constant $K_1 > 0$ such that if $\lambda^-_c$ denote the Lagrange parameter associated to a solution $u^-_c$ lying at the level $\gamma^-(c)$ then we have
	\begin{align*}
	\gamma^-(c) \to \dfrac{1}{3 \sqrt{a K_{GN}}} \quad \text{as} \quad c \to 0 \quad \mbox{and} \quad \lambda^-_c \leq K_1 c^{\frac{1}{2}}.
	\end{align*}
\end{lemma}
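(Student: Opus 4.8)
The plan is to derive both conclusions from two structural identities available when $p=6$ (so that $\sigma=6$), after first isolating a uniform-in-$c$ a priori bound on $A(u^-_c)$. Recall that for any $u\in\slc$ one has, exactly as in the proof of \cref{lemma:18},
\begin{align*}
F(u)=\frac{\sigma-2}{2\sigma}A(u)-\frac{\gamma(\sigma-1)}{4\sigma}B(u)=\frac13 A(u)-\frac{5\gamma}{24}B(u),
\end{align*}
and that substituting $p=6$ into \eqref{eqn:lambda}, together with $\normLp{u^-_c}{2}^2=c$, gives
\begin{align*}
\lambda^-_c=\frac{3\gamma}{4c}B(u^-_c).
\end{align*}

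The key step is to show there is a constant $C_0>0$, independent of $c\in(0,c_1)$, with $A(u^-_c)\le C_0$. By \cref{lemma:20} and $\gamma^+(c)<0$ (see \cref{lemma:28}\eqref{point:28i}) we have $\gamma^-(c)<\gamma^+(c)+\frac{1}{3\sqrt{aK_{GN}}}<\frac{1}{3\sqrt{aK_{GN}}}$. Writing $\gamma^-(c)=F(u^-_c)$ with the identity above and using $B(u^-_c)\le K_H\sqrt{A(u^-_c)}\,c^{3/2}\le K_H\sqrt{A(u^-_c)}\,c_1^{3/2}$ from \cref{lemma:1}\eqref{point:1i}, this turns into the inequality $\frac13 t^2-\frac{5\gamma K_H c_1^{3/2}}{24}t-\frac{1}{3\sqrt{aK_{GN}}}<0$ for $t:=\sqrt{A(u^-_c)}$, which forces $t$ below the positive root $t_*$ of the corresponding quadratic; hence $A(u^-_c)\le t_*^2=:C_0$. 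The multiplier bound is then immediate: \cref{lemma:1}\eqref{point:1i} gives $B(u^-_c)\le K_H\sqrt{C_0}\,c^{3/2}$, so $\lambda^-_c=\frac{3\gamma}{4c}B(u^-_c)\le\frac{3\gamma K_H\sqrt{C_0}}{4}\,c^{1/2}=:K_1c^{1/2}$.

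For the limit $\gamma^-(c)\to\frac{1}{3\sqrt{aK_{GN}}}$ I would squeeze. The upper estimate is already in hand: $\gamma^-(c)<\gamma^+(c)+\frac{1}{3\sqrt{aK_{GN}}}$ and $\gamma^+(c)\to0$ as $c\to0$ (by \cref{lemma:22}), so $\limsup_{c\to0}\gamma^-(c)\le\frac{1}{3\sqrt{aK_{GN}}}$. For the lower estimate, observe that $u^-_c\in\slm$ gives $A(u^-_c)<5aC(u^-_c)$, and since $\frac{6-p}{4}=0$ when $p=6$, \cref{lemma:1}\eqref{point:1ii} yields $A(u^-_c)<5aK_{GN}[A(u^-_c)]^3$, hence the uniform lower bound $A(u^-_c)\ge(5aK_{GN})^{-1/2}$. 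Meanwhile $Q(u^-_c)=0$ reads $A(u^-_c)=\frac{\gamma}{4}B(u^-_c)+aC(u^-_c)$; since $B(u^-_c)\le K_H\sqrt{C_0}\,c^{3/2}=o(1)$ and $C(u^-_c)\le K_{GN}[A(u^-_c)]^3$, dividing by $A(u^-_c)\ge(5aK_{GN})^{-1/2}$ gives $1\le aK_{GN}[A(u^-_c)]^2+o(1)$, so $A(u^-_c)\ge(aK_{GN})^{-1/2}+o(1)$. Feeding this into $\gamma^-(c)=\frac13 A(u^-_c)-\frac{5\gamma}{24}B(u^-_c)=\frac13 A(u^-_c)+o(1)$ yields $\liminf_{c\to0}\gamma^-(c)\ge\frac{1}{3\sqrt{aK_{GN}}}$, and the two estimates give the convergence.

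The only genuinely delicate input is \cref{lemma:20}: it is the strict inequality $\gamma^-(c)<\gamma^+(c)+\frac{1}{3\sqrt{aK_{GN}}}$ that keeps $A(u^-_c)$ from escaping to infinity and makes the uniform bound $A(u^-_c)\le C_0$ available; the rest is bookkeeping with the Hardy--Littlewood--Sobolev and Sobolev/Gagliardo--Nirenberg inequalities of \cref{lemma:1}. The one point to watch is uniformity: all the $o(1)$ terms above must be uniform in $c$, which they are because the bounds $(5aK_{GN})^{-1/2}\le A(u^-_c)\le C_0$ hold with constants independent of $c$.
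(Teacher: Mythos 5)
Your proposal is correct and follows essentially the same route as the paper: the identity $\lambda^-_c=\frac{3\gamma}{4c}B(u^-_c)$ together with \cref{lemma:1}\eqref{point:1i} for the multiplier bound, the lower bound $A(u^-_c)\ge(5aK_{GN})^{-1/2}$ coming from $u^-_c\in\slm$ combined with $Q(u^-_c)=0$ and $B(u^-_c)\to 0$ for the $\liminf$, and \cref{lemma:20} for the $\limsup$. The only (harmless) difference is that you make the uniform-in-$c$ bound on $A(u^-_c)$ explicit via a quadratic inequality derived from \cref{lemma:20}, whereas the paper obtains it by invoking the coercivity of \cref{lemma:5}, and you replace the paper's subsequence dichotomy ($\ell=0$ or $\ell\ge(aK_{GN})^{-1/2}$) by the equivalent direct quantitative estimate.
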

\begin{proof}
	Since $F(u)$ restricted to $\Lambda(c)$ is coercive on $\Ho$ (see \cref{lemma:5})  we have that $A(u^-_c)$ remain bounded.
	We deduce from \eqref{eqn:lambda} and \cref{lemma:1}\eqref{point:1i} that
	\begin{align*}
	\lambda^-_c = \dfrac{1}{c} \dfrac{3\gamma}{4} B(u^-_c) \leq \dfrac{1}{c} \dfrac{3\gamma}{4} K_{H}  \sqrt{A(u^-_c)} c^{\frac{3}{2}} := K_1 c^{\frac{1}{2}}.
	\end{align*}	
	We have that $B(u^-_c) \to 0$ as $c \to 0$ due to $B(u^-_c) \leq K_{H}  \sqrt{A(u^-_c)} c^{\frac{3}{2}}$. Since $Q(u^-_c) = 0$, we have
	\begin{align*}
	A(u^-_c) = a C(u^-_c) + o_c(1),
	\end{align*}
	where $o_c(1) \to 0$ as $c \to 0$. Passing to the limit as $c \to 0$, up to subsequence we infer that
	\begin{align*}
	\lim_{c \to 0} A(u^-_c) = \lim_{c \to 0} a C(u^-_c) :=\ell \geq 0.
	\end{align*}
	Using \cref{lemma:1}\eqref{point:1ii}, we have
	\begin{align*}
	\ell = \lim_{c \to 0} a C(u^-_c) \leq \lim_{c \to 0} a K_{GN} [A(u^-_c)]^3 = a K_{GN} \ell^3.
	\end{align*}
	Therefore, either $\ell = 0$ or $\ell \geq (a K_{GN})^{-\frac{1}{2}}$. Using \cref{lemma:16}\eqref{point:2}, we ensure that $\ell \geq (a K_{GN})^{-\frac{1}{2}}$. Hence, we have 
	\begin{align*}
	\gamma^-(c) + o_c(1) = F(u^-_c) &= \dfrac{\sigma - 2}{2\sigma} A(u^-_c) - \dfrac{\gamma(\sigma - 1)}{4\sigma} B(u^-_c)
	= \dfrac{1}{3} A(u^-_c) + o_c(1) = \dfrac{1}{3}\ell + o_c(1) \geq \dfrac{1}{3 \sqrt{a K_{GN}}} + o_c(1),
	\end{align*}
	which implies that
	\begin{align*}
	\gamma^-(c) \geq \dfrac{1}{3 \sqrt{a K_{GN}}} \quad \text{as} \quad c \to 0.
	\end{align*}
	Recording \cref{lemma:20}, the lemma is proved.
\end{proof}

\subsection{The monotonicity of the map $c \mapsto \gamma^{-}(c)$}\label{Subsection3-6}


\begin{lemma}\label{lemma:10}
	When $p \in (\frac{10}{3}, 6]$, the function $c \mapsto \gamma^{-}(c)$ is strictly decreasing on $(0, c_1)$.
\end{lemma}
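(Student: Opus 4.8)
The plan is to show that for any $0 < c < c' < c_1$ one has $\gamma^-(c') < \gamma^-(c)$, exploiting the scaling behaviour of $A$, $B$, $C$ under the dilation $u \mapsto \sqrt{\theta}\, u$ combined with the fiber-map characterisation of $\Lambda^-$. First I would recall, as in \eqref{eqn:26l}, that
$$\gamma^-(c) = \inf_{u \in S(c)} \max_{s_u^+ < t \leq s_u^-} F(u^t) = \inf_{u \in S(c)} \max_{t>0} F(u^t),$$
since for $u \in S(c)$ the fiber map $g_u$ attains its (unique, by \cref{lemma:6}) local maximum at $s_u^-$, and $\max_{t>0} g_u(t) = g_u(s_u^-) = F(u^{s_u^-})$. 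Fix $c < c' < c_1$, let $\theta := c'/c > 1$, and given $u \in S(c)$ set $v := \sqrt{\theta}\, u \in S(c')$, so that $A(v) = \theta A(u)$, $B(v) = \theta^2 B(u)$, $C(v) = \theta^\sigma C(u)$.

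The key computation is to compare $\max_{t>0} F(v^t)$ with $\theta \max_{t>0} F(u^t)$. For any $t>0$,
$$F(v^t) = \tfrac12 t^2 A(v) - \tfrac{\gamma}{4} t B(v) - \tfrac{a}{p} t^\sigma C(v) = \tfrac12 \theta t^2 A(u) - \tfrac{\gamma}{4}\theta^2 t B(u) - \tfrac{a}{p}\theta^\sigma t^\sigma C(u).$$
Writing $t = \tau/\sqrt{\theta}\cdot\sqrt{\theta} $ is not quite the right substitution; instead I would simply estimate: since $\theta > 1$, $\sigma > 2$, and $\gamma, a > 0$, we have $\theta^2 \geq \theta$ and $\theta^\sigma \geq \theta$, hence
$$F(v^t) \leq \theta\Big[\tfrac12 t^2 A(u) - \tfrac{\gamma}{4} t B(u) - \tfrac{a}{p} t^\sigma C(u)\Big] = \theta F(u^t),$$
with \emph{strict} inequality as soon as $B(u) > 0$ (which always holds for $u \in S(c)$, $u \not\equiv 0$) — indeed $-\tfrac{\gamma}{4}\theta^2 t B(u) < -\tfrac{\gamma}{4}\theta t B(u)$ strictly. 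Taking the max over $t>0$ on both sides, then the infimum over $u \in S(c)$, and using that $v$ ranges over (a subset of) $S(c')$ as $u$ ranges over $S(c)$, gives
$$\gamma^-(c') \leq \max_{t>0} F(v^t) \leq \theta \max_{t>0} F(u^t) \quad \text{for all } u \in S(c),$$
so $\gamma^-(c') \leq \theta\, \gamma^-(c)$; and the strictness of the middle inequality should be upgraded to a strict bound at the level of infima by arguing on a minimising sequence or, more cleanly, by noting $\gamma^-(c)$ is attained (by \cref{theorem:1}, in both the subcritical and critical cases, $\gamma^-(c)$ is reached by some $u_c^- \in \Lambda^-(c)$) so that with $u = u_c^-$ the strict inequality $F(v^t) < \theta F(u_c^{-,t})$ holds pointwise and hence $\gamma^-(c') < \theta \max_{t>0} F(u_c^{-,t}) = \theta \gamma^-(c)$.

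It remains to deduce $\gamma^-(c') < \gamma^-(c)$ from $\gamma^-(c') < \theta \gamma^-(c)$ with $\theta > 1$; this follows provided $\gamma^-(c) > 0$, but in the present setting $\gamma^-(c)$ may be negative, so a little care is needed. If $\gamma^-(c) \leq 0$ then $\theta \gamma^-(c) \leq \gamma^-(c)$ and we are immediately done. If $\gamma^-(c) > 0$, then again $\theta \gamma^-(c) > \gamma^-(c)$ and $\gamma^-(c') < \theta\gamma^-(c)$ does not directly suffice — so in that case I would instead run the argument slightly differently, comparing $F(v^t)$ with $F(u^{t})$ after also rescaling $t$: the homogeneities show $F(v^t)$, as a function of $t$, has its maximiser $s_v^-$ related to $s_u^-$, and one checks directly that $F(v^{s_v^-}) < F(u^{s_u^-})$ by monotonicity of the relevant one-variable quantities in $\theta$. \textbf{The main obstacle} is precisely this sign issue: unlike in \cite{JeanjeanJendrejLeVisciglia2020, JeanjeanLe2020} where $\gamma^- \geq 0$, here one cannot blindly use $\gamma^-(c') \leq \theta\gamma^-(c)$, so the cleanest route is to establish the \emph{pointwise} strict inequality $F(v^t) < F(u^t)$ for all $t$ in the relevant range directly — which holds because increasing $c$ strictly increases the (negative, energy-lowering) coefficients $-\tfrac{\gamma}{4} B$ and $-\tfrac{a}{p} C$ faster than the (positive) coefficient $\tfrac12 A$ — take the max over $t$, infimum over $u$, and use that $\gamma^-(c)$ is attained to preserve strictness. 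I expect the write-up to mirror the proof of \cref{lemma:28}\eqref{point:28iii} closely, with the roles of $\Lambda^+$ and the "$\min$" replaced by $\Lambda^-$ and the "$\max$".
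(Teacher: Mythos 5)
Your proposal does not close. The core step you rely on is the homogeneity estimate $F(v^t)\leq \theta F(u^t)$ for $v=\sqrt{\theta}\,u$, $\theta=c'/c>1$, which yields $\gamma^-(c')<\theta\,\gamma^-(c)$; as you yourself note, this only gives $\gamma^-(c')<\gamma^-(c)$ when $\gamma^-(c)\leq 0$, and that is precisely the case one cannot assume here (for $p=6$, \cref{lemma:25} shows $\gamma^-(c)\to \frac{1}{3\sqrt{aK_{GN}}}>0$ as $c\to 0$, so the problematic sign actually occurs). Your proposed repair — the pointwise inequality $F(v^t)<F(u^t)$ for the same scaling $v=\sqrt{\theta}\,u$ — is false in general: one has $F(v^t)-F(u^t)=\tfrac12(\theta-1)t^2A(u)-\tfrac{\gamma}{4}(\theta^2-1)tB(u)-\tfrac{a}{p}(\theta^{p/2}-1)t^{\sigma}C(u)$, and the first term is \emph{positive} and can dominate in an intermediate range of $t$ when $A(u)$ is large relative to $B(u)$ and $C(u)$. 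The heuristic that the energy-lowering coefficients grow faster only delivers the multiplicative bound $F(v^t)<\theta F(u^t)$, not the additive one you need. There is also a secondary inaccuracy: the identity $\gamma^-(c)=\inf_{u\in S(c)}\max_{t>0}F(u^t)$ fails when $\gamma^-(c)<0$, since $g_u(t)\to 0$ as $t\to 0$ and hence $\sup_{t>0}g_u(t)\geq 0$; the correct characterization restricts the max to $t\in(s_u^+,s_u^-]$.

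The paper's proof avoids all of this by choosing a different rescaling: starting from a minimizer $u\in S(c_2)$ with $F(u)=\gamma^-(c_2)$, it sets $v(x)=\sqrt{\theta}\,u(\theta x)$ with $\theta=\sqrt{c_2/c_3}<1$, so that $A(v)=A(u)$ while $B(v)=\theta^{-3}B(u)\geq B(u)$ and $C(v)=\theta^{\frac{p}{2}-3}C(u)\geq C(u)$. Since only the two energy-lowering terms change, the pointwise strict inequality $F(v^t)<F(u^t)$ holds for \emph{all} $t>0$ with no sign issue. The remaining point, which your plan does not address, is to guarantee that the maximizer $s_v^-$ of $g_v$ lies in the range where $\max_{t>s_u^+}F(u^t)=F(u^{s_u^-})=\gamma^-(c_2)$; the paper gets $s_u^+<s_v^-$ from the separation estimates \eqref{eqn:43} and \eqref{eqn:41} (namely $A(u^{s_u^+})<k_1<A(v^{s_v^-})$ together with $A(v)=A(u)$). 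If you want to salvage your write-up, replace the scaling $v=\sqrt{\theta}\,u$ by the $A$-preserving one and add the $k_1$-separation step.
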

\begin{proof}
	Let $0 < c_2 < c_3 < c_1$, Since $\gamma^{-}(c_2)$ is reached, there exists $u \in S(c_2)$ such that $F(u) = \gamma^{-}(c_2)$. We define $v \in S(c_3)$ by $v(x) = \sqrt{\theta} u(\theta x)$ where $\theta = \sqrt{\frac{c_2}{c_3}} < 1$. 	
	By direct calculations we have
	\begin{equation}
		A(v) = A(u), \quad B(v) = \theta^{-3}B(u) \quad \mbox{and } C(v) = \theta^{\frac{p}{2}-3}C(u). \label{eqn:37T2}
	\end{equation}
	Now observe that, since $\theta < 1$, for all $t>0$, 
	\begin{align}
		\begin{split}
			F(v^t) = \dfrac{1}{2}t^2 A(v) - \dfrac{\gamma}{4} t B(v) - \dfrac{a}{p} t^{\sigma} C(v)
			 = \dfrac{1}{2}t^2 A(u) - \dfrac{\gamma}{4} t \theta^{-3} B(u) - \dfrac{a}{p} t^{\sigma} \theta^{\frac{p}{2}-3} C(u)
			<  F(u^t).
		\end{split} \label{eqn:38T2}
	\end{align}
	By \eqref{eqn:43} and \eqref{eqn:41}, we have that
	\begin{align*}
		A(u^{s_u^+}) < k_1 < A(v^{s_v^-}),
	\end{align*}
	and thus $s_u^+ < s_v^-$ due to $A(v) = A(u)$. Hence, we can deduce from \eqref{eqn:38T2} that
	\begin{align*}
		F(v^{s_v^-}) < \max_{s_u^+ < t}F(u^t) = F(u) = \gamma^{-}(c_2).
	\end{align*}
	This implies that $\gamma^{-}(c_3) < \gamma^{-}(c_2)$ and hence, the lemma is proved.
\end{proof}

\section{The case $\gamma > 0,$ $ a < 0$ and $p \in (\frac{10}{3}, 6]$.} \label{Section4}
Throughout this section, we assume that $\gamma > 0,$ $ a < 0$ and $p \in (\frac{10}{3}, 6]$.
\begin{lemma} \label{lemma:5.1}
	$F$ restricted to $S(c)$ is coercive on $\Ho$ and bounded from below.
\end{lemma}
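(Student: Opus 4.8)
The plan is to exploit the change of sign: since $a<0$, the term $-\frac{a}{p}C(u)=\frac{|a|}{p}C(u)$ is now nonnegative, so the only term in $F$ with an unfavorable sign is $-\frac{\gamma}{4}B(u)$. The idea is therefore to simply discard the $C$-term and to control $B$ by the gradient through \cref{lemma:1}\eqref{point:1i}. This is the same mechanism as in \cref{lemma:5}, except that there one had to use the relation $\frac{a}{p}C(u)=\frac1\sigma A(u)-\frac{\gamma}{4\sigma}B(u)$ valid on $\slc$ to eliminate the (bad-signed) power term; here the sign of $a$ makes that term harmless and the estimate works on all of $S(c)$.

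Concretely, for $u \in S(c)$ I would write
\begin{align*}
F(u)=\dfrac12 A(u) - \dfrac{\gamma}{4} B(u) - \dfrac{a}{p}C(u) \geq \dfrac12 A(u) - \dfrac{\gamma}{4}B(u),
\end{align*}
using $a<0$ and $C(u)\geq 0$, and then apply \cref{lemma:1}\eqref{point:1i}, namely $B(u)\leq K_{H}\sqrt{A(u)}\,c^{3/2}$, to obtain
\begin{align*}
F(u) \geq \dfrac12 A(u) - \dfrac{\gamma K_{H} c^{3/2}}{4}\sqrt{A(u)}.
\end{align*}
The right-hand side depends only on $A(u)$ and, as a function of $s=\sqrt{A(u)}\geq 0$, equals $\frac12 s^2-\frac{\gamma K_{H}c^{3/2}}{4}s$, which is bounded from below by a constant depending only on $\gamma$, $K_H$ and $c$, and tends to $+\infty$ as $s\to\infty$.

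Boundedness from below of $F$ on $S(c)$ is then immediate from this inequality. For coercivity on $\Ho$, I would note that on $S(c)$ the $L^2$-norm is fixed, so $\|u\|^2 = A(u)+c$; hence a sequence $(u_n)\subset S(c)$ with $\|u_n\|\to+\infty$ satisfies $A(u_n)\to+\infty$, and the displayed lower bound gives $F(u_n)\to+\infty$. There is no genuine difficulty in this lemma; the only point worth recording is that, unlike \cref{lemma:5}, the argument holds on the whole constraint $S(c)$ and not merely on $\slc$, precisely because $a<0$.
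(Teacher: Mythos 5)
Your proof is correct and follows essentially the same route as the paper's: both bound $B(u)$ by $K_H\sqrt{A(u)}\,c^{3/2}$ via \cref{lemma:1}\eqref{point:1i} and observe that the sign $a<0$ renders the term $-\frac{a}{p}C(u)$ nonnegative, so the lower bound $\frac12 A(u)-\frac{\gamma K_H c^{3/2}}{4}\sqrt{A(u)}$ yields both coercivity and boundedness from below on all of $S(c)$. Your added remark contrasting this with \cref{lemma:5} is accurate but not needed.
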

\begin{proof}
	Let $u \in S(c)$. Using \cref{lemma:1}\eqref{point:1i}, we obtain 
	\begin{align*}
	F(u) = \dfrac{1}{2} A(u) - \dfrac{\gamma}{4} B(u) - \dfrac{a}{p} C(u)
	\geq \dfrac{1}{2} A(u)  - \dfrac{\gamma}{4} K_{H}  \sqrt{A(u)} c^{\frac{3}{2}} - \dfrac{a}{p} C(u).
	\end{align*}
	Since $\gamma > 0,$ $ a < 0$, this concludes the proof. 
\end{proof}
In what follows, we collect some basic properties of $m(c)$ defined in \eqref{eqn:0.1}.
\begin{lemma} \label{lemma:5.5}
	It holds that
	\begin{enumerate}[label=(\roman*), ref = \roman*]
		\item\label{point:5.5i} $m(c) < 0$, $\forall c > 0$.
		\item\label{point:5.5ii} $c \mapsto m(c)$ is a continuous mapping.
		\item\label{point:5.5iii} For any $c_2 > c_1 > 0$, we have $c_1 m(c_2) \leq c_2 m(c_1)$. If $m(c_1)$ is reached then the inequality is strict.
		\item\label{point:5.5iv} For any $c_2, c_1 > 0$, we have $m(c_1+ c_2) \leq m(c_1) + m(c_2)$. If $m(c_1)$ or $m(c_2)$ is reached then the inequality is strict.
	\end{enumerate}
\end{lemma}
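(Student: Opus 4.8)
The plan is to prove the four assertions in the order (i), (ii), (iii), (iv), obtaining (iv) as a formal consequence of (iii); the only point requiring a new idea, compared with the case $a>0$ treated in \cref{lemma:28}, is the choice of scaling in (iii). For (i), fix any $u\in S(c)$; since $c>0$ we have $u\neq0$, hence $A(u)>0$ and $B(u)>0$. Look at the fiber $g_u(t)=F(u^t)=\tfrac12 t^2A(u)-\tfrac{\gamma}{4}tB(u)-\tfrac{a}{p}t^{\sigma}C(u)$. As $a<0$ the last term equals $\tfrac{|a|}{p}t^{\sigma}C(u)\geq0$ and is $O(t^{\sigma})$ with $\sigma>2$, while the middle term is $-\tfrac{\gamma}{4}B(u)\,t<0$; therefore $g_u(t)=-\tfrac{\gamma}{4}B(u)\,t+O(t^2)<0$ for $t>0$ small, so that $m(c)\leq g_u(t)<0$.

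For (ii) I would use the dilation $u\mapsto\sqrt{\mu}\,u$, which maps $S(c)$ onto $S(\mu c)$ and depends continuously on $\mu$ through $A,B,C$. Given $c_n\to c$: to get $\limsup_n m(c_n)\leq m(c)$, pick $u\in S(c)$ with $F(u)\leq m(c)+\varepsilon$, set $u_n:=\sqrt{c_n/c}\,u\in S(c_n)$ and note $F(u_n)=F(u)+o_n(1)$, whence $m(c_n)\leq F(u)+\varepsilon+o_n(1)$. For the reverse bound, pick $v_n\in S(c_n)$ with $F(v_n)\leq m(c_n)+\varepsilon$; since the coercivity estimate in the proof of \cref{lemma:5.1} is uniform for $\normLp{v_n}{2}^2=c_n$ bounded and $m(c_n)$ is bounded above (by a fixed dilated test function), $(v_n)$ is bounded in $\Ho$, so $w_n:=\sqrt{c/c_n}\,v_n\in S(c)$ satisfies $m(c)\leq F(w_n)=F(v_n)+o_n(1)\leq m(c_n)+\varepsilon+o_n(1)$. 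Letting $n\to\infty$ and then $\varepsilon\to0$ gives $m(c_n)\to m(c)$.

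The heart of the lemma is (iii), and this is where the sign change in front of the $C$-term bites: the simple dilation $v=\sqrt{\theta}\,u$ used when $a>0$ scales $C(u)$ by $\theta^{p/2}>\theta$, pushing $-\tfrac{a}{p}C$ in the wrong direction. Instead, for $0<c_1<c_2$ put $\theta:=c_2/c_1>1$ and, given $u\in S(c_1)$, set $v(x):=u(\theta^{-1/3}x)$. A change of variables gives $\normLp{v}{2}^2=\theta\,c_1=c_2$, together with $A(v)=\theta^{1/3}A(u)$, $B(v)=\theta^{5/3}B(u)$ and $C(v)=\theta\,C(u)$; the point is that $C$ now picks up only the harmless factor $\theta$, while the factor $\theta^{5/3}>\theta$ in front of $B$ works in our favour because $\gamma>0$. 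Using $\theta>1$, $A(u)>0$ and $B(u)>0$ we obtain $F(v)=\tfrac12\theta^{1/3}A(u)-\tfrac{\gamma}{4}\theta^{5/3}B(u)-\tfrac{a}{p}\theta\,C(u)<\tfrac12\theta A(u)-\tfrac{\gamma}{4}\theta B(u)-\tfrac{a}{p}\theta\,C(u)=\theta F(u)$, hence $m(c_2)\leq F(v)<\theta F(u)$. Choosing $u$ with $F(u)\to m(c_1)$ yields $m(c_2)\leq\theta m(c_1)$, i.e. $c_1 m(c_2)\leq c_2 m(c_1)$; if $m(c_1)$ is attained one takes $F(u)=m(c_1)$, making the inequality strict.

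Finally, (iv) follows from (iii) exactly as \cref{lemma:28}(iii) was deduced there: apply (iii) to the pairs $(c_1,c_1+c_2)$ and $(c_2,c_1+c_2)$ to get $m(c_1+c_2)\leq\tfrac{c_1+c_2}{c_1}m(c_1)$ and $m(c_1+c_2)\leq\tfrac{c_1+c_2}{c_2}m(c_2)$, each strict when the corresponding $m(c_i)$ is attained, and then write $m(c_1+c_2)=\tfrac{c_1}{c_1+c_2}m(c_1+c_2)+\tfrac{c_2}{c_1+c_2}m(c_1+c_2)\leq m(c_1)+m(c_2)$, with strict inequality as soon as $m(c_1)$ or $m(c_2)$ is reached. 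So the only genuinely new work is the dilation in (iii); parts (i), (ii) and the reduction of (iv) are routine once that is in place.
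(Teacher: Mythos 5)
Your proposal is correct and follows essentially the same route as the paper's proof: the small-$t$ fiber argument for (i), the $\sqrt{c/c_n}$ dilation for (ii), the spatial dilation $v(x)=u(\theta^{-1/3}x)$ with $\theta=c_2/c_1$ for (iii) (with the strict inequality coming from the $A$ and $B$ terms since $\theta>1$), and the deduction of (iv) from (iii). The only differences are presentational: you make the boundedness needed in (ii) explicit, and you derive (iv) via a convex-combination identity rather than the paper's chained application of (iii), both of which are equivalent.
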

\begin{proof}
	\ref{point:5.5i}) For any $u \in S(c)$, we recall that $u^t \in S(c)$ and 
	$$g_u(t)= F(u^t) = \dfrac{1}{2}t^2 A(u) - \dfrac{\gamma}{4} t B(u) - \dfrac{a}{p} t^{\sigma} C(u) \quad \mbox{and also} \quad g_u'(t) = t A(u) - \dfrac{\gamma}{4} B(u) - \dfrac{a \sigma}{p} t^{\sigma - 1} C(u).$$
	We observe that $g_u(t) \to 0$ and $g_u'(t) \to - \frac{\gamma}{4} B(u) < 0$ as $t \to 0$. Therefore, there exists $t_0 > 0$ such that $F(u^{t_0}) = g_u(t_0) < 0$. Thus, we have $m(c) < 0$.
	

	\ref{point:5.5ii}) 
	We assume that $c_n \to c$. From the definition of $m(c_n)$, for any $\varepsilon > 0$, there exists $u_n \in S(c_n)$ such that
	\begin{align}
	F(u_n) \leq m(c_n) + \varepsilon. \label{eqn:5.9}
	\end{align}
	We set $y_n := \sqrt{\frac{c}{c_n}}\cdot u_n$. Taking into account that $y_n \in S(c)$ and $\frac{c}{c_n} \to 1$, we have
	\begin{align}
	m(c) \leq F(y_n) = F(u_n) + o_n(1).  \label{eqn:5.10}
	\end{align}
	Combining \eqref{eqn:5.9} and \eqref{eqn:5.10}, we get
	\begin{align*}
	m(c) \leq m(c_n) + \varepsilon + o_n(1).
	\end{align*}
	Reversing the argument we obtain similarly that
	\begin{align*}
	m(c_n) \leq m(c) + \varepsilon + o_n(1).
	\end{align*}
	Therefore, since $\varepsilon > 0$ is arbitrary, we deduce that $m(c_n) \to m(c)$. The point \eqref{point:5.5ii} follows.
	
	\ref{point:5.5iii}) Let $t := \frac{c_2}{c_1} > 1$. For any $\varepsilon > 0$, there exist $u \in S(c_1)$ such that
	\begin{align}
	F(u) \leq m(c_1) + \varepsilon. \label{eqn:5.11}
	\end{align}
	Let $v:= u(t^{-\frac{1}{3}}x)$. Then we have $\normLp{v}{2}^2 = t \normLp{u}{2}^2 = c_2$, hence $v \in S(c_2)$. Moreover, we have
	\begin{align*}
	A(v) = t^{\frac{1}{3}}A(u), \qquad B(v) = t^{\frac{5}{3}} B(u), \qquad C(v) = tC(u).
	\end{align*}
	Therefore, we have 
	\begin{align*}
	m(c_2) \leq F(v) &= \dfrac{1}{2} A(v) - \dfrac{\gamma}{4} B(v) - \dfrac{a}{p} C(v)
	= \dfrac{1}{2} t^{\frac{1}{3}}A(u) - \dfrac{\gamma}{4} t^{\frac{5}{3}} B(u) - \dfrac{a}{p} tC(u)\\
	&< \dfrac{1}{2} t A(u) - \dfrac{\gamma}{4} t B(u) - \dfrac{a}{p} t C(u) 
	= t \(\dfrac{1}{2} A(u) - \dfrac{\gamma}{4} B(u) - \dfrac{a}{p} C(u)\) \\
	&=t F(u) 
	\leq t(m(c_1) + \varepsilon) 
	= \dfrac{c_2}{c_1} m(c_1) + \frac{c_2 }{c_1}\varepsilon.
	\end{align*}
	Since $\varepsilon > 0$ is arbitrary, we have $c_1 m(c_2) \leq c_2 m(c_1)$. If $m(c_1)$ is reached then we can let $\varepsilon = 0$ in \eqref{eqn:5.11} and thus the strict inequality follows.
	
	\ref{point:5.5iv}) Assume first that $0 < c_1 \leq c_2$. Then, by \eqref{point:5.5iii}, we have that
	\begin{align*}
	m(c_1 + c_2) \leq \dfrac{c_1 + c_2}{c_2} m(c_2) = m(c_2) + \dfrac{c_1}{c_2} m(c_2) \leq m(c_2) + \dfrac{c_1}{c_2} \dfrac{c_2}{c_1} m(c_1) = m(c_1) + m(c_2).
	\end{align*}
	If $m(c_1)$ or $m(c_2)$ is reached, then we can use the strict inequality in \eqref{point:5.5iii} and thus the strict inequality follows.
	The case $0< c_2 < c_1$ can be treated reversing the role of $c_1$ and $c_2$. 
\end{proof}

\begin{lemma} \label{lemma:5.3}
	Let $(u_n) \subset S(c)$ be any minimizing sequence for $m(c)$. Then, there exist a $\beta_0 > 0$ and a sequence $(y_n) \in \Rb$ such that
	\begin{align}
	\int_{B(y_n, R)} \abs{u_n}^2 dx \geq \beta_0 > 0, \qquad \text{for some } R > 0. \label{eqn:5.1}
	\end{align}
\end{lemma}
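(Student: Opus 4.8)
The plan is to argue by contradiction, excluding the vanishing scenario and exploiting the sign condition $a<0$. First I would record that $(u_n)$ is bounded in $\Ho$: since $F(u_n)\to m(c)\in\R$ and $F$ restricted to $S(c)$ is coercive on $\Ho$ by \cref{lemma:5.1}, one gets $\sup_n\norm{u_n}<\infty$. Suppose now, for contradiction, that the conclusion of the lemma fails; then for every $R>0$,
\begin{align*}
	\sup_{y\in\Rb}\int_{B(y,R)}\abs{u_n}^2\,dx \to 0 \qquad\text{as }n\to\infty.
\end{align*}

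By the vanishing lemma of Lions (see \cite[Lemma I.1]{LIONS1984part2}), this forces $\normLp{u_n}{q}\to 0$ for every $q\in(2,6)$. In particular $\normLp{u_n}{\frac{12}{5}}\to 0$, so the Hardy--Littlewood--Sobolev bound \eqref{eqn:2.1} gives $B(u_n)\to 0$. The crucial point is then that, since $\gamma>0$, $a<0$, and $A(u_n),C(u_n)\geq 0$,
\begin{align*}
	F(u_n)=\dfrac{1}{2} A(u_n)-\dfrac{\gamma}{4}B(u_n)-\dfrac{a}{p}C(u_n)\geq -\dfrac{\gamma}{4}B(u_n).
\end{align*}
Passing to the limit yields $m(c)=\lim_n F(u_n)\geq 0$, which contradicts $m(c)<0$ established in \cref{lemma:5.5}\eqref{point:5.5i}. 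Hence vanishing cannot occur, and there exist $R>0$, $\beta_0>0$ and $(y_n)\subset\Rb$ with $\int_{B(y_n,R)}\abs{u_n}^2\,dx\geq\beta_0$, which is exactly the assertion.

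I expect this argument to be essentially routine. The only subtlety worth flagging is that it must also cover the Sobolev critical case $p=6$, in which $C(u_n)=\normLp{u_n}{6}^6$ need not tend to $0$ under the vanishing hypothesis. This is precisely where the assumption $a<0$ enters: the (possibly critical) power term contributes the nonnegative quantity $-\frac{a}{p}C(u_n)$ to $F(u_n)$ and can therefore simply be discarded, so that no Brezis--Lieb splitting or fine level estimate is needed here, in contrast with the analysis carried out in \autoref{Subsection3-3}.
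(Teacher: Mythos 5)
Your argument is correct and coincides with the paper's own proof: boundedness from coercivity (\cref{lemma:5.1}), Lions' vanishing lemma giving $B(u_n)\to 0$, and then the sign $a<0$ forcing $\liminf F(u_n)\geq 0$, contradicting $m(c)<0$ from \cref{lemma:5.5}. Your closing remark about why $a<0$ disposes of the critical term $C(u_n)$ at $p=6$ is exactly the point the paper relies on as well.
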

\begin{proof}
	Since $F$ restricted to $S(c)$ is coercive on $\Ho$ (see \cref{lemma:5.1}), the sequence $(u_n)$ is bounded.
	Now, we assume that \eqref{eqn:5.1} does not hold. By \cite[Lemma I.1]{LIONS1984part2}, we have, for $q \in (2,6)$, $\normLp{u_n}{q} \to 0,$ as
	$n \to \infty.$
	This implies that
	\begin{align*}
		B(u_n) \leq K_1 \norm{u}_{\Lp{\frac{12}{5}}}^4 \to 0,
	\end{align*}
	due to \eqref{eqn:2.1}. Hence, we obtain	
	\begin{align*}
	F(u_n) = \dfrac{1}{2} A(u_n) - \dfrac{\gamma}{4} B(u_n) - \dfrac{a}{p} C(u_n) \to \dfrac{1}{2} A(u_n) - \dfrac{a}{p} C(u_n) \geq 0,
	\end{align*}
	due to $a < 0$. This contradicts $F(u_n) \to m(c) < 0$, see \cref{lemma:5.5}\eqref{point:5.5i}. 
\end{proof}

\begin{lemma} \label{lemma:5.4}
	Any minimizing sequence $(u_n) \subset S(c)$ for $m(c)$ is, up to translation, strongly convergent in $\Ho$.
\end{lemma}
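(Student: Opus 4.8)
The plan is to follow the now-standard concentration-compactness strategy à la \cite{Ikoma2014}, exactly parallel to the proof of \cref{lemma:29} but keeping track of the fact that the power term now has a favourable sign. First I would take an arbitrary minimizing sequence $(u_n) \subset S(c)$. By \cref{lemma:5.1} it is bounded in $\Ho$, and by \cref{lemma:5.3} there are $\beta_0>0$, $R>0$ and translations $(y_n)\subset\Rb$ with $\int_{B(y_n,R)}|u_n|^2\,dx \geq \beta_0$; hence, replacing $u_n$ by $u_n(\cdot - y_n)$, we may assume $u_n \weakto u_c$ weakly in $\Ho$ with $u_c \neq 0$. Set $\tilde c := \normLp{u_c}{2}^2 \in (0,c]$ and $w_n := u_n - u_c$, so that $w_n \weakto 0$.

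Next I would record the Brezis--Lieb type splittings: $\normLp{u_n}{2}^2 = \normLp{w_n}{2}^2 + \tilde c + o_n(1)$, and $F(u_n) = F(u_c) + F(w_n) + o_n(1)$ — the latter using the classical Brezis--Lieb lemma \cite{BrezisLieb1983} for the terms $A$ and $C$ and \cite[Lemma 2.2]{ZHAO2008} (or \cite[Proposition 3.1]{BellazziniSiciliano2011}) for the nonlocal term $B$, just as in \eqref{eqn:49}. Writing $c_n := \normLp{w_n}{2}^2 = c - \tilde c + o_n(1)$, the point is that $w_n \in S(c_n)$ up to the $o_n(1)$ correction (rescaling slightly if one wants to be pedantic), so $F(w_n) \geq m(c_n) + o_n(1)$, and $F(u_c) \geq m(\tilde c)$ by weak lower semicontinuity together with $u_c \in S(\tilde c)$. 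Passing to the limit and using continuity of $c\mapsto m(c)$ (\cref{lemma:5.5}\eqref{point:5.5ii}) gives
\begin{align*}
m(c) = \lim_n F(u_n) \geq m(c-\tilde c) + m(\tilde c).
\end{align*}
If $\tilde c < c$, then since $m(\tilde c)$ is achieved (by $u_c$, because $F(u_c)\le m(\tilde c)$ forces equality) we may invoke the strict subadditivity in \cref{lemma:5.5}\eqref{point:5.5iv}: $m(c-\tilde c) + m(\tilde c) > m(c)$, a contradiction. Hence $\tilde c = c$, i.e. $\normLp{w_n}{2}^2 \to 0$.

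It then remains to upgrade $L^2$-convergence to $\Ho$-convergence. From $\normLp{w_n}{2}^2\to 0$ and boundedness of $(w_n)$ in $\Ho$, \cref{lemma:1}\eqref{point:1i} gives $B(w_n)\to 0$. Since $\tilde c = c$ we get $F(u_c) \geq m(c)$, while $F(u_n) = F(u_c) + F(w_n) + o_n(1) \to m(c)$ forces $F(w_n) \leq o_n(1)$. On the other hand $F(w_n) = \tfrac12 A(w_n) - \tfrac\gamma4 B(w_n) - \tfrac ap C(w_n) \geq \tfrac12 A(w_n) + o_n(1)$ because $B(w_n)\to 0$ and $-\tfrac ap C(w_n) \geq 0$ (here $a<0$); combining the two bounds yields $A(w_n)\to 0$. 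Therefore $u_n \to u_c$ strongly in $\Ho$, with $F(u_c) = m(c)$. I do not expect a serious obstacle: the only delicate point — as in \cref{lemma:29} — is the dichotomy step, where one must be sure that $F(u_c) > m(\tilde c)$ is impossible; but this is handled exactly as there, by noting that $F(u_c)\geq m(\tilde c)$ always, and if the inequality were strict the displayed estimate combined with ordinary subadditivity already contradicts $m(c)=m(c)$, while if $F(u_c)=m(\tilde c)$ one uses the strict version. The favourable sign of $a$ actually makes the final $A(w_n)\to 0$ step cleaner than in the $a>0$ case, so no analogue of \cref{lemma:30} is needed.
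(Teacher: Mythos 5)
Your proposal is correct and follows essentially the same route as the paper's proof: boundedness from \cref{lemma:5.1}, non-vanishing from \cref{lemma:5.3}, Brezis--Lieb splitting of $F$ and of the $L^2$-norm, the dichotomy argument via the continuity and (strict) subadditivity of $c \mapsto m(c)$ from \cref{lemma:5.5} to force $\normLp{u_c}{2}^2 = c$, and finally $B(w_n)\to 0$ together with the sign $a<0$ to upgrade to $A(w_n)\to 0$. The only cosmetic remark is that the parenthetical ``because $F(u_c)\le m(\tilde c)$ forces equality'' in your main body has the logic backwards (one always has $F(u_c)\ge m(\tilde c)$, and it is the contradiction in the case $F(u_c)>m(\tilde c)$ that forces equality), but your closing paragraph states the correct two-case argument, which is exactly the one in the paper.
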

\begin{proof} 
	Since $F$ restricted to $S(c)$ is coercive on $\Ho$ (see \cref{lemma:5.1}), the sequence $(u_n)$ is bounded in $\Ho$. We deduce from the weak convergence in $\Ho$, the local compactness in $\Lp{2}$ and \cref{lemma:5.3} that
	\begin{align*}
	u_n(x - y_n) \weakto u_c \neq 0 \quad\text{in } \Ho.
	\end{align*}
	Our aim is to prove that $w_n(x) := u_n(x - y_n) - u_c(x) \to 0$ in $\Ho$.
	Now, taking into account that $F$ fulfills the following splitting properties of Brezis-Lieb type (see \cite{BrezisLieb1983} for terms $A$ and $C$; see \cite[Lemma 2.2]{ZHAO2008} or \cite[Proposition 3.1]{BellazziniSiciliano2011} for term $B$),
	\begin{align*}
	F(u_n - u_c) + F(u_c) =F(u_n) + o_n(1),
	\end{align*}
	and by the translational invariance, we obtain 
	\begin{align}
	\begin{split}
	F(u_n) &=F(u_n(x-y_n))
	=F(u_n(x-y_n) - u_c(x)) + F(u_c) + o_n(1)
	=F(w_n) + F(u_c) + o_n(1),
	\end{split} \label{eqn:5.5}
	\end{align}
	and
	\begin{align*}
	\normLp{u_n}{2}^2 &= \normLp{u_n(x-y_n)}{2}^2
	= \normLp{u_n(x-y_n) - u_c(x)}{2}^2 + \normLp{u_c}{2}^2 + o_n(1)
	=\normLp{w_n}{2}^2 + \normLp{u_c}{2}^2 + o_n(1).
	\end{align*}
	Thus, we have
	\begin{align}
	\normLp{w_n}{2}^2 = \normLp{u_n}{2}^2 -  \normLp{u_c}{2}^2 + o_n(1) = c - \normLp{u_c}{2}^2 + o_n(1). \label{eqn:5.6}
	\end{align}
	We claim that
	\begin{align}
	\normLp{w_n}{2}^2 \to 0 \quad \text{as} \quad n \to \infty. \label{eqn:5.12}
	\end{align}
	In order to prove this, let us denote $c_1:= \normLp{u_c}{2}^2 > 0$. By \eqref{eqn:5.6}, if we show that $c_1 = c$ then the claim follows. We assume by contradiction that $c_1 < c$.
	Recording that $F(u_n) \to m(c)$, in view of \eqref{eqn:5.5}, we have
	\begin{align*}
	m(c) = F(w_n) + F(u_c) \geq m\(\normLp{w_n}{2}^2\) + F(u_c). 
	\end{align*}
	Since the map $c \mapsto m(c)$ is continuous (see \cref{lemma:5.5}\eqref{point:5.5ii}) and \eqref{eqn:5.6}, we deduce that
	\begin{align}
	m(c) \geq m(c-c_1) + F(u_c). \label{eqn:5.13}
	\end{align}
	If $F(u_c) > m(c_1)$, then it follows from \cref{lemma:5.5}\eqref{point:5.5iv} that
	\begin{align*}
	m(c) > m(c-c_1) + m(c_1) \geq m(c -c_1 + c_1) = m(c),
	\end{align*}
	which is impossible. Hence, we have $F(u_c) = m(c_1)$, namely $u_c$ is global minimizer with respect to $c_1$. So, we can using \cref{lemma:5.5}\eqref{point:5.5iv} with the strict inequality and we deduce from \eqref{eqn:5.13} that
	\begin{align*}
	m(c) \geq m(c-c_1) + F(u_c) = m(c-c_1) + m(c_1) > m(c -c_1 + c_1) = m(c),
	\end{align*}
	which is impossible. Thus, the claim follows and $\normLp{u_c}{2}^2 = c$.
	
	At this point, since $w_n$ is a bounded sequence in $\Ho$ and by \cref{lemma:1}\eqref{point:1i}, we have
	\begin{align*}
		B(w_n) \leq K_{H}  \sqrt{A(w_n)} \normLp{w_n}{2}^{3} \to 0.
	\end{align*}
	Thus, we obtain that
	\begin{align}
	\liminf_{n \to \infty} F(w_n) = \liminf_{n \to \infty} \[ \dfrac{1}{2} A(w_n) - \dfrac{a}{p}C(w_n)\] \geq 0. \label{eqn:5.14}
	\end{align}
	On the other hand, since $\normLp{u_c}{2}^2 = c$, we deduce from \eqref{eqn:5.5} that
	\begin{align*}
	F(u_n) = F(w_n) + F(u_c) +  o_n(1) \geq F(w_n) + m(c) + o_n(1),
	\end{align*}
	and by $F(u_n) \to m(c)$, we have that
	\begin{align}
	\limsup_{n \to \infty} F(w_n) \leq 0. \label{eqn:5.15}
	\end{align}
	Combining \eqref{eqn:5.14} and \eqref{eqn:5.15}, we obtain that $F(w_n) \to 0$. Hence, by \eqref{eqn:5.14} and by $a <0$, we have $A(w_n) \to 0$ and $C(w_n) \to 0$. Thus, we get $w_n \to 0$ in $\Ho$. The lemma is completed.
\end{proof}

\begin{proof}[Proof of \cref{theorem:2}]
	The proof follows directly from \cref{lemma:5.4} for the convergence of the minimizing sequence and from \cref{lemma:2.3} for the sign of the Lagrange parameter. 
\end{proof}

\begin{lemma}\label{lemma:5.6}
	There exist three constants $K_1, K_2, K_3 > 0$ such that if $\lambda_c$ denote the Lagrange parameter associated to a solution $u_c$ lying at the level $m(c)$ then we have
	\begin{align*}
		|m(c)| \leq K_1 c^3 + K_2 c^{2p-3}   \quad \mbox{and} \quad \lambda_c \leq K_3 c^{2}.
	\end{align*}
\end{lemma}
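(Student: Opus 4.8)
The plan is to follow closely the a priori estimate carried out for $\gamma^+(c)$ in \cref{lemma:22}; the only genuinely new feature, coming from $a<0$, is that the term $C(u_c)$ no longer cancels when one expands $F(u_c)$ via $Q(u_c)=0$, and it is exactly this surviving term that produces the extra $c^{2p-3}$ contribution in the statement. I do not anticipate a real obstacle here: the whole argument is a short a priori bound, and the only points needing a bit of care are the sign bookkeeping forced by $a<0$ and the arithmetic check that $\frac{3\sigma}{2}+\frac{6-p}{4}=2p-3$, which follows from $\sigma=\frac{3(p-2)}{2}$.

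First, let $u_c\in S(c)$ be a minimizer with $F(u_c)=m(c)$ and let $\lambda_c$ be its Lagrange parameter; such a pair exists by \cref{theorem:2} (through \cref{lemma:5.4}), and by \cref{lemma:2.3} one has $Q(u_c)=0$ and $\lambda_c>0$. From $Q(u_c)=0$ together with $a<0$ I would first obtain $A(u_c)=\frac{\gamma}{4}B(u_c)+\frac{a\sigma}{p}C(u_c)\le\frac{\gamma}{4}B(u_c)$, and then, using \cref{lemma:1}\eqref{point:1i}, $A(u_c)\le\frac{\gamma}{4}K_H\sqrt{A(u_c)}\,c^{3/2}$, hence $A(u_c)\le(\gamma K_H/4)^2 c^3$. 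This immediately yields $B(u_c)\le K_H\sqrt{A(u_c)}\,c^{3/2}\le\frac{\gamma K_H^2}{4}c^3$ and, by \cref{lemma:1}\eqref{point:1ii} together with the exponent identity above, $C(u_c)\le K_{GN}[A(u_c)]^{\sigma/2}c^{(6-p)/4}\le K_{GN}(\gamma K_H/4)^{\sigma}c^{2p-3}$.

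With these three bounds in hand, the estimate on $|m(c)|$ is obtained by bounding each term of $F$ in absolute value: $|m(c)|=|F(u_c)|\le\frac12 A(u_c)+\frac{\gamma}{4}B(u_c)+\frac{|a|}{p}C(u_c)\le K_1 c^3+K_2 c^{2p-3}$, where $K_1$ absorbs the two $c^3$-terms and $K_2=\frac{|a|}{p}K_{GN}(\gamma K_H/4)^\sigma$. For $\lambda_c$ I would use the identity \eqref{eqn:lambda} established in the proof of \cref{lemma:2.3}, applied to $u_c$ with $\normLp{u_c}{2}^2=c$, namely $2(3p-6)\,c\,\lambda_c=2(6-p)A(u_c)+(5p-12)\gamma B(u_c)$. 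Since $p\in(\frac{10}{3},6]$ the coefficients $6-p\ge 0$, $5p-12>0$, $3p-6>0$ are all non-negative, so inserting the $c^3$-bounds on $A(u_c)$ and $B(u_c)$ and dividing by $2(3p-6)c>0$ gives $\lambda_c\le K_3 c^2$ for an explicit $K_3=K_3(p,\gamma,K_H)>0$, which completes the proof.
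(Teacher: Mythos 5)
Your proof is correct and takes essentially the same route as the paper's: the only difference is that you obtain the initial bound $A(u_c)\lesssim c^3$ from the identity $Q(u_c)=0$ (discarding the non-positive term $\tfrac{a\sigma}{p}C(u_c)$), whereas the paper gets it from $m(c)<0$ combined with $-\tfrac{a}{p}C(u_c)\geq 0$; both yield $\sqrt{A(u_c)}\leq \mathrm{const}\cdot c^{3/2}$. The remaining steps, namely bounding $B(u_c)$ and $C(u_c)$ via \cref{lemma:1}, checking $\tfrac{3\sigma}{2}+\tfrac{6-p}{4}=2p-3$, and using \eqref{eqn:lambda} with the sign conditions $6-p\geq 0$ and $5p-12>0$ for the Lagrange multiplier, coincide with the paper's argument.
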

\begin{proof}
	By the fact that $m(c) < 0$ and by using \cref{lemma:1}\eqref{point:1i}, we get that
	\begin{align*}
		0 > m(c) = F(u_c) = \dfrac{1}{2} A(u_c) - \dfrac{\gamma}{4} B(u_c) - \dfrac{a}{p} C(u_c) \geq \dfrac{1}{2} A(u_c) - \dfrac{\gamma}{4} K_H \sqrt{A(u_c)} c^{\frac{3}{2}} - \dfrac{a}{p} C(u_c) \geq \dfrac{1}{2} A(u_c) - \dfrac{\gamma}{4} K_H \sqrt{A(u_c)} c^{\frac{3}{2}},				
	\end{align*}
 	due to our assumption $\gamma > 0$ and $a < 0$. This implies that
 	\begin{align*}
 		\sqrt{A(u_c)} < \dfrac{\gamma K_H}{2} c^{\frac{3}{2}}.
 	\end{align*}
 	Therefore, using again \cref{lemma:1}, we obtain that
 	\begin{align*}
 		|m(c)| = |F(u_c)| &= \Big|\dfrac{1}{2} A(u_c) - \dfrac{\gamma}{4} B(u_c) - \dfrac{a}{p} C(u_c) \Big| \leq \dfrac{1}{2} A(u_c) + \dfrac{\gamma}{4} B(u_c) - \dfrac{a}{p} C(u_c)\\
 		&\leq \dfrac{1}{2} A(u_c) + \dfrac{\gamma K_H}{4} \sqrt{A(u_c)} c^{\frac{3}{2}} - \dfrac{a K_{GN}}{p} [A(u_c)]^{\frac{\sigma}{2}} c^{\frac{6-p}{4}} \\
 		&\leq \dfrac{\gamma^2 K_H^2}{8} c^3 - \dfrac{a K_{GN}}{p} \[\dfrac{\gamma K_H}{2}\]^{\sigma} c^{\frac{3\sigma}{2}} c^{\frac{6-p}{4}} := K_1 c^3 + K_2 c^{2p-3}. 
 	\end{align*}
 	We deduce from \eqref{eqn:lambda} that
 	\begin{align*}
 		\lambda_c &= \dfrac{6-p}{3p-6} \dfrac{1}{c} A(u_c) + \dfrac{\gamma(5p-12)}{2(3p-6)} \dfrac{1}{c} B(u_c) 
 		\leq \dfrac{6-p}{3p-6} \dfrac{1}{c} A(u_c) + \dfrac{\gamma(5p-12)K_H}{2(3p-6)} \dfrac{1}{c}  \sqrt{A(u_c)} c^{\frac{3}{2}} \\
 		&\leq \dfrac{6-p}{3p-6}  \dfrac{1}{c} \dfrac{\gamma^2 K_H^2}{4}  c^{3} + \dfrac{\gamma(5p-12)K_H}{2(3p-6)} \dfrac{1}{c}  \dfrac{\gamma K_H}{2} c^{\frac{3}{2}} c^{\frac{3}{2}} := K_3 c^2.
 	\end{align*}
 	Thus, the lemma is proved.
\end{proof}

\section{The case $\gamma < 0,$ $ a > 0$ and $p=6$.} \label{Section:6}
Throughout this section, we assume that $\gamma < 0,$ $ a > 0$ and $p =6$. 
To prove the non-existence of the positive solution to \eqref{eqn:Laplace}, we first recall a Liouville-type result, see \cite[Theorem 2.1]{Armstrong-Sirakov-2011},
\begin{proposition}\label{theorem:Liouville-type}
	Assume that $N \geq 3$ and  the nonlinearity $f: (0, \infty) \mapsto (0, \infty)$ is
	continuous and satisfies 
	\begin{align*}
		\liminf_{s \to 0} s^{-\frac{N}{N-2}} f(s) > 0.
	\end{align*}
	Then the differential inequality  $- \laplace u \geq f(u)$ has no positive solution in any exterior domain of $\R^N$.
\end{proposition}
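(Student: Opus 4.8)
The plan is to reduce, by way of the hypothesis on $f$, to the borderline (Serrin) Liouville theorem for the pure power nonlinearity, and to prove the latter by a rescaled test-function argument combined with the minimum principle; this is, in essence, the strategy of \cite{Armstrong-Sirakov-2011}. Suppose $\Omega$ is an exterior domain, so that $\Omega\supset\{x\in\R^N:|x|>R_0\}$ for some $R_0>0$, and suppose for contradiction that $u>0$ solves $-\laplace u\ge f(u)$ in $\Omega$. The hypothesis yields $c_0>0$ and $s_0\in(0,1]$ with $f(s)\ge c_0\,s^{q}$ for $0<s\le s_0$, where $q:=\tfrac{N}{N-2}$ is the critical Serrin exponent. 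Criticality will enter through the two exponent identities $(2-N)q=-N$ and $-2+\tfrac{N}{q'}=0$ (here $q'=\tfrac{q}{q-1}$), which are the manifestation of the scale invariance of $-\laplace v\ge v^{q}$ under $v\mapsto\lambda^{N-2}v(\lambda\,\cdot\,)$.

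The first task is to show that $u$ becomes small near infinity, so that $f(u)\ge c_0\,u^{q}$ is available there. Since $-\laplace u\ge f(u)>0$, $u$ is strictly superharmonic on $\{|x|>R_0\}$; by the minimum principle, comparing $u$ on annuli with harmonic functions $a|x|^{2-N}+b$, the spherical infimum $m(r):=\inf_{|x|=r}u$ is, in the variable $t=r^{2-N}$, a positive concave function on $(0,R_0^{2-N})$. Hence $\ell:=\lim_{r\to\infty}m(r)$ exists and is finite, a positive concave function being unable to tend to $+\infty$ at the left endpoint of its interval of definition. If $\ell>0$ one derives a contradiction by integrating the radial differential inequality $\bigl(r^{N-1}M'(r)\bigr)'=r^{N-1}\,\overline{\laplace u}\le-r^{N-1}\,\overline{f(u)}$ for the spherical mean $M(r)$ (bars denoting spherical averages, and after also controlling $u$ from above by superharmonicity), which forces $M(r)\to-\infty$. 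Thus $\ell=0$, and a further comparison argument promotes this to $0<u\le s_0$ on $\{|x|\ge R_1\}$ for some $R_1\ge R_0$; on that set $-\laplace u\ge c_0\,u^{q}$.

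It then remains to prove that $-\laplace u\ge c_0\,u^{q}$ has no positive solution in $\{|x|>R_1\}$, and here I would play two estimates against each other. From the minimum principle — comparing $u$ on $\{R_1<|x|<\rho\}$ with the harmonic function equal to $\inf_{|x|=R_1}u$ on $\{|x|=R_1\}$ and to $0$ on $\{|x|=\rho\}$, then letting $\rho\to\infty$ — one obtains $u(x)\ge c_1|x|^{2-N}$ for $|x|\ge R_1$ with $c_1>0$, and since $(2-N)q=-N$ this gives
\begin{equation*}
\int_{\{|x|>2R_1\}}u^{q}\,dx\ \ge\ c_1^{q}\int_{\{|x|>2R_1\}}|x|^{-N}\,dx\ =\ +\infty .
\end{equation*}
On the other hand, the Mitidieri--Pohozaev rescaled test-function method gives $\int_{\{|x|>2R_1\}}u^{q}\,dx<\infty$: testing the inequality against $\varphi_R^{\,k}$, where $\varphi_R$ is a cut-off supported away from $\partial\Omega$, equal to $1$ on $\{2R_1\le|x|\le 2R\}$, with $|\laplace(\varphi_R^{\,k})|\lesssim R^{-2}\varphi_R^{\,k-2}$ on the outer shell $\{2R\le|x|\le3R\}$ and bounded on the inner shell near $\partial\Omega$, and $k$ large, then integrating by parts twice and applying H\"older's inequality, one reaches
\begin{equation*}
\int u^{q}\varphi_R^{\,k}\,dx\ \le\ C_1+C\,R^{\,-2+N/q'}\Bigl(\int u^{q}\varphi_R^{\,k}\,dx\Bigr)^{1/q}
\end{equation*}
with $C_1$ independent of $R$; since $-2+\tfrac{N}{q'}=0$ and $\tfrac{1}{q}<1$, the left-hand side is bounded uniformly in $R$, whence $\int_{\{|x|>2R_1\}}u^{q}\,dx<\infty$. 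The two displays are incompatible, which completes the argument.

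The main obstacle is precisely the criticality of $q=\tfrac{N}{N-2}$: in the subcritical range the test-function estimate already carries a strictly negative power of $R$ and nonexistence is immediate, while at the borderline one obtains only uniform boundedness of the localized integrals, and must combine this with the minimum-principle lower bound $u\gtrsim|x|^{2-N}$ — itself only borderline non-integrable at infinity — to reach a contradiction. A secondary difficulty is the reduction to $u\le s_0$ near infinity when $u$ is a priori unbounded, handled by the superharmonicity and spherical-averaging arguments above, which is exactly where the strict positivity $f>0$ (and not merely $f\ge0$) is used. The inner boundary of $\Omega$ causes no trouble, since all test functions are supported away from it.
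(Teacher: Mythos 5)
The paper itself gives no proof of this proposition: it is quoted verbatim from \cite[Theorem 2.1]{Armstrong-Sirakov-2011} and used as a black box in the proof of \cref{theorem:non-existence}, so the only comparison available is with the argument you reconstruct. Your final step (the lower bound $u\ge c_1|x|^{2-N}$ forcing $\int_{\{|x|>2R_1\}}u^{q}=\infty$, against the Mitidieri--Pohozaev test-function bound $\int u^{q}\varphi_R^k\le C_1+CR^{-2+N/q'}(\int u^{q}\varphi_R^k)^{1/q}$ with $-2+N/q'=0$) is correct and is a standard, complete proof of the Liouville theorem for the \emph{pure power} $-\laplace u\ge c_0u^{q}$ in an exterior domain. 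The problem is the reduction to that case.

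The gap is the sentence ``Thus $\ell=0$, and a further comparison argument promotes this to $0<u\le s_0$ on $\{|x|\ge R_1\}$.'' No such comparison argument exists: the minimum principle for superharmonic functions produces \emph{lower} bounds for $u$ in terms of boundary infima, never upper bounds, and a positive supersolution in an exterior domain need not be bounded (one can superpose mollified fundamental-solution bumps centered at points escaping to infinity to get a positive superharmonic function with $\inf_{|x|=r}u\to0$ but $\sup_{|x|=r}u\to\infty$). Knowing $m(r)=\inf_{|x|=r}u\to0$, or even that the spherical means stay bounded, gives no pointwise bound $u\le s_0$ near infinity; and without it the inequality $-\laplace u\ge c_0u^{q}$ is unavailable on the support of your test functions, so both halves of your final contradiction (which concern $u^{q}$, not $f(u)$) collapse. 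This is not a technicality: handling an $f$ that is only controlled near $s=0$ is precisely the content of the proposition beyond the pure-power case, and it is the part your argument does not actually address. (Your treatment of the case $\ell>0$ is also under-justified --- ``controlling $u$ from above by superharmonicity'' only bounds the spherical \emph{mean}, and one needs a Chebyshev-type argument to bound $\fint_{|x|=r}f(u)$ from below --- but that step is salvageable.) The standard repair, and the one in the spirit of the cited maximum-principle approach of Armstrong--Sirakov, is to radialize first: $\widetilde u(x):=\inf_{|y|=|x|}u(y)=m(|x|)$ is again a positive (viscosity) supersolution of $-\laplace\widetilde u\ge f(\widetilde u)$, being an infimum over rotations of supersolutions; for this \emph{radial} supersolution the statement $m(r)\to0$ \emph{does} give $\widetilde u\le s_0$ near infinity, and one concludes with the two-sided bounds $cr^{2-N}\le m(r)\le Cr^{2-N}$ (the upper bound from concavity of $m$ in $r^{2-N}$ together with $\ell=0$) and a $\log$-improving bootstrap, or with your test-function computation applied to $\widetilde u$.
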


\begin{proof}[Proof of \cref{theorem:non-existence}]
	Let $u \in \Hs{1}$ be a non-trivial solution to \eqref{eqn:Laplace}. By \cref{lemma:2.3}, we have $\lambda < 0$ and $Q(u) = 0$. Hence, 
	\begin{align*}
		aC(u) = A(u) - \dfrac{\gamma}{4} B(u) > A(u)
	\end{align*}
	and using \cref{lemma:1}\eqref{point:1ii}, we obtain that
	\begin{align*}
		A(u) < aC(u) \leq a K_{GN} [A(u)]^{3}.
	\end{align*} 
	This implies that
	\begin{align*}
		A(u) > \sqrt{\dfrac{1}{a K_{GN}}}.
	\end{align*}
	Using again $Q(u) = 0$ we have that
	\begin{align*}
		F(u) = \dfrac{1}{2}A(u) - \dfrac{\gamma}{4} B(u) - \dfrac{a}{6} C(u) = \dfrac{5a}{6} C(u) - \dfrac{1}{2}A(u) > \dfrac{1}{3}A(u) > \dfrac{1}{3\sqrt{a K_{GN}}},
	\end{align*} 
	proving point \eqref{point:theorem:non:i}. To prove point \eqref{point:theorem:non:ii}, we assume by contradiction that there exists a positive solution $u \in \Hs{1}$ to \eqref{eqn:Laplace}. Then, by point \eqref{point:theorem:non:i}, the associated Lagrange multiplier  $\lambda $ is strictly negative. 
	In view of \eqref{eqn:tend-to-0}, there exists $R_0 > 0$ large enough such that 
	\begin{align*}
		(\abs{x}^{-1} * \abs{u}^2)(x) \leq - \dfrac{\lambda}{2 \gamma} \quad \mbox{for } |x| > R_0.
	\end{align*}
	Therefore, we get that
	\begin{align*}
		- \laplace u(x) = \( - \lambda + \gamma (\abs{x}^{-1} * \abs{u}^2)(x)  + a \abs{u(x)}^{4}\)u(x) \geq \( - \lambda + \gamma (\abs{x}^{-1} * \abs{u}^2)(x)\)u(x) \geq - \dfrac{\lambda}{2} u(x) \quad \mbox{for } |x| > R_0.
	\end{align*}
	By applying \cref{theorem:Liouville-type} with $f(s) = -\dfrac{\lambda}{2} s$, we obtain a contradiction, and thus point \eqref{point:theorem:non:ii} holds.		
\end{proof}

\begin{remark} \label{remrak:6.1}
In \cite[Theorem 1.2]{Soave2019Sobolevcriticalcase}, the author considers the equation
	\begin{align}
		-\laplace u - \lambda u - \mu \abs{u}^{q-2} u - \abs{u}^{2^*-2} u = 0 \quad \mbox{in } \R^N, \label{eqn:LaplaceS}
	\end{align}
	with $N \geq 3$, $2<q<2^*$ and $\mu < 0$. 
	If $u \in \mathit{H}^1(\R^N)$ is a non-trivial solution to \eqref{eqn:LaplaceS} then by \cite[Theorem 1.2]{Soave2019Sobolevcriticalcase}, the associated Lagrange multiplier $\lambda$ is positive and following the arguments in \cite[Proof of Theorem 1.2]{Soave2019Sobolevcriticalcase}, one obtains that
	\begin{align*}
		-\laplace u \geq \dfrac{\lambda}{2} u \quad \mbox{for } |x| > R_1,
	\end{align*}
	with $R_1 > 0$ large enough. Hence, by applying \cref{theorem:Liouville-type}, we see that \eqref{eqn:LaplaceS} has no positive solution $u \in {\mathit H}^{1}(\R^N)$ for all $N \geq 3$, improving slightly the conclusions of \cite[Theorem 1.2]{Soave2019Sobolevcriticalcase}.
	Actually, borrowing an observation from \cite{BerestyckiLions1983}, the
	non-existence results of \cite[Theorem 1.2]{Soave2019Sobolevcriticalcase} can be further extended by showing that \eqref{eqn:LaplaceS} has no non-trivial radial solutions in $\mathit{H}^1(\R^N)$ when $N \geq 3$ and $q > 2 + \frac{2}{N-1}$. Indeed, if $ u \in \mathit{H}^1(\R^N)$ is a radial function by \cite[Radial Lemma A.II]{BerestyckiLions1983}, there exist constants $C>0$ and $R_2 > 0$ such that
	\begin{align*}
		|u(x)| \leq C |x|^{-\frac{N-1}{2}}  \quad\mbox{for } |x| > R_2.
	\end{align*}
	Setting $V(x) = - \mu \abs{u(x)}^{q-2} - \abs{u(x)}^{2^*-2}$, we obtain that any radial solution  $u \in \mathit{H}^1(\R^N)$ satisfies
	\begin{align}
		-\laplace u(x) + V(x) u(x) = \lambda u(x), \label{eqn:6.3}
	\end{align}
	where, since $q > 2 + \frac{2}{N-1}$, 
	\begin{align*}
		\lim_{|x| \to \infty} |x| |V(x)| \leq \lim_{|x| \to \infty} \[-\mu C |x|^{-\frac{(N-1)(q-2)}{2} +1} + C |x|^{-\frac{(N-1)(2^*-2)}{2} +1} \] = 0.
	\end{align*}
	Then \eqref{eqn:6.3} has no solution in view of Kato's result \cite[page 404]{Kato-1959}, also see \cite{Simon-2019-Part2} which states that Schr\"{o}dinger operator 
	$ H = - \Delta + p(x)$ has no positive eigenvalue with an $L^2$-eigenfunction if $p(x) = o(|x|^{-1})$.
	\end{remark}
	
	\begin{remark}\label{open-problem}
	One may wonder if a non-existence result for radial solutions also holds for \eqref{eqn:Laplace} under the assumptions of \cref{theorem:non-existence}. The difficulty one faces is that, for any $u \in \mathit{H}^1(\R^N), $
	$(|x|^{-1}*|u|^2)(x) \geq C |x|^{-1}$ for $|x| > R$ for some $C, R > 0$ (see \cite{BellazziniJeanjeanLuo2013} or \cite[Appendix A.4]{Moroz-VanSchaftingen-2017}). Thus, the result of Kato used in \cref{remrak:6.1} cannot be directly applied and the non-existence of radial solutions to   \eqref{eqn:Laplace} when $\gamma <0$, $a>0$ and $p=6$ is an open problem.
\end{remark}

{\bf Acknowledgments:} The authors thank S. Cingolani for providing to us the statement and proof of \cref{lemma:3} and Colette De Coster for pointing to us \cite[Theorem 3.27]{Troianiello-1987}. 

\renewcommand{\bibname}{References}
\bibliographystyle{plain}
\bibliography{References}
\vspace{0.25cm}
\end{document}